\newtheorem{theorem}{Theorem}[section]
\newtheorem{lemma}[theorem]{Lemma}
\newtheorem{proposition}[theorem]{Proposition}
\newtheorem{corollary}[theorem]{Corollary}
\theoremstyle{definition}
\newtheorem{definition}[theorem]{Definition}
\newtheorem{conjecture}[theorem]{Conjecture}
\newtheorem{question}[theorem]{Question}
\newtheorem{problem}[theorem]{Problem}
\theoremstyle{remark}
\newtheorem{remark}[theorem]{Remark}
\newtheorem{example}[theorem]{Example}
\numberwithin{equation}{section}
\newcommand {\D}  {{\mathcal D}}
\newcommand {\E}  {{\mathcal E}}
\newcommand {\NN}  {{\mathbb N}}
\newcommand {\ZZ}  {{\mathbb Z}}
\newcommand {\RR}  {{\mathbb R}}
\newcommand {\CC}  {{\mathbb C}}
\newcommand {\OR}  {{C}}
\newcommand{\abs}[1]{\left\vert#1\right\vert}
\newcommand{\spk}[1]{\left\langle #1\right\rangle}
\def\interior{{\rm int\,}}
\def\vectz #1 #2 {\left(
\begin{array}{c}
 #1 \\ #2
\end{array}\right)}
\def\vektor #1 #2 {\left(
\begin{array}{c}
 #1 \\ \vdots \\ \vdots \\ #2
\end{array}\right)}
\def\eps{\varepsilon}
\title[Shift Radix Systems]{Shift Radix Systems - A Survey}
\author[P.~Kirschenhofer]{Peter Kirschenhofer}
\author[J.~M.~Thuswaldner]{J\"org~M.~Thuswaldner}
\address{Chair of Mathematics and Statistics,
Montanuniversit\"at Leoben, Franz-Josef-Strasse 18, A-8700 Leoben,
AUSTRIA}
 \email{Peter.Kirschenhofer@unileoben.ac.at}
\email{Joerg.Thuswaldner@unileoben.ac.at}
\thanks{The authors are supported by the Franco-Austrian research project I-1136 ``Fractals and numeration'' granted by the French National Research Agency (ANR) and the Austrian Science Fund (FWF), and by the Doctoral Program W1230 ``Discrete Mathematics'' granted by the Austrian Science Fund (FWF)}
\date{\today}
\keywords{shift radix systems}
\subjclass[2000]{11A63}
\begin{document}

\begin{abstract}
Let $d\ge 1$ be an integer and ${\bf r}=(r_0,\dots,r_{d-1}) \in \RR^d$.
The {\em shift radix system} $\tau_\mathbf{r}:
\mathbb{Z}^d \to \mathbb{Z}^d$ is defined by
$$
\tau_{{\bf r}}({\bf z})=(z_1,\dots,z_{d-1},-\lfloor {\bf r} {\bf z}\rfloor)^t \qquad ({\bf z}=(z_0,\dots,z_{d-1})^t).
$$
$\tau_\mathbf{r}$ has the {\em finiteness
property} if each ${\bf z} \in \mathbb{Z}^d$ is eventually mapped to
${\bf 0}$ under iterations of $\tau_\mathbf{r}$.
In the present survey we summarize results on these nearly linear mappings. We discuss how these mappings are related to well-known numeration systems, to rotations with round-offs, and to a conjecture on periodic expansions w.r.t.\ Salem numbers. Moreover, we review the behavior of the orbits of points under iterations of $\tau_\mathbf{r}$ with special emphasis on ultimately periodic orbits and on the finiteness property. We also describe a geometric theory related to shift radix systems.\vspace{-0.6cm}
\end{abstract}
\maketitle

\tableofcontents

\section{Introduction}

{\it Shift Radix Systems} and their relation to beta-numeration seem to have appeared first in Hollander's PhD thesis~\cite{Hollander:96} from 1996. Already in 1994 Vivaldi~\cite{Vivaldi:94} studied similar mappings in order to investigate rotations with round-off errors.  In 2005 Akiyama {\it et al.}~\cite{Akiyama-Borbeli-Brunotte-Pethoe-Thuswaldner:05} introduced the notion of shift radix system formally and elaborated the connection of  these simple dynamical systems to several well-known notions of number systems such as beta-numeration and canonical number systems. We recall the definition of these objects (here for $y\in\mathbb{R}$ we denote by $\lfloor y \rfloor$ the largest $n\in\mathbb{Z}$ with $n\le y$; moreover, we set $\{y\} = y - \lfloor y \rfloor$).

\begin{definition}[Shift radix system]\label{def:SRS}
Let $d\ge 1$ be an integer and ${\bf r}=(r_0,\dots,r_{d-1}) \in \RR^d$. Then we define the {\em shift radix system} (SRS by short) to be the following mapping $\tau_{{\bf r}}\; : \; \ZZ^d \to \ZZ^d$: For ${\bf z}=(z_0,\dots,z_{d-1})^t \in \ZZ^d$ let
\begin{equation}\label{SRSdefinition}
\tau_{{\bf r}}({\bf z})=(z_1,\dots,z_{d-1},-\lfloor {\bf r} {\bf z}\rfloor)^t,
\end{equation}
where ${\bf r} {\bf z}=r_0z_0 +\dots + r_{d-1}z_{d-1}$.

If for each $ {\bf z} \in \ZZ^d$ there is $k\in\mathbb{N}$ such that the $k$-fold iterate of the application of $\tau_{\mathbf{r}}$ to $\mathbf{z}$ satisfies $\tau_{{\bf r}}^k({\bf z})={\bf 0}$ we say that  $\tau_\mathbf{r}$ has the {\em finiteness property}.
\end{definition}

It should be noticed that the definition of SRS differs in literature. Our definition agrees with the one in \cite{BSSST2011}, but the SRS in \cite{Akiyama-Borbeli-Brunotte-Pethoe-Thuswaldner:05} coincide with our SRS with finiteness property.

Equivalently to the above definition we might state that $\tau_{{\bf r}}({\bf z})=(z_1,\dots,z_{d-1},z_{d})^t,$ where $z_{d}$ is the unique integral solution of the linear inequality
\begin{equation}\label{linearinequalities}
0 \le r_0z_0+\cdots + r_{d-1}z_{d-1} + z_{d} < 1.
\end{equation}
Thus, the investigation of shift radix systems has natural relations to the study of {\it almost linear recurrences} and {\em linear Diophantine inequalities}.

Shift radix systems have many remarkable properties and admit relations to several seemingly unrelated objects studied independently in the past. In the present paper we will survey these properties and relations. In particular, we will emphasize on the following topics.

\begin{itemize}
\item For an algebraic integer $\beta > 1$ the {\em beta-transformation} $T_\beta
$ is conjugate to $\tau_\mathbf{r}$ for a parameter $\mathbf{r}$ that is defined in terms of $\beta$. Therefore, the well-known {\it beta-expansions} (R\'enyi \cite{Renyi:57}, Parry \cite{Parry:60}) have a certain finiteness property called {\em property (F)}  ({\it cf.} Frougny and Solomyak \cite{Frougny-Solomyak:92}) if and only if the related $\tau_\mathbf{r}$ is an SRS with finiteness property. Pisot numbers $\beta$ are of special importance in this context.
\item The {\em backward division mapping} used to define {\it canonical number systems} is conjugate to $\tau_\mathbf{r}$ for certain parameters $\mathbf{r}$. For this reason, characterizing all bases of canonical number systems is a special
case of describing all vectors $\mathbf{r}$ giving rise to SRS with finiteness property ({\it cf.} Akiyama {\it et al.} \cite{Akiyama-Borbeli-Brunotte-Pethoe-Thuswaldner:05}).
\item The {\em Schur-Cohn region} (see Schur~\cite{Schur:18}) is the set of all vectors $\mathbf{r}=(r_0,\ldots,r_{d-1}) \in \mathbb{R}^d$ that define a contractive polynomial $X^d + r_{d-1}X^{d-1} + \cdots + r_1X+r_0$. This region is intimately related to the set of all parameters
$\mathbf{r}$ for which each orbit of $\tau_\mathbf{r}$ is ultimately periodic.
\item Vivaldi~\cite{Vivaldi:94} started to investigate discretized rotations which are of interest in computer science because they can be performed in integer arithmetic. A fundamental problem is to decide whether their orbits are periodic. It turns out that discretized rotations are special cases of shift radix systems and their periodicity properties have close relations to the conjecture of Bertrand and Schmidt mentioned in the following item.
\item Bertrand~\cite{Bertrand:77} and Schmidt~\cite{Schmidt:80} studied beta-expansions w.r.t.\ a Salem number $\beta$. They conjectured that
each element of the number field $\mathbb{Q}(\beta)$ admits a periodic beta-expansion. As a Salem number has conjugates on the unit circle it can be shown that this conjecture can be reformulated in terms of the ultimate periodicity of $\tau_{\mathbf{r}}$, where $\mathbf{r}$ is a parameter whose companion matrix $R(\mathbf{r})$ (see \eqref{mata}) has non-real eigenvalues on the unit circle.
\item Shift radix systems admit a geometric theory. In particular, it is possible to define so-called {\it SRS tiles} (see Berth\'e {\it et al.}~\cite{BSSST2011}). In view of the conjugacies mentioned above these tiles contain Rauzy fractals~\cite{Akiyama:02,Rauzy:82} as well as self-affine {\em fundamental domains} of canonical number systems (see K\'atai and  K\H{o}rnyei~\cite{Katai-Koernyei:92}) as special cases. However, also new tiles with different (and seemingly new) geometric properties occur in this class. It is conjectured that SRS tiles always induce tilings of their representation spaces. This contains the {\em Pisot conjecture} (see {\it e.g.}\ Arnoux and Ito~\cite{Arnoux-Ito:01} or Baker~{\it et al.}~\cite{BBK:06}) in the setting of beta-expansions as a special case.
\item Akiyama~{\it et al.}~\cite{Akiyama-Frougny-Sakarovitch:07} study number systems with rational bases and establish relations of these number systems to Mahler's $\frac32$-problem ({\it cf.}~\cite{Mahler:68}). Also these number systems can be regarded as special cases of SRS (see Steiner and Thuswaldner~\cite{ST:11}) and there seem to be relations between the $\frac32$-problem and the length of SRS tiles associated with $\tau_{-2/3}$. These tiles also have relations to the Josephus problem (see again \cite{ST:11}).
\item In recent years variants of shift radix systems have been studied. Although their definition is very close to that of $\tau_\mathbf{r}$, some of them have different properties. For instance, the ``tiling properties'' of SRS tiles are not the same in these modified settings.
\end{itemize}

It is important to recognize that the mapping $\tau_\mathbf{r}$ is ``almost linear'' in the sense that it is the sum of a linear function and a small error term caused by the floor function $\lfloor\cdot\rfloor$ occurring in the definition. To make this more precise define the matrix
\begin{equation}\label{mata}
R({\bf r})= \left(
\begin{array}{ccccc}
  0 & 1 & 0   & \cdots  &0 \\
  \vdots & \ddots &\ddots  & \ddots & \vdots\\
  \vdots  &  & \ddots &\ddots& 0      \\
  0     & \cdots & \cdots & 0 &1 \\
  -r_0     & -r_1   & \cdots & \cdots & -r_{d-1}
\end{array}
\right) \qquad({\bf r}=(r_0, \ldots,r_{d-1}) \in \RR^d)
\end{equation}
and observe that its characteristic polynomial
\begin{equation}\label{chi}
\chi_{\bf r}(X)= X^d +r_{d-1} X^{d-1}+ \cdots + r_1 X + r_0
\end{equation}
 is also the characteristic polynomial of the linear recurrence $z_{n}+r_{d-1}z_{n-1}+ \cdots + r_0 z_{n-d}=0$. Thus \eqref{linearinequalities} implies that
\begin{equation}\label{linear}
\tau_\mathbf{r}(\mathbf{z}) =  R({\bf r})\mathbf{z} +
\mathbf{v}(\mathbf{z}),
\end{equation}
where $\mathbf{v}(\mathbf{z}) = (0,\ldots,0,\{\mathbf{rz}\})^t$ (in particular, $||\mathbf{v}(\mathbf{z})||_\infty<1$). Moreover, again using \eqref{linearinequalities} one easily derives that
\begin{equation}\label{almostlinear}
\hbox{either}\quad\tau_\mathbf{r}(\mathbf{z}_1+\mathbf{z}_2) = \tau_\mathbf{r}(\mathbf{z}_1) + \tau_\mathbf{r}(\mathbf{z}_2) \quad \hbox{or}\quad \tau_\mathbf{r}(\mathbf{z}_1+\mathbf{z}_2) =
\tau_\mathbf{r}(\mathbf{z}_1) - \tau_\mathbf{r}(-\mathbf{z}_2)
\end{equation}
holds for $\mathbf{z}_1,\mathbf{z}_2\in \mathbb{Z}^d$.

The following sets turn out to be of importance in the study of various aspects of shift radix systems.

\begin{definition}[$\D_d$ and $\D_d^{(0)}$]
For $d\in \mathbb{N}$ set
\begin{equation}\label{DdDd0}
\begin{split}
\D_{d}:= & \left\{{\bf r} \in \RR^{d}\; :\; \forall \mathbf{z} \in
\ZZ^{d} \, \exists k,l \in \NN:
     \tau_{\bf r}^{k}({\bf z})=\tau_{\bf r}^{k+l}({\bf z}) \right\}
 \quad\hbox{and}\\
\D_{d}^{(0)}:= & \left\{{\bf r} \in \RR^{d}\; :\; \tau_{\bf r}
\mbox{ is an SRS with finiteness property} \right\} .
\end{split}
\end{equation}
\end{definition}
Observe that $\D_{d}$ consists of all vectors $\mathbf{r}$ such that the iterates of $\tau_{\bf r}$ end up periodically for each starting vector $\mathbf{z}$.

In order to give the reader a first impression of these sets, we present in Figure~\ref{d20} images of (approximations of) the sets $\D_{2}$ and $\D_{2}^{(0)}$. As we will see in Section~\ref{sec:Dd0}, the set $\D_d^{(0)}$ can be constructed starting from $\D_d$ by ``cutting out'' polyhedra. Each of these polyhedra corresponds to a nontrivial periodic orbit. Using this fact, in Section~\ref{sec:algorithms} we shall provide algorithms for the description of $\D_d^{(0)}$. (Compare the more detailed comments on $\D_d$ and $\D_d^{(0)}$ in Sections~\ref{Dd} and~\ref{sec:Dd0}, respectively.)

\begin{figure}
\centering
\includegraphics[width=0.4\textwidth]{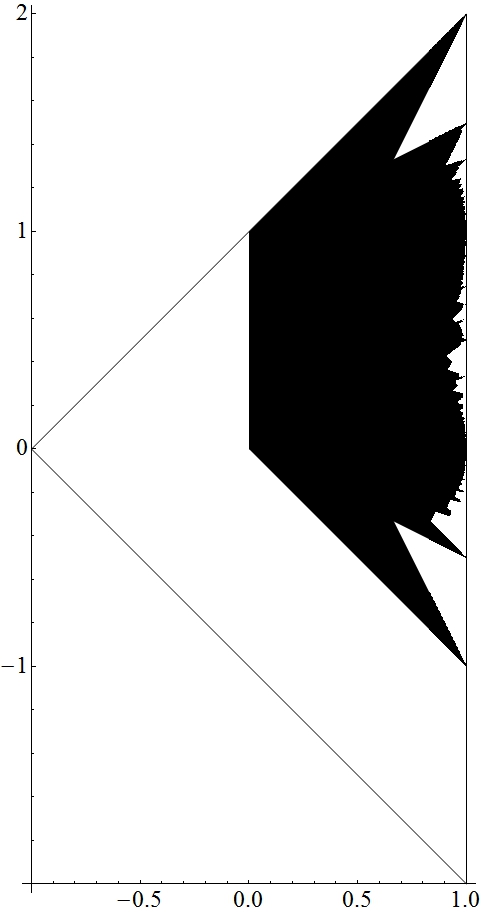}
 \caption{The large triangle is (up to its boundary) the set $\D_2$.  The black set is an approximation of $\D_2^{(0)}$ (see~\cite[Figure~1]{Akiyama-Borbeli-Brunotte-Pethoe-Thuswaldner:05}).}
\label{d20}
\end{figure}

\section{The relation between shift radix systems and numeration systems}

In the present section we discuss relations between SRS and beta-expansions as well as canonical number systems (see {\it e.g.} \cite{Akiyama-Borbeli-Brunotte-Pethoe-Thuswaldner:05,Hollander:96}). Moreover, we provide a notion of radix expansion for integer vectors which is defined in terms of SRS.

\subsection{Shift radix systems and beta-expansions}\label{sec:beta}
The notion of {\it beta-expansion} (introduced by R\'enyi \cite{Renyi:57} and Parry \cite{Parry:60}) is well-known in number theory.

\begin{definition}[Beta-expansion] Let $\beta>1$ be a non-integral real number and define the set
of ``digits" to be
$\mathcal{A}=\{0,1,\ldots,\lfloor\beta\rfloor\}.$
Then
each $\gamma\in[0,\infty)$ can be represented
uniquely by
\begin{equation}\label{greedyexp}
\gamma= \sum_{i=m}^\infty  \frac{a_i}{\beta ^i}
\end{equation}
with $m\in\mathbb{Z}$ and $a_i\in \mathcal{A}$ chosen in a way that
\begin{equation}\label{greedycondition}
0\leq \gamma-\sum_{i=m}^n \frac{a_{i}}{\beta^{i}} < \frac{1}{\beta^{n}}
\end{equation}
for all $n\ge m$. Observe that this means that the representation in \eqref{greedyexp} is the {\it greedy expansion} of $\gamma$ with respect to $\beta$.
\end{definition}

For $\gamma\in [0,1)$ we can use the {\it beta-transform}
\begin{equation}\label{betatransform}
T_{\beta} (\gamma) = \beta \gamma -\lfloor \beta \gamma \rfloor,
\end{equation}
to establish this greedy expansion, namely we have
$$ 
a_i = \lfloor \beta T^{i-1}_\beta (\gamma) \rfloor
$$ 
({\it cf.} Renyi \cite{Renyi:57}). This holds no longer for $\gamma = 1$, where the beta-transform yields a  representation (whose digit string is often denoted by $d(1,\beta)$) different from the greedy algorithm (see \cite{Parry:60}).

In the investigation of beta-expansions two classes of algebraic numbers, {\em Pisot} and {\em Salem numbers} play an important role. For convenience, we recall their definitions.
\begin{itemize}
\item An algebraic integer $\alpha > 1$ is called a {\em Pisot number} if all its algebraic conjugates have absolute value less than 1.
\item An algebraic integer $\alpha > 1$ is called a {\em Salem number} if all its algebraic conjugates have absolute value less than or equal to 1 with at least one of them lying on the unit circle.
\end{itemize}

Bertrand \cite{Bertrand:77} and Schmidt \cite{Schmidt:80} provided relations between periodic beta-expansions and  Pisot as well as Salem numbers (see Section~\ref{sec:Salem} for details). Frougny and Solomyak \cite{Frougny-Solomyak:92} investigated the problem to characterize base numbers $\beta$ which give rise to {\it finite beta-expansions} for large classes of numbers. Denoting the set of positive reals having finite greedy expansion with respect to $\beta$  by ${\rm Fin}(\beta)$, we say that $\beta>1$ has property (F) if
\begin{equation}\label{PropertyF}
{\rm Fin}(\beta) = \mathbb{Z}[1/\beta]\cap [0,\infty).
\end{equation}
 As it is shown in \cite{Frougny-Solomyak:92} property (F) can hold only for Pisot numbers $\beta$, on the other hand, not all Pisot numbers have property (F).

Characterizing all Pisot numbers with property (F) has turned out to be a very difficult problem: many partial results have been established, {\em e.g.} by Frougny and Solomyak~\cite{Frougny-Solomyak:92} (quadratic Pisot numbers), or Akiyama~\cite{Akiyama:00} (cubic Pisot units). The following theorem is basically due to Hollander~\cite{Hollander:96} (except for the notion of SRS) and establishes the immediate relation of the problem in consideration with shift radix systems (recall that $\D_{d}^{(0)}$ is defined in \eqref{DdDd0}). 

\begin{theorem}\label{srsbeta}
Let $\beta>1$ be an algebraic integer with minimal polynomial
$X^{d+1}+b_dX^{d}+\cdots + b_1X+b_0$. Set
\begin{equation}\label{ers}
r_j := -\left( b_j \beta^{-1} + b_{j-1}\beta^{-2} + \cdots + b_0
\beta^{-j-1} \right), \quad 0\le j\le d-1.
\end{equation}
Then $\beta$ has property (F) if and only if $(r_0,\ldots, r_{d-1}) \in \D_{d}^{(0)}$.
\end{theorem}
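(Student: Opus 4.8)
The plan is to establish a conjugacy between the beta-transformation $T_\beta$ restricted to a suitable $\ZZ[1/\beta]$-lattice and the SRS map $\tau_{\mathbf r}$ with the parameter $\mathbf r$ defined by \eqref{ers}, and then to translate the finiteness property on one side into property (F) on the other. The first step is to recognize that $d(1,\beta)$-type data is not needed here: it suffices to work with $\mathrm{Fin}(\beta)$ and to understand which $\gamma \in \ZZ[1/\beta]\cap[0,\infty)$ have eventually vanishing orbit under $T_\beta$. For $\gamma \in [0,1)$, the greedy expansion is finite if and only if $T_\beta^k(\gamma) = 0$ for some $k$, and for general $\gamma \ge 0$ one reduces to the fractional part since the integer part contributes only finitely many digits. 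So property (F) is equivalent to: every $\gamma \in \ZZ[1/\beta]\cap[0,1)$ is eventually mapped to $0$ by $T_\beta$.

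Next I would set up the linear-algebraic dictionary. Write $\beta$ for a root of $X^{d+1}+b_dX^d+\cdots+b_0$, and observe that $\ZZ[1/\beta]\cap[0,1)$ can be coordinatized: each such $\gamma$ is a $\ZZ$-linear combination $\gamma = z_0 + z_1\beta^{-1} + \cdots$ but, using the minimal polynomial relation, one can normalize to a representation with a bounded window of coefficients. The key computation is that applying $T_\beta$ — i.e. multiplying by $\beta$ and subtracting the floor — acts on these coordinate vectors exactly as $\tau_{\mathbf r}$ acts on $\ZZ^d$, with $\mathbf r = (r_0,\dots,r_{d-1})$ as in \eqref{ers}. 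Concretely, the numbers $r_j$ are designed so that the "carry" produced when multiplying $z_0+z_1\beta^{-1}+\cdots+z_{d-1}\beta^{-d}$ by $\beta$ and reducing modulo the minimal polynomial is governed by the inequality \eqref{linearinequalities}: one checks that $\mathbf r \mathbf z = \sum r_j z_j$ is precisely (a shift of) the quantity whose floor is the next beta-digit. This is the computational heart of the argument, and it is where one must be careful: the correspondence between $\gamma$ and its coordinate vector $\mathbf z$ must be a bijection between $\ZZ[1/\beta]\cap[0,1)$ and $\ZZ^d$ (or at least a semiconjugacy compatible with reaching $0$), and one has to verify that $\gamma \to 0$ iff $\mathbf z \to \mathbf 0$.

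**The main obstacle** is precisely this coordinatization and the verification that it intertwines the two dynamics. One must show that every element of $\ZZ[1/\beta]\cap[0,1)$ admits a coordinate vector in $\ZZ^d$ (this uses that $1,\beta^{-1},\dots,\beta^{-d}$ is not merely spanning but that the fractional-part normalization terminates), that distinct elements give distinct vectors, and — most delicately — that the floor operation in $T_\beta$ matches the floor in $\tau_{\mathbf r}$ under this identification; a direct substitution of \eqref{ers} into $\mathbf r\mathbf z$ and a manipulation using $\beta^{d+1} = -(b_d\beta^d+\cdots+b_0)$ should do it. Once the conjugacy is in place, the theorem follows: $\mathbf r \in \D_d^{(0)}$ means every $\mathbf z\in\ZZ^d$ reaches $\mathbf 0$ under $\tau_{\mathbf r}$, which translates to every $\gamma\in\ZZ[1/\beta]\cap[0,1)$ reaching $0$ under $T_\beta$, which is property (F) by the first-paragraph reduction. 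I would also remark that the equivalence of "reaches $0$" with "finite greedy expansion" uses $\gamma\ge 0$ and the greedy nature of $T_\beta$, so no subtlety with the exceptional representation of $1$ intervenes.
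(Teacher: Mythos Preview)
Your plan is correct and matches the paper's strategy: establish a conjugacy between $T_\beta$ and $\tau_{\mathbf r}$, then translate finiteness on one side into property~(F) on the other. Two refinements from the paper are worth noting. First, the conjugacy the paper writes down is remarkably clean: the map is simply $\Phi_{\mathbf r}(\mathbf z)=\{\mathbf r\mathbf z\}$, and the verification that $\{\mathbf r\,\tau_{\mathbf r}(\mathbf z)\}=T_\beta(\{\mathbf r\mathbf z\})$ boils down to the observation that $(r_0,\ldots,r_{d-1},1)$ is a left eigenvector of the $(d{+}1)\times(d{+}1)$ companion matrix $R(\mathbf b)$ with eigenvalue $\beta$ --- this replaces your ``direct substitution and manipulation'' step with a one-line structural argument. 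Second, the target of the bijection $\Phi_{\mathbf r}$ is $\ZZ[\beta]\cap[0,1)$, not $\ZZ[1/\beta]\cap[0,1)$; these differ when $\beta$ is not a unit, and $\ZZ^d$ is in bijection only with the former (since $\{r_0,\ldots,r_{d-1},1\}$ is a $\ZZ$-basis of $\ZZ[\beta]$). The passage from $\ZZ[1/\beta]$ to $\ZZ[\beta]$ is the extra reduction step you alluded to: multiply $\gamma\in\ZZ[1/\beta]\cap[0,\infty)$ by a suitable $\beta^k$ to land in $\ZZ[\beta]$ without changing the digit string.
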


\begin{remark}
Observe that $r_0,\ldots,r_{d-1}$ in \eqref{ers} can also be defined in terms of the identity
$$
X^{d+1}+b_d X^{d}  + \dots +b_{1} X+ b_0 =(X-\beta)(X^{d}+r_{d-1}X^{d-1} +\dots + r_0).
$$
\end{remark}
It turns out that Theorem \ref{srsbeta} is an immediate consequence of the following more general observation ({\it cf.}\ Berth\'e {\it et al.}~\cite{BSSST2011}).

\begin{proposition}\label{prop:betanumformula}
Under the assumptions of Theorem \ref{srsbeta} and denoting $\mathbf{r}=(r_0,\ldots,r_{d-1})$ we have
\begin{equation} \label{eq:conj}
\{\mathbf{r}\tau_\mathbf{r}(\mathbf{z})\}=T_\beta(\{\mathbf{r}\mathbf{z}\}) \quad \mbox{for all } \mathbf{z}\in\mathbb{Z}^d.
\end{equation}
In particular, the restriction of $T_\beta$ to $\mathbb{Z}[\beta]\cap[0,1)$ is conjugate to $\tau_\mathbf{r},$ {\em i.e.}, denoting $\Phi_{\bf r}:\mathbf{z}\mapsto\{\mathbf{r}\mathbf{z}\}$ we have the following commutative diagram.
$$
\CD
\ZZ^d       @> \tau_{{\bf r}} >>\ZZ^d\\
@V\Phi_{\bf r} VV                               @VV\Phi_{\bf r} V\\
\mathbb{Z}[\beta]\cap [0,1) @> T_{\beta} >>   \mathbb{Z}[\beta]\cap [0,1)
\endCD
$$
\end{proposition}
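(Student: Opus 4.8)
The plan is to verify the key identity \eqref{eq:conj} by direct computation, using the algebraic relation between the coefficients $b_j$ of the minimal polynomial of $\beta$ and the entries $r_j$, and then to deduce the commutative diagram from it. First I would record the fundamental recursion satisfied by the $r_j$: from the factorization $X^{d+1}+b_dX^d+\cdots+b_0=(X-\beta)(X^d+r_{d-1}X^{d-1}+\cdots+r_0)$ (equivalently from \eqref{ers}) one reads off, comparing coefficients, that
\begin{equation}\label{eq:rrec}
r_{j-1}=\beta r_j + b_j\qquad(1\le j\le d-1),\qquad \beta r_0 = b_0,\qquad r_{d-1}=b_d-\beta .
\end{equation}
These are exactly the relations needed to let $\beta$ "absorb" a shift of the coordinates of $\mathbf{z}$.

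Next, fix $\mathbf{z}=(z_0,\dots,z_{d-1})^t\in\mathbb{Z}^d$ and write $\tau_{\mathbf r}(\mathbf z)=(z_1,\dots,z_{d-1},z_d)^t$ where, by \eqref{linearinequalities}, $z_d\in\mathbb Z$ is characterized by $0\le \mathbf{r}\mathbf{z}+z_d<1$, i.e.\ $z_d=-\lfloor\mathbf{rz}\rfloor$ and $\{\mathbf{rz}\}=\mathbf{rz}+z_d$. The heart of the computation is to expand $\beta\{\mathbf{rz}\}$ using \eqref{eq:rrec}:
\begin{equation}\label{eq:betakey}
\beta\{\mathbf{rz}\}=\beta\sum_{j=0}^{d-1} r_j z_j + \beta z_d
=\sum_{j=0}^{d-1}(r_{j-1}-b_j)z_j + \beta z_d
=\sum_{j=1}^{d-1} r_{j-1} z_j - \sum_{j=0}^{d-1} b_j z_j + (b_d-r_{d-1})z_d ,
\end{equation}
where I used $\beta r_j = r_{j-1}-b_j$ for $j\ge 1$, $\beta r_0=b_0$, and $\beta=b_d-r_{d-1}$. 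Re-indexing the first sum as $\sum_{j=0}^{d-2} r_j z_{j+1}$ and combining it with the $r_{d-1}z_d$ term coming from $-r_{d-1}z_d$, the terms $r_0z_1+\cdots+r_{d-2}z_{d-1}+r_{d-1}z_d$ assemble into $\mathbf{r}\,\tau_{\mathbf r}(\mathbf z)$; the remaining terms $-\sum_{j=0}^{d-1}b_jz_j+b_dz_d=-(\mathbf{b}\mathbf{z}-b_dz_d)$ form an integer $N\in\mathbb Z$. Hence $\beta\{\mathbf{rz}\}=\mathbf{r}\,\tau_{\mathbf r}(\mathbf z)+N$, and therefore
\[
\{\mathbf{r}\,\tau_{\mathbf r}(\mathbf z)\}=\{\beta\{\mathbf{rz}\}-N\}=\{\beta\{\mathbf{rz}\}\}=\beta\{\mathbf{rz}\}-\lfloor\beta\{\mathbf{rz}\}\rfloor=T_\beta(\{\mathbf{rz}\}),
\]
which is precisely \eqref{eq:conj}.

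Finally, the commutative diagram follows formally. The map $\Phi_{\mathbf r}:\mathbf z\mapsto\{\mathbf{rz}\}$ sends $\mathbb Z^d$ onto $\mathbb Z[\beta]\cap[0,1)$ — surjectivity because $\{1,\beta,\dots,\beta^{d-1}\}$ generates $\mathbb Z[\beta]$ as a $\mathbb Z$-module (using the minimal polynomial to reduce higher powers) and for $\gamma=\sum c_i\beta^i\in\mathbb Z[\beta]$ one can, via \eqref{ers}, find $\mathbf z\in\mathbb Z^d$ with $\mathbf{rz}\equiv -\gamma\pmod{\mathbb Z}$ so that $\{\mathbf{rz}\}$ realizes the fractional part of $\gamma$; and \eqref{eq:conj} says exactly that $\Phi_{\mathbf r}\circ\tau_{\mathbf r}=T_\beta\circ\Phi_{\mathbf r}$, so the square commutes and $T_\beta|_{\mathbb Z[\beta]\cap[0,1)}$ is a factor of (in fact conjugate to, on the relevant quotient) $\tau_{\mathbf r}$. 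The main obstacle is purely bookkeeping: keeping the re-indexing in \eqref{eq:betakey} straight and correctly checking the boundary cases $j=0$ (the $\beta r_0=b_0$ relation) and $j=d-1$ (the $\beta=b_d-r_{d-1}$ relation), since an off-by-one error there collapses the whole identity; once \eqref{eq:rrec} is in hand, the rest is routine.
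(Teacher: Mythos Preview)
Your approach is essentially the paper's: verify \eqref{eq:conj} by a direct computation exploiting the coefficient relations coming from the factorization, then deduce the conjugacy. The paper packages the same computation via the $(d{+}1)\times(d{+}1)$ companion matrix $R(\mathbf b)$ and the left-eigenvector $(r_0,\dots,r_{d-1},1)$, but the content is identical.

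However, your boundary relations in \eqref{eq:rrec} carry sign errors. Comparing constant terms in $(X-\beta)(X^d+r_{d-1}X^{d-1}+\cdots+r_0)=X^{d+1}+b_dX^d+\cdots+b_0$ gives $-\beta r_0=b_0$, i.e.\ $\beta r_0=-b_0$, not $b_0$; and comparing the $X^d$-coefficient gives $r_{d-1}-\beta=b_d$, i.e.\ $\beta=r_{d-1}-b_d$, not $b_d-r_{d-1}$. (Check on the golden mean: $\beta r_0=\varphi\cdot\varphi^{-1}=1=-b_0$.) With your signs the term $(b_d-r_{d-1})z_d$ contributes $-r_{d-1}z_d$, which does \emph{not} combine with $\sum_{k=0}^{d-2}r_kz_{k+1}$ to give $\mathbf r\,\tau_{\mathbf r}(\mathbf z)$; you need $+r_{d-1}z_d$. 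With the corrected signs the computation goes through exactly as you outline, yielding $\beta\{\mathbf{rz}\}=\mathbf r\,\tau_{\mathbf r}(\mathbf z)-\sum_{j=0}^{d}b_jz_j$, and hence \eqref{eq:conj}.

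Your bijectivity sketch also needs repair: $\beta$ has degree $d{+}1$, so $\mathbb Z[\beta]$ has $\mathbb Z$-rank $d{+}1$ and $\{1,\beta,\dots,\beta^{d-1}\}$ does not generate it. The clean argument (used in the paper) is that irreducibility forces $\{r_0,\dots,r_{d-1},1\}$ to be a $\mathbb Z$-basis of $\mathbb Z[\beta]$, whence $\Phi_{\mathbf r}:\mathbb Z^d\to\mathbb Z[\beta]\cap[0,1)$ is a genuine bijection (not merely a surjection onto a quotient), giving an honest conjugacy rather than only a semi-conjugacy.
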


\begin{proof}
Let the notations be as in Theorem~\ref{srsbeta}. Let $\mathbf{z}=(z_0,\ldots,z_{d-1})^t\in\mathbb{Z}^d,$  $z_d=-\lfloor\mathbf{r}\mathbf{z}\rfloor$, and $\mathbf{b}=(b_0,\ldots,b_d).$
Then we have, with the $(d+1)\times(d+1)$ companion matrix $R(\mathbf{b})$ 
defined analogously as $R({\bf  r})$ in~\eqref{mata} (note that the vector $\mathbf{b}$ has $d+1$ entries),
\begin{align}
\{(r_0,\ldots,r_{d-1},1)R(\mathbf{b})(z_0,\ldots,z_d)^t\}
&=  \{(-b_0, r_0-b_1,\ldots,r_{d-1}-b_d)(z_0,\ldots,z_d)^t\} \nonumber\\
&= \{  -b_0 z_0 + (r_0-b_1)z_1 +\cdots + (r_{d-1} - b_d) z_d \} \nonumber\\
&= \{  r_0 z_1 + \cdots +r_{d-1} z_d \} \label{RBfirst} \\
&= \{ (r_0,\ldots, r_{d-1})(z_1,\ldots, z_d)^t \} \nonumber\\
&= \{\mathbf{r}\tau_\mathbf{r}(\mathbf{z})\}.\nonumber
\end{align}
In the third identity we used that $b_0,\ldots,b_d,z_0,\ldots,z_d$ are integers.
Observing that $(r_0,\ldots,r_{d-1},1)$ is a left eigenvector of the
matrix $R(\mathbf{b})$ with eigenvalue $\beta$ we conclude that
\begin{align}
\{(r_0,\ldots,r_{d-1},1)R(\mathbf{b})(z_0,\ldots,z_d)^t\} 
&= \{\beta (r_0,\ldots,r_{d-1},1) (z_0,\ldots,z_d)^t\} \nonumber\\ 
&= \{\beta ({\bf rz} +z_d)\} \nonumber\\ 
&= \{\beta ({\bf rz} - \lfloor {\mathbf{rz}} \rfloor)\} \label{RBsecond}\\
&= \{\beta\{\mathbf{r}\mathbf{z}\}\} \nonumber\\
&= T_\beta(\{\mathbf{r}\mathbf{z}\}).\nonumber
\end{align}
Combining \eqref{RBfirst} and \eqref{RBsecond} yields \eqref{eq:conj}.

Since the minimal polynomial of $\beta$ is irreducible,  $\{r_0,\ldots,r_{d-1},1\}$ is a basis of $\mathbb{Z}[\beta]$.
Therefore  the map $$\Phi_{\bf r}:\mathbf{z}\mapsto\{\mathbf{r}\mathbf{z}\}$$ is a bijective map from
$\mathbb{Z}^d$ to $\mathbb{Z}[\beta]\cap[0,1)$.
This proves the conjugacy between $T_\beta$ on $\mathbb{Z}[\beta]\cap[0,1)$ and $\tau_\mathbf{r}.$
\end{proof}

Theorem~\ref{srsbeta} is now an easy consequence of this conjugacy:

\begin{proof}[Proof of Theorem \ref{srsbeta}]
Let $\gamma\in\mathbb{Z}[1/\beta]\cap[0,\infty)$. Then, obviously,  $\gamma\beta^k\in \mathbb{Z}[\beta]\cap[0,\infty)$ for a suitable integer exponent $k$, and the beta-expansions of $\gamma$ and $\gamma\beta^k$ have the same digit string. Thus $\beta$ admits property (F) if and only if every element of $\mathbb{Z}[\beta]\cap[0,\infty)$ has finite beta-expansion. The greedy condition \eqref{greedycondition}
now shows that it even suffices to guarantee finite beta-expansions for every element of $\mathbb{Z}[\beta]\cap[0,1)$. Thus the conjugacy in Proposition~\ref{prop:betanumformula} implies the result.
\end{proof}

\begin{example}[Golden mean and Tribonacci]\label{ex:fibo}
First we illustrate Proposition~\ref{prop:betanumformula} for $\beta$ equal to the golden mean $\varphi=\frac{1+\sqrt{5}}{2}$ which is a root of the polynomial $X^2 - X - 1 = (X-\varphi)(X+r_0)$ with $r_0=\frac1\varphi=\frac{-1+\sqrt{5}}{2}$. By Proposition~\ref{prop:betanumformula} we therefore get that $T_\varphi$ is conjugate to $\tau_{1/\varphi}$. As $\varphi$ has property (F) (see \cite{Frougny-Solomyak:92}), we conclude that $\frac1\varphi \in \D_1^{(0)}$. Let us confirm the conjugacy for a concrete example. Indeed, starting with $3$ we get $\tau_{1/\varphi}(3)=-\lfloor\frac{3}{\varphi}\rfloor=-1$. The mapping $\Phi_{1/\varphi}$ for these values is easily calculated by $\Phi_{1/\varphi}(3)=\{\frac3\varphi\}=\{3\varphi-3\}=3\varphi-4$ and $\Phi_{1/\varphi}(-1)=-\varphi+2$. As $T_\varphi(3\varphi-4)=\{3\varphi^2-4\varphi\}=\{-\varphi+3\}=-\varphi+2$ the conjugacy is checked for this instance.

The root $\beta > 1$ of the polynomial $X^3-X^2-X-1$ is often called {\em Tribonacci number}. In this case Proposition~\ref{prop:betanumformula} yields that  $r_0=\frac1\beta$ and  $r_1=\frac1\beta + \frac1{\beta^2}$. Thus $T_\beta$ is conjugate to $\tau_{(1/\beta, 1/\beta+1/\beta^2)}$. Property (F) holds also in this case.
\end{example}

\subsection{Shift radix systems and canonical number systems}
It was already observed  in 1960 by Knuth~\cite{Knuth:60} and  in 1965 Penney~\cite{Penney:65} that $\alpha=-1+\sqrt{-1}$ can be used as a base for a number system in the Gaussian integers. Indeed, each non-zero $\gamma\in\mathbb{Z}[\sqrt{-1}]$ has a unique representation of the shape
$\gamma=c_0+c_1\alpha+\cdots+c_h\alpha^h$ with $c_i\in\{0,1\}$ $(0\le i< h)$, $c_h=1$ and $h\in\mathbb{N}$. This simple observation has been the starting point for several generalizations of the classical $q$-ary number systems to algebraic number fields, see for instance \cite{Gilbert:81,Katai-Kovacs:80,Katai-Kovacs:81,Katai-Szabo:76,Kovacs-Pethoe:91}.

The following more general notion has proved to be useful in this context.

\begin{definition}[Canonical number system, see {Peth\H{o}~\cite{Pethoe:91}}]\label{def:CNS}
Let
\[
P(X) = p_dX^d + p_{d-1}X^{d-1}+\cdots+p_1X+ p_0 \in \mathbb{Z}[X], \quad
p_0\ge 2, \quad p_d\neq 0; \quad \mathcal{N}=\{0,1,\ldots,p_0-1\}
\]
and $\mathcal{R}:=\mathbb{Z}[X]/P(X)\mathbb{Z}[X]$ and let $x$ be the image of $X$ under the canonical epimorphism from ${\ZZ}[X]$ to $\mathcal{R}$. If every $B\in \mathcal{R}, B \neq 0$, can be represented uniquely as
$$
B=b_0 + b_1 x + \cdots + b_{\ell} x^{\ell}
$$
with $b_0,\ldots,b_{\ell} \in \mathcal{N}, b_{\ell} \neq 0$, the system $(P,\mathcal{N})$ is called a {\it canonical number system} ({\it CNS} for short); $P$ is called its {\it base} or {\it CNS polynomial}, $\mathcal{N}$ is called the set of {\it digits}.
\end{definition}

Using these notions the problem arises, whether it is possible to characterize CNS polynomials by algebraic conditions on their coefficients and roots. First of all, it is easy to see that a CNS polynomial has to be expanding (see~\cite{Pethoe:91}). Further characterization results could be gained {\it e.g.} by Brunotte~\cite{Brunotte:01}, who gave a characterization of all quadratic monic CNS polynomials. For irreducible CNS polynomials of general degree, Kov\'acs~\cite{Kovacs:81a} proved that a polynomial $P$ given as in Definition~\ref{def:CNS}
is CNS if $p_0\ge 2$ and $p_0\ge p_1\ge \cdots \ge p_{d-1}>0$. In \cite{Akiyama-Pethoe:02,Scheicher-Thuswaldner:03} characterization results under the condition $p_0 > |p_1|+ \cdots + |p_{d-1}|$ were shown, \cite{Burcsi-Kovacs:08} treats polynomials with small $p_0$. A general characterization of CNS polynomials is not known and seems to be hard to obtain.

It has turned out that in fact there is again a close connection to the problem of determining shift radix systems with finiteness property. The corresponding result due to Akiyama {\em et al.}~\cite{Akiyama-Borbeli-Brunotte-Pethoe-Thuswaldner:05} and Berth\'e {\em et al.}~\cite{BSSST2011} is given in the following theorem  (recall the definition of $\D_{d}^{(0)}$ in \eqref{DdDd0}).

\begin{theorem}\label{srscns}
Let $P(X) := p_dX^d+p_{d-1}X^{d-1}+\cdots+p_1X+p_0 \in \ZZ[X]$. Then $P$ is a CNS-polynomial if and only if $\mathbf{r}:=\left(\frac{p_d}{p_0},\frac{p_{d-1}}{p_0},\ldots, \frac{p_1}{p_0}\right) \in \D_{d}^{(0)}$.
\end{theorem}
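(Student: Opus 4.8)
The plan is to establish a conjugacy between the backward division mapping used to define CNS and the shift radix system $\tau_{\mathbf r}$, entirely parallel to the strategy used for beta-expansions in Proposition~\ref{prop:betanumformula}. Recall that associated with $P(X)=p_dX^d+\cdots+p_0$ one has the residue ring $\mathcal R=\mathbb Z[X]/P(X)\mathbb Z[X]$ with $x$ the image of $X$, and the backward division map $D:\mathcal R\to\mathcal R$ sending $B=b_0+b_1x+\cdots$ to $(B-c)/x$, where $c\in\mathcal N=\{0,\ldots,p_0-1\}$ is the unique digit with $B\equiv c\pmod{x}$; concretely, if $B=\sum_{i=0}^{d-1} B_i x^i$ then $c$ is determined by $B_0\equiv c p_0^{-1}\cdot(\text{something})$—more precisely $c$ is the residue of $-B_0 p_d\cdots$; in any case $P$ is a CNS polynomial exactly when every element of $\mathcal R$ is eventually mapped to $0$ by iterating $D$.

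First I would set up the right coordinates. Since $\dim_{\mathbb Z}\mathcal R=d$, write $B=\sum_{i=0}^{d-1}z_i x^i$ and record the coefficient vector $\mathbf z=(z_0,\ldots,z_{d-1})^t\in\mathbb Z^d$. The key computation is to show that under $B\mapsto D(B)$ the coefficient vector transforms as $\mathbf z\mapsto \tau_{\mathbf r}(\mathbf z)$ with $\mathbf r=(p_d/p_0,p_{d-1}/p_0,\ldots,p_1/p_0)$. The point is that multiplying by $x^{-1}$ in $\mathcal R$ is given (up to the digit subtraction) by the companion-type matrix of the reversed polynomial $X^dP(1/X)/p_0 = 1 + (p_{d-1}/p_0)X+\cdots+(p_d/p_0)X^d$, whose companion matrix is exactly $R(\mathbf r)$; and the digit $c$ chosen in the division step is precisely the one forcing the new top coefficient to be the integer solution of the defining inequality~\eqref{linearinequalities}, i.e.\ $c=\lfloor \mathbf r\mathbf z\rfloor$ after a sign normalization. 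I would make this precise by computing $x^{-1}(B-c)$ coefficient-by-coefficient using the relation $x^d=-p_0^{-1}(p_{d-1}x^{d-1}+\cdots+p_0)\cdot(\ldots)$—i.e., from $P(x)=0$ one gets $p_d x^d=-(p_{d-1}x^{d-1}+\cdots+p_0)$, hence $x^{-1}=-p_0^{-1}(p_1+p_2x+\cdots+p_dx^{d-1})$—and verifying that the resulting map on $\mathbb Z^d$ is $\tau_{\mathbf r}$. One should note the minor subtlety that CNS uses digit set $\{0,\ldots,p_0-1\}$ while $\tau_{\mathbf r}$ implicitly uses the floor; a sign change $B\mapsto -B$ (equivalently $\mathbf z\mapsto -\mathbf z$) reconciles the two conventions, and since finiteness is preserved under this involution it causes no harm.

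Once the conjugacy $\Phi:\mathbf z\mapsto B$ (a $\mathbb Z$-module isomorphism $\mathbb Z^d\to\mathcal R$) satisfying $\Phi\circ\tau_{\mathbf r}=D\circ\Phi$ is established, the theorem follows immediately: $P$ is CNS $\iff$ every $B\in\mathcal R$ reaches $0$ under $D$ $\iff$ every $\mathbf z\in\mathbb Z^d$ reaches $\mathbf 0$ under $\tau_{\mathbf r}$ $\iff$ $\mathbf r\in\mathcal D_d^{(0)}$, using that $\Phi(\mathbf 0)=0$ and $D(0)=0$, $\tau_{\mathbf r}(\mathbf 0)=\mathbf 0$. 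I would also remark that implicitly $P$ must be expanding for it to be CNS (this is classical, see~\cite{Pethoe:91}), which matches the fact that points in $\mathcal D_d^{(0)}$ lie in (the closure of) the Schur–Cohn region scaled appropriately; but this is not needed for the equivalence itself.

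The main obstacle I expect is purely bookkeeping: getting the change of coordinates exactly right, including the placement of $p_d$ versus $p_1$ in the vector $\mathbf r$, the exact digit $c$ selected by the CNS division step, and the sign convention relating $\lfloor\cdot\rfloor$ to the digit set $\{0,\ldots,p_0-1\}$. In particular one must check that the division step is always well-defined, i.e.\ that the unique digit $c$ making $B-c$ divisible by $x$ in $\mathcal R$ exists and corresponds under $\Phi$ to solving $0\le \mathbf r\mathbf z + z_d<1$ for $z_d\in\mathbb Z$; this hinges on $\gcd(p_0,\ldots)$-type considerations and the fact that $x$ is not a zero-divisor in $\mathcal R$ (true since $p_0\neq0$). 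Apart from this normalization, every step is an elementary linear-algebra identity over $\mathbb Z$, exactly mirroring the proof of Proposition~\ref{prop:betanumformula}, so no deep input is required.
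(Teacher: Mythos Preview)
Your overall strategy—establish a conjugacy between the backward division map $D_P$ and $\tau_{\mathbf r}$, then transfer the finiteness property—is correct and matches the paper. However, your proposed implementation contains a genuine gap in two related places.

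First, the power basis $\{1,x,\dots,x^{d-1}\}$ does \emph{not} conjugate $D_P$ to $\tau_{\mathbf r}$. If $B=\sum_{i=0}^{d-1} z_i x^i$ and $q=\lfloor z_0/p_0\rfloor$, then from the definition of $D_P$ one computes
\[
D_P(B)=(z_1-qp_1)+(z_2-qp_2)x+\cdots+(z_{d-1}-qp_{d-1})x^{d-2}-qp_d\,x^{d-1},
\]
so the coordinate map is $(z_0,\dots,z_{d-1})\mapsto(z_1-qp_1,\dots,z_{d-1}-qp_{d-1},-qp_d)$. This is not of the shift form $(z_1,\dots,z_{d-1},-\lfloor\mathbf r\mathbf z\rfloor)$: every coordinate is perturbed by $q$, not just the last one. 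The remedy, as in the paper, is to pass to the \emph{Brunotte basis} $w_k=p_dx^k+p_{d-1}x^{k-1}+\cdots+p_{d-k}$; a direct computation then gives
\[
D_P\Big(\sum_{k=0}^{d-1}z_kw_k\Big)=\sum_{k=0}^{d-2}z_{k+1}w_k-\Big\lfloor\tfrac{p_dz_0+\cdots+p_1z_{d-1}}{p_0}\Big\rfloor w_{d-1},
\]
which is exactly $\tau_{\mathbf r}$ on coordinates.

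Second, your assertion that $\mathcal R\cong\mathbb Z^d$ as a $\mathbb Z$-module is false when $p_d\neq\pm1$: in that case $\mathcal R=\mathbb Z[X]/P(X)\mathbb Z[X]$ is not even finitely generated over $\mathbb Z$. The Brunotte basis spans only a rank-$d$ submodule $\Lambda_P\subsetneq\mathcal R$, and one needs the additional (nontrivial) fact, cited from \cite{Scheicher-Surer-Thuswaldner-vdWoestijne:08}, that finiteness of all $(P,\mathcal N)$-representations on $\Lambda_P$ already implies the CNS property on all of $\mathcal R$. Your proposal does not account for this reduction. The sign-change $B\mapsto-B$ you worry about is, incidentally, unnecessary: the digit $c_0=b_0-\lfloor b_0/p_0\rfloor p_0$ already lies in $\{0,\dots,p_0-1\}$.
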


By a similar reasoning as in the proof of Theorem \ref{srsbeta} we will derive the result from a more general one, this time establishing a conjugacy between $\tau_\mathbf{r}$ and the restriction of the following {\it backward division mapping} $D_P:\mathcal{R} \rightarrow \mathcal{R}$ (with $\mathcal{R}:=\mathbb{Z}[X]/P(X)\mathbb{Z}[X]$ as above) to a well-suited finitely generated $\ZZ$-submodule of $\mathcal{R}$ (compare \cite{BSSST2011}).

\begin{definition}[Backward division mapping] \label{lem:bdm}
The {\em backward division mapping} $D_P:\mathcal{R} \rightarrow \mathcal{R}$  for $B=\sum_{i=0}^\ell b_i x^i$, $b_i\in\mathbb{Z},$ is defined by
\[
D_P(B) = \sum_{i=0}^{\ell-1} b_{i+1}x^i - \sum_{i=0}^{d-1} q p_{i+1}x^i,\quad
q=\left\lfloor{\frac{b_0}{p_0}}\right\rfloor.
\]
\end{definition}

Observe that $D_P(B)$ does not depend on the representative of the equivalence class of~$B$, that
\begin{equation} \label{DP}
B=(b_0-q p_0)+x D_P(B),
\end{equation}
and that $c_0=b_0-q p_0$ is the unique element in $\mathcal{N}$ with $B-c_0 \in x\mathcal{R}$ (compare \cite{Akiyama-Borbeli-Brunotte-Pethoe-Thuswaldner:05} for the case of monic $P$ and \cite{Scheicher-Surer-Thuswaldner-vdWoestijne:08} for the general case). Iterating the application of $D_P$ yields
\begin{equation}\label{DPn}
B=\sum_{n=0}^{m-1} c_n x^n + x^m D_P^m(B)
\end{equation}
with $c_n=D_P^n(B)-x D_P^{n+1}(B)\in\mathcal{N}.$
If we write formally
\begin{equation}\label{cnsrepinfinity}
B = \sum_{n=0}^\infty c_n x^n
\end{equation}
then it follows from the reasoning above that this representation is unique in having the property that
\begin{equation}\label{PNrep}
B -\sum_{i=0}^{m-1} c_n x^n \in x^m\mathcal{R},\; c_n\in\mathcal{N},\,
\text { for all } m\in\mathbb{N}.
\end{equation}
Expansion \eqref{cnsrepinfinity} is called the $(P,\mathcal{N})${\it -representation} of $B\in \mathcal{R}$.

In order to relate $D_P$ to an SRS, it is appropriate to use the so-called {\it Brunotte module} \cite{Scheicher-Surer-Thuswaldner-vdWoestijne:08}.

\begin{definition}[Brunotte module]
The \emph{Brunotte basis modulo $P = p_dX^d+p_{d-1}X^{d-1}+\cdots+p_1X+p_0$} is the set  $\{w_0,\ldots,w_{d-1}\}$ with
\begin{equation} \label{eq:Brunottebasis}
w_0=p_d, \quad w_1=p_dx+p_{d-1}, \quad w_2=p_dx^2+p_{d-1}x+p_{d-2},\dots,\quad w_{d-1}=p_dx^{d-1}+\dots+p_1.
\end{equation}
The \emph{Brunotte module} $\Lambda_P$ is the $\mathbb{Z}$-submodule of $\mathcal{R}$ generated by the Brunotte basis.
We furthermore denote the representation mapping with respect to the Brunotte basis by
\[
\Psi_P: \,\Lambda_P \to \mathbb{Z}^d, \quad  B = \sum_{k=0}^{d-1} z_kw_k
\mapsto (z_0,\ldots,z_{d-1})^t.
\]
\end{definition}

We call P {\it monic}, if $p_d = \pm 1.$ Note that in this instance $\Lambda_P$ is isomorphic to $\mathcal{R}$, otherwise $\mathcal{R}$ is not finitely generated.

Now we can formulate the announced conjugancy between backward division and the SRS transform (compare \cite{BSSST2011}).
\begin{proposition}\label{p:CNSconjugacy}
Let $P(X)=p_d X^d+p_{d-1}X^{d-1}+\cdots+p_1X+p_0\in\mathbb{Z}[X]$, $p_0 \geq 2$, $p_d \neq 0$, $\mathbf{r}=\big(\frac{p_d}{p_0},\ldots,\frac{p_1}{p_0}\big)$. Then we have
\begin{equation} \label{eq:CNSconj}
D_P\Psi_P^{-1}(\mathbf{z}) = \Psi_P^{-1}\tau_\mathbf{r}(\mathbf{z}) \quad\mbox{for all }\mathbf{z}\in\mathbb{Z}^d.
\end{equation}
In particular, the restriction of $D_P$ to $\Lambda_P$ is conjugate to $\tau_\mathbf{r}$ according to the diagram
$$
\CD
\ZZ^d       @> \tau_{{\bf r}} >>\ZZ^d\\
@V\Psi_{\bf P}^{-1} VV        @VV\Psi_{\bf P}^{-1} V\\
\Lambda_P @> D_P >>   \Lambda_P
\endCD
.$$
\end{proposition}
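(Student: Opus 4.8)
The plan is to verify the identity \eqref{eq:CNSconj} by direct computation, translating the action of $D_P$ on $\Lambda_P$ into coordinates via the Brunotte basis $\{w_0,\ldots,w_{d-1}\}$. First I would fix $\mathbf{z}=(z_0,\ldots,z_{d-1})^t\in\mathbb{Z}^d$ and set $B=\Psi_P^{-1}(\mathbf{z})=\sum_{k=0}^{d-1}z_kw_k\in\Lambda_P$. Using the explicit formulas \eqref{eq:Brunottebasis} for the $w_k$, one collects the coefficient of each power $x^i$ in $B$; the key observation to record here is that the constant term of $B$ is $b_0=p_dz_{d-1}+p_{d-1}z_{d-2}+\cdots+p_1z_0 = p_0\,(\mathbf{r}\mathbf{z})$ (reading $\mathbf{r}$ as $(p_d/p_0,\ldots,p_1/p_0)$), so that $q=\lfloor b_0/p_0\rfloor=\lfloor \mathbf{r}\mathbf{z}\rfloor$. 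This is exactly the quantity that appears, up to sign, in the last coordinate of $\tau_\mathbf{r}(\mathbf{z})$, which is the first sign that the two maps should match.

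Next I would compute $D_P(B)$ from Definition \ref{lem:bdm} and re-express it in the Brunotte basis. The cleanest way is probably to use the relation \eqref{DP}, namely $B=(b_0-qp_0)+xD_P(B)$, together with the fact (noted after \eqref{DP}) that $b_0-qp_0$ is the unique digit $c_0\in\mathcal{N}$ with $B-c_0\in x\mathcal{R}$. A convenient bookkeeping device is the identity $xw_{k}=w_{k+1}-p_{d-k-1}$ for $0\le k\le d-2$ and $xw_{d-1}=p_dx^d+p_{d-1}x^{d-1}+\cdots+p_1x \equiv -p_0 \pmod{P(X)}$ (using $P(x)=0$ in $\mathcal{R}$). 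Applying $x^{-1}$ to $B-c_0$ via these relations, one finds that $D_P(B)=\sum_{k=0}^{d-1}z'_kw_k$ with $z'_k=z_{k+1}$ for $0\le k\le d-2$ and $z'_{d-1}=-q=-\lfloor\mathbf{r}\mathbf{z}\rfloor$; that is, $\Psi_P(D_P(B))=(z_1,\ldots,z_{d-1},-\lfloor\mathbf{r}\mathbf{z}\rfloor)^t=\tau_\mathbf{r}(\mathbf{z})$, which is precisely \eqref{eq:CNSconj}.

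It remains to address why $D_P$ maps $\Lambda_P$ into $\Lambda_P$ (so that the diagram makes sense) and why $\Psi_P$ is a bijection $\Lambda_P\to\mathbb{Z}^d$. Bijectivity of $\Psi_P$ is essentially the statement that the Brunotte basis is $\mathbb{Z}$-linearly independent in $\mathcal{R}$: since $w_k$ has leading term $p_dx^k$, the change-of-basis matrix from $\{1,x,\ldots,x^{d-1}\}$ (or rather from the standard generators of $\mathcal{R}$, using $p_d$ as a scaling when $P$ is not monic) to $\{w_0,\ldots,w_{d-1}\}$ is triangular with nonzero diagonal, so $\Lambda_P$ is free of rank $d$ and $\Psi_P$ is a $\mathbb{Z}$-module isomorphism onto $\mathbb{Z}^d$. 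That $D_P(\Lambda_P)\subseteq\Lambda_P$ then follows a posteriori from the coordinate computation above (the output was exhibited as an integer combination of the $w_k$), but it is cleaner to note it directly from \eqref{DP}: $xD_P(B)=B-c_0\in\Lambda_P+\mathbb{Z}$, and multiplication by $x^{-1}$ on the relevant submodule preserves $\Lambda_P$ by the relations listed above. The main obstacle — really the only place where care is needed — is the bookkeeping in the non-monic case: one must keep track of the factor $p_d$ throughout, check that the ``wrap-around'' term $xw_{d-1}\equiv -p_0\pmod{P}$ contributes the correct multiple of $q$ to the last coordinate, and confirm that no stray terms outside $\Lambda_P$ appear. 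Once the relation $xw_{d-1}\equiv -p_0$ is in hand, everything reduces to matching coefficients, and the conjugacy follows.
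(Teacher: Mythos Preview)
Your proposal is correct and follows essentially the same approach as the paper: both verify by direct computation in the Brunotte basis that $D_P\big(\sum_{k} z_k w_k\big) = \sum_{k=0}^{d-2} z_{k+1} w_k - \lfloor \mathbf{r}\mathbf{z}\rfloor\, w_{d-1}$, which is precisely \eqref{eq:CNSconj}; the paper simply asserts this identity, while you supply the bookkeeping via the relations $xw_k=w_{k+1}-p_{d-k-1}$ and $xw_{d-1}\equiv -p_0$. One small slip: in your displayed expression for $b_0$ the indices on the $z_k$'s are reversed --- since the constant term of $w_k$ is $p_{d-k}$, one has $b_0 = z_0 p_d + z_1 p_{d-1} + \cdots + z_{d-1} p_1$ --- but your conclusion $b_0 = p_0(\mathbf{r}\mathbf{z})$ and hence $q=\lfloor\mathbf{r}\mathbf{z}\rfloor$ is correct.
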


\begin{proof}
It follows immediately from the definitions that on $\Lambda_P$ we have
\begin{equation} \label{eq:TA}
D_P\bigg(\sum_{k=0}^{d-1} z_kw_k\bigg) = \sum_{k=0}^{d-2} z_{k+1}w_k - \left\lfloor \frac{z_0p_d+\cdots+z_{d-1}p_1}{p_0} \right\rfloor w_{d-1},
\end{equation}
 which implies (\ref{eq:CNSconj}). Since $\Psi_P:\ \Lambda_P \to \mathbb{Z}^d$ is bijective the proof is complete.
\end{proof}

For monic $P$ Proposition \ref{p:CNSconjugacy} establishes a conjugacy between $D_P$ on the full set $\mathcal{R}$ and $\tau_{{\bf r}}$.

\begin{proof}[Proof of Theorem~\ref{srscns}.]

Observing Proposition \ref{p:CNSconjugacy} the  theorem follows from the fact,  that it is sufficient to establish the finiteness of the $(P,\mathcal{N})$-representations of all $B\in\Lambda_P$ in order to check whether $(P,\mathcal{N})$ is a CNS (compare \cite{Scheicher-Surer-Thuswaldner-vdWoestijne:08}).
\end{proof}

\begin{example}[The $\frac32$-number system]\label{ex32}
Considering $P(X)=-2X+3$ and $\mathcal{R}=\mathbb{Z}[X]/P(X)\mathbb{Z}[X]$ we get $\mathcal{R}\cong\mathbb{Z}[\frac32]=\mathbb{Z}[\frac12]$. Thus in this case we can identify the image of $X$ under the natural epimorphism $\mathbb{Z}[X]\to \mathcal{R}$ with $x=\frac32$ and the backward division mapping yields representations of the form $B=b_0+b_1\frac32 + b_2 (\frac32)^2+ \cdots$ for $B\in\mathbb{Z}[\frac12]$.  For $B\in \mathbb{Z}$ this was discussed (apart from a leading factor $\frac12$) under the notation {\em $\frac32$-number system} in~\cite{Akiyama-Frougny-Sakarovitch:07}. Namely, each $B\in\mathbb{N}$ can be represented as $B=\frac12 \sum_{n=0}^{\ell(n)} b_n(\frac32)^n$ with ``digits'' $b_n\in\{0,1,2\}$.  The language of possible digit strings turns out to be very complicated.
If we restrict ourselves to the Brunotte module $\Lambda_{P}$ (which is equal to $2\mathbf{Z}$ in this case) Proposition~\ref{p:CNSconjugacy} implies that the backward division mapping $D_{-2X+3}$ is conjugate to the SRS $\tau_{-2/3}$. As $-1$ doesn't have a finite $(-2X+3,\{0,1,2\})$-representation, we conclude that $-\frac23 \not \in \D_1^{(0)}$.

We mention that the $\frac32$-number system was used in~\cite{Akiyama-Frougny-Sakarovitch:07} to established irregularities in the distribution of certain generalizations of Mahler's $\frac32$-problem ({\it cf.}~\cite{Mahler:68}).
\end{example}

\begin{example}[Knuth's Example]\label{exKnuth}
Consider $P(X)=X^2 + 2X + 2$. As this polynomial is monic with root $\alpha=-1+\sqrt{-1}$, we obtain $\mathcal{R}\cong \mathbb{Z}[\sqrt{-1}]\cong\Lambda_P$. In this case $(X^2+2X+2,\{0,1\})$ is a CNS (see Knuth~\cite{Knuth:60}) that allows to represent each $\gamma\in\mathbb{Z}[\sqrt{-1}]$ in the form $\gamma=b_0+b_1\alpha + \cdots + b_\ell \alpha^\ell$ with digits $b_j\in \{0,1\}$. According to Proposition~\ref{p:CNSconjugacy} the backward division mapping $D_P$ is conjugate to the SRS mapping $\tau_{(\frac12,1)}$. Therefore, $(\frac12,1)\in\D_2^{(0)}$.
\end{example}

\subsection{Digit expansions based on shift radix systems}
In the final part of this section we consider a notion of representation for vectors $\mathbf{z} \in \mathbb{Z}^d$ based on the SRS-transformation $\tau_\mathbf{r}$ (compare \cite{BSSST2011}).

\begin{definition}[SRS-representation]\label{def:SRSexp}
Let $\mathbf{r} \in \mathbb{R}^d$.
The \emph{SRS-representation} of $\mathbf{z} \in \mathbb{Z}^d$ with respect to $\mathbf{r}$ is the sequence $(v_1,v_2,v_3,\ldots)$, where $v_k=\big\{\mathbf{r}\tau_\mathbf{r}^{k-1}(\mathbf{z})\big\}$ for all $k\ge1$.
\end{definition}

Observe that vectors $\mathbf{r}\in\mathcal{D}_d^{(0)}$ give rise to finite SRS-representations, and vectors $\mathbf{r}\in\mathcal{D}_d$ to ultimately periodic SRS-representations.

Let $(v_1,v_2,v_3,\ldots)$ denote the SRS-representation of $\mathbf{z} \in \mathbb{Z}^d$ with respect to $\mathbf{r}$. The following lemma shows that there is a radix expansion of integer vectors where the companion matrix $R(\mathbf{r})$ acts like a base and the vectors $(0,\ldots,0,v_j)^t$ act like the digits (see \cite[Equation~(4.2)]{Akiyama-Borbeli-Brunotte-Pethoe-Thuswaldner:05}). This justifies the name {\em shift radix system}.

\begin{lemma}\label{lum}
Let $\mathbf{r} \in \mathbb{R}^d$ and $(v_1,v_2,\ldots)$ be the SRS-representation of $\mathbf{z} \in \mathbb{Z}^d$ with respect to~$\mathbf{r}$.
Then we have
\begin{equation} \label{eq:tauk}
R(\mathbf{r})^n \mathbf{z} = \tau_\mathbf{r}^n(\mathbf{z}) - \sum_{k=1}^n R(\mathbf{r})^{n-k} (0,\ldots,0,v_k)^t.
\end{equation}
\end{lemma}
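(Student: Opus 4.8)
The plan is to prove \eqref{eq:tauk} by induction on $n$, using the almost-linear decomposition \eqref{linear} as the engine. Recall that $\tau_\mathbf{r}(\mathbf{z}) = R(\mathbf{r})\mathbf{z} + \mathbf{v}(\mathbf{z})$ where $\mathbf{v}(\mathbf{z}) = (0,\ldots,0,\{\mathbf{r}\mathbf{z}\})^t$; in the notation of Definition~\ref{def:SRSexp}, this says precisely that $\mathbf{v}(\tau_\mathbf{r}^{k-1}(\mathbf{z})) = (0,\ldots,0,v_k)^t$ for each $k\ge 1$, since $v_k = \{\mathbf{r}\tau_\mathbf{r}^{k-1}(\mathbf{z})\}$.

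The base case $n=0$ is trivial, as the sum is empty and $R(\mathbf{r})^0\mathbf{z} = \mathbf{z} = \tau_\mathbf{r}^0(\mathbf{z})$. For the inductive step, suppose \eqref{eq:tauk} holds for some $n\ge 0$. Then
$$
R(\mathbf{r})^{n+1}\mathbf{z} = R(\mathbf{r})\bigl(R(\mathbf{r})^n\mathbf{z}\bigr) = R(\mathbf{r})\tau_\mathbf{r}^n(\mathbf{z}) - \sum_{k=1}^n R(\mathbf{r})^{n+1-k}(0,\ldots,0,v_k)^t.
$$
Now apply \eqref{linear} to the vector $\tau_\mathbf{r}^n(\mathbf{z})$: we have $R(\mathbf{r})\tau_\mathbf{r}^n(\mathbf{z}) = \tau_\mathbf{r}^{n+1}(\mathbf{z}) - \mathbf{v}(\tau_\mathbf{r}^n(\mathbf{z})) = \tau_\mathbf{r}^{n+1}(\mathbf{z}) - (0,\ldots,0,v_{n+1})^t$, where the last equality uses the identification noted above. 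Substituting this in and absorbing the new term $-(0,\ldots,0,v_{n+1})^t = -R(\mathbf{r})^{0}(0,\ldots,0,v_{n+1})^t$ as the $k=n+1$ summand completes the step, giving exactly \eqref{eq:tauk} with $n$ replaced by $n+1$.

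There is no real obstacle here; the only point requiring a moment of care is the bookkeeping identification between the error term $\mathbf{v}(\tau_\mathbf{r}^{n}(\mathbf{z}))$ arising from \eqref{linear} and the digit vector $(0,\ldots,0,v_{n+1})^t$ from the SRS-representation, which is immediate once one unwinds Definition~\ref{def:SRSexp}. Everything else is the standard telescoping of a linear recursion with inhomogeneous term, and the statement is in fact just the explicit solution formula for the affine iteration $\mathbf{z}\mapsto R(\mathbf{r})\mathbf{z} + \mathbf{v}(\mathbf{z})$ written in closed form.
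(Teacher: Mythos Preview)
Your proof is correct and follows essentially the same approach as the paper: both argue by induction on $n$ starting from the almost-linear decomposition \eqref{linear}, with the identification $\mathbf{v}(\tau_\mathbf{r}^{k-1}(\mathbf{z}))=(0,\ldots,0,v_k)^t$. The paper's proof is simply a terser statement of the same induction, phrased as the equivalent identity \eqref{tauiterate}.
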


\begin{proof}
Starting from \eqref{linear} and using induction we immediately get for
the $n$-th iterate of $\tau_{\mathbf r}$ that
\begin{equation}\label{tauiterate}
\tau^n_{\bf r}({\bf z})=R({\bf r})^n {\bf {z}} + \sum_{k=1}^n R({\bf r})^{n-k} {\bf v}_k
\end{equation}
with vectors ${\bf v}_k=(0,\ldots,0, \{\mathbf{r} \tau_\mathbf{r}^{k-1}(\mathbf{z}) \})^t$.
\end{proof}

There is a direct relation between the digits of a given beta-expansion and the digits of the associated SRS-representation of $\mathbf{z} \in \mathbb{Z}^d$ ({\it cf.}~\cite{BSSST2011}; a related result for CNS is contained in \cite[Lemma~5.5]{BSSST2011}).

\begin{proposition} \label{cor:betanumformula}
Let $\beta$ and $\mathbf{r}$ be defined as in Theorem~\ref{srsbeta}. Let  $(v_1,v_2,v_3,\ldots)$ be the SRS-representation of $\mathbf{z} \in \mathbb{Z}^d$ and $\{\mathbf{r}\mathbf{z}\}=\sum_{n=1}^\infty a_n \beta^{-n}$  be the beta-expansion of $v_1=\{\mathbf{r}\mathbf{z}\}$. Then we have
\[
v_n=T_\beta^{n-1}(\{\mathbf{r}\mathbf{z}\}) \quad \mbox{and} \quad a_n=\beta v_n-v_{n+1} \quad \mbox{for all }n\ge1.
\]
\end{proposition}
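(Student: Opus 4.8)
The plan is to deduce Proposition~\ref{cor:betanumformula} directly from the conjugacy established in Proposition~\ref{prop:betanumformula} together with the definition of the SRS-representation. First I would recall that, by Definition~\ref{def:SRSexp}, the $n$-th entry of the SRS-representation is $v_n=\{\mathbf{r}\tau_\mathbf{r}^{n-1}(\mathbf{z})\}$, and in particular $v_1=\{\mathbf{r}\mathbf{z}\}$. Applying the key identity \eqref{eq:conj} from Proposition~\ref{prop:betanumformula}, namely $\{\mathbf{r}\tau_\mathbf{r}(\mathbf{w})\}=T_\beta(\{\mathbf{r}\mathbf{w}\})$, with $\mathbf{w}=\tau_\mathbf{r}^{n-2}(\mathbf{z})$ and iterating (or, more cleanly, by a short induction on $n$), I obtain $v_n=\{\mathbf{r}\tau_\mathbf{r}^{n-1}(\mathbf{z})\}=T_\beta^{n-1}(\{\mathbf{r}\mathbf{z}\})$ for all $n\ge1$. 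This establishes the first claimed formula with essentially no computation beyond unwinding the conjugacy.

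For the second formula, I would use the relation between the digits of a greedy beta-expansion and the beta-transform. Since $\{\mathbf{r}\mathbf{z}\}\in[0,1)$, its greedy expansion $\{\mathbf{r}\mathbf{z}\}=\sum_{n\ge1}a_n\beta^{-n}$ is produced by the beta-transform: by the formula $a_i=\lfloor\beta T_\beta^{i-1}(\gamma)\rfloor$ recalled after \eqref{betatransform} (applied with $\gamma=\{\mathbf{r}\mathbf{z}\}$), we have $a_n=\lfloor\beta T_\beta^{n-1}(\{\mathbf{r}\mathbf{z}\})\rfloor=\lfloor\beta v_n\rfloor$, using the first formula just proved. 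On the other hand, by the definition \eqref{betatransform} of $T_\beta$ we have $T_\beta(v_n)=\beta v_n-\lfloor\beta v_n\rfloor$, and since $v_{n+1}=T_\beta^{n}(\{\mathbf{r}\mathbf{z}\})=T_\beta(T_\beta^{n-1}(\{\mathbf{r}\mathbf{z}\}))=T_\beta(v_n)$, this gives $v_{n+1}=\beta v_n-\lfloor\beta v_n\rfloor=\beta v_n-a_n$, i.e.\ $a_n=\beta v_n-v_{n+1}$, as required.

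The only subtlety worth flagging is the status of $v_1=1$ or, more generally, whether the beta-transform and the greedy algorithm could disagree; recall from the discussion after \eqref{betatransform} that the two agree precisely on $[0,1)$ but differ at $\gamma=1$. Since $\{\mathbf{r}\mathbf{z}\}$ is a fractional part it lies in $[0,1)$, and all subsequent iterates $v_n=T_\beta^{n-1}(\{\mathbf{r}\mathbf{z}\})$ remain in $[0,1)$ because $T_\beta$ maps $[0,1)$ into itself; hence the greedy-expansion digit formula $a_n=\lfloor\beta T_\beta^{n-1}(\{\mathbf{r}\mathbf{z}\})\rfloor$ is valid at every step. I expect this point to be the main (and essentially only) obstacle, and it is dispatched in one line; the rest is a transparent combination of Proposition~\ref{prop:betanumformula} and the elementary properties of $T_\beta$.
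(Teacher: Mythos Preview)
Your proof is correct and follows essentially the same approach as the paper's own argument: both derive the first identity by combining Definition~\ref{def:SRSexp} with the conjugacy \eqref{eq:conj}, and both obtain the second identity from the digit formula $a_n=\lfloor\beta T_\beta^{n-1}(\{\mathbf{r}\mathbf{z}\})\rfloor$ together with the definition of $T_\beta$. Your added remark about the $\gamma=1$ subtlety is a nice clarification that the paper leaves implicit.
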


\begin{proof}
By Definition~\ref{def:SRSexp} and (\ref{eq:conj}), we obtain that $v_n=\{\mathbf{r}\tau_\mathbf{r}^{n-1}(\mathbf{z})\}=T_\beta^{n-1}(\{\mathbf{r}\mathbf{z}\})$, which yields the first assertion. Using this equation and the definition of the beta-expansion, we obtain
\[
a_n = \big\lfloor \beta T_\beta^{n-1}(\{\mathbf{r}\mathbf{z}\}) \big\rfloor = \beta T_\beta^{n-1}(\{\mathbf{r}\mathbf{z}\}) - \big\{\beta T_\beta^{n-1}(\{\mathbf{r}\mathbf{z}\})\big\} = \beta v_n - T_\beta^n(\{\mathbf{r}\mathbf{z}\}) = \beta v_n-v_{n+1}. \hfill\qedhere
\]
\end{proof}

\begin{example}[Golden mean, continued]
Again we deal with $\beta=\varphi$, the golden mean, and consider the digits of $3\varphi-4 = \Phi_{1/\varphi}(3)$. Using \eqref{greedycondition} one easily computes the beta expansion $3\varphi-4 = \frac1\beta + \frac1{\beta^3}$.  Using the notation of Proposition~\ref{cor:betanumformula} we have that $a_1=a_3=1$ and $a_i=0$ otherwise. On the other hand we have $\tau_{1/\varphi}(3)= -\lfloor \frac3\varphi\rfloor = -1$, $\tau_{1/\varphi}(-1)= -\lfloor- \frac{1}\varphi\rfloor = 1$, and $\tau_{1/\varphi}(1)= -\lfloor \frac1\varphi\rfloor = 0$. Thus, for the SRS-representation $3=(v_1,v_2,\ldots)$ we get $v_1=\{\frac3\varphi\}=\{3\varphi-3\}=3\varphi-4$, $v_2=\{-\frac{1}\varphi\}=\{-\varphi+1\}=-\varphi+2$,  $v_3=\{\frac1\varphi\}=\{\varphi-1\}=\varphi-1$, and $v_i=0$ for $i \ge 4$. It is now easy to verify the formulas in Proposition~\ref{cor:betanumformula}. For instance, $\varphi v_1 - v_2 = \varphi(3\varphi-4)-(-\varphi+2)=3\varphi^2 - 3\varphi - 2 = 1 = a_1$.
\end{example}

\section{Shift radix systems with periodic orbits: the sets $\D_{d}$ and the Schur-Cohn region} \label{Dd}

\subsection{The sets $\D_d$ and their relations to the Schur-Cohn region}
In this section we focus on results on the sets $\D_{d}$  defined in \eqref{DdDd0}. To this aim it is helpful to consider the {\em Schur-Cohn} region (compare~\cite{Schur:18})
\[
\E_d:=\{\mathbf{r} \in \RR^d\; : \; \varrho(R(\mathbf{r})) < 1\}.
\]
Here $\varrho(A)$ denotes the spectral radius of the matrix $A$. The following important relation holds between the sets  $\E_d$ and $\D_d$ ({\it cf.}~\cite{Akiyama-Borbeli-Brunotte-Pethoe-Thuswaldner:05}) .

\begin{proposition}\label{EdDdEd}
For $d\in\mathbb{N}$ we have
$$
\E_d \subset \D_d \subset \overline{\E_{d}}.
$$
\end{proposition}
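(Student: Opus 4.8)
The plan is to prove the two inclusions $\E_d \subset \D_d$ and $\D_d \subset \overline{\E_d}$ separately, both relying on the almost-linear decomposition $\tau_\mathbf{r}(\mathbf{z}) = R(\mathbf{r})\mathbf{z} + \mathbf{v}(\mathbf{z})$ from~\eqref{linear}, where $\|\mathbf{v}(\mathbf{z})\|_\infty < 1$.

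For the first inclusion $\E_d \subset \D_d$, suppose $\varrho(R(\mathbf{r})) < 1$. I would first observe that there is a norm $\|\cdot\|$ on $\RR^d$ and a constant $0 < \rho < 1$ such that $\|R(\mathbf{r})\mathbf{x}\| \le \rho\|\mathbf{x}\|$ for all $\mathbf{x}$ (a standard consequence of the spectral radius formula, e.g.\ via the Jordan form or Householder's theorem). Let $C = \sup_{\mathbf{z}} \|\mathbf{v}(\mathbf{z})\|$, which is finite since $\|\mathbf{v}(\mathbf{z})\|_\infty < 1$ and all norms on $\RR^d$ are equivalent. From \eqref{tauiterate}, $\tau_\mathbf{r}^n(\mathbf{z}) = R(\mathbf{r})^n\mathbf{z} + \sum_{k=1}^n R(\mathbf{r})^{n-k}\mathbf{v}_k$, so $\|\tau_\mathbf{r}^n(\mathbf{z})\| \le \rho^n\|\mathbf{z}\| + C\sum_{k=0}^{n-1}\rho^k \le \rho^n\|\mathbf{z}\| + \frac{C}{1-\rho}$. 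Hence for $n$ large the orbit $\{\tau_\mathbf{r}^n(\mathbf{z})\}$ is eventually contained in a bounded set, which contains only finitely many lattice points; since $\tau_\mathbf{r}$ maps $\ZZ^d$ into this bounded set eventually, some value must repeat, so the orbit is ultimately periodic. As this holds for every $\mathbf{z} \in \ZZ^d$, we get $\mathbf{r} \in \D_d$.

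For the second inclusion $\D_d \subset \overline{\E_d}$, I would argue by contraposition: suppose $\mathbf{r} \notin \overline{\E_d}$, i.e.\ $\varrho(R(\mathbf{r})) > 1$ (the boundary case $\varrho = 1$ is excluded by working in the complement of the closure). Then $R(\mathbf{r})$ has an eigenvalue $\lambda$ with $|\lambda| > 1$; let $\mathbf{u}$ be a corresponding (left or suitably chosen) eigenvector, or better, decompose $\RR^d$ into the $R(\mathbf{r})$-invariant subspace $E^+$ on which $R(\mathbf{r})$ is expanding and its complement. Using a left eigenvector $\mathbf{w}$ with $\mathbf{w}R(\mathbf{r}) = \lambda\mathbf{w}$ and $|\lambda| > 1$, consider the scalar quantity $\mathbf{w}\tau_\mathbf{r}^n(\mathbf{z}) = \lambda^n \mathbf{w}\mathbf{z} + \sum_{k=1}^n \lambda^{n-k}\mathbf{w}\mathbf{v}_k$. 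If $\mathbf{w}\mathbf{z} \neq 0$, the geometric growth of $\lambda^n\mathbf{w}\mathbf{z}$ dominates the sum $\sum_{k=1}^n \lambda^{n-k}\mathbf{w}\mathbf{v}_k$, which is bounded by $\frac{|\mathbf{w}|_* C}{|\lambda|-1}$ uniformly in $n$; hence $|\mathbf{w}\tau_\mathbf{r}^n(\mathbf{z})| \to \infty$, so $\|\tau_\mathbf{r}^n(\mathbf{z})\| \to \infty$ and the orbit of $\mathbf{z}$ cannot be ultimately periodic. It remains to exhibit a single $\mathbf{z} \in \ZZ^d$ with $\mathbf{w}\mathbf{z} \neq 0$: this follows because $\mathbf{w} \neq \mathbf{0}$, so $\mathbf{w}$ is nonzero on some standard basis vector $\mathbf{e}_j \in \ZZ^d$ (if $\mathbf{w}$ has complex entries, take real and imaginary parts and pick a lattice point on which the relevant real linear functional is nonzero). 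Therefore $\mathbf{r} \notin \D_d$, proving the contrapositive.

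The main obstacle, and the place where the statement is genuinely delicate, is the boundary $\partial\E_d = \{\mathbf{r} : \varrho(R(\mathbf{r})) = 1\}$: such $\mathbf{r}$ may or may not lie in $\D_d$, which is exactly why the theorem only claims $\D_d \subset \overline{\E_d}$ rather than $\D_d \subset \E_d$, and why $\overline{\E_d} \setminus \D_d$ can be nonempty. The contraposition above cleanly sidesteps this by only needing strict expansion $|\lambda| > 1$, so the argument never has to analyze the subtle dynamics when eigenvalues sit exactly on the unit circle (where the error term $\mathbf{v}_k$ can accumulate linearly and the orbit may still drift to infinity, but need not). A secondary technical point is handling complex eigenvalues and eigenvectors carefully when extracting a real lattice point $\mathbf{z}$ with nonzero projection onto the expanding direction; this is routine but should be stated explicitly rather than glossed over.
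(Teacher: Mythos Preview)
Your overall strategy is exactly the paper's: an adapted norm for the contractive direction to prove $\E_d \subset \D_d$, and a left eigenvector for an eigenvalue of modulus $>1$ to prove $\D_d \subset \overline{\E_d}$ by exhibiting an unbounded orbit. The first half is fine and matches the paper essentially verbatim.

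There is, however, a genuine slip in your second half. You write that the error sum
\[
\sum_{k=1}^n \lambda^{n-k}\,\mathbf{w}\mathbf{v}_k
\]
is ``bounded by $\frac{|\mathbf{w}|_* C}{|\lambda|-1}$ uniformly in $n$''. This is false when $|\lambda|>1$: the triangle inequality only gives
\[
\Bigl|\sum_{k=1}^n \lambda^{n-k}\mathbf{w}\mathbf{v}_k\Bigr| \le C'\sum_{j=0}^{n-1}|\lambda|^{j} = C'\,\frac{|\lambda|^n-1}{|\lambda|-1},
\]
which grows like $|\lambda|^n$ just as the main term does. Consequently your claim that $\mathbf{w}\mathbf{z}\neq 0$ alone forces $|\mathbf{w}\tau_\mathbf{r}^n(\mathbf{z})|\to\infty$ is unjustified: the two $|\lambda|^n$-sized contributions could in principle cancel. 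The paper handles this correctly by requiring not merely $\mathbf{w}\mathbf{z}\neq 0$ but $|\mathbf{w}\mathbf{z}|>\frac{C'+1}{|\lambda|-1}$; then
\[
|\mathbf{w}\tau_\mathbf{r}^n(\mathbf{z})| \ge |\lambda|^n|\mathbf{w}\mathbf{z}| - C'\,\frac{|\lambda|^n-1}{|\lambda|-1} \ge c_2|\lambda|^n
\]
for some $c_2>0$. Such a lattice point $\mathbf{z}$ exists because $\mathbf{w}\neq 0$ implies $\mathbf{w}\mathbf{e}_j\neq 0$ for some $j$, and then an integer multiple of $\mathbf{e}_j$ does the job. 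With this one-line fix your argument is complete and coincides with the paper's.
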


\begin{proof}
We first deal with the proof of the assertion
$\E_d \subset \D_d$.
Let us assume now $0 <\varrho(R({\mathbf r}))<1$ (the instance $\mathbf{r}=\mathbf{0}$ is trivial) and choose a number ${\tilde\varrho}\in(\varrho(R({\bf r})),1)$. According {\it e.g.} to \cite[formula (3.2)]{Lagarias-Wang:96a} it is possible to construct a norm $|| \cdot ||_{\tilde\varrho}$ on $\ZZ^d$ with the property that
\begin{equation}\label{norminequ}
||R({\mathbf r}){\bf x}||_{\tilde\varrho}\leq
{\tilde\varrho}
||{\bf x}||_{\tilde\varrho}.
\end{equation}
Using \eqref{tauiterate} it follows that
\begin{equation*}
\Vert \tau_{{\bf r}}^n({\bf z})\Vert _{\tilde\varrho} \leq {\tilde\varrho} ^n \Vert {\bf z} \Vert _{\tilde\varrho}+ c\sum_{k=1}^n {\tilde\varrho}^{n-k}\leq {\tilde\varrho} ^n \Vert {\bf z} \Vert _{\tilde\varrho}+ \frac{c}{1
-{\tilde\varrho}},
\end{equation*}
where $c=\sup\{||(0,\ldots,0,\varepsilon)^t||_{\tilde\varrho} \,:\, \varepsilon\in[0,1)\}$ is a finite positive constant. Hence, for $n$ large enough,
\begin{equation}\label{pluseins}
\Vert \tau_\mathbf{r}^n({\bf z})\Vert_{\tilde\varrho} \leq  \frac{c}{1-{\tilde\varrho}}+ 1.
\end{equation}
Since the set of all iterates $\tau_\mathbf{r}^n({\bf z}), n\in \NN$, is bounded in $\ZZ^d$ it has to be finite and, hence, the sequence $(\tau_\mathbf{r}^n({\bf z}))_ {n\in \NN}$ has to be ultimately periodic. Therefore we have $\mathbf{r}\in  \D_d$.

We now switch to the assertion $\D_d \subset \overline{\E_{d}}$.
Let us assume $\tau_\mathbf{r}\in \D_d$ and, by contrary, that there exists an eigenvalue $\lambda$ of $R(\bf{r})$ with $|\lambda| > 1.$ Let $\mathbf{u}^t$ be a left eigenvector of $R(\bf{r})$ belonging to $\lambda$. Multiplying \eqref{tauiterate} by $\mathbf{u}^t$ from the left we find that
\begin{equation}\label{utauiterate}
|\mathbf{u}^t\tau^n_{\bf r}({\bf z})|=|\lambda^n \mathbf{u}^t{\bf z} + \sum_{k=1}^n \lambda^{n-k} \mathbf{u}^t{\bf v}_k|
\end{equation}
for any ${\bf z}\in \ZZ^d.$ Since $||{\bf v}_k||_\infty<1$ there is an absolute constant, say $c_1$, such that $|\mathbf{u}^t{\bf v}_k|\leq c_1$ for any $k$. Choosing $\mathbf{z}\in \ZZ^d$ such that $|\mathbf{u}^t{\bf z}|> \frac {c_1+1}{|\lambda|-1}$ it follows from \eqref{utauiterate} that
$|\mathbf{u}^t\tau^n_{\bf r}({\bf z})| \geq c_2|\lambda|^n$
with some positive constant $c_2$. Therefore the sequence $(\tau_\mathbf{r}^n({\bf z}))_ {n\in \NN}$ cannot be bounded, which contradicts the assumption that $\mathbf{r}\in \D_d$.
\end{proof}

Using the last proposition and the fact that the spectral radius of a real monic polynomial is a continuous function in the coefficients of the polynomial it can easily be shown that the boundary of $\D_d$ can be characterized as follows (compare \cite{Akiyama-Borbeli-Brunotte-Pethoe-Thuswaldner:05} for details).
\begin{corollary}\label{Ddboundary}
For $d\in\mathbb{N}$ we have that
\begin{equation*}
\partial \D_{d}:=  \left\{{\bf r} \in \RR^{d}\; :\; \varrho(R({\mathbf r}))=1 \right\}.
\end{equation*}
\end{corollary}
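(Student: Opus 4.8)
The plan is to deduce Corollary~\ref{Ddboundary} directly from Proposition~\ref{EdDdEd} together with the continuity of the spectral radius $\varrho(R(\mathbf{r}))$ as a function of $\mathbf{r}\in\RR^d$. Write $S_d := \{\mathbf{r}\in\RR^d : \varrho(R(\mathbf{r}))=1\}$ for the set on the right-hand side; the goal is to show $\partial\D_d = S_d$, i.e.\ both inclusions.

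For the inclusion $\partial\D_d\subset S_d$: since $\varrho$ is continuous, $\E_d = \varrho^{-1}([0,1))$ is open and $\E_d^c := \varrho^{-1}((1,\infty))$ is open as well, hence $S_d = \varrho^{-1}(\{1\})$ is closed and $\E_d\cup S_d = \overline{\E_d}$ (one checks $\overline{\E_d}\subset\varrho^{-1}([0,1]) = \E_d\cup S_d$, and conversely any $\mathbf{r}$ with $\varrho(R(\mathbf{r}))=1$ is a limit of points with spectral radius $<1$ by scaling the companion matrix — more precisely, if $\varrho(R(\mathbf{r}))=1$ then for $t\in(0,1)$ the polynomial $\chi_{\mathbf{r}}(X/t)t^d$ has all roots of modulus $<1$, giving a nearby parameter in $\E_d$, so $S_d\subset\overline{\E_d}$). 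Now take $\mathbf{r}\in\partial\D_d$. If $\varrho(R(\mathbf{r}))<1$ then $\mathbf{r}\in\E_d$, which is open and contained in $\D_d$ by Proposition~\ref{EdDdEd}, so $\mathbf{r}\in\interior\D_d$, contradicting $\mathbf{r}\in\partial\D_d$. If $\varrho(R(\mathbf{r}))>1$ then $\mathbf{r}$ lies in the open set $\E_d^c$, which is disjoint from $\overline{\E_d}\supset\D_d$ (again by Proposition~\ref{EdDdEd}), so $\mathbf{r}$ has a whole neighborhood outside $\D_d$, again contradicting $\mathbf{r}\in\partial\D_d$. Hence $\varrho(R(\mathbf{r}))=1$, i.e.\ $\mathbf{r}\in S_d$.

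For the reverse inclusion $S_d\subset\partial\D_d$: let $\mathbf{r}\in S_d$, so $\varrho(R(\mathbf{r}))=1$. On one hand, by the scaling argument above there are parameters arbitrarily close to $\mathbf{r}$ lying in $\E_d\subset\D_d$, so $\mathbf{r}\in\overline{\D_d}$. On the other hand, by scaling the companion matrix in the opposite direction — replacing the roots of $\chi_{\mathbf{r}}$ by $t^{-1}$ times themselves for $t\in(0,1)$ — we obtain parameters arbitrarily close to $\mathbf{r}$ with $\varrho>1$, which by Proposition~\ref{EdDdEd} (since $\D_d\subset\overline{\E_d}$ and such points are in the open complement of $\overline{\E_d}$) lie outside $\D_d$; thus $\mathbf{r}\in\overline{\D_d^c}$. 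Therefore $\mathbf{r}\in\overline{\D_d}\cap\overline{\D_d^c} = \partial\D_d$.

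I do not expect a serious obstacle here; this is genuinely a short topological argument. The one point requiring a little care — and the closest thing to a ``main obstacle'' — is making the two scaling perturbations precise: one must check that multiplying all roots of the monic real polynomial $\chi_{\mathbf{r}}(X)$ by a factor $t$ (resp.\ $t^{-1}$) produces again a real polynomial whose coefficient vector depends continuously on $t$ and converges to $\mathbf{r}$ as $t\to1$, so that the perturbed parameters indeed approach $\mathbf{r}$ while having spectral radius strictly less than (resp.\ greater than) $1$. This follows because the elementary symmetric functions of the scaled roots are $t^k$ (resp.\ $t^{-k}$) times those of the original roots, hence real and continuous in $t$; alternatively one can invoke directly the stated continuity of $\varrho$ in the coefficients together with the fact that $\E_d$ and its complement are both nonempty near any point of $S_d$.
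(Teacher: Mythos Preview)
Your proof is correct and follows essentially the same route as the paper: the paper simply states that the result follows from Proposition~\ref{EdDdEd} together with the continuity of the spectral radius in the coefficients, referring to \cite{Akiyama-Borbeli-Brunotte-Pethoe-Thuswaldner:05} for details. You have filled in exactly those details, including the scaling argument $\chi_{\mathbf r}(X/t)t^d$ that shows every point of $S_d$ is approached from both $\E_d$ and its exterior.
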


\subsection{The Schur-Cohn region and its boundary} \label{sec:Ed}
We want to give some further properties of $\E_d$. Since the coefficients of a polynomial depend continuously on its roots it follows that  $\E_d = \interior(\overline{\E_d})$. Moreover, one can prove that $\E_d$ is simply connected  ({\it cf.} \cite{Fam-Meditch:78}). By a result of Schur~\cite{Schur:18} the sets $\E_d$ can be described by  determinants.

\begin{proposition}[{{\it cf}.~Schur~\cite{Schur:18}}]\label{stregion}
For $0 \leq k < d$ let
\[\delta_k(r_0,\ldots,r_{d-1})=
\left(\begin{array}{cccccccc}
1       & 0      & \cdots & 0      & r_0    & \cdots & \cdots & r_k    \\
r_{d-1} & \ddots & \ddots & \vdots & 0      & \ddots &        & \vdots \\
\vdots  & \ddots & \ddots & 0      & \vdots & \ddots & \ddots & \vdots \\
r_{d-k-1} & \cdots & r_{d-1}& 1      & 0      & \cdots & 0      & r_0    \\
r_0     & 0      & \cdots & 0      & 1      & r_{d-1}& \cdots & r_{d-k-1}\\
\vdots  & \ddots & \ddots & \vdots & 0      & \ddots & \ddots & \vdots \\
\vdots  &        & \ddots & 0      & \vdots & \ddots & \ddots & r_{d-1}\\
r_k     & \cdots & \cdots & r_0    & 0      & \cdots & 0      & 1
\end{array}\right)\in \RR^{2(k+1) \times 2(k+1)}.\]
Then the sets $\E_d$ are given by
\begin{equation}\label{charaEd}
\E_d=\left\{(r_0,\ldots,r_{d-1})\in \RR^d \; :\; \forall k \in
\{0,\ldots,d-1\}: \;
\det\left(\delta_k(r_0,\ldots,r_{d-1})\right)>0\right\}.
\end{equation}
\end{proposition}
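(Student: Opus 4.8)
The plan is to recognise the statement as the classical Schur--Cohn criterion written out in explicit determinantal form, and to prove it in two stages. By \eqref{chi} and the definition of $\E_d$, the condition $\mathbf{r}\in\E_d$ says precisely that the polynomial $\chi_{\mathbf{r}}(X)=X^{d}+r_{d-1}X^{d-1}+\cdots+r_0$ is \emph{Schur stable}, i.e.\ has all of its roots in the open unit disk. The first stage will produce an inductive stability test; the second will identify the quantities appearing in that test with the determinants $\det\delta_k$.

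For the first stage I would use Cohn's reduction. Write $\chi=\chi_{\mathbf{r}}$, let $\chi^{*}(X)=X^{d}\chi(1/X)$ be the reciprocal polynomial (the coefficient string of $\chi$ reversed), and set
\[
\chi_{1}(X)=\frac{\chi(X)-r_{0}\,\chi^{*}(X)}{X},
\]
a polynomial of degree at most $d-1$ with leading coefficient $1-r_0^{2}$ and vanishing constant term. The key claim is that $\chi$ is Schur stable if and only if $1-r_0^{2}>0$ and $\chi_{1}$ is Schur stable of degree exactly $d-1$. To see it, note that for $|X|=1$ one has $\chi^{*}(X)=X^{d}\overline{\chi(X)}$ (real coefficients), so $|\chi^{*}(X)|=|\chi(X)|$ there; hence whenever $|r_0|<1$ and $\chi$ has no unimodular zero, Rouch\'e's theorem applied with $f=\chi$ and $g=-r_0\chi^{*}$ shows that the number of zeros of $\chi$ in $|X|<1$ equals the number of zeros of $f+g=\chi-r_0\chi^{*}=X\chi_{1}$, namely $1$ plus the number of zeros of $\chi_{1}$ in $|X|<1$. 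The requirement ``$\chi$ has no unimodular zero'' is harmless: a zero of $\chi$ on $|X|=1$ is also a zero of $\chi^{*}$, hence of $X\chi_{1}$, hence of $\chi_{1}$, so if $\chi_{1}$ is Schur stable then $\chi$ automatically has no unimodular zero. Together with the trivial fact that Schur stability of $\chi$ forces $|r_0|=\prod_i|\text{root}_i|<1$, this yields the claim. Iterating it, $\chi$ is Schur stable if and only if, for $j=0,\dots,d-1$, the polynomial $\chi^{(j)}$ --- where $\chi^{(0)}=\chi$ and $\chi^{(j+1)}=(\chi^{(j)})_{1}$ --- has leading coefficient of strictly larger modulus than its constant term; this is a list of $d$ explicit polynomial inequalities in $\mathbf{r}$, the first of which is $1-r_0^{2}>0$ (note $\det\delta_0=1-r_0^{2}$).

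The second stage is to show that the joint positivity of $\det\delta_0,\dots,\det\delta_k$ is equivalent to the first $k+1$ of these inequalities; taking $k=d-1$ then gives exactly \eqref{charaEd}. I would do this by induction on $k$ via a determinant identity: performing on $\delta_k(r_0,\dots,r_{d-1})$ the row and column operations ``$(\text{leading coeff})\cdot\text{row}-(\text{constant term})\cdot\text{mirror row}$'' that implement a single Cohn reduction step --- operations compatible with the two-Toeplitz-block shape of $\delta_k$ --- one factors out a power of $1-r_0^{2}$ and is left with a scalar multiple of the analogous determinant formed from the coefficients of the renormalised polynomial $\chi_{1}/(1-r_0^{2})$. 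Alternatively, one may match $\det\delta_k$ with the $(k+1)$-st leading principal minor of the symmetric Hermite/Schur--Cohn matrix $H(\chi)$ attached to $\chi$ --- collapsing the $2(k+1)\times 2(k+1)$ double-Toeplitz determinant to a $(k+1)\times(k+1)$ Bezoutian minor by a Schur-complement computation --- and then invoke Sylvester's criterion together with the classical fact that $\chi$ is Schur stable precisely when $H(\chi)$ is positive definite.

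I expect the second stage to be the main obstacle: the bookkeeping of the block-Toeplitz determinant reduction --- tracking the change of matrix sizes, the substitution $\mathbf{r}\mapsto(\text{coefficients of }\chi_{1})$, and the exponents of the accumulated positive factors --- is the only genuinely laborious part, while the first stage is a clean application of Rouch\'e's theorem and, once the identity is in hand, Sylvester's criterion (or the parallel induction) finishes the argument. Since the matrices $\delta_k$ displayed in the proposition are exactly those occurring in Schur's original work, the content of the second stage may in any case be quoted from \cite{Schur:18}.
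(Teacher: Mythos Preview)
The paper does not supply its own proof of this proposition: it is stated as a classical result with the attribution ``\textit{cf.}~Schur~\cite{Schur:18}'' and followed immediately by an example, so there is nothing against which to compare your argument line by line.

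That said, your outline is a correct reconstruction of the classical Schur--Cohn argument. The first stage --- Cohn's reduction via Rouch\'e --- is handled cleanly, including the subtle point that a unimodular zero of $\chi$ would force a unimodular zero of $\chi_1$, so the Rouch\'e hypothesis is self-enforcing in the reverse direction. The second stage, relating the recursive inequalities to positivity of the block-Toeplitz determinants $\det\delta_k$, is exactly where the real work lies; you are right that it is pure (if tedious) bookkeeping, and your closing remark that it may be quoted from \cite{Schur:18} is in fact precisely what the paper itself does. If you wanted to make the sketch self-contained, the cleanest route is the one you mention second: identify $\det\delta_k$ with the $(k{+}1)$-st leading principal minor of the Schur--Cohn Hermitian form and invoke Sylvester, rather than tracking the row-reduction through the iteration.
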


\begin{example}
Evaluating the determinants for $d\in\{1,2,3\}$ yields (see also \cite{Fam-Meditch:78})
\begin{equation}\label{1902082}\begin{split}
\E_1= & \{x \in \RR \; : \; \abs{x}<1\}, \\
\E_2= & \{(x,y) \in \RR^2\; :\; \abs{x}<1, \abs{y}<x+1\}, \\
\E_3= & \{(x,y,z) \in \RR^3\; :\; \abs{x}<1, \abs{y-xz}<1-x^2,
\abs{x+z}<y+1\}.
\end{split}\end{equation}
Thus $\E_2$ is the (open) triangle in Figure 1. $\E_3$ is the solid depicted in Figure \ref{E3}.
\begin{figure}
\centering
\includegraphics[width=0.6\textwidth]{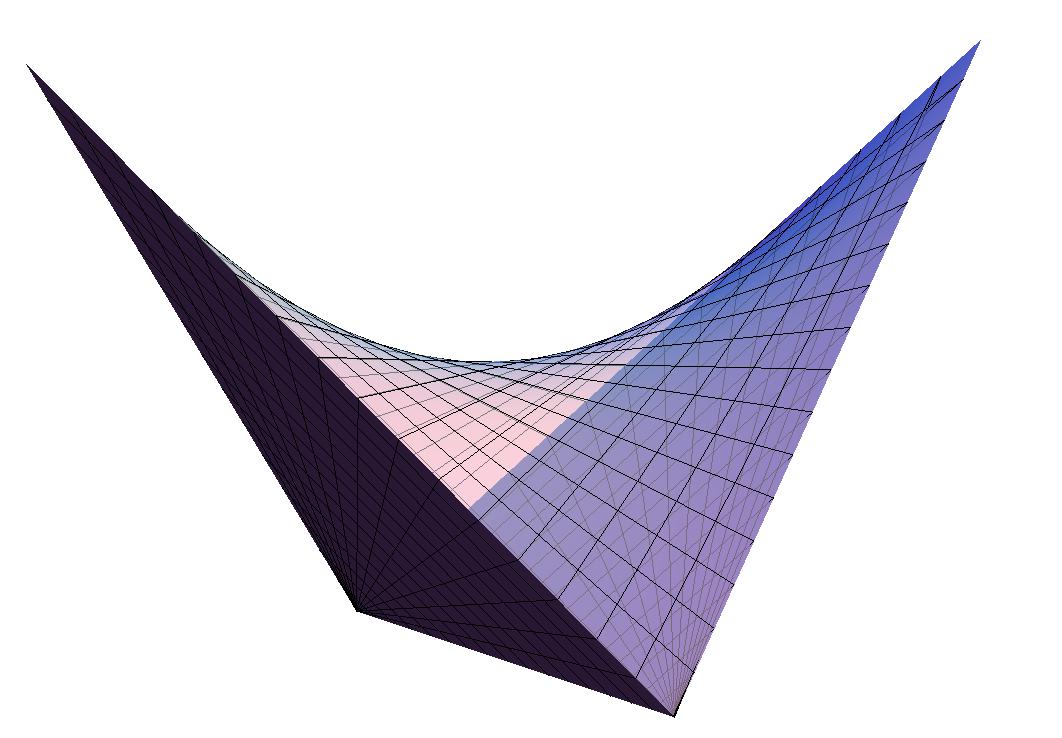}
\vskip -0.5cm
\caption{The set $\E_3$}
\label{E3}
\end{figure}
\end{example}
The boundary of $\E_d$ consists of all parameters $\mathbf{r}$ for which $R(\mathbf{r})$ has an eigenvalue of modulus~$1$. Thus $\partial \E_d$ naturally decomposes into the three hypersurfaces
\begin{align*}
E_d^{(1)} &:= \{\mathbf{r} \in \partial \E_d \;:\; R(\mathbf{r}) \hbox{ has $1$ as an eigenvalue}\}, \\
E_d^{(-1)} &:=  \{\mathbf{r} \in \partial \E_d \;:\; R(\mathbf{r}) \hbox{ has $-1$ as an eigenvalue}\}, \\
E_d^{(\mathbb{C})} &:= 
\{\mathbf{r} \in \partial \E_d \;:\; R(\mathbf{r}) \hbox{ has  a non-real eigenvalue of modulus 1}\},
\end{align*}
{\it i.e.},
\begin{equation}\label{Edboundary}
\partial\E_d=E_d^{(1)} \cup E_d^{(-1)} \cup E_d^{(\mathbb{C})}.
\end{equation}
These sets can be determined using $\E_{d-1}$ and $\E_{d-2}$. To state the corresponding result, we introduce the following terminology. Define for vectors ${\bf r}=(r_0,\ldots,r_{p-1})$, ${\bf s}=(s_0,\ldots,s_{q-1})$ of arbitrary dimension  $p,q\in\NN$ the binary operation $\odot$ by
\begin{equation}\label{eq:odot}
\chi_{{\bf r} \odot {\bf s}}=\chi_{\bf r}\cdot \chi_{\bf s},
\end{equation}
where ``$\cdot$'' means multiplication of polynomials. For $D \subset \RR^p$ and $E \subset \RR^q$ let $D \odot E:=\{{\bf r} \odot {\bf s} : \, {\bf r} \in D, {\bf s} \in E\}$. Then, due to results of Fam and Meditch~\cite{Fam-Meditch:78} (see also~\cite{Kirschenhofer-Pethoe-Surer-Thuswaldner:10}), the following theorem holds.

\begin{theorem}
For $d \geq 3$ we have
\begin{equation}\begin{split}
E_d^{(1)} = & (1) \odot \overline{\E_{d-1}}, \\
E_d^{(-1)} = & (-1) \odot \overline{\E_{d-1}}, \\
E_d^{(\mathbb{C})} = & \left\{(1,y) \;:\; y \in
(-2,2)\right\}\odot \overline{\E_{d-2}}.
\end{split}\end{equation}
Moreover, $E_d^{(1)}$ and $E_d^{(-1)}$ are subsets of hyperplanes while $E_d^{(\mathbb{C})}$ is a hypersurface  in $\mathbb{R}^d$
\end{theorem}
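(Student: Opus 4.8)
The plan is to reduce the statement about the three pieces of $\partial\E_d$ to the description of $\E_{d-1}$ and $\E_{d-2}$ via the factorization operation $\odot$, and then to verify that each piece has the stated dimension. First I would recall that $\mathbf{r}\in\partial\E_d$ exactly when $\varrho(R(\mathbf{r}))=1$ and no eigenvalue of $R(\mathbf{r})$ has modulus exceeding $1$; equivalently, the characteristic polynomial $\chi_\mathbf{r}(X)$ has all roots in the closed unit disk with at least one root on the unit circle. A root on the unit circle is either $1$, or $-1$, or a conjugate pair $e^{\pm i\theta}$ with $\theta\in(0,\pi)$. This is precisely the trichotomy \eqref{Edboundary}, so it suffices to treat the three cases separately.

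For $E_d^{(1)}$: if $1$ is a root of $\chi_\mathbf{r}$, then $\chi_\mathbf{r}(X)=(X-1)\,g(X)$ for a monic real polynomial $g$ of degree $d-1$, and the remaining roots of $\chi_\mathbf{r}$ (namely the roots of $g$) must all lie in the closed unit disk, i.e.\ $g=\chi_\mathbf{s}$ with $\mathbf{s}\in\overline{\E_{d-1}}$. Conversely, writing $\chi_{(1)}(X)=X-1$, any product $\chi_{(1)}\cdot\chi_\mathbf{s}$ with $\mathbf{s}\in\overline{\E_{d-1}}$ has all roots in the closed disk and the root $1$ on the circle, hence lies in $E_d^{(1)}$ (it is in $\partial\E_d$ since $1$ is on the boundary circle). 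By the very definition of $\odot$ this says $E_d^{(1)}=(1)\odot\overline{\E_{d-1}}$. The case $E_d^{(-1)}$ is identical with $X-1$ replaced by $X+1$. For $E_d^{(\mathbb{C})}$: a non-real root of modulus $1$ comes in a conjugate pair $e^{\pm i\theta}$, whose minimal real factor is $X^2-2\cos\theta\,X+1$; as $\theta$ ranges over $(0,\pi)$ the coefficient $-2\cos\theta=:y$ ranges over $(-2,2)$, and $X^2+yX+1$ is exactly $\chi_{(1,y)}$. The complementary degree-$(d-2)$ factor must again have all roots in the closed unit disk, hence equals $\chi_\mathbf{s}$ with $\mathbf{s}\in\overline{\E_{d-2}}$. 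This gives $E_d^{(\mathbb{C})}=\{(1,y):y\in(-2,2)\}\odot\overline{\E_{d-2}}$; one should note here that $(1,y)\in\E_2$ for $y\in(-2,2)$ by \eqref{1902082}, so $\{(1,y):y\in(-2,2)\}$ genuinely sits inside (the interior of) the two-dimensional Schur--Cohn region, which is why the resulting set is a hypersurface and not merely contained in one.

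For the dimension claims: the map $\odot$ that multiplies $(X\mp1)$ by $\chi_\mathbf{s}$ is, on the level of coefficient vectors, an affine (indeed linear in $\mathbf{s}$ up to a fixed shift) injective map $\overline{\E_{d-1}}\to\RR^d$, and its image is cut out by the single linear equation $\chi_\mathbf{r}(\pm1)=0$, i.e.\ $\sum(\pm1)^j r_j=(\mp1)^d$; hence $E_d^{(1)},E_d^{(-1)}$ are $(d-1)$-dimensional and lie in affine hyperplanes. For $E_d^{(\mathbb{C})}$, the parametrization $(y,\mathbf{s})\mapsto\chi_{(1,y)}\cdot\chi_\mathbf{s}$ maps the $(d-1)$-dimensional set $(-2,2)\times\overline{\E_{d-2}}$ into $\RR^d$; checking that this map is (generically) an immersion with nowhere-vanishing Jacobian — equivalently, that a given polynomial in the image has a unique factorization into a modulus-one quadratic factor times a closed-disk factor, which holds on the dense open set where the modulus-one root is simple and not accompanied by other boundary roots — shows the image is a $(d-1)$-dimensional hypersurface, but not contained in any single hyperplane (because $y$ enters the coefficients nonlinearly). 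The main obstacle is this last point: ruling out degeneracies of the factorization map on $E_d^{(\mathbb{C})}$ and showing the resulting set is an honest hypersurface rather than something lower-dimensional or contained in a hyperplane. For this I would invoke the smooth dependence of roots on coefficients together with the simple-connectedness and $\E_d=\interior(\overline{\E_d})$ facts recorded just before Proposition~\ref{stregion}, following Fam and Meditch~\cite{Fam-Meditch:78} and~\cite{Kirschenhofer-Pethoe-Surer-Thuswaldner:10}; the algebraic core — the three-way split of boundary roots and the translation into $\odot$-products — is the routine part already sketched above.
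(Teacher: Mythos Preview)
The paper does not give its own proof of this theorem; it simply attributes the result to Fam and Meditch~\cite{Fam-Meditch:78} (see also~\cite{Kirschenhofer-Pethoe-Surer-Thuswaldner:10}) and states it. Your argument is the natural and essentially correct one: factoring off the modulus-one root(s) of $\chi_\mathbf{r}$ and identifying the coefficient vector of the cofactor with an element of $\overline{\E_{d-1}}$ or $\overline{\E_{d-2}}$ is exactly how one unwinds the definition of $\odot$, and the hyperplane claims for $E_d^{(\pm1)}$ follow immediately from the linear conditions $\chi_\mathbf{r}(\pm1)=0$.

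There is one factual slip. You write that $(1,y)\in\E_2$ for $y\in(-2,2)$ by \eqref{1902082}, and hence that $\{(1,y):y\in(-2,2)\}$ sits in the interior of the two-dimensional Schur--Cohn region. This is false: \eqref{1902082} gives $\E_2=\{(x,y):|x|<1,\ |y|<x+1\}$, and $x=1$ already violates $|x|<1$. In fact both roots of $X^2+yX+1$ have modulus exactly $1$ when $|y|<2$, so $(1,y)\in\partial\E_2$ --- indeed $\{(1,y):y\in(-2,2)\}$ \emph{is} $E_2^{(\mathbb{C})}$. Fortunately your factorization argument never actually uses that $(1,y)$ lies in the interior, so the three displayed identities survive; but the parenthetical remark and the reasoning you draw from it (``which is why the resulting set is a hypersurface'') should be removed. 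The hypersurface claim for $E_d^{(\mathbb{C})}$ rests, as you say at the end, on checking that the parametrization $(y,\mathbf{s})\mapsto(1,y)\odot\mathbf{s}$ from $(-2,2)\times\overline{\E_{d-2}}$ is generically an immersion, and for that you are right to defer to the cited references.
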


In order to characterize $\D_d$, there remains the problem to describe $\D_d \setminus \E_d$, which is a subset of $\partial \D_d=\partial \E_d$. The problem is relatively simple for $d=1$, where it is an easy exercise to prove that $\D_1=[-1,1]$. For dimensions $d\ge 2$ the situation is different and will be discussed in the following two sections.

\section{The boundary of $\D_2$ and discretized rotations in $\mathbb{R}^2$}\label{sec:D2}

In this section we consider the behavior of the orbits of $\tau_\mathbf{r}$ for $\mathbf{r} \in \partial \mathcal{D}_2$. In particular we are interested in whether these orbits are ultimately periodic or not. To this matter we subdivide the isosceles triangle $\partial \mathcal{D}_2$ into four pieces (instead of three as in \eqref{1902082}), in particular, we split $E_2^{(1)}$ in two parts as follows.
\begin{align*}
E_{2-}^{(1)}& =  \{(x,-x-1) \in \mathbb{R}^2 \;:\; -1 \le x \le 0 \}, \\
E_{2+}^{(1)}& =  \{(x,-x-1) \in \mathbb{R}^2 \;:\; 0 < x \le 1 \}, \\
E_2^{(-1)} &= \{(x,x+1) \in \mathbb{R}^2 \;:\; -1 \le x \le 1 \}, \\
E_2^{(\mathbb{C})}& = \{(1, y) \in \mathbb{R}^2 \;:\; -2 < y < 2 \}.
\end{align*}
It turns out that the behavior of the orbits is much more complicated for $\mathbf{r}\in E_2^{(\mathbb{C})}$ than it is for the remaining cases. This is due to the fact that the matrix $R(\mathbf{r})$ has one eigenvalue that is equal to $-1$ for $\mathbf{r} \in E_2^{(-1)} $, one
eigenvalue that is equal to $1$ for $\mathbf{r} \in E_2^{(1)}$, but a pair of complex conjugate eigenvalues on the unit circle for $\mathbf{r} \in E_2^{(\mathbb{C})}$. Figure~\ref{fig:PartialD2} surveys the known results on the behavior of the orbits of $\tau_{\mathbf{r}}$ for $\mathbf{r}\in\partial \mathcal{D}_2$.

\begin{figure}
\includegraphics[height=7cm]{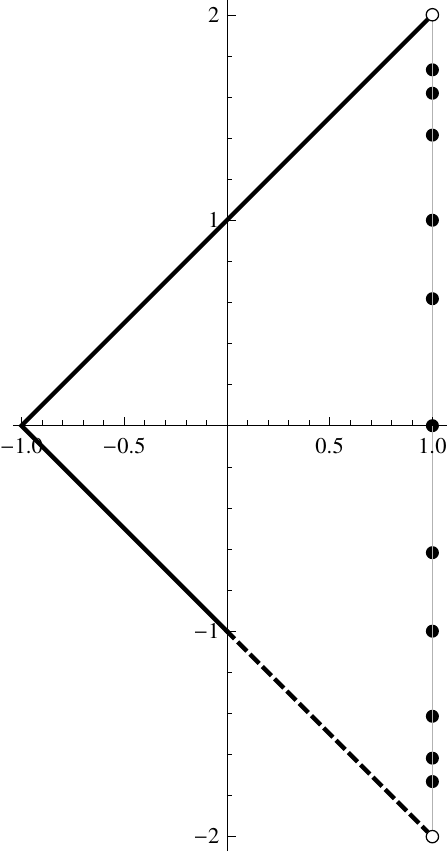}
\caption{An image of the isosceles triangle $\partial \mathcal{D}_2$. The black lines belong to $\mathcal{D}_2$, the dashed line doesn't belong to $\mathcal{D}_2$. For the grey line $E_2^{(\mathbb{C})}$ it is mostly not known whether it belongs to $\mathcal{D}_2$ or not. Only the 11 black points in $E_2^{(\mathbb{C})}$ could be shown to belong to $\mathcal{D}_2$ so far (compare~\cite[Figure~1]{Kirschenhofer-Pethoe-Surer-Thuswaldner:10}). For the two points $(1,2)$ and $(1,-2)$ it is easy to see that they do not belong to $\mathcal{D}_2$ by solving a linear recurrence relation.
 \label{fig:PartialD2}}
\end{figure}

\subsection{The case of real roots}

We start with the easier cases that have been treated in \cite[Section~2]{Akiyama-Brunotte-Pethoe-Thuswaldner:06}. In this paper the following result is proved.

\begin{proposition}[{\cite[Theorem~2.1]{Akiyama-Brunotte-Pethoe-Thuswaldner:06}}]\label{D2easyboundary}
If $\mathbf{r} \in (E_2^{(-1)}  \cup E_{2-}^{(1)})\setminus\{(1,2)\}$ then $\mathbf{r} \in \mathcal{D}_2\setminus \mathcal{D}_2^{(0)}$, \emph{i.e.}, each orbit of $\tau_\mathbf{r}$ is ultimately periodic, but not all orbits end in $\mathbf{0}$.

If $\mathbf{r} \in  E_{2+}^{(1)} \cup \{(1,2)\}$ then $\mathbf{r} \not \in  \mathcal{D}_2$, \emph{i.e.}, there exist orbits of $\tau_\mathbf{r}$ that are not ultimately periodic.
\end{proposition}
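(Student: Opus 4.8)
The plan is to analyze the cases $\mathbf{r}\in E_2^{(-1)}$, $\mathbf{r}\in E_{2-}^{(1)}$, and $\mathbf{r}\in E_{2+}^{(1)}$ (plus the endpoint $(1,2)$) separately, exploiting that on each of these lines $R(\mathbf{r})$ has a real eigenvalue $\pm1$, so the dynamics of $\tau_\mathbf{r}$ is governed by a one-dimensional almost-linear recurrence. On $E_2^{(-1)}=\{(x,x+1):-1\le x\le1\}$ we have $\chi_{\mathbf r}(X)=X^2+(x+1)X+x=(X+1)(X+x)$, so the second eigenvalue lies in $[-1,1]$; on $E_2^{(1)}=\{(x,-x-1):-1\le x\le1\}$ we have $\chi_{\mathbf r}(X)=X^2-(x+1)X+x=(X-1)(X-x)$, again with a second eigenvalue in $[-1,1]$. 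Writing $\tau_\mathbf{r}(z_0,z_1)^t=(z_1,z_2)^t$ with $z_2=-\lfloor r_0z_0+r_1z_1\rfloor$, I would track the scalar quantities $z_{n+1}+\varepsilon z_n$ (with $\varepsilon=1$ in the $(-1)$-case, reflecting the eigenvector of eigenvalue $-1$) or $z_{n+1}-z_n$ (in the $(1)$-case, the eigenvector of eigenvalue $1$), and show these are bounded, forcing the orbit into a finite set.

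For boundedness one uses \eqref{linearinequalities}: $0\le r_0z_0+r_1z_1+z_2<1$. In the $E_2^{(-1)}$ case this reads $0\le xz_0+(x+1)z_1+z_2<1$, i.e. $0\le x(z_0+z_1)+(z_1+z_2)<1$. Since $|x|\le1$ one gets $z_1+z_2$ bounded in terms of $z_0+z_1$, but more usefully, setting $s_n:=z_{n+1}+z_n$, the recurrence becomes $s_{n}= -x\,s_{n-1}+\theta_n$ with $\theta_n=\{r_0z_{n-1}+r_1z_n\}\in[0,1)$ a bounded perturbation and $|x|\le1$; the same telescoping argument as in the proof of Proposition~\ref{EdDdEd} (but now carried out on the scalar $s_n$, using $|x|\le 1$ so that $|s_n|\le |s_0|+n$ grows at most linearly, and then using integrality together with the second-coordinate contraction when $|x|<1$) shows the orbit is bounded, hence ultimately periodic; this gives $\mathbf{r}\in\mathcal D_2$. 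Here a cleaner route for $-1\le x<1$ is: the eigenvalue $-x$ has modulus $<1$, so by Proposition~\ref{EdDdEd} already $\mathbf r\in\E_2\subset\D_2$ — the only genuinely boundary subcase is the finitely many points where $|x|=1$, namely $x=-1$ (the vertex $(-1,0)$, trivial) and, on $E_{2-}^{(1)}$, again $x=-1$; so the bulk of $E_2^{(-1)}\cup E_{2-}^{(1)}$ is handled by Proposition~\ref{EdDdEd} and only the degenerate corner needs a direct check. To show $\mathbf{r}\notin\mathcal D_2^{(0)}$ one must exhibit a nontrivial periodic orbit: for $\mathbf{r}\in E_2^{(-1)}$, $\tau_\mathbf{r}(1,-1)^t=(-1,z_2)^t$ with $z_2=-\lfloor -x-(x+1)(-1)\rfloor=-\lfloor 1\rfloor$ adjusted — one checks directly that $(1,-1)\mapsto(-1,1)\mapsto(1,-1)$ is a $2$-cycle (using $\lfloor x\cdot(-1)+(x+1)\cdot 1\rfloor=\lfloor1\rfloor=1$ for $x\in[-1,1]$, with the appropriate half-open convention at $x=0$), so not all orbits reach $\mathbf{0}$; a similar explicit small cycle works on $E_{2-}^{(1)}$.

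For the second assertion, $\mathbf{r}\in E_{2+}^{(1)}\cup\{(1,2)\}$, I would produce an explicit orbit that escapes to infinity, i.e. is not ultimately periodic. On $E_{2+}^{(1)}=\{(x,-x-1):0<x\le1\}$ write $t_n:=z_{n+1}-z_n$; from $0\le xz_0-(x+1)z_1+z_2<1$, i.e. $0\le -x(z_1-z_0)+(z_2-z_1)<1$ — wait, recomputing: $xz_0-(x+1)z_1+z_2 = x(z_0-z_1) + (z_2-z_1)$, so $0\le -x\,t_{n-1}+t_n<1$, giving $t_n = x\,t_{n-1}+\{\cdots\}$ with $0<x\le1$. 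Starting from a sufficiently large positive $t_0$ (e.g. $z_0=0$, $z_1=N$ large, so $t_0=N$), the perturbation is nonnegative, hence $t_n\ge x^n t_0$ stays bounded below by a positive constant when $x=1$, and in fact $t_n\ge t_0$ for all $n$, forcing $z_n\to\infty$; thus the orbit is unbounded and cannot be ultimately periodic, so $\mathbf{r}\notin\mathcal D_2$. For $\mathbf{r}=(1,2)$, $\chi_\mathbf{r}(X)=X^2+2X+1=(X+1)^2$ has a double eigenvalue $-1$, and solving the recurrence $z_{n+1}+2z_n+z_{n-1}=-\{\,\}$ directly (as the figure caption suggests) shows linear growth of a suitable orbit. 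I expect the main obstacle to be purely bookkeeping: getting the half-open interval \eqref{linearinequalities} and the resulting floor conventions exactly right at the transition point $x=0$ between $E_{2-}^{(1)}$ and $E_{2+}^{(1)}$, and choosing the escaping initial vector on $E_{2+}^{(1)}$ so that the $\{\cdot\}$-perturbation never conspires to cancel the growth — both of which are finite, checkable computations rather than deep difficulties, since the linear-algebra backbone is already supplied by Proposition~\ref{EdDdEd} and Corollary~\ref{Ddboundary}.
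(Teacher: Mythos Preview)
Your proposal contains a genuine and fatal gap in the proof that $\mathbf{r}\in\D_2$ for $\mathbf{r}\in E_2^{(-1)}\cup E_{2-}^{(1)}$. You write that ``a cleaner route for $-1\le x<1$ is: the eigenvalue $-x$ has modulus $<1$, so by Proposition~\ref{EdDdEd} already $\mathbf r\in\E_2\subset\D_2$'' and that only the corners are ``genuinely boundary''. This is false: by definition, $E_2^{(-1)}$ and $E_2^{(1)}$ are subsets of $\partial\E_2$. On $E_2^{(-1)}$ you have $\chi_\mathbf{r}(X)=(X+1)(X+x)$, so $-1$ is \emph{always} an eigenvalue of $R(\mathbf{r})$, hence $\varrho(R(\mathbf{r}))\ge 1$ and $\mathbf{r}\notin\E_2$ for every single point of the line. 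Proposition~\ref{EdDdEd} therefore gives you nothing here, and the whole content of the proposition is precisely to handle these boundary points. Your alternative route via $s_n=z_{n+1}+z_n$ satisfying $s_n=-x s_{n-1}+\theta_n$ does yield $s_n$ bounded when $|x|<1$, but boundedness of $z_{n+1}+z_n$ does \emph{not} imply boundedness of $z_n$ (take $z_n=(-1)^n n$); you never close this gap.

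The paper's argument is genuinely different and does the real work you are missing. For $\mathbf{r}=(x,x+1)\in E_2^{(-1)}$ with $x\le 0$ it shows directly that $\|\tau_\mathbf{r}(\mathbf{z})\|_\infty\le\|\mathbf{z}\|_\infty$ by a sign-based case analysis on $\mathbf{z}$; for $x>0$ this inequality fails and one instead proves ultimate periodicity first on the set $M=\{(-n,m)^t:m\ge n\ge 0\}$ by induction on $m-n$, and then shows every orbit enters $M$ in finitely many steps. Your exhibition of the $2$-cycle $(1,-1)^t\leftrightarrow(-1,1)^t$ for $\mathbf{r}\notin\D_2^{(0)}$ is fine and matches the paper. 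For $E_{2+}^{(1)}$ your recurrence $t_n=xt_{n-1}+\theta_n$ with $\theta_n\in[0,1)$ is correct, but the claim ``$t_n\ge t_0$'' is unjustified; what actually works (and is essentially the paper's argument with starting vector $(1,2)^t$) is that $t_{n-1}\ge 1$ and $0<x\le 1$ force $t_n\ge xt_{n-1}>0$, and since $t_n\in\mathbb{Z}$ this gives $t_n\ge 1$, whence $z_n\to\infty$.
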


\begin{proof}[Sketch of the proof]
We subdivide the proof in four parts.
\begin{itemize}
\item[(i)]$\mathbf{r}=(x,x+1) \in E_{2}^{(-1)}$ with $x \le 0$. The cases $x\in \{-1,0\}$ are trivial, so we can assume that $-1<x<0$.
It is easy to see by direct calculation that $\tau_{\mathbf{r}}^2((-n,n)^t)=(-n,n)^t$ holds for each $n\in\mathbb{N}$. This implies
that $\mathbf{r}\not\in \mathcal{D}_2^{(0)}$ holds in this case. To show that $\mathbf{r}\in \mathcal{D}_2$
one proves that $||\tau_\mathbf{r}(\mathbf{z})||_\infty \le ||\mathbf{z}||_\infty$. This is accomplished by distinguishing four
cases according to the signs of the coordinates of the vector $\mathbf{z}$ and examining $\tau_\mathbf{r}(\mathbf{z})$ in each of these cases.

\item[(ii)] $\mathbf{r}=(x,-x-1) \in E_{2-}^{(1)}$. This is treated in the same way as the previous case; here $\tau_{\mathbf{r}}((n,n)^t)=(n,n)^t$  holds for
each $n\in\mathbb{N}$.

\item[(iii)] $\mathbf{r}=(x,x+1) \in E_{2}^{(-1)}$ with $x > 0$. Here again $\tau_{\mathbf{r}}^2((-n,n)^t)=(-n,n)^t$ implies that $\mathbf{r}\not\in \mathcal{D}_2^{(0)}$. To prove that
$\mathbf{r}\in\mathcal{D}_2$ for $x < 1$ is a bit more complicated. First ultimate periodicity is shown for starting vectors contained in the set $M=\{(-n,m)^t\,:\, m\ge n\ge 0\}$. This is done by an easy induction argument on the quantity $m-n$. After that one shows that each $\mathbf{z} \in \mathbb{Z}^2\setminus M$ hits $M$ after finitely many applications of $\tau_\mathbf{r}$. Proving this is done by distinguishing several cases.
If $x=1$ the fact that $\mathbf{r}\not\in\mathcal{D}_2$ follows by solving a linear recurrence relation.

\item[(iv)] $\mathbf{r}=(x,-x-1) \in E_{2+}^{(1)}$. If $n > m > 0$ then $\tau_\mathbf{r}((m,n)^t) = (n,p)^t$ with $p >n$ follows from the definition of $\tau_{\mathbf{r}}$. Thus
$||\tau_{\mathbf{r}}^k(1,2)||_\infty \to \infty$ for $k\to \infty$ implying that the orbit of $(1,2)^t$ is not ultimately periodic. \qedhere
\end{itemize}
\end{proof}

For $\mathbf{r} \in E_2^{(\mathbb{C})}$ we can only exclude that $\mathbf{r} \in \mathcal{D}_2^{(0)}$. Indeed, in this case $\mathbf{r}=(1,y)$ with $|y| < 2$. This implies that $\tau_\mathbf{r}^{-1}((0,0)^t)=\{(0,0)^t\}$.  In other words, in this case each orbit starting in a non-zero element does not end up at $(0,0)^t$. Combining this with Proposition~\ref{D2easyboundary} we obtain that $\mathcal{D}_2^{(0)} \cap \partial \mathcal{D}_2 = \emptyset$. By Proposition~\ref{EdDdEd} this is equivalent to the following result.

\begin{corollary}[{\cite[Corollary~2.5]{Akiyama-Brunotte-Pethoe-Thuswaldner:06}}]\label{D2boundarycorollary}
$$
\D_2^{(0)} \subset \E_2.
$$
\end{corollary}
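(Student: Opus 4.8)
The plan is to reduce the statement to the single fact $\D_2^{(0)}\cap\partial\D_2=\emptyset$ and then to feed this into Proposition~\ref{EdDdEd}. First note that $\D_2^{(0)}\subseteq\D_2$: if $\tau_\mathbf{r}^k(\mathbf{z})=\mathbf{0}$ then, since $\tau_\mathbf{r}(\mathbf{0})=\mathbf{0}$, the orbit of $\mathbf{z}$ is ultimately constant, hence ultimately periodic. By Proposition~\ref{EdDdEd} we have $\E_2\subseteq\D_2\subseteq\overline{\E_2}$; combining this with $\E_2=\interior(\overline{\E_2})$ (stated in Section~\ref{sec:Ed}) gives $\interior(\D_2)=\E_2$ and $\partial\D_2=\partial\E_2$, so that $\D_2\setminus\partial\D_2=\interior(\D_2)=\E_2$. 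Consequently, $\D_2^{(0)}\subseteq\E_2$ is equivalent to $\D_2^{(0)}\cap\partial\D_2=\emptyset$; equivalently, one may use Corollary~\ref{Ddboundary}, which identifies $\partial\D_2$ with $\{\mathbf{r}:\varrho(R(\mathbf{r}))=1\}$.

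It therefore remains to check, for every $\mathbf{r}\in\partial\D_2$, that $\mathbf{r}\notin\D_2^{(0)}$. I would use the decomposition $\partial\D_2=E_{2-}^{(1)}\cup E_{2+}^{(1)}\cup E_2^{(-1)}\cup E_2^{(\mathbb{C})}$ introduced at the beginning of this section. The pieces with a real eigenvalue $\pm1$ are handled entirely by Proposition~\ref{D2easyboundary}: for $\mathbf{r}\in(E_2^{(-1)}\cup E_{2-}^{(1)})\setminus\{(1,2)\}$ one has $\mathbf{r}\in\D_2\setminus\D_2^{(0)}$, and for $\mathbf{r}\in E_{2+}^{(1)}\cup\{(1,2)\}$ one has $\mathbf{r}\notin\D_2\supseteq\D_2^{(0)}$; in both situations $\mathbf{r}\notin\D_2^{(0)}$.

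This leaves the complex case $\mathbf{r}\in E_2^{(\mathbb{C})}$, i.e.\ $\mathbf{r}=(1,y)$ with $|y|<2$, where I expect only a one-line verification. From \eqref{SRSdefinition}, $\tau_\mathbf{r}((z_0,z_1)^t)=(z_1,-\lfloor z_0+yz_1\rfloor)^t$; demanding this to equal $\mathbf{0}$ forces $z_1=0$ and then $\lfloor z_0\rfloor=0$, hence $z_0=0$. Thus $\tau_\mathbf{r}^{-1}(\mathbf{0})=\{\mathbf{0}\}$, so no iterate of a nonzero vector can ever reach $\mathbf{0}$ and $\mathbf{r}\notin\D_2^{(0)}$. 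Putting the three cases together yields $\D_2^{(0)}\cap\partial\D_2=\emptyset$, and the corollary follows from the first paragraph. I do not anticipate any real obstacle: the substantive analysis (ultimate periodicity versus divergence along the real‑root part of the triangle) is already packaged in Proposition~\ref{D2easyboundary}, and the genuinely new input here is just the preimage computation for $E_2^{(\mathbb{C})}$ given above.
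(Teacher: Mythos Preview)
Your proposal is correct and follows essentially the same approach as the paper: reduce to $\D_2^{(0)}\cap\partial\D_2=\emptyset$ via Proposition~\ref{EdDdEd}, dispatch the real-eigenvalue sides of the triangle using Proposition~\ref{D2easyboundary}, and handle $E_2^{(\mathbb{C})}$ by the preimage computation $\tau_{(1,y)}^{-1}(\mathbf{0})=\{\mathbf{0}\}$. The paper presents this argument in the paragraph immediately preceding the corollary, with exactly the same ingredients and in the same order.
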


\subsection{Complex roots and discretized rotations}
We now turn our attention to periodicity results for parameters $\mathbf{r}\in E_2^{(\mathbb{C})}$, {\it i.e.}, fr $\mathbf{r}=(1,\lambda)$ with $|\lambda| < 2$. From the definition of $\tau_\mathbf{r}$ it follows that $E_2^{(\mathbb{C})} \subset \mathcal{D}_2$ is equivalent to the following conjecture.

\begin{conjecture}[{see {\em e.g.} \cite{Akiyama-Brunotte-Pethoe-Thuswaldner:06,Bruin-Lambert-Poggiaspalla-Vaienti:03,Lowensteinetal:97,Vivaldi:94}}]\label{Vivaldi-SRS-Conjecture}
For each $\lambda \in \mathbb{R}$ satisfying $|\lambda| < 2$ the sequence $(a_n)_{n\in \mathbb{N}}$ defined by
\begin{equation}\label{eq:vivaldiconjecture}
0 \le a_{n-1} + \lambda a_n + a_{n+1} < 1
\end{equation}
is ultimately periodic for each pair of starting values $(a_0,a_1)\in\mathbb{Z}^2$.
\end{conjecture}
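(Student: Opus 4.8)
The plan is as follows; since the statement is an open conjecture, this is a description of the approach one would take together with the point at which it stalls. Write $\mathbf{r}=(1,\lambda)$ with $|\lambda|<2$. The companion matrix $R(\mathbf{r})=\left(\begin{smallmatrix}0&1\\-1&-\lambda\end{smallmatrix}\right)$ has $\det R(\mathbf{r})=1$ and characteristic polynomial $X^2+\lambda X+1$, so its eigenvalues are $e^{\pm i\theta}$ with $\lambda=-2\cos\theta$ and $\theta\in(0,\pi)$; thus $R(\mathbf{r})$ is, in a suitable Euclidean norm, an isometry, namely the rotation by the angle $\theta$. The recurrence \eqref{eq:vivaldiconjecture} is equivalent to the discretized rotation $a_{n+1}=-a_{n-1}-\lfloor\lambda a_n\rfloor$, and \eqref{linear} gives $\tau_\mathbf{r}(\mathbf{z})=R(\mathbf{r})\mathbf{z}+\mathbf{v}(\mathbf{z})$ with $\|\mathbf{v}(\mathbf{z})\|_\infty<1$. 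The structural obstacle is visible at once: because the linear part is neutral, the estimate used for $\E_d\subset\D_d$ in Proposition~\ref{EdDdEd} only yields the worthless bound $\|\tau_\mathbf{r}^n(\mathbf{z})\|\le\|\mathbf{z}\|+cn$, while the argument for $\D_d\subset\overline{\E_d}$ needs an eigenvalue off the unit circle and hence does not apply. Everything therefore comes down to excluding a slow, round-off-driven escape to infinity.

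First I would dispose of the three integer values $\lambda\in\{0,\pm1\}$: here $\lfloor\lambda a_n\rfloor=\lambda a_n$, so $\tau_\mathbf{r}$ is literally the finite-order linear map $R(\mathbf{r})$ and every orbit is periodic. For all remaining $\lambda$ the floor function genuinely acts, and the only known route is the renormalization method (Lowenstein, Hatjispyros and Vivaldi): one seeks a bounded polygonal region $U\subseteq\RR^2$ such that the first-return map induced by $\tau_\mathbf{r}$ on the integer points of $U$ coincides, after an integer affine change of variables followed by a dilation, with $\tau_\mathbf{r}$ itself. Given such an exact self-similarity one shows that every orbit enters $U$, that renormalization sends it to a ``smaller'' orbit, and that iterating this forces the orbit into a cycle, so that boundedness and ultimate periodicity are obtained simultaneously. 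Making this rigorous has two costly steps: (i) guessing the atoms of the return map, which is governed by the continued-fraction arithmetic of $\lambda$ and is why the method has so far succeeded only for the quadratic-irrational values $\lambda=-2\cos(\pi t)$, $t\in\QQ$, of degree at most $2$ (the values $\pm\sqrt2,\pm\sqrt3,\pm\varphi,\pm\varphi^{-1}$, giving the eight non-trivial points in Figure~\ref{fig:PartialD2}); and (ii) checking a finite but large bookkeeping of affine identities describing how $\tau_\mathbf{r}$ permutes these atoms, which in practice is done with computer assistance.

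The part I expect to remain genuinely out of reach is a \emph{generic} $\lambda$, in particular a transcendental one. Then $R(\mathbf{r})$ has infinite order, no renormalization structure is known, and it is not even known whether a single orbit of $\tau_\mathbf{r}$ can be unbounded. A KAM-style attempt -- trapping orbits between invariant curves of the nearby smooth rotation -- is unavailable because $\tau_\mathbf{r}$ is merely piecewise linear, so the required smoothness is absent; and the errors $\{\mathbf{r}\mathbf{z}\}$ behave heuristically like an equidistributed sequence, offering no built-in cancellation that a drift argument could exploit. Accordingly I would expect to settle $\lambda\in\{0,\pm1\}$ trivially and the special renormalizable quadratic values along the above lines, while the statement for arbitrary $\lambda$ with $|\lambda|<2$ -- above all for $\lambda\notin\overline{\QQ}$ -- would have to be left as the essential open problem, exactly as Figure~\ref{fig:PartialD2} records.
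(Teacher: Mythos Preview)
Your assessment is correct: the statement is an open conjecture, and the paper does not claim to prove it. The paper's own discussion (Sections~4.2--4.6) treats it exactly as you do --- as an open problem with partial results for special values of $\lambda$ --- so there is no ``paper's proof'' to compare against.

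Your outline of the known territory is largely faithful to the paper, with one difference in emphasis worth noting. You describe the method for the quadratic-irrational cases as renormalization acting directly on integer points in a polygonal region $U\subset\RR^2$, attributed to Lowenstein--Hatjispyros--Vivaldi. The paper's detailed exposition (Section~4.4, following Akiyama--Brunotte--Peth\H{o}--Steiner) instead passes first to a conjugate piecewise-affine map $T$ on the torus $[0,1)^2$ via $(a_{k-1},a_k)\mapsto(\{\varphi a_{k-1}\},\{\varphi a_k\})$, then exhibits a self-inducing structure for $T$ on a scaled copy $D/\varphi^2$ of a polygonal domain $D$, and finally shows that the aperiodic set of $T$ misses $\ZZ[\varphi]^2$. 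This torus-side formulation is what makes the ``finite but large bookkeeping'' you mention tractable; your $\RR^2$-side description is morally the same idea but hides where the self-similarity actually lives. The paper also records two further partial results you do not mention: a short Diophantine proof for $\lambda=\varphi$ alone (Theorem~\ref{th:steinerproof}), and the general fact that $\tau_{(1,\lambda)}$ has infinitely many periodic orbits for every $\lambda\in(-2,2)$ (Theorem~\ref{AP13}).
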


\begin{remark}\label{rem:vivaldi}
Several authors (in particular Franco Vivaldi and his co-workers) study the slightly different mapping
$\Phi_\lambda: \mathbb{Z}^2 \to \mathbb{Z}^2$, $(z_0,z_1)^t \mapsto (\lfloor \lambda z_0 \rfloor - z_1, z_0)^t$.
Their results --- which we state using $\tau_{(1,-\lambda)}$ --- carry over to our setting by obvious changes of the proofs.
\end{remark}

To shed some light on Conjecture~\ref{Vivaldi-SRS-Conjecture}, we emphasize that $\tau_{(1,\lambda)}$ can be regarded as a \emph{discretized rotation}. Indeed, the inequalities in \eqref{eq:vivaldiconjecture} imply that
\[
\begin{pmatrix}
a_n \\
a_{n+1}
\end{pmatrix} =
\begin{pmatrix}
0&1\\-1&-\lambda
\end{pmatrix}
\begin{pmatrix}
a_{n-1} \\
a_{n}
\end{pmatrix}+
\begin{pmatrix}
0 \\
\{\lambda a_n\}
\end{pmatrix},
\]
and writing $\lambda=-2\cos(\pi \theta)$ we get that the eigenvalues of the involved matrix are given by $\exp({\pm i\pi\theta})$. Thus $\tau_{(1,\lambda)}$ is a rotation followed by a ``round-off''. As in computer arithmetic round-offs naturally occur due to the limited accuracy of floating point arithmetic, it is important to worry about the effect of such round-offs. It was this application that Vivaldi and his co-authors had in mind when they started to study the dynamics of the mapping $\tau_{(1,\lambda)}$ in the 1990s (see \cite{Lowensteinetal:97,Lowenstein-Vivaldi:98,Vivaldi:94}). Of special interest is the case of rational rotation angles $\theta=p/q$, as rotations by these angles have periodic orbits with period $q$. In these cases, for the discretization one gets that $||\tau_{(1,\lambda)}^q(\mathbf{z}) - \mathbf{z}||_\infty$ is uniformly small for all $\mathbf{z}\in\mathbb{Z}^2$. The easiest non-trivial cases (if $\lambda\in\mathbb{Z}$ everything is trivial) occur for $q=5$. For instance, consider $\theta=\frac25$ which gives $\lambda=\frac{1-\sqrt{5}}2=-\frac1\varphi$, where $\varphi=\frac{1+\sqrt{5}}2$ is the golden ratio. Although the behavior of the orbits of $\tau_{(1,-1/\varphi)}$ looks rather involved and there is no upper bound on their period, Lowenstein {\it et al.}~\cite{Lowensteinetal:97} succeeded in proving that nevertheless each orbit of $\tau_{(1,-1/\varphi)}$ is periodic. This confirms Conjecture~\ref{Vivaldi-SRS-Conjecture} in the case $\lambda=-\frac1\varphi$. In their proof, they consider a dynamical system on a subset of the torus, which is conjugate to $\tau_{(1,-1/\varphi)}$ (see Section~\ref{sec:domainexchange} for more details). This system is then embedded in a higher dimensional space where the dynamics becomes periodic. This proves that $\tau_{(1,-1/\varphi)}$ is \emph{quasi-periodic} which is finally used in order to derive the result. The case $\theta=\frac45$ corresponds to $\tau_{(1,\varphi)}$ and is treated in detail in the next subsection.

\subsection{A parameter associated with the golden ratio}
Akiyama {\it et al.}~\cite{Akiyama-Brunotte-Pethoe-Steiner:06} came up with a very simple and beautiful proof for the periodicity of $\tau_{(1,\varphi)}$. In particular, in their argument they combine the fact that $||\tau_{(1,\varphi)}^5(\mathbf{z}) - \mathbf{z}||_\infty$ is small (see the orbits in Figure~\ref{fig:goldenorbit})
 \begin{figure}
\includegraphics[height=6cm]{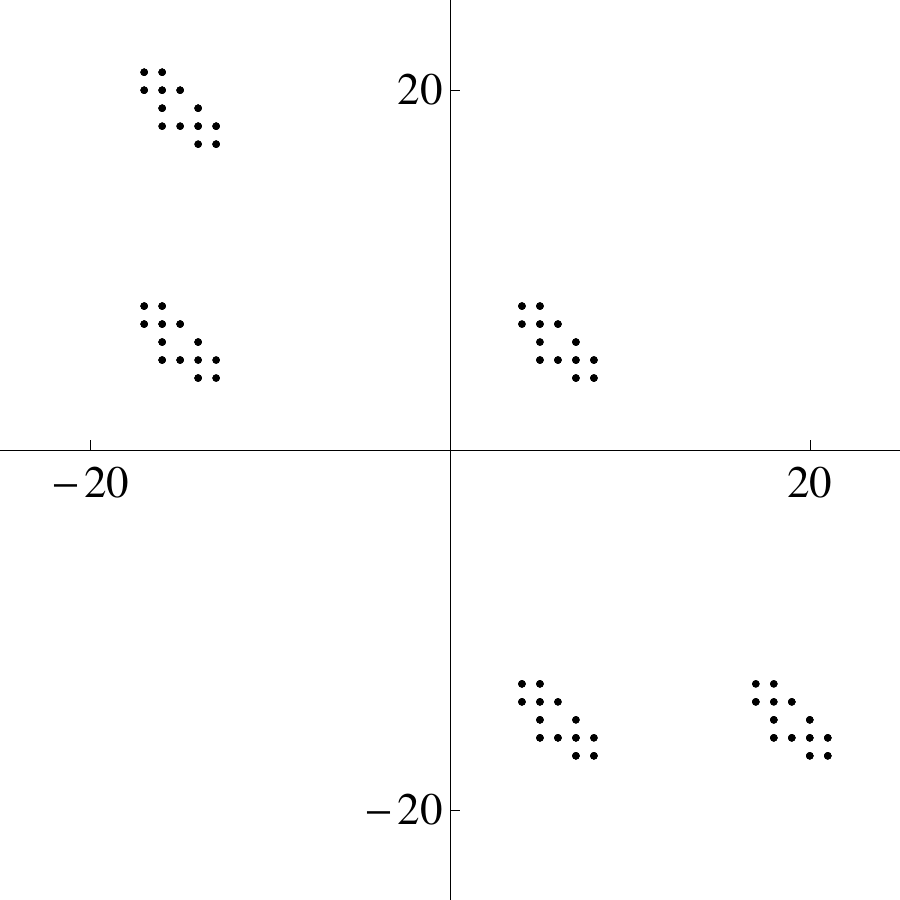} \hskip 1cm
\includegraphics[height=6cm]{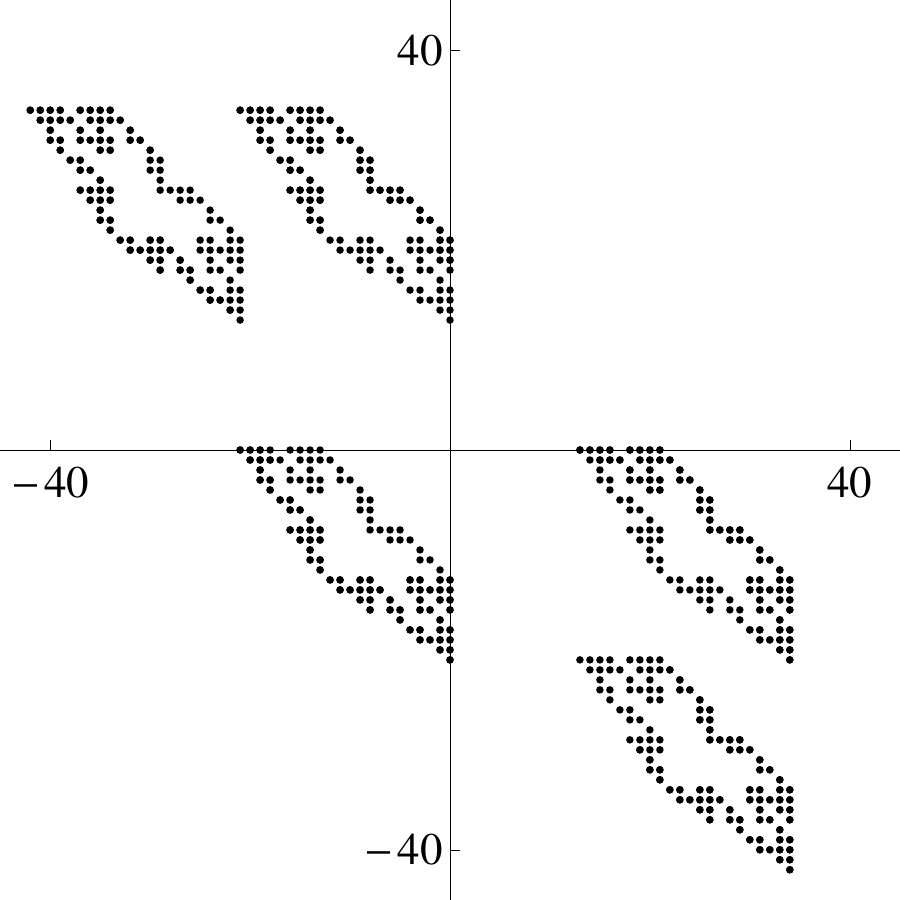}
\caption{Two examples of orbits of $\tau_{(1,\varphi)}$. The picture on the left is the orbit of  $(5,5)^t$. Its period is 65. The orbit of $(13,0)^t$ on the right has period 535.  \label{fig:goldenorbit}}
\end{figure}
with Diophantine approximation properties of the golden mean. In what follows we state the theorem and give a sketch of this proof (including some explanations to make it easier to read).

\begin{theorem}[{\cite[Theorem~5.1]{Akiyama-Brunotte-Pethoe-Steiner:06}}]\label{th:steinerproof}
Let $\varphi=\frac{1+\sqrt{5}}2$ be the golden mean. Then $(1,\varphi)\in \mathcal{D}_2$. In other words, the sequence $(a_n)_{n \in \mathbb{N}}$ defined by
\begin{equation}\label{eq:phirecurrence}
0 \le a_{n-1} + \varphi a_n + a_{n+1} < 1
\end{equation}
is ultimately periodic for each pair of starting values $(a_0,a_1)\in\mathbb{Z}^2$.
\end{theorem}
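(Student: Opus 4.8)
The plan is to exploit the fact that $\lambda=\varphi$ corresponds to the rational rotation angle $\theta=\tfrac45$, so that the linear part $R((1,\varphi))$ satisfies $R^5=\mathrm{Id}$, together with the excellent Diophantine properties of $\varphi$. First I would observe that since the matrix has order $5$, iterating \eqref{tauiterate} over five steps gives $\tau_\mathbf{r}^5(\mathbf{z}) = \mathbf{z} + \mathbf{w}(\mathbf{z})$, where $\mathbf{w}(\mathbf{z}) = \sum_{k=1}^5 R(\mathbf{r})^{5-k}\mathbf{v}_k$ is a ``small'' perturbation with $\|\mathbf{w}(\mathbf{z})\|_\infty$ bounded by an absolute constant (each $\mathbf{v}_k$ has sup-norm $<1$ and there are only five terms). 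Thus the dynamics of $\tau_\mathbf{r}$ is governed, up to a bounded error accumulating every five steps, by the identity; the whole difficulty is to rule out a slow ``drift to infinity'' of the fifth iterates.

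Next I would try to pin down a conserved (or almost conserved) quantity. The natural candidate is a suitable norm adapted to the rotation: since $R(\mathbf{r})$ has complex eigenvalues on the unit circle, there is a positive definite quadratic form $Q$ (the pull-back of the Euclidean norm under the diagonalizing change of coordinates) with $Q(R(\mathbf{r})\mathbf{x}) = Q(\mathbf{x})$. Writing $\mathbf{Z} = \tau_\mathbf{r}^5(\mathbf{z}) = \mathbf{z}+\mathbf{w}(\mathbf{z})$ one gets $Q(\mathbf{Z}) = Q(\mathbf{z}) + 2B(\mathbf{z},\mathbf{w}(\mathbf{z})) + Q(\mathbf{w}(\mathbf{z}))$, where $B$ is the associated bilinear form. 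The error term $Q(\mathbf{w}(\mathbf{z}))$ is bounded, but $B(\mathbf{z},\mathbf{w}(\mathbf{z}))$ can be of order $\|\mathbf{z}\|$, so $Q$ alone is not monotone. The key algebraic input — and this is where $\varphi$ enters decisively — is that for $\lambda=\varphi$ the relevant inner products $B(\mathbf{z},\mathbf{w}(\mathbf{z}))$ lie in $\mathbb{Z}[\varphi]$ and, when one tracks the fractional parts $\{\varphi a_n\}$ that make up the $\mathbf{v}_k$, the self-similar relation $\varphi^2=\varphi+1$ forces a renormalization: the piece of $\mathbf{z}$ that could cause the form to grow is, after division by $\varphi$, again an integer vector of strictly smaller size, so an induction on $Q(\mathbf{z})$ (or on $\|\mathbf{z}\|_\infty$) can be set up. Concretely I would show that either $Q(\tau_\mathbf{r}^5(\mathbf{z})) < Q(\mathbf{z})$, or $\mathbf{z}$ already lies in a bounded region, using the inequality $0\le a_{n-1}+\varphi a_n+a_{n+1}<1$ to control the sign of the cross term via the golden-ratio continued-fraction expansion.

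From such a contraction-up-to-bounded-set statement the theorem follows quickly: the set of $\mathbf{z}$ with $Q(\mathbf{z})$ below the threshold is finite, every orbit of $\tau_\mathbf{r}^5$ eventually enters it, hence every orbit of $\tau_\mathbf{r}$ is eventually trapped in a finite set and is therefore ultimately periodic, which is exactly $\mathbf{r}\in\mathcal{D}_2$; equivalently, the sequence $(a_n)$ defined by \eqref{eq:phirecurrence} is ultimately periodic. The main obstacle is establishing the renormalization/descent step: showing that the potentially growing cross term $B(\mathbf{z},\mathbf{w}(\mathbf{z}))$ is, thanks to $\varphi^2=\varphi+1$ and the Diophantine quality of $\varphi$ (its partial quotients are all $1$, so $\|q\varphi\|$ is as large as possible), controlled by a strictly smaller integer instance of the same problem. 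Getting the bookkeeping right — which of the five fractional parts $\{\varphi a_n\}$ contribute, and verifying that the descent does not stall on a positive-measure set of configurations — is the delicate part; the rest is linear algebra over $\mathbb{Z}[\varphi]$ and a finiteness argument.
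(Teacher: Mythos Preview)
Your high-level picture is right: $R((1,\varphi))^5=\mathrm{Id}$, so $\tau_\mathbf{r}^5(\mathbf{z})=\mathbf{z}+\mathbf{w}(\mathbf{z})$ with $\|\mathbf{w}(\mathbf{z})\|_\infty$ absolutely bounded, and one must use the Diophantine quality of $\varphi$ to rule out drift. But the concrete mechanism you propose --- showing $Q(\tau_\mathbf{r}^5(\mathbf{z}))<Q(\mathbf{z})$ outside a bounded set for an $R$-invariant quadratic form $Q$ --- cannot work as stated. The orbits of $\tau_{(1,\varphi)}$ can have arbitrarily long periods (the paper displays one of period $535$), so $Q$ is certainly not eventually monotone along an orbit; the cross term $B(\mathbf{z},\mathbf{w}(\mathbf{z}))$ genuinely changes sign and there is no descent. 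Your ``renormalization'' step (``after division by $\varphi$, again an integer vector of strictly smaller size'') is also not a well-defined operation on $\mathbb{Z}^2$, and you yourself flag that this is where the argument is missing.

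The paper's proof does something structurally different and sharper. It does not try to make any scalar quantity decrease. Instead it computes $a_{n+5}-a_n$ \emph{explicitly} as a function of the pair $(\{\varphi a_{n-1}\},\{\varphi a_n\})\in[0,1)^2$, obtaining $a_{n+5}=a_n+d_n$ with $d_n\in\{-1,0,1\}$ and an explicit partition of $[0,1)^2$ into the three regions. The Diophantine input then enters in a one-sided barrier argument: one fixes a threshold $F_{2m}$ (an even-indexed Fibonacci number) dominating the first few $a_n$, and shows the orbit can never exceed it. The only dangerous moment is $a_n=F_{2m}$; but since the $F_k$ are the convergent denominators of $\varphi$, the value $\{\varphi F_{2m}\}$ is exceptionally small, and the inequalities $\{\varphi a_n\}\le\{\varphi a_{n-1}\}\le 1-\{\varphi a_n\}$ force $(\{\varphi a_{n-1}\},\{\varphi a_n\})$ out of the region where $d_n=1$. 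So $a_n\le F_{2m}$ for all $n$, and a trivial bound in the other direction gives ultimate periodicity. The point is an \emph{invariant} upper bound at a carefully chosen arithmetic threshold, not a Lyapunov-type contraction; your quadratic-form plan aims at the latter and that is the gap.
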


\begin{proof}[Sketch of the proof ({\em cf.}\ {\cite{Akiyama-Brunotte-Pethoe-Steiner:06}})]
First we make precise our observation that $\tau_{(1,\varphi)}^5({\mathbf z})$ is close to $\mathbf{z}$ for each $\mathbf{z} \in \mathbb{Z}^2$. In particular, we shall give an upper bound for the quantity $|a_{n+5} - a_n|$ when $(a_n)_{n\in\mathbb{Z}}$ satisfies \eqref{eq:phirecurrence}.  These inequalities immediately yield
\begin{equation}\label{eq:phi1}
a_{n+1} = -a_{n-1} - \varphi a_n + \{\varphi a_n\},
\end{equation}
a representation of $a_{n+1}$ in terms of $a_n$ and $a_{n-1}$. We wish to express $a_{n+5}$ by these two elements. To accomplish this we have to proceed in four steps. We start with a representation of $a_{n+2}$ in terms of $a_n$ and $a_{n-1}$. As
\begin{equation}\label{eq:phi2}
a_{n+2} = -\lfloor a_{n} + \varphi a_{n+1} \rfloor
\end{equation}
we first calculate
\begin{equation}\label{eq:phi2a}
\begin{array}{rclcl}
-a_n-\varphi a_{n+1} & = & (-1+\varphi^2) a_n + \varphi a_{n-1} - \varphi\{\varphi a_n\} &\;& \hbox{(by \eqref{eq:phi1})} \\
&=&\varphi a_n + \varphi a_{n-1}  - \varphi\{\varphi a_n\}  &\;& (\hbox{by }\varphi^2=\varphi+1) \\
&=& \lfloor\varphi a_n  \rfloor  +  \varphi a_{n-1} + (1-\varphi)\{\varphi a_n\} &&\\
&=&\lfloor  \varphi a_{n}  \rfloor + \lfloor  \varphi a_{n-1}  \rfloor   -  \varphi^{-1}\{\varphi a_n\}  +\{\varphi a_{n-1}\} &\;& (\hbox{by }\varphi^2=\varphi+1).
\end{array}
\end{equation}
Inserting \eqref{eq:phi2a} in \eqref{eq:phi2} we obtain
\[
a_{n+2} = \lfloor  \varphi a_{n}  \rfloor + \lfloor  \varphi a_{n-1}  \rfloor + c_n,
\quad
\hbox{where}
\quad
c_n = \begin{cases}
1, & \hbox{if }   \varphi^{-1}\{\varphi a_n\}  < \{\varphi a_{n-1}\};\\
0, & \hbox{if }   \varphi^{-1}\{\varphi a_n\}  \ge \{\varphi a_{n-1}\},
\end{cases}
\]
the desired representation of $a_{n+2}$. We can now go on like that for three more steps and successively gain representations of $a_{n+3}$, $a_{n+4}$, and $a_{n+5}$ in terms of $a_n$ and $a_{n-1}$. The formula for $a_{n+5}$, which is relevant for us, reads
\begin{equation}\label{eq:phi5}
a_{n+5} = a_n + d_n
\end{equation}
where
\[
d_n = \begin{cases}
1, &  \hbox{if }  \{\varphi a_{n-1}\} \ge \varphi\{\varphi a_{n}\},\;    \{\varphi a_{n-1}\}+\{\varphi a_{n}\} >1   \\
    &  \hbox{or } \varphi\{\varphi a_{n-1}\} \le\{\varphi a_{n}\},\;      \varphi\{\varphi a_{n}\}  \le  1 + \{\varphi a_{n-1}\};    \\
0, &  \hbox{if }   \varphi \{\varphi a_{n-1}\}  > \{\varphi a_{n}\},\;    \{\varphi a_{n-1}\}+\{\varphi a_{n}\} \le 1,\;   \varphi^2 \{\varphi a_{n-1}\}<1    \\
    &  \hbox{or } \{\varphi a_{n}\} < \varphi\{\varphi a_{n-1}\} < \varphi^2\{\varphi a_{n}\},\;    \{\varphi a_{n-1}\}+\{\varphi a_{n}\}>1;  \\
-1,&  \hbox{if }  \varphi\{\varphi a_{n-1}\}>\{\varphi a_{n}\},\;         \{\varphi a_{n-1}\}+\{\varphi a_{n}\}\le 1,\;  \varphi^2\{\varphi a_{n-1}\}\ge 1     \\
    &  \hbox{or } \varphi\{\varphi a_{n}\}> 1+\{\varphi a_{n-1}\}.    \\
\end{cases}
\]
In particular, this implies that
\begin{equation}\label{eq:diffbound}
|a_{n+5}-a_{n}| \le 1.
\end{equation}

To conclude the proof we use the Fibonacci numbers $F_k$ defined by $F_0=0$, $F_1=1$ and $F_k=F_{k-1}+F_{k-2}$ for $k \ge 2$.
In particular, we will use the classical identity (see {\it e.g.} \cite[p.12]{Rockett-Szusz:92})
\begin{equation}\label{eq:fiboid}
\varphi F_k = F_{k+1} + \frac{(-1)^{k+1}}{\varphi^k}.
\end{equation}
Let $(a_n)_{n\in \mathbb{N}}$ be an arbitrary sequence satisfying \eqref{eq:phirecurrence} and choose $m \in \mathbb{N}$ in a way that $a_n \le F_{2m}$ holds for $n\in \{0,\ldots, 5\}$. We claim that in this case $a_n \le F_{2m}$ holds for all $n\in \mathbb{N}$. Assume on the contrary, that this is not true. Then there is a smallest $n \in \mathbb{N}$ such that $a_{n+5} > F_{2m}$. In order to derive a contradiction, we distinguish two cases. Assume first that $a_n < F_{2m}$. In this case \eqref{eq:diffbound} implies that $a_{n+5} \le F_{2m}$, which already yields the desired contradiction. Now assume that $a_n = F_{2m}$. Here we have to be more careful. First observe that \eqref{eq:phirecurrence} implies that $\varphi a_{n-1} \ge -a_{n-2}- a_n \ge -2F_{2m}$ and, hence, \eqref{eq:fiboid} yields $a_{n-1} \ge - 2\varphi^{-1} F_{2m}= -2\varphi^{-2}F_{2m+1}+2\varphi^{-2m-2} > -F_{2m+1}$. Summing up, we have
$-F_{2m+1} < a_{n-1} \le F_{2m}.$
As the Fibonacci numbers are the denominators of the convergents of the continued fraction expansion $\varphi=[1;1,1,1,\ldots]$ ({\it cf. e.g.} \cite[p.12]{Rockett-Szusz:92}) we obtain that
\begin{equation}\label{cfConsequence}
\{\varphi a_{n}\} \le \{\varphi a_{n-1}\} \le 1 - \{\varphi a_{n}\}.
\end{equation}
This chain of inequalities rules out the case $d_{n}=1$ in \eqref{eq:phi5}. Thus we get $a_{n+5} - a_n \in \{-1,0\}$, hence, $a_{n+5} \le a_{n} \le F_{2m}$, and we obtain a contradiction again.

We have now shown that $a_n \le F_{2m}$ holds for each $n\in \mathbb{N}$. However, in view of \eqref{eq:phi1},  $a_{n+1}\le F_{2m}$ implies that $a_{n} \ge -(1+\varphi)F_{2m}-1$, which yields that $-(1+\varphi)F_{2m}-1 \le a_n \le F_{2m}$ holds for each $n\in\mathbb{N}$. Thus, the orbit $\{a_n\,:\, n\in\mathbb{N}\}$ is bounded and, hence, there are only finitely many possibilities for the pairs $(a_n,a_{n+1})$. In view of \eqref{eq:phirecurrence} this implies that $(a_n)_{n\in \mathbb{N}}$ is ultimately periodic.
\end{proof}

This proof depends on the very particular continued fraction expansion of the golden ratio $\varphi$. It seems not possible to extend this argument to other parameters (apart from its conjugate $\varphi'=\frac{1-\sqrt{5}}{2}$). In fact, inequalities of the form \eqref{cfConsequence} do not hold any more and so it cannot be guaranteed that the orbit does not ``jump'' over the threshold values.

\subsection{Quadratic irrationals that give rise to rational rotations}\label{sec:domainexchange}

One of the most significant results on Conjecture~\ref{Vivaldi-SRS-Conjecture} is contained in Akiyama~{\it et al.}~\cite{Akiyama-Brunotte-Pethoe-Steiner:07}. It reads as follows.

\begin{theorem}
If $\lambda \in \left\{ \frac{\pm1\pm\sqrt 5}2, \pm\sqrt 2, \pm \sqrt 3\right\}$, then $ (1,\lambda)\in \D_2$, {\em i.e.}, each orbit of $\tau_{(1,\lambda)}$ is ultimately periodic.
\end{theorem}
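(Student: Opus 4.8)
The plan is to treat the six values of $\lambda$ according to the arithmetic of the associated rotation angle. Writing $\lambda=-2\cos(\pi\theta)$, these are exactly the quadratic irrationals for which $\theta$ is rational with small denominator: one gets $\theta\in\{\frac15,\frac25,\frac35,\frac45\}$ for $\lambda=\frac{\pm1\pm\sqrt5}{2}$, $\theta\in\{\frac14,\frac34\}$ for $\lambda=\pm\sqrt2$, and $\theta\in\{\frac16,\frac56\}$ for $\lambda=\pm\sqrt3$. In each case the companion matrix $R(1,\lambda)$ has eigenvalues $e^{\pm i\pi\theta}$, which are roots of unity, so some fixed power satisfies $R(1,\lambda)^{N}=\pm\mathrm{id}$; by \eqref{tauiterate} this gives $\tau_{(1,\lambda)}^{N}(\mathbf{z})=\pm\mathbf{z}+\mathbf{e}_N(\mathbf{z})$ with $\|\mathbf{e}_N(\mathbf{z})\|_\infty$ bounded independently of $\mathbf{z}$. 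This is the natural entry point, but it is not sufficient by itself, since the bounded error terms could a priori accumulate along an orbit.

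First I would pass to a bounded model. Using the complex eigenvector of $R(1,\lambda)$ as a change of coordinates one conjugates $\tau_{(1,\lambda)}$ to a \emph{domain exchange transformation} (a piecewise isometry) $T$ on a bounded polygonal region $X$, together with an injection $\iota\colon\mathbb{Z}^2\hookrightarrow X$ whose image is a countable dense $T$-invariant set and which intertwines $\tau_{(1,\lambda)}$ with $T$. This is the discretized-rotation picture of Section~\ref{sec:domainexchange}: $T$ rotates by $\pi\theta$ and then translates back into $X$, the translation being locally constant. Since $\iota$ is a conjugacy onto its image, it suffices to show that every $T$-orbit through a point of $\iota(\mathbb{Z}^2)$ is ultimately periodic.

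The crucial and, I expect, hardest step is a \emph{renormalisation} of $T$. For each of the three angles one has to exhibit a sub-region $X'\subset X$ and an affine similarity $\phi$ whose linear part is multiplication by a real algebraic number $\rho$ with $0<|\rho|<1$ (determined by the Galois conjugate of $\lambda$, e.g.\ a power of $\varphi^{-1}$ in the golden-ratio cases), such that $X=\phi(X)\cup\bigcup_j A_j$ where $T$ acts as a single rotation on each atom $A_j$, and such that the first-return map of $T$ to $\phi(X)$ equals $\phi\circ T\circ\phi^{-1}$. In other words $T$ is conjugate, via the contraction $\phi$, to its own induced map on a scaled copy of itself. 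Proving this requires an explicit description of the finitely many atoms of the piecewise isometry, a computation of the return map to $\phi(X)$, and a verification of the identification; the number of atoms grows with the order of the root of unity, so the cases $\lambda=\pm\sqrt2$ and $\lambda=\pm\sqrt3$ demand considerably more (and partly computer-assisted) bookkeeping than the golden-ratio case.

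To conclude, one iterates the renormalisation. Any $T$-orbit is either eventually contained in the rotational part $X\setminus\phi(X)$, where it visits only the finitely many atoms $A_j$ on which $T$ is a rational rotation and is therefore periodic by a pigeonhole argument, or it re-enters $\phi(X),\phi^2(X),\dots$ indefinitely. In the latter case each re-entry turns the orbit, read through $\phi^{-1}$, into an orbit of the \emph{same} system $T$ but of a point whose representation in the relevant finitely generated $\mathbb{Z}$-module has strictly smaller ``denominator'', shrinking by a factor comparable to $|\rho|$ at each level. Since $\iota(\mathbf{z})$ has a fixed such denominator for $\mathbf{z}\in\mathbb{Z}^2$, this descent can occur only finitely often, after which the orbit is trapped in $X\setminus\phi(X)$ and hence periodic; equivalently, the non-periodic points of $T$ form a measure-zero, $\phi$-invariant exceptional set meeting $\iota(\mathbb{Z}^2)$ trivially, so $T$ is \emph{quasi-periodic} on $\iota(\mathbb{Z}^2)$. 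Transporting this back through $\iota$ and the conjugacy of Section~\ref{sec:domainexchange} shows that every orbit of $\tau_{(1,\lambda)}$ is ultimately periodic, i.e.\ $(1,\lambda)\in\mathcal{D}_2$.
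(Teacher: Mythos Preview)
Your overall architecture---pass to a bounded model, establish a self-induction (renormalisation) of that model on a scaled copy of itself, and then argue that the image of $\mathbb{Z}^2$ avoids the aperiodic set---is exactly the strategy of the paper (following Akiyama, Brunotte, Peth\H{o} and Steiner). Two points, one minor and one substantial.

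The minor point: the conjugacy to the bounded model is not built from the complex eigenvector of $R(1,\lambda)$ but from the \emph{Galois} conjugate of $\lambda$. For $\lambda=\varphi$ the map is $(a_{k-1},a_k)\mapsto(\{\varphi a_{k-1}\},\{\varphi a_k\})\in[0,1)^2$, and the bounded system reads $T(x,y)=(y,\{-x-\varphi' y\})$ with $\varphi'=\frac{1-\sqrt5}{2}$. This $T$ is not a Euclidean piecewise isometry on $[0,1)^2$ (its linear part is not orthogonal), though it is area-preserving. Your description matches the geometric set-up of Lowenstein \emph{et al.}, which is equivalent but arranged differently; either way the renormalisation step goes through as you describe.

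The substantial gap is in your final step. Your ``denominator descent'' argument does not work, because in every one of these cases the renormalisation factor is a \emph{unit} in the relevant ring. For $\lambda=\varphi$ the conjugacy sends $\mathbb{Z}^2$ onto $\mathbb{Z}[\varphi]^2\cap[0,1)^2$, and $\phi^{-1}$ has linear part $\varphi^{2}$; since $\varphi$ is a unit in $\mathbb{Z}[\varphi]$, applying $\phi^{-1}$ to a point of $\mathbb{Z}[\varphi]^2$ again yields a point of $\mathbb{Z}[\varphi]^2$. Nothing shrinks, and a point of $\iota(\mathbb{Z}^2)$ could \emph{a priori} lie in the aperiodic set $X=\bigcap_n\phi^n(D)$ and be renormalised forever. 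The same happens for $\pm\sqrt2$ and $\pm\sqrt3$, which are likewise units in their rings of integers. What the paper actually does for this last step is different: one represents the points of $X$ by a symbolic (digit) expansion coming from the renormalisation, and uses this to reduce the claim $X\cap\mathbb{Z}[\varphi]^2=\emptyset$ to an explicit finite check of periodicity for a concrete list of points. That finite verification is, with present methods, unavoidable, and it is precisely what makes the cases $\pm\sqrt3$ so much more laborious than the golden-ratio case.
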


Observe that this settles all the instances $\lambda= -2\cos (\theta \pi)$ with rational rotation angle $\theta$ such that $\lambda$ is a quadratic irrational. The proof of this result is very long and tricky. In what follows, we outline the main ideas. The proof runs along the same lines for each of the eight values of $\lambda$. As the technical details are simpler for $\lambda = \varphi=\frac{1 + \sqrt{5}}{2}$ we use this instance as a guideline.

As in the first proof of periodicity in the golden ratio case given in \cite{Lowensteinetal:97}, a conjugate dynamical system that was also studied in Adler {\it et al}.~\cite{AdlerKitchensTresser:01} is used here. Indeed, let $((a_{k-1},a_k)^t)_{k\in \mathbb{N}}$ be an orbit of $\tau_{(1,\varphi)}$ and set $x=\{\varphi a_{k-1}\}$ and $y=\{\varphi a_{k}\}$. Then, by the definition of $\tau_{(1,\varphi)}$ we have that
\[
\{\varphi a_{k+1}\} = \{-\varphi a_{k-1} - \varphi^2 a_{k} + \varphi y\} = \{-x+(\varphi-1)y\}=\{-x-\varphi'y\}
\]
where $\varphi'=\frac{1 - \sqrt{5}}{2}$ is the algebraic conjugate of $\varphi$. Thus (the according restriction of) the mapping
\[
T:[0,1)^2 \to [0,1)^2, \quad (x,y) \mapsto (y, \{-x-\varphi'y\})
\]
is conjugate to $\tau_{(1,\varphi)}$ and it suffices to study the orbits of elements of $\mathbb{Z}[\varphi]^2\cap [0,1)^2$ under $T$ to prove the conjecture. (Computer assisted work on almost all orbits on $T$ was done in~\cite{Kouptsov-Lowenstein-Vivaldi:02}; however the results given there are not strong enough to imply the above theorem.) Let $A=\begin{pmatrix}0&-1\\ 1& 1/\varphi \end{pmatrix}$ and write $T$ in the form
\begin{equation}\label{Tcases}
T(x,y) = \begin{cases}
(x,y)A,& \hbox{for }y \ge \varphi x \\
(x,y)A + (0,1),  & \hbox{for }y < \varphi x.
\end{cases}
\end{equation}
We will now iteratively determine pentagonal subregions of $[0,1)^2$ whose elements admit periodic orbits under the mapping $T$. First define the pentagon
\[
R=\{(x,y)\in [0,1)^2\;:\; y <\varphi x,\ x+y > 1,\ x<\varphi y\}
\]
and split the remaining part $D = [0,1)^2 \setminus R$ according to the cases in the definition of $T$ in \eqref{Tcases}, {\it i.e.}, set
\begin{align*}
D_0 & = \{(x,y) \in D \;:\, y \ge \varphi x\} \setminus \{0,0\}, \\
D_1 &= D \setminus D_0.
\end{align*}
Using the fact that $A^5$ is the identity matrix one easily derives that $T^5(\mathbf{z})=\mathbf{z}$ for each $\mathbf{z} \in R$. This exhibits the first pentagon of periodic points of $T$. We will now use first return maps of $T$ on smaller copies of $[0,1)^2$ to exhibit more periodic pentagons.

\begin{figure}
\includegraphics[height=4cm]{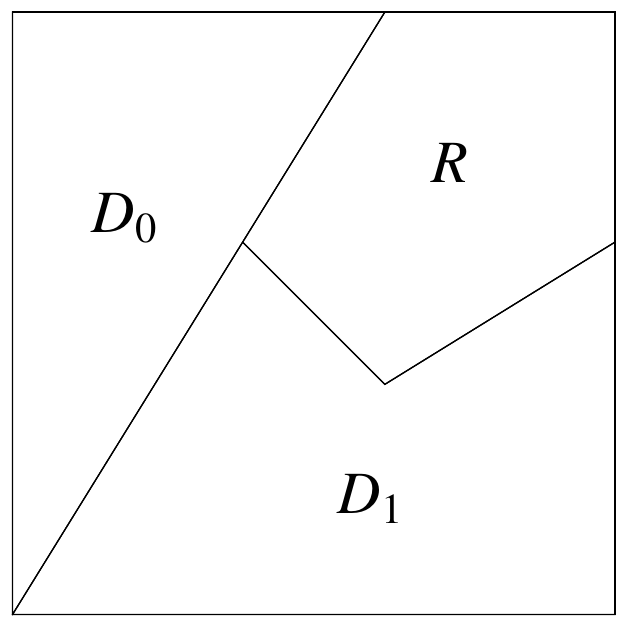} \hskip 1cm
\includegraphics[height=4cm]{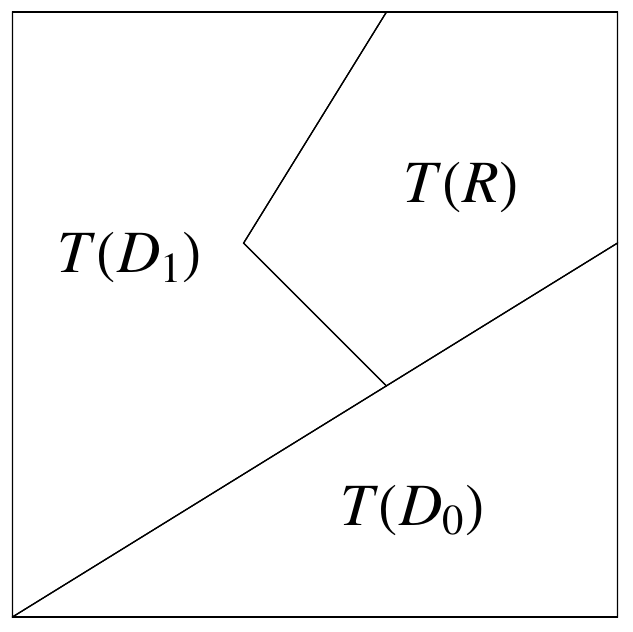} \hskip 1cm
\caption{The effect of the mapping $T$ on the regions $R$, $D_0$, and $D_1$ (compare~\cite[Figure~2.1]{Akiyama-Brunotte-Pethoe-Steiner:07}).
\label{fig:steiner1}}
\end{figure}

To this matter we first observe that by the definition of $D_0$ and $D_1$,  the mapping $T$ acts in an easy way on these sets (see Figure~\ref{fig:steiner1}).  Now we scale down $D$ by a factor $1/\varphi^2$ and  and follow the $T$-trajectory of each $\mathbf{z} \in D$ until it hits $D/\varphi^2$. First we determine all $\mathbf{z} \in D$ that never hit $D/\varphi^2$. By the mapping properties illustrated in Figure~\ref{fig:steiner1} one easily obtains that the set of these parameters is the union $P$ of the the five shaded pentagons drawn in Figure~\ref{fig:steiner2}.
\begin{figure}
\includegraphics[height=8cm]{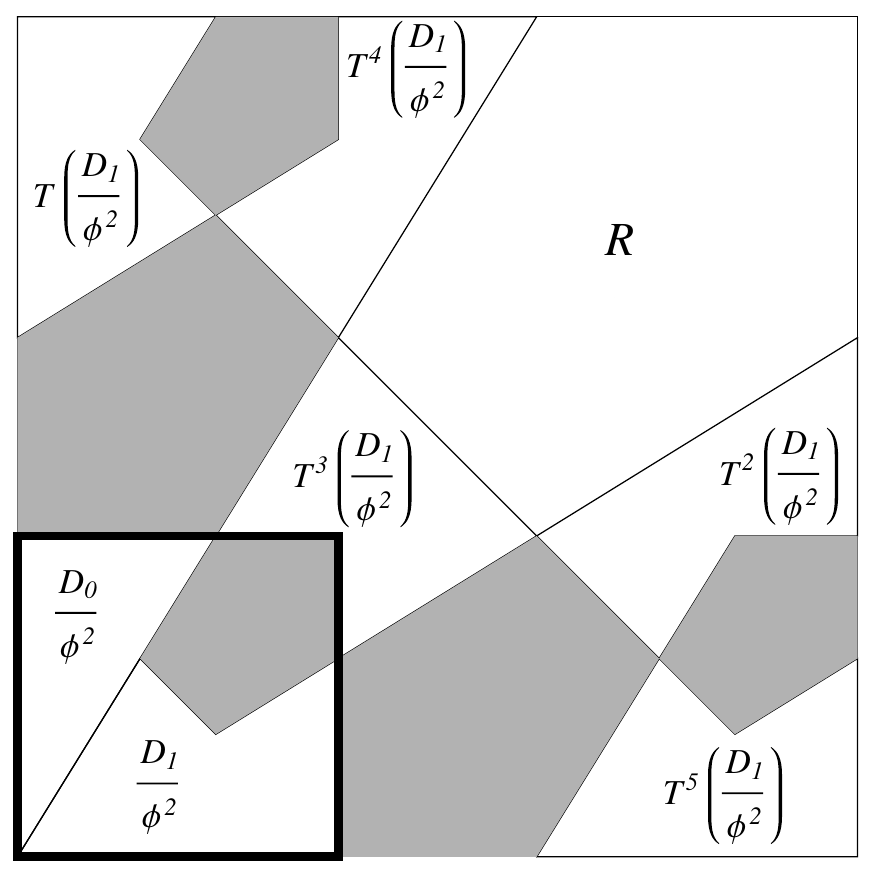} \hskip 1cm
\caption{The region of induction (black frame) of $T$ and the five (shaded) rectangles containing points with periodic orbits of $T$ (compare~\cite[Figure~2.2]{Akiyama-Brunotte-Pethoe-Steiner:07}).
\label{fig:steiner2}}
\end{figure}
Again we can use the mapping properties of Figure~\ref{fig:steiner1} to show that all elements of $P$ are periodic under $T$. Thus, to determine the periodic points of $T$ it is enough to study the map induced by $T$ on $D/\varphi^2$. The mapping properties of this induced map on the subsets $D_0/\varphi^2$ and $D_1/\varphi^2$ are illustrated in Figure~\ref{fig:steiner3}. They are (apart from the scaling factor) the same as the ones in Figure~\ref{fig:steiner1}.
\begin{figure}
\includegraphics[height=4cm]{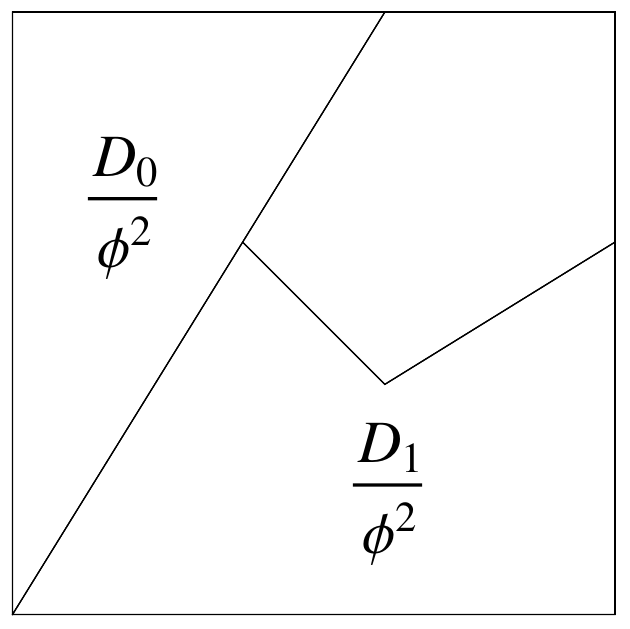} \hskip 1cm
\includegraphics[height=4cm]{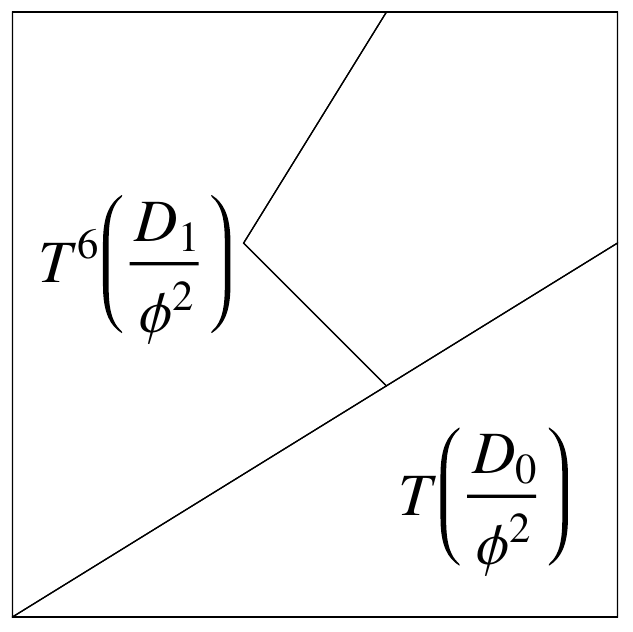} \hskip 1cm
\caption{The effect of the mapping $T$ on the induced regions $D_0/\varphi^2$ and $D_1/\varphi^2$. As can be seen by looking at the lower left corner of Figure~\ref{fig:steiner1},  the region $D_0/\varphi^2$ is mapped into the induced region $D/\varphi^2$ after one application of $T$. To map  $D_1/\varphi^2$ back to  $D/\varphi^2$ we need to apply the sixth iterate of $T$ (see Figure~\ref{fig:steiner2}, where $T^i(D_0/\varphi^2)$ is visualized for $i\in\{0,1,2,3,4,5\}$). The induced mapping on $D/\varphi^2$ shows the same behavior as $T$ on $D$, thus we say that $T$ is {\it self-inducing}.
\label{fig:steiner3}}
\end{figure}

Now we can iterate this procedure by defining a sequence of induced maps on $D/\varphi^{2k}$ each of which exhibits $5^k$ more pentagonal pieces of periodic points of $T$ in $[0,1)^2$. To formalize this, let $s(\mathbf{z})=\min\{m\in \mathbb{N}\,:\, T^m(\mathbf{z}) \in D/\varphi^2\}$ and define the mapping
\[
S: D \setminus P \to D, \quad \mathbf{z} \mapsto \varphi^2 T^{s(\mathbf{z})}(\mathbf{z}).
\]
The above mentioned iteration process then leads to the following result.

\begin{lemma}[{\cite[Theorem~2.1]{Akiyama-Brunotte-Pethoe-Steiner:07}}]\label{lem:steiner1}
The orbit $(T^k(\mathbf{z}))_{k\in \mathbb{N}}$ is periodic if and only if $\mathbf{z}\in R$ or $S^n(\mathbf{z})\in P$ for some $n \ge 0$.
\end{lemma}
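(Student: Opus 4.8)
The plan is to read off the equivalence from three facts that are already available: (a) every $\mathbf{z}\in R$ satisfies $T^5(\mathbf{z})=\mathbf{z}$, because $A^5$ is the identity matrix; (b) every point of $P$ is periodic under $T$; and (c) the self-inducing (renormalisation) property illustrated in Figures~\ref{fig:steiner1}--\ref{fig:steiner3}, namely that the first return map of $T$ to the rescaled set $D/\varphi^2$, conjugated by the dilation $\mathbf{z}\mapsto\varphi^2\mathbf{z}$, coincides with the action of $T$ on $D$. Iterating (c) one obtains $S^n(\mathbf{z})=\varphi^{2n}T^{M_n}(\mathbf{z})$ for suitable integers $M_n\ge0$ (with $M_n\to\infty$ whenever $\mathbf{z}\ne\mathbf{0}$), and one obtains that $\varphi^{-2n}P$ consists of $T$-periodic points for every $n\ge0$. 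I would also note at the outset that $T$ is a bijection of $[0,1)^2$, with inverse $(x,y)\mapsto(\{x/\varphi-y\},x)$; hence a $T$-orbit is periodic as soon as it meets one periodic point, and ``periodic'' and ``ultimately periodic'' coincide for $T$.

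For the ``if'' direction: if $\mathbf{z}\in R$ then $T^5(\mathbf{z})=\mathbf{z}$ by (a) and the orbit is periodic. If instead $S^n(\mathbf{z})\in P$ for some $n\ge0$, then by the identity above $T^{M_n}(\mathbf{z})=\varphi^{-2n}S^n(\mathbf{z})$ lies in $\varphi^{-2n}P$, a set of $T$-periodic points by the iterated form of (c); so the orbit of $\mathbf{z}$ meets a periodic point, and invertibility of $T$ forces $(T^k(\mathbf{z}))_{k\in\mathbb{N}}$ itself to be periodic.

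For the ``only if'' direction I would argue by contraposition. Suppose $\mathbf{z}\notin R$ and $S^n(\mathbf{z})\notin P$ for every $n$. Then $\mathbf{z}\in D$, and since $S$ maps $D\setminus P$ into $D$, induction shows $S^n(\mathbf{z})$ is defined and lies in $D\subset[0,1)^2$ for all $n$. Hence $\|T^{M_n}(\mathbf{z})\|=\varphi^{-2n}\|S^n(\mathbf{z})\|\le\sqrt{2}\,\varphi^{-2n}\to0$. If the orbit of $\mathbf{z}$ were periodic it would be a finite set, so the points $T^{M_n}(\mathbf{z})$, which tend to $\mathbf{0}$, would be forced to equal $\mathbf{0}$ for all large $n$; by injectivity of $T$ this gives $\mathbf{z}=\mathbf{0}$, the trivial fixed point, which is excluded from the outset. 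Thus for $\mathbf{z}\ne\mathbf{0}$ the orbit is not periodic, completing the proof.

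The main obstacle is ingredient (c) itself. Making the renormalisation structure precise requires tracking, through the elementary but intricate combinatorics of Figures~\ref{fig:steiner1}--\ref{fig:steiner3}, exactly how $T$ permutes the pieces $R$, $D_0$, $D_1$ and their rescaled copies: one must show that the $T$-trajectory of every $\mathbf{z}\in D$ either never leaves $P$ (the union of five pentagons of periodic points) or enters $D/\varphi^2$ after finitely many steps, and that the induced first return map on $D/\varphi^2$ is precisely $T$ on $D$ up to the factor $\varphi^2$. Once this is secured, the lemma is its formal shadow: a point is periodic exactly when finitely many renormalisation steps drop it into one of the already-understood pentagons $R$ or $P$.
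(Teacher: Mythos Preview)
Your proposal is correct and matches the approach the survey outlines: the paper does not give a detailed proof of this lemma but simply says that ``the above mentioned iteration process then leads to the following result'' and cites the original source, so you have filled in precisely the formalisation the paper leaves implicit, using the renormalisation identity $S^n(\mathbf{z})=\varphi^{2n}T^{M_n}(\mathbf{z})$ together with bijectivity of $T$. The only loose end is the fixed point $\mathbf{0}$, which you flag but do not fully settle within the stated hypotheses; this is a harmless boundary case (and is absorbed into the convention $D_0=\{\ldots\}\setminus\{\mathbf{0}\}$ in the paper's setup).
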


This result cannot only be used to characterize all periodic points of $T$ in $[0,1)^2$, it even enables one to determine the exact periods (see \cite[Theorem~2.3]{Akiyama-Brunotte-Pethoe-Steiner:07}). An approximation of the set $X \subset [0,1)^2$ of aperiodic points of $T$ is depicted in Figure~\ref{fig:steiner4}.
\begin{figure}
\includegraphics[height=8cm]{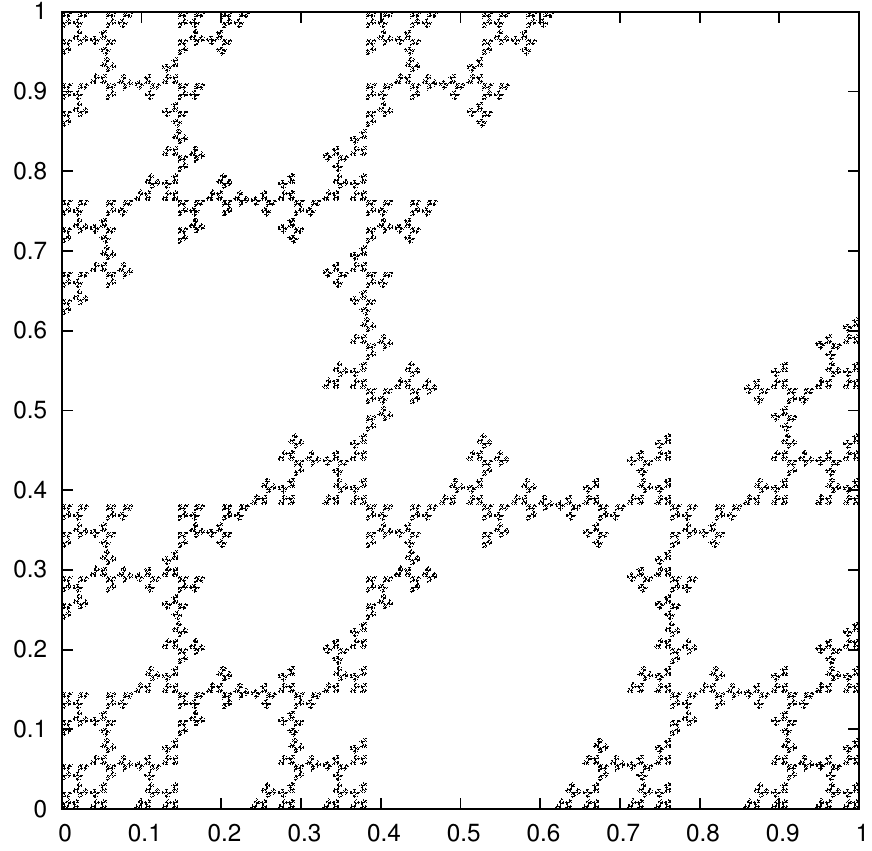} \hskip 1cm
\caption{The aperiodic set $X$ (see \cite[Figure~2.3]{Akiyama-Brunotte-Pethoe-Steiner:07}).
\label{fig:steiner4}}
\end{figure}
To prove the theorem it suffices to show that $X \cap \mathbb{Z}[\varphi]^2 = \emptyset$. By representing the elements of $X$ with help of some kind of digit expansion one can prove that this can be achieved by checking periodicity of the orbit of each point contained in a certain finite subset of $X$. This finally leads to the proof of the theorem. We mention that Akiyama and Harriss~\cite{AH:13} show that the dynamics of $T$ on the set of aperiodic points $X$ is conjugate to the addition of $1$ on the set of 2-adic integers $\mathbb{Z}_2$.

Analogous arguments are used to prove the other cases. However, the technical details get more involved (the worst case being $\pm\sqrt 3$). The most difficult part is to find a suitable region of induction for $T$ and no ``rule'' for its shape is known.  Although the difficulty of the technical details increases dramatically, similar arguments should be applicable for an arbitrary algebraic number $\alpha$ that induces a rational rotation. However, if $d$ is the degree of $\alpha$ the dynamical system $T$ acts on the set $[0,1)^{2d-2}$. This means that already in the cubic case we have to find a region of induction for $T$ in a $4$-dimensional torus.

\subsection{Rational parameters and $p$-adic dynamics}
Bosio and Vivaldi~\cite{Bosio-Vivaldi:00} study\footnote{see Remark~\ref{rem:vivaldi}} $\tau_{(1,\lambda)}$ for parameters $\lambda = q/p^n$ where $p$ is a prime and $q\in\mathbb{Z}$ with $|q|<2p^n$. They exhibit an interesting link to $p$-adic dynamics for these parameters. Before we give their main result we need some preparations.

For $p$ and $q$ given as above consider the polynomial
\[
p^{2n}\chi_{(1,\lambda)}\left(\frac{X}{p^n}\right)= X^2 +qX + p^{2n}.
 \]
If we regard this as a polynomial over the ring $\mathbb{Z}_p$ of $p$-adic integers, by standard methods from algebraic number theory one derives that it has two distinct roots $\theta,\theta' \in \mathbb{Z}_p$. Obviously we have
\begin{equation}\label{padicroots}
\theta \theta' = p^{2n} \quad\hbox{and}\quad \theta + \theta' = -q.
\end{equation}
With help of $\theta$ we now define the mapping
\begin{equation}\label{Ell}
\mathcal{L}: \mathbb{Z}^2 \to \mathbb{Z}_p,\quad (x,y)^t \mapsto  y-\frac{\theta x}{p^n}.
\end{equation}
If $\sigma:\mathbb{Z}_p \to \mathbb{Z}_p$ denotes the shift mapping
\[
\sigma\left(\sum_{i\ge 0} b_ip^i\right) =  \sum_{i\ge 0} b_{i+1}p^i
\]
we can state the following conjugacy of the SRS $\tau_{(1,\lambda)}$ to a mapping on $\mathbb{Z}_p$.

\begin{theorem}[{\cite[Theorem~1]{Bosio-Vivaldi:00}}]\label{BosioThm}
Let $p$ be a prime number and $q\in \mathbb{Z}$ with $|q|<2p^n$, and set $\lambda=q/p^n$. The function $\mathcal{L}$ defined in \eqref{Ell} embeds $\mathbb{Z}^2$ densely into $\mathbb{Z}_p$. The mapping $\tau^*_{(1,\lambda)}= \mathcal{L} \circ \tau_{(1,\lambda)}\circ \mathcal{L}^{-1}:\mathcal{L}(\mathbb{Z}^2)\to \mathcal{L}(\mathbb{Z}^2)$ is therefore conjugate to $ \tau_{(1,\lambda)}$. It can be extended continuously to $\mathbb{Z}_p$ and has the form
\[
\tau^*_{(1,\lambda)}(\psi) = \sigma^n(\theta' \psi).
\]
\end{theorem}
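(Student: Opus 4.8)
The plan is to prove the statement in three stages: (i) $\mathcal{L}$ is injective, so that $\tau^*_{(1,\lambda)}$ is automatically well-defined and conjugate to $\tau_{(1,\lambda)}$; (ii) $\mathcal{L}(\mathbb{Z}^2)$ is dense in $\mathbb{Z}_p$; and (iii) the conjugacy identity $\mathcal{L}\circ\tau_{(1,\lambda)}=\sigma^n\circ(\theta'\,\cdot\,)\circ\mathcal{L}$ holds on $\mathbb{Z}^2$, which together with (ii) and a continuity argument yields the continuous extension and its explicit shape.

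Before (i) I would record two facts about the roots. Since $|q|<2p^n$, the discriminant $q^2-4p^{2n}$ of $X^2+qX+p^{2n}$ is negative, so this polynomial has no real and hence no rational root; in particular $\theta\notin\mathbb{Q}$. Moreover, by \eqref{padicroots} we have $v_p(\theta)+v_p(\theta')=2n$ with $v_p(\theta),v_p(\theta')\ge 0$; after naming $\theta$ the root of larger valuation we get $v_p(\theta)\ge n$, whence $\theta/p^n\in\mathbb{Z}_p$ (so that $\mathcal{L}$ in \eqref{Ell} indeed takes values in $\mathbb{Z}_p$) and $\theta'\equiv-q\pmod{p^n}$ (using $\theta\equiv 0\pmod{p^n}$ and $\theta+\theta'=-q$). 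Part (i) now follows from \eqref{Ell}: if $\mathcal{L}(x,y)=\mathcal{L}(x',y')$ then $y-y'=(\theta/p^n)(x-x')$, and $x\ne x'$ would force $\theta/p^n\in\mathbb{Q}$, contradicting $\theta\notin\mathbb{Q}$; hence $x=x'$ and then $y=y'$. Part (ii) is immediate because $\mathcal{L}(0,y)=y$ shows $\mathbb{Z}\subseteq\mathcal{L}(\mathbb{Z}^2)$, and $\mathbb{Z}$ is dense in $\mathbb{Z}_p$.

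For (iii) I would argue by direct computation. Fix $\mathbf{z}=(z_0,z_1)^t$ and write $\tau_{(1,\lambda)}(\mathbf{z})=(z_1,z_2)^t$; by \eqref{linearinequalities} (with $d=2$, $r_0=1$, $r_1=\lambda$) one has $z_0+\lambda z_1+z_2=\{\lambda z_1\}$, and multiplying by $p^n$ and using $\lambda=q/p^n$ gives $p^nz_2=m-qz_1-p^nz_0$, where $m:=p^n\{\lambda z_1\}$ is the representative of $qz_1$ modulo $p^n$ in $\{0,\dots,p^n-1\}$. On the $p$-adic side, $\theta'\mathcal{L}(\mathbf{z})=\theta' z_1-(\theta\theta'/p^n)z_0=\theta' z_1-p^nz_0$ by \eqref{padicroots}, and its residue modulo $p^n$ equals that of $\theta' z_1$, i.e.\ that of $-qz_1$ by the congruence recorded above. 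Since $\sigma^n$ subtracts from its argument the representative of its class modulo $p^n$ in $\{0,\dots,p^n-1\}$ and divides by $p^n$, comparing $\sigma^n(\theta'\mathcal{L}(\mathbf{z}))$ with $\mathcal{L}(z_1,z_2)=z_2-(\theta/p^n)z_1$ reduces, after clearing the denominators $p^n$, to an equality of integer blocks that is forced by $\theta+\theta'=-q$, $\theta\theta'=p^{2n}$ and the interval constraint $0\le z_0+\lambda z_1+z_2<1$. I expect this step to be the main obstacle: it is routine in spirit but demands careful bookkeeping of the floor function, of the precise normalisation of the shift $\sigma$ used in \cite{Bosio-Vivaldi:00}, and of the boundary case $\{\lambda z_1\}=0$.

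Finally, for the continuous extension, the map $\psi\mapsto\theta'\psi$ is continuous on $\mathbb{Z}_p$ (multiplication by $\theta'\in\mathbb{Z}_p$ does not increase the $p$-adic absolute value), and $\sigma^n$ is continuous because $\psi\equiv\psi'\pmod{p^{N+n}}$ forces $\sigma^n(\psi)\equiv\sigma^n(\psi')\pmod{p^N}$. Hence $\psi\mapsto\sigma^n(\theta'\psi)$ is a continuous self-map of $\mathbb{Z}_p$ which, by (iii), coincides with $\tau^*_{(1,\lambda)}=\mathcal{L}\circ\tau_{(1,\lambda)}\circ\mathcal{L}^{-1}$ on the dense set $\mathcal{L}(\mathbb{Z}^2)$. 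As $\mathbb{Z}_p$ is Hausdorff, a continuous map is determined by its values on a dense subset, so $\psi\mapsto\sigma^n(\theta'\psi)$ is the unique continuous extension of $\tau^*_{(1,\lambda)}$ to $\mathbb{Z}_p$, which is exactly the asserted formula.
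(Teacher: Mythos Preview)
Your plan matches the paper's proof essentially step for step: the paper likewise defers injectivity (and continuity) of $\mathcal{L}$ to \cite[Proposition~4.1]{Bosio-Vivaldi:00}, then carries out exactly the direct computation you describe in~(iii), writing $\lfloor qy/p^n\rfloor=(qy-c)/p^n$ with $c\in\{0,\dots,p^n-1\}$ (your $m$), expanding $\mathcal{L}\big(\tau_{(1,\lambda)}(x,y)^t\big)$, and using only the Vieta relations $\theta\theta'=p^{2n}$, $\theta+\theta'=-q$ to collapse the expression to $(\theta'\psi+c)/p^n$; the identification with $\sigma^n(\theta'\psi)$ then comes from $c\equiv -\theta'\psi\pmod{p^n}$, which is exactly the congruence you isolate via $\theta'\equiv -q\pmod{p^n}$. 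Your self-contained arguments for injectivity (via $\theta\notin\mathbb{Q}$), density (via $\mathbb{Z}\subset\mathcal{L}(\mathbb{Z}^2)$), and the explicit continuity of $\psi\mapsto\sigma^n(\theta'\psi)$ are clean additions the paper only sketches or cites, but the core mechanism is the same.
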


\begin{proof}[Sketch of the proof {\rm (see \cite[Proposition~4.2]{Bosio-Vivaldi:00})}]
The fact that $\mathcal{L}$ is continuous and injective is shown in \cite[Proposition~4.1]{Bosio-Vivaldi:00}. We establish the formula for $\tau^*_{(1,\lambda)}$ which immediately implies the existence of the continuous extension to $\mathbb{Z}_p$.

Let $\psi = y-\frac{\theta x}{p^n} \in \mathcal{L}(\mathbb{Z}^2) \subset \mathbb{Z}_p$ be given. Noting that $\lfloor qy/p^n \rfloor =  qy/p^n - c/p^n$ for some $c \equiv qy\; ({\rm mod}\, p^n )$  and using \eqref{padicroots} we get
\begin{align*}
\tau^*_{(1,\lambda)}(\psi) &= \mathcal{L}\circ \tau_{(1,\lambda)}((x,y)^t)  = \mathcal{L}\left(\left(y,-x-\left\lfloor \frac{qy}{p^n} \right\rfloor \right)^t\right)
= -x-\left\lfloor \frac{qy}{p^n} \right\rfloor - \frac{\theta y}{p^n} \\
&= \frac{1}{p^n}\left( -p^n x - (q+\theta)y     + c        \right)
= \frac{1}{p^n} \left( -\frac{\theta'\theta}{p^n} x+ \theta' y + c \right) = \frac{1}{p^n}(\theta' \psi + c).
\end{align*}
One can show that $z\in\mathbb{Z}$ inplies $\theta z \equiv 0\; ({\rm mod}\, p^{2n} )$, thus $c \equiv qy \equiv q\psi \equiv -\theta' \psi \; ({\rm mod}\, p^{n} )$ and the result follows.
\end{proof}

Theorem~\ref{BosioThm} is used by Vivaldi and Vladimirov~\cite{Vivaldi-Vladimirov:03} to set up a probabilistic model for the cumulative round off error caused by the floor function under iteration of $\tau_{(1,\lambda)}$. Furthermore, they prove a central limit theorem for this model.

\subsection{Newer developments}

We conclude this section with two new results related to Conjecture~\ref{Vivaldi-SRS-Conjecture}. Very recently, Akiyama and Peth\H{o}~\cite{Akiyama-Pethoe:13} proved the following very general result.

\begin{theorem}[{\cite[Theorem~1]{Akiyama-Pethoe:13}}]\label{AP13}
For each fixed $\lambda\in(-2,2)$ the mapping $\tau_{(1,\lambda)}$ has infinitely many periodic orbits.
\end{theorem}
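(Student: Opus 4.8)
\emph{Plan.} I would begin by recasting the problem as a question about integer solutions of a cyclic system of inequalities. A periodic orbit of $\tau_{(1,\lambda)}$ of period $L$ is, after cyclic relabelling, the same thing as a vector $\mathbf a=(a_0,\dots,a_{L-1})^t\in\ZZ^L$ with
\[
0\le a_{n-1}+\lambda a_n+a_{n+1}<1\qquad(n\in\ZZ/L\ZZ),
\]
i.e. $M_\lambda\mathbf a\in[0,1)^L$, where $M_\lambda:=\lambda I+C+C^{-1}$ and $C$ is the cyclic shift on $\ZZ^L$; conversely any such $\mathbf a$ gives a periodic orbit whose period divides $L$ (the zero vector giving the fixed point $\mathbf 0$). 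Since there are only finitely many fixed points, it suffices to produce, for \emph{infinitely many primes} $L$, one \emph{nonzero} integer solution of $M_\lambda\mathbf a\in[0,1)^L$: for prime $L$ such a solution has exact period $L$, and distinct primes give distinct orbits.

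The second step is a determinant computation that makes the whole strategy plausible. Write $\lambda=-2\cos\omega$ with $\omega\in(0,\pi)$, and set $\mu=e^{i\omega}$. The circulant $M_\lambda$ has eigenvalues $\lambda+2\cos(2\pi k/L)$ for $k=0,\dots,L-1$, and using $\mu+\mu^{-1}-\zeta^k-\zeta^{-k}=(\mu-\zeta^k)(\mu\zeta^k-1)/(\mu\zeta^k)$ (with $\zeta=e^{2\pi i/L}$) and the factorisations of $\prod_k(\mu-\zeta^k)$, $\prod_k(\mu\zeta^k-1)$, $\prod_k\zeta^k$ over $L$-th roots of unity one gets the clean formula
\[
\det M_\lambda=(-1)^{L+1}\,4\sin^2\!\Big(\frac{L\omega}{2}\Big).
\]
Hence, whenever $M_\lambda$ is nonsingular, $M_\lambda^{-1}([0,1)^L)$ is a parallelepiped of volume $1/(4\sin^2(L\omega/2))$, which becomes enormous precisely when $L\omega$ is close to a multiple of $2\pi$.

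For the generic case $\omega/(2\pi)\notin\QQ$, equidistribution of $\{p\cdot\omega/(2\pi)\}$ over the primes $p$ (Vinogradov) lets me pick primes $L_k$ with $\{L_k\omega/(2\pi)\}\to0$, so that $\det M_\lambda\to0$ and the volume of $M_\lambda^{-1}([0,1)^{L_k})$ tends to infinity. The remaining, and in my view main, obstacle is to turn this volume blow-up into the existence of a nonzero lattice point in the parallelepiped: the body is a thin sliver, elongated only in the two-dimensional subspace $V\subset\RR^{L_k}$ spanned by the real and imaginary parts of the near-null Fourier mode(s) (those $k$ with $2\pi k/L_k\approx\pm\omega$), and for prime $L_k$ this plane $V$ contains no nonzero integer vector. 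One therefore needs a lattice point \emph{close} to $V$, i.e. a simultaneous-Diophantine / geometry-of-numbers argument adapted to the precise shape of $M_\lambda^{-1}([0,1)^{L_k})$ (controlling the ``widths'' coming from the non-small eigenvalues $\lambda+2\cos(2\pi k/L_k)$), which is where the real work lies; Minkowski's theorem alone is far too weak here. Once a nonzero solution is produced for each $L_k$, each yields a period-$L_k$ orbit and we are done.

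Finally there is the case $\omega/(2\pi)=p/q\in\QQ$, where $R(\mathbf r)$ has finite order: here $|\det M_\lambda|=4\sin^2(\pi Lp/q)$ stays bounded away from $0$ unless $q\mid L$, in which case $M_\lambda$ is singular with a two-dimensional kernel that (generically) again contains no nonzero lattice point, so the volume trick does not apply. The subcase in which $\lambda$ is a \emph{quadratic} irrational is exactly $q\in\{4,5,6\}$ and is already settled (all orbits are periodic, by the theorem quoted just before this one). For the residual subcase ($\lambda$ of degree $\ge3$ over $\QQ$) I would argue differently: since $R(\mathbf r)^N=I$ for some $N$, we have $\tau_{(1,\lambda)}^N=\mathrm{id}+\mathbf D$ with $\mathbf D\colon\ZZ^2\to\ZZ^2$ integer-valued and \emph{bounded}, and $\tau_{(1,\lambda)}$ extends to the area-preserving piecewise-isometry $\hat\tau(x,y)=(y,-x-\lfloor\lambda y\rfloor)$ of $\RR^2$; the goal is to exploit this structure to force some orbit to be bounded (hence periodic). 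This is delicate, because bounded displacement plus measure preservation alone does \emph{not} suffice (a translation is a counterexample), so the finite-order rotational structure of $\hat\tau^N$ must genuinely enter — this is the other place where the argument must be worked out carefully.
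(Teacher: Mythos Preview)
Your proposal is not a proof but a plan with two self-acknowledged gaps that are, in fact, the whole difficulty. In the irrational-rotation case you correctly observe that the parallelepiped $M_\lambda^{-1}([0,1)^L)$ has volume $\asymp 1/\sin^2(L\omega/2)$, but you then say that turning this into a nonzero lattice point ``is where the real work lies'' and that ``Minkowski's theorem alone is far too weak here''. That is exactly right, and nothing you wrote gets past it: the body is elongated only in a two-dimensional Fourier subspace that meets $\ZZ^L$ trivially for prime~$L$, while in the remaining $L-2$ directions the widths are governed by the eigenvalues $2(\cos(2\pi k/L)-\cos\omega)$, many of which are of order $1/L$; the resulting successive-minima picture is delicate and does not obviously yield a lattice point. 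In the rational-rotation case (degree $\ge 3$) you again stop at ``the argument must be worked out carefully''. So as it stands the proposal is a heuristic, not a proof.

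The paper's argument (from Akiyama--Peth\H{o}) is entirely different and avoids both obstacles. It is a pigeonhole/area argument in the plane, not in $\RR^L$. One assumes for contradiction that $\tau_{(1,\lambda)}$ has only finitely many periodic orbits. After a suitable rescaling of the lattice, every \emph{unbounded} orbit is shown to hit a ``trapping region'' $\mathrm{Trap}(R)$, defined as the symmetric difference of two disks of radius~$R$ whose centres are at a fixed distance (independent of~$R$). This lune-shaped region contains only $O(R)$ lattice points, whereas the number of distinct unbounded orbits that must pass through it grows faster as $R\to\infty$; since $\tau_{(1,\lambda)}$ is a bijection on $\ZZ^2$ and distinct orbits are disjoint, this is a contradiction. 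The argument is uniform in~$\lambda$ and makes no case distinction between rational and irrational rotation numbers. If you want to pursue an alternative route, you would have to genuinely close the geometry-of-numbers gap in high dimension; the paper's two-dimensional counting idea is both simpler and complete.
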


The proof is tricky and uses the fact that (after proper rescaling of the lattice $\mathbb{Z}^2$) each unbounded orbit of  $\tau_{(1,\lambda)}$ has to hit a so called ``trapping region'' ${\rm Trap}(R)$ which is defined as the symmetric difference of two circles of radius $R$ whose centers have a certain distance (not depending on $R$) of each other. The proof is done by contradiction. If one assumes that there are only finitely many periodic orbits, there exist more unbounded orbits hitting ${\rm Trap}(R)$ than there are lattice points in ${\rm Trap}(R)$ if $R$ is chosen large enough. This contradiction proves the theorem.

Using lattice point counting techniques this result can be extended to variants of SRS (as defined in Section~\ref{sec:variants}), see \cite[Theorem~2]{Akiyama-Pethoe:13}.

\medskip

Reeve-Black and Vivaldi~\cite{Reeve-Black-Vivaldi:13} study the dynamics of\footnote{see Remark~\ref{rem:vivaldi}} $\tau_{(1,\lambda)}$ for $\lambda\to 0$, $\lambda < 0$. While the dynamics of $\tau_{(1,0)}$ is trivial, for each fixed small positive parameter $\lambda$ one observes that the orbits approximate polygons as visualized in Figure~\ref{fig:black}.
\begin{figure}
\includegraphics[height=6cm]{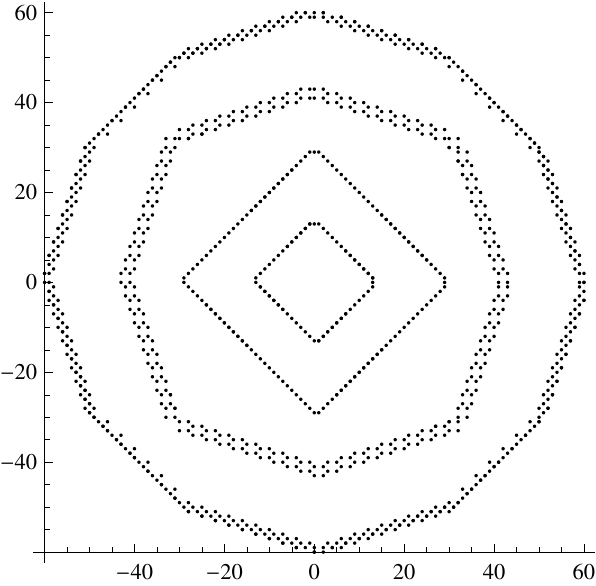} \hskip 1cm
\caption{Some examples of orbits of $\tau_{(1,1/50)}$.
\label{fig:black}}
\end{figure}
Close to the origin the orbits approximate squares, however, the number of edges of the polygons increases the farther away from the origin the orbit is located. Eventually, the orbits approximate circles. Moreover, the closer to zero the parameter $\lambda$ is chosen, the better is the approximation of the respective polygons (after a proper rescaling of the mappings $\tau_{(1,\lambda)}$). This behavior can be explained by using the fact that $\tau^4_{(1,\lambda)}(\mathbf{z})$ is very close to $\mathbf{z}$ for small values of $\lambda$.

The idea in \cite{Reeve-Black-Vivaldi:13} is now to construct a near integrable Hamiltonian function $P:\mathbb{R}^2 \to \mathbb{R}$ that models this behavior. In particular, $P$ is set up in a way that the orbits of the flow associated with the Hamiltonian vector field $(\partial P / \partial y, -\partial P/\partial x)$ are polygons. Moreover, if such a polygon passes through a lattice point of $\mathbb{Z}^2$ it is equal to the corresponding polygon approximated by the discrete systems.
Polygonal orbits of the flow passing through a lattice point are called {\it critical}. Critical polygons are used to separate the phase space into infinitely many {\it classes}.

There is a crucial difference between the orbits of the discrete systems and their Hamiltonian model: the orbits of the Hamiltonian flow surround a polygon once and then close up. As can be seen in Figure~\ref{fig:black} this need not be the case for the orbits of $\tau_{(1,\lambda)}$. These may well ``surround'' a polygon more often (as can be seen in the two outer orbits of Figure~\ref{fig:black}). This behavior leads to long periods and makes the discrete case harder to understand. Discrete orbits that surround a polygon only once are called {\it simple}. They are of particular interest because they {\it shadow} the orbit of the Hamiltonian system and show some kind of structural stability.

The main result in \cite{Reeve-Black-Vivaldi:13} asserts that there are many simple orbits. In particular, there exist infinitely many classes in the above sense, in which a positive portion of the orbits of $\tau_{(1,\lambda)}$ are simple for small values of $\lambda$. The numerical value for this portion can be calculated for $\lambda\to0$. These classes can be described by divisibility properties of the coordinates of the lattice points contained in a critical polygon (see~\cite[Theorems~A and~B]{Reeve-Black-Vivaldi:13}).

\section{The boundary of $\mathcal{D}_d$ and periodic expansions w.r.t.\ Salem numbers}

In Section~\ref{sec:D2} we considered periodicity properties of the orbits of $\tau_{\mathbf{r}}$ for $\mathbf{r}\in \partial\D_2$. While we get complete results for the regions $E_2^{(1)}$ and $E_2^{(-1)}$, the orbits of $\tau_\mathbf{r}$ for $\tau_{\mathbf{r}}\in E_2^{(\mathbb{C})}$ are hard to study and their periodicity is known only for a very limited number of instances. In the present section we want to discuss periodicity results for orbits of $\tau_\mathbf{r}$ with $\mathbf{r}\in \partial\D_d$. Here, already the ``real'' case is difficult. In Section~\ref{sec:realDd} we review a result due to Kirschenhofer {\it et al.}~\cite{Kirschenhofer-Pethoe-Surer-Thuswaldner:10} that shows a relation of this problem to the structure of $\D_p^{(0)}$ for $p < d$. The study of the ``non-real'' part $E_d^{(\mathbb{C})}$ is treated in the remaining parts of this section. Since we saw in Section~\ref{sec:D2} that the investigation of $E_2^{(\mathbb{C})}$ is difficult already in the case $d=2$, it is no surprise that no complete result exists in this direction. However, there are interesting relations to a conjecture of Bertrand~\cite{Bertrand:77} and Schmidt~\cite{Schmidt:80} on periodic orbits of beta-transformations w.r.t.\ Salem numbers and partial periodicity results starting with the work of Boyd~\cite{Boyd:89,Boyd:96,Boyd:97} that we want to survey.

\subsection{The case of real roots}\label{sec:realDd}
In Kirschenhofer {\it et al.}~\cite{Kirschenhofer-Pethoe-Surer-Thuswaldner:10}  the authors could establish strong relations between the sets $\D_d$ on the one side and $\D_e^{(0)}$ for $e<d$ as well as some related sets on the other side. This leads to a characterization of $\D_d$ in the regions $E_d^{(1)}$ and $E_d^{(-1)}$ (and even in some small subsets of $E_d^{(\mathbb{C})}$) in terms of these sets. The according result, which is stated below as Corollary~\ref{cor:real} follows from a more general theorem that will be established in this subsection.

Recall the definition of the operator $\odot$ in \eqref{eq:odot}. It was observed in \cite{Kirschenhofer-Pethoe-Surer-Thuswaldner:10} that the behavior of $\tau_{\bf r}, {\bf r} \in \RR^p$, can be described completely by the behavior of $\tau_{{\bf r}
\odot {\bf s}}$ if ${\bf s}\in\mathbb{Z}^q$. To be more specific,
 for  $q \in
\NN\setminus\{0\}$ and  ${\bf s}=(s_0,\ldots,s_{q-1}) \in
\ZZ^q$ let
\[
V_{\bf s}:\ZZ^{\infty} \rightarrow \ZZ^{\infty},\quad (x_n)_{n \in
\NN} \mapsto \left(\sum_{k=0}^{q-1} s_kx_{n+k}+x_{n+q}\right)_{n \in
\NN}.
\]
Then $V_{\bf s}$ maps each periodic sequence to a periodic sequence and each sequence that is eventually zero to a sequence that is eventually zero. Furthermore the following important fact holds ({\it cf.}~\cite{Kirschenhofer-Pethoe-Surer-Thuswaldner:10}).

\begin{proposition}
Let $p,q \geq 1$ be integers, ${\bf r} \in \RR^p$ and ${\bf s} \in
\ZZ^q$. Then
\[
V_{\bf s}\circ\tau_{{\bf r} \odot {\bf s}}^*(\ZZ^{p+q})=\tau_{\bf
r}^*(\ZZ^p).
\]\end{proposition}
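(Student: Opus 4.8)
The plan is to translate both sides of the identity into statements about the integer sequences generated by the respective SRS, and then to reduce everything to the single polynomial identity $\chi_{\mathbf r\odot\mathbf s}=\chi_{\mathbf r}\cdot\chi_{\mathbf s}$ of \eqref{eq:odot}. By \eqref{linearinequalities}, a sequence $(x_n)_{n\in\NN}\in\ZZ^{\infty}$ lies in $\tau_{\mathbf r}^*(\ZZ^p)$ --- that is, is the orbit sequence of $\tau_{\mathbf r}$ started at some $\mathbf z\in\ZZ^p$ --- exactly when it is \emph{$\mathbf r$-admissible}, meaning $0\le r_0x_n+r_1x_{n+1}+\cdots+r_{p-1}x_{n+p-1}+x_{n+p}<1$ for every $n\in\NN$: prescribing the first $p$ entries arbitrarily forces each further entry to be the unique integer for which the next inequality holds. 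Writing $E\colon\ZZ^{\infty}\to\ZZ^{\infty}$, $(x_n)_n\mapsto(x_{n+1})_n$, for the shift and, for a coefficient vector $\mathbf t=(t_0,\dots,t_{m-1})$, $\chi_{\mathbf t}(E)$ for the operator $(x_n)_n\mapsto\big(t_0x_n+\cdots+t_{m-1}x_{n+m-1}+x_{n+m}\big)_n$ (that is, $\chi_{\mathbf t}$ of \eqref{chi} evaluated at $E$), we have $V_{\mathbf s}=\chi_{\mathbf s}(E)$, a sequence is $\mathbf r$-admissible iff $\chi_{\mathbf r}(E)$ maps it into $[0,1)^{\NN}$, and --- since substitution of a fixed operator for the variable respects products --- $\chi_{\mathbf r\odot\mathbf s}(E)=\chi_{\mathbf r}(E)\circ\chi_{\mathbf s}(E)=\chi_{\mathbf r}(E)\circ V_{\mathbf s}$.

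For the inclusion ``$\subseteq$'', take $\mathbf w\in\tau_{\mathbf r\odot\mathbf s}^*(\ZZ^{p+q})$, so $\mathbf w$ is an $(\mathbf r\odot\mathbf s)$-admissible integer sequence, and put $\mathbf y=V_{\mathbf s}(\mathbf w)$. Since $\mathbf s\in\ZZ^q$, the sequence $\mathbf y$ is again integral, and the factorization gives $\chi_{\mathbf r}(E)(\mathbf y)=\chi_{\mathbf r\odot\mathbf s}(E)(\mathbf w)\in[0,1)^{\NN}$; hence $\mathbf y$ is $\mathbf r$-admissible, i.e.\ $\mathbf y\in\tau_{\mathbf r}^*(\ZZ^p)$.

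For ``$\supseteq$'' I would check that $V_{\mathbf s}$ is already onto $\tau_{\mathbf r}^*(\ZZ^p)$ when restricted to $\tau_{\mathbf r\odot\mathbf s}^*(\ZZ^{p+q})$. Given an $\mathbf r$-admissible $\mathbf y=(y_n)_{n\in\NN}$, pick $w_0,\dots,w_{q-1}\in\ZZ$ arbitrarily and define $w_{n+q}=y_n-s_0w_n-\cdots-s_{q-1}w_{n+q-1}$ for $n\ge0$; because this recurrence is monic with integer coefficients, $\mathbf w=(w_n)_{n\in\NN}$ stays in $\ZZ^{\infty}$ and satisfies $V_{\mathbf s}(\mathbf w)=\mathbf y$ by construction. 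Then $\chi_{\mathbf r\odot\mathbf s}(E)(\mathbf w)=\chi_{\mathbf r}(E)(\mathbf y)\in[0,1)^{\NN}$, so $\mathbf w$ is $(\mathbf r\odot\mathbf s)$-admissible; by the characterization above it is therefore the orbit sequence of $\tau_{\mathbf r\odot\mathbf s}$ started at $(w_0,\dots,w_{p+q-1})^t$, whence $\mathbf w\in\tau_{\mathbf r\odot\mathbf s}^*(\ZZ^{p+q})$ and $V_{\mathbf s}(\mathbf w)=\mathbf y$. The two inclusions give the claim.

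The argument becomes essentially formal once the dictionary \emph{integer vector $\leftrightarrow$ admissible orbit sequence} is in place, so I do not expect a genuine obstacle. The one step needing a little care is the surjectivity direction: one must actually produce the preimage $\mathbf w$ by solving $V_{\mathbf s}(\mathbf w)=\mathbf y$ forward and then verify that the resulting sequence is $(\mathbf r\odot\mathbf s)$-admissible, and not merely integral. The bookkeeping with index ranges is harmless --- all sequences are one-sided, and $V_{\mathbf s}$ uses $q$ future terms while $\chi_{\mathbf r}(E)$ uses a further $p$, exactly the $p+q$ future terms used by $\chi_{\mathbf r\odot\mathbf s}(E)$ --- so every inequality that has to be checked is available at the index in question.
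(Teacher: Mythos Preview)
Your proof is correct and follows essentially the same idea as the paper's, just packaged more cleanly. The paper introduces the $p\times(p+q)$ matrix $U$ (which simply reads off the first $p$ entries of $V_{\mathbf s}$ applied to a sequence with initial segment $\mathbf x$) and asserts the pointwise identity $V_{\mathbf s}\circ\tau_{\mathbf r\odot\mathbf s}^*(\mathbf x)=\tau_{\mathbf r}^*(U\mathbf x)$, to be proved by induction on the index~$n$; surjectivity then comes from $U\ZZ^{p+q}=\ZZ^p$. Your shift-operator formalism --- the factorization $\chi_{\mathbf r\odot\mathbf s}(E)=\chi_{\mathbf r}(E)\circ V_{\mathbf s}$ together with the admissibility characterization of orbit sequences --- is precisely what that induction would establish, but it makes the argument transparent in one line and handles both inclusions symmetrically without invoking the matrix or the induction step explicitly.
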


Here we denote by $\tau_{\bf t}^*(\bf x)$ the integer sequence that is derived by concatenating successively the newly occurring entries of the iterates  $\tau_{\bf t}^n(\bf x)$ to the entries of the referring initial vector ${\bf x} =(x_0,\dots,x_{d-1})^t$.

\begin{proof}[Sketch of the proof {(\rm compare~\cite{Kirschenhofer-Pethoe-Surer-Thuswaldner:10})}.]
Let
\[
U=\left(\begin{array}{ccccccccc}
s_0 & s_1 & \cdots & s_{q-1} & 1 & 0 &\cdots &0 \\
0 & s_0 & & & \ddots & \ddots & \ddots & \vdots \\
\vdots & \ddots &\ddots & & & \ddots & \ddots & 0 \\
0 & \cdots  & 0 & s_0& \cdots & \cdots & s_{q-1} & 1
\end{array}\right) \in \ZZ^{p \times (p+q)}.
\]
Then $U$ has maximal rank $p$ and $U\ZZ^{p+q}=\ZZ^p$. The result will be proved if we show that for all ${\bf x} \in \ZZ^{p+q}$
\begin{equation}\label{for1}
V_{\bf s}\circ\tau_{{\bf r} \odot {\bf s}}^*({\bf x})=\tau_{\bf r}^*(U{\bf x}).
\end{equation}
holds. Supposing $(x_k)_{k \in \NN}=\tau_{{\bf r} \odot {\bf s}}^*({\bf x})$ and $(y_k)_{k \in \NN}=\tau_{\bf r}^*(U{\bf x})$ it has to be shown that
$$
y_n=s_0x_n+\cdots+s_{q-1}x_{n+q-1}+x_{n+q}
$$
holds for all $n \geq 0$. The latter fact  is now proved  by induction on $n$.
\end{proof}

\begin{example}[{see \cite{Kirschenhofer-Pethoe-Surer-Thuswaldner:10}}]
Let ${\bf r}=(\frac{11}{12},\frac{9}{5})$ and ${\bf s}=(1)$. The theorem says that the behavior of $\tau_{\bf r}$ is completely described by the behavior of $\tau_{{\bf r} \odot {\bf s}}$. For instance, suppose ${\bf y}:=(5,-3)^t$. We can choose ${\bf x}:=(4,1,-4)^t$ such that $U{\bf x}={\bf y}$ with
\[U=\left(\begin{array}{ccc}
1 & 1 & 0  \\
0 & 1 & 1
\end{array}\right).\]
Then ${\bf r} \odot {\bf s}=(\frac{11}{12},\frac{163}{60},\frac{14}{5})$ and
\[
\tau_{{\bf r} \odot {\bf s}}^*({\bf x})=4,1,-4,(5,-4,2,1,-4,7,-9,10,-9,7,-4,1,2,-4)^\infty.
\]
In our case the map $V_{\bf s}$ performs the addition of each two consecutive entries of a sequence. Therefore we find
\[
\tau_{{\bf r}}^*({\bf y})=V_{{\bf s}}\circ\tau_{{\bf r} \odot {\bf s}}^*({\bf x})=5,-3,(1,1,-2,3,-3,3,-2)^\infty.
\]
\end{example}

An important consequence of the last proposition is the following result.

\begin{corollary}[{{\em cf.}~\cite{Kirschenhofer-Pethoe-Surer-Thuswaldner:10}}]
Let ${\bf r} \in \RR^d$ and ${\bf s} \in \overline{\E_q} \cap
\ZZ^q$.
\begin{itemize}
\item  If ${\bf r} \odot {\bf s} \in \D_{d+q}$  then ${\bf r} \in
\D_{d}$.
\item  If ${\bf r} \odot {\bf s} \in \D_{d+q}^{(0)}$ then
${\bf r} \in \D_{d}^{(0)}$.
\end{itemize}
\end{corollary}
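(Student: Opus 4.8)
The plan is to read both implications off the preceding Proposition, applied with $p=d$. Its content — the identity \eqref{for1}, namely $V_{\bf s}\circ\tau_{{\bf r}\odot{\bf s}}^*({\bf x})=\tau_{\bf r}^*(U{\bf x})$ for all ${\bf x}\in\ZZ^{d+q}$, together with the surjectivity $U\ZZ^{d+q}=\ZZ^d$ noted in its proof — already carries all the weight; what remains is only to combine it with the two properties of $V_{\bf s}$ recorded just before the Proposition (it sends periodic sequences to periodic ones and eventually-zero sequences to eventually-zero ones) and with the elementary translation between an orbit and its associated concatenated sequence. Concretely, for any ${\bf t}\in\RR^e$ and ${\bf w}\in\ZZ^e$, the orbit $(\tau_{\bf t}^n({\bf w}))_{n\in\NN}$ is ultimately periodic (resp.\ eventually equals ${\bf 0}$) if and only if the sequence $\tau_{\bf t}^*({\bf w})$ is ultimately periodic (resp.\ eventually zero), since the iterates $\tau_{\bf t}^n({\bf w})$ are exactly the overlapping blocks of length $e$ of $\tau_{\bf t}^*({\bf w})$ and ${\bf 0}$ is a fixed point of $\tau_{\bf t}$.

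First I would fix an arbitrary ${\bf y}\in\ZZ^d$ and, invoking surjectivity of $U:\ZZ^{d+q}\to\ZZ^d$, choose ${\bf x}\in\ZZ^{d+q}$ with $U{\bf x}={\bf y}$; then \eqref{for1} reads $\tau_{\bf r}^*({\bf y})=V_{\bf s}\bigl(\tau_{{\bf r}\odot{\bf s}}^*({\bf x})\bigr)$. For the first bullet, assume ${\bf r}\odot{\bf s}\in\D_{d+q}$. Then the $\tau_{{\bf r}\odot{\bf s}}$-orbit of ${\bf x}$ is ultimately periodic, so $\tau_{{\bf r}\odot{\bf s}}^*({\bf x})$ is an ultimately periodic sequence; since $V_{\bf s}$ commutes with the left shift, the property ``periodic $\mapsto$ periodic'' extends to ``ultimately periodic $\mapsto$ ultimately periodic'', whence $\tau_{\bf r}^*({\bf y})$ is ultimately periodic, i.e.\ the $\tau_{\bf r}$-orbit of ${\bf y}$ is ultimately periodic. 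As ${\bf y}$ was arbitrary, ${\bf r}\in\D_d$. For the second bullet, assume ${\bf r}\odot{\bf s}\in\D_{d+q}^{(0)}$. Then the orbit of ${\bf x}$ eventually reaches ${\bf 0}$, so $\tau_{{\bf r}\odot{\bf s}}^*({\bf x})$ is eventually zero, and by the second property of $V_{\bf s}$ the sequence $\tau_{\bf r}^*({\bf y})$ is eventually zero too; hence the $\tau_{\bf r}$-orbit of ${\bf y}$ reaches ${\bf 0}$, and since ${\bf y}$ was arbitrary, ${\bf r}\in\D_d^{(0)}$.

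I do not expect a real obstacle: all the nontrivial work is packaged in the preceding Proposition. The only points that merit a careful sentence are (a) making the passage between an orbit and its concatenated sequence precise in both the ultimately-periodic and the eventually-zero regime, and (b) noting that for these two implications one only needs ${\bf s}\in\ZZ^q$; the closure hypothesis ${\bf s}\in\overline{\E_q}$ does not enter here and is relevant to the sharper correspondences of \cite{Kirschenhofer-Pethoe-Surer-Thuswaldner:10}, where one must invert the linear recurrence $V_{\bf s}(x)=y$ and the behaviour of its homogeneous solutions matters.
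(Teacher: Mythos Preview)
Your argument is correct and is precisely the derivation the paper has in mind: the corollary is stated there without proof, as ``an important consequence of the last proposition,'' and your spelling-out of \eqref{for1} together with surjectivity of $U$ and the two preservation properties of $V_{\bf s}$ is exactly how one is meant to read it off. Your remark that only ${\bf s}\in\ZZ^q$ is used here (not ${\bf s}\in\overline{\E_q}$) is also on point.
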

Unfortunately the converse of the corollary does not hold in general; for instance, we have (see Example~\ref{ex:KPT} below) $\left(1,\frac{1+\sqrt{5}}{2}\right) \in \D_2$, but
$\left(1,\frac{3+\sqrt{5}}{2},\frac{3+\sqrt{5}}{2}\right) =
(1)\odot\left(1,\frac{1+\sqrt{5}}{2}\right) \in \partial \E_3
\setminus \D_3$.
In the following we turn to results from \cite{Kirschenhofer-Pethoe-Surer-Thuswaldner:10} that allow to ``lift'' information
on some sets derived from $\D_{e}$ for $e<d$ to the boundary of
the sets $\D_{d}$.
We will need the following notations.
For ${\bf r} \in \D_d$ let
$\OR({\bf r})$ be the
set of all equivalence classes of cycles of $\tau_{\mathbf{r}}$.

For $p \in
\NN\setminus\{0\}$ and $\mathcal{B} \in
\OR({\bf r})$, define the function $S_p$ by
\[
 \mathcal{B}=\spk{x_0,
\ldots,x_{l(\mathcal{B})-1}} \mapsto
\begin{cases}
0 & \hbox{for } p \nmid l(\mathcal{B}) \mbox{ or } \sum_{j=0}^{l(\mathcal{B})-1} \xi_p^j x_j=0  \\
1 & \mbox{ otherwise},
\end{cases}
\]
where $\xi_p$ denotes a primitive $p$-th root of unity. Furthermore
let
$$
\D_d^{(p)}:=\{{\bf r} \in \D_d\; :\; \forall \mathcal{B} \in
\OR({\bf r}):S_p(\mathcal{B})=0\}.
$$

Observe that for $p=1$ we have that ${\bf r} \in \D_d$ lies in $\D_d^{(1)}$
iff the sum of the entries of each cycle of $\tau_{\mathbf r}$ equals 0. For $p=2$ the
alternating sums $x_0-x_1+x_2-x_3\pm \dots$ must vanish, and so on.

Let $\Phi_j$ denote the $j$th {\it cyclotomic polynomial}. Then
the following result could be proved in
\cite{Kirschenhofer-Pethoe-Surer-Thuswaldner:10}.

\begin{theorem}\label{iii}
Let $d,q \geq 1$, ${\bf r} \in \RR^d$ and ${\bf
s}=(s_0,\ldots,s_{q-1}) \in \ZZ^q$ such that $s_0 \not=0$. Then
${\bf r} \odot {\bf s} \in \D_{d+q}$ if and only if the following
conditions are satisfied:
\begin{itemize}
\item [(i)] $\chi_{\bf s}=\Phi_{\alpha_1}\Phi_{\alpha_2}\cdots\Phi_{\alpha_b}$ for
pairwise disjoint non-negative integers $\alpha_1,\ldots,\alpha_b$, and
\item [(ii)] ${\bf r} \in \bigcap_{j=1}^b \D_d^{(\alpha_j)}$.
\end{itemize}
\end{theorem}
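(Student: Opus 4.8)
The plan is to reduce everything to the identity $V_{\bf s}\circ\tau_{{\bf r}\odot{\bf s}}^*=\tau_{\bf r}^*\circ U$ of the preceding proposition (see~\eqref{for1}), combined with two elementary facts: that $V_{\bf s}$ is the action $\chi_{\bf s}(\Sigma)$ of the polynomial $\chi_{\bf s}$ (see~\eqref{chi}) on sequences, where $\Sigma$ is the shift $(\Sigma x)_n=x_{n+1}$; and that for a self-map of $\ZZ^{d+q}$ an orbit is bounded exactly when it is ultimately periodic. Write $\mathbf c:={\bf r}\odot{\bf s}$, so $\chi_{\mathbf c}=\chi_{\bf r}\chi_{\bf s}$ by~\eqref{eq:odot}.

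First I would show that (i) is necessary. Assuming $\mathbf c\in\D_{d+q}$, I would restrict~\eqref{for1} to the rank-$q$ sublattice $K:=\ker U$ of $\ZZ^{d+q}$. Using $\tau_{\bf r}(\mathbf 0)=\mathbf 0$, one checks that $\mathbf x\in K$ precisely when the orbit sequence $\tau_{\mathbf c}^*(\mathbf x)$ is annihilated by $V_{\bf s}=\chi_{\bf s}(\Sigma)$; conversely every integer solution of $\chi_{\bf s}(\Sigma)(x)=0$ is such an orbit sequence, because $\chi_{\mathbf c}(\Sigma)=\chi_{\bf r}(\Sigma)\chi_{\bf s}(\Sigma)$ then also annihilates it, so the floor-remainder in the $\tau_{\mathbf c}$-recurrence vanishes. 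Since $\mathbf c\in\D_{d+q}$, all these orbits are bounded, hence every integer---and therefore, by spanning, every complex---solution of $\chi_{\bf s}(\Sigma)(x)=0$ is bounded; thus $R({\bf s})^n$ is bounded. This forces $\chi_{\bf s}$ to have all roots in the closed unit disc with those of modulus $1$ simple (companion matrices are non-derogatory, so the semisimple eigenvalues are exactly the simple roots). As $\chi_{\bf s}\in\ZZ[X]$ is monic with $\chi_{\bf s}(0)=s_0\neq0$, Kronecker's theorem makes all its roots roots of unity, and simplicity yields $\chi_{\bf s}=\Phi_{\alpha_1}\cdots\Phi_{\alpha_b}$ with pairwise distinct $\alpha_j$, which is (i). In particular $\varrho(R({\bf s}))\le1$, so ${\bf s}\in\overline{\E_q}\cap\ZZ^q$ and the preceding corollary gives ${\bf r}\in\D_d$.

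For the equivalence with (ii) the key is a Fourier dictionary. On the space of period-$N$ sequences, identified with $\CC^N$, the shift $\Sigma$ is diagonalised by the characters $n\mapsto\eta^n$ with $\eta^N=1$, so $V_{\bf s}=\chi_{\bf s}(\Sigma)$ scales the $\eta$-mode by $\chi_{\bf s}(\eta)$ and its image is the span of the modes $\eta$ that are \emph{not} primitive $\alpha_j$-th roots of unity for any $j$. Moreover, for a cycle $\mathcal B=\langle x_0,\dots,x_{l-1}\rangle$ of $\tau_{\bf r}$ and a primitive $\alpha_j$-th root of unity $\zeta$, the sum $\sum_n\zeta^n x_n$ occurring in $S_{\alpha_j}$ is, when $\alpha_j\mid l$, a nonzero multiple of a Fourier coefficient of the periodic extension $\mathcal B^\infty$ of $\mathcal B$ at a primitive $\alpha_j$-th root of unity (independence of the chosen primitive root being Galois invariance); so ${\bf r}\in\D_d^{(\alpha_j)}$ says exactly that for every cycle $\mathcal B$ either $\alpha_j\nmid l(\mathcal B)$ or that Fourier coefficient vanishes. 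Granting this, the ``only if'' part of (ii) is a contradiction argument: if ${\bf r}\notin\D_d^{(\alpha_j)}$, pick a witnessing cycle $\mathcal B$ and, by surjectivity of $U$ in~\eqref{for1}, a point $\mathbf x$ for which $V_{\bf s}(\tau_{\mathbf c}^*(\mathbf x))$ equals $\mathcal B^\infty$; were this orbit periodic of period $N$, then $l(\mathcal B)\mid N$ (as $V_{\bf s}$ commutes with $\Sigma$), so $\zeta$ is an $N$-th root of unity and $\chi_{\bf s}(\Sigma)$ kills its mode, contradicting that $\mathcal B^\infty$ has the nonzero $\zeta$-Fourier coefficient it does; hence the orbit is unbounded, contradicting $\mathbf c\in\D_{d+q}$. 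Conversely, assuming (i) and (ii)---so ${\bf r}\in\D_d$---for arbitrary $\mathbf x$ the sequence $V_{\bf s}(\tau_{\mathbf c}^*(\mathbf x))=\tau_{\bf r}^*(U\mathbf x)$ is ultimately periodic with periodic part a cycle $\mathcal B$; by (ii) all its Fourier modes at roots of $\chi_{\bf s}$ vanish, so it admits a bounded (periodic) $\chi_{\bf s}(\Sigma)$-preimage, and as $\tau_{\mathbf c}^*(\mathbf x)$ differs from such a preimage by a solution of the homogeneous recurrence $\chi_{\bf s}(\Sigma)(x)=0$---bounded because $\chi_{\bf s}$ is squarefree with all roots on the unit circle---the sequence $\tau_{\mathbf c}^*(\mathbf x)$ is itself bounded, hence ultimately periodic. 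As $\mathbf x$ was arbitrary, $\mathbf c\in\D_{d+q}$.

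The step I expect to be the main obstacle is the bookkeeping inside this dictionary: matching the combinatorial invariant $\sum_n\xi_p^n x_n$ attached to a \emph{cycle} with an honest Fourier coefficient of an \emph{ultimately} periodic sequence, controlling which periods $N$ can arise (in particular why $l(\mathcal B)$ divides every such $N$), verifying independence of the chosen primitive root, and using squarefreeness of $\chi_{\bf s}$ precisely where ``a bounded preimage exists'' is upgraded to ``the orbit sequence is bounded''. The familiar heuristic that a multiple root on the unit circle produces linear growth---visible from~\eqref{tauiterate}, compare the second half of the proof of Proposition~\ref{EdDdEd}---is here replaced cleanly by the exact observation that the orbits over $\ker U$ are genuine solutions of $\chi_{\bf s}(\Sigma)(x)=0$.
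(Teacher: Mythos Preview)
Your argument is correct and rests on the same reduction via \eqref{for1} as the paper, but the core linear-recurrence analysis is organised differently.

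In the paper, once one knows that $(y_n)=V_{\bf s}(\tau_{{\bf r}\odot{\bf s}}^*(\mathbf x))$ is ultimately periodic with period $l$, the sequence $(x_n)$ is viewed as a solution of the homogeneous recurrence with characteristic polynomial $(t^l-1)\chi_{\bf s}(t)$; one writes the general solution explicitly with coefficients $A_j^{(\nu)}$, sets up a linear system, and uses Cramer's rule together with Vandermonde determinants to show that the coefficients of the polynomially growing terms vanish exactly when the sums $\sum_k\xi_{\alpha_j}^k y_{n_0+k}$ vanish. For necessity of (i) the paper constructs, when $\chi_{\bf s}$ has a double root, an explicit initial segment $(z_1,\dots,z_q)$ forcing a nonzero linear-growth coefficient and hence an unbounded orbit.

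Your route replaces these computations by the spectral picture $V_{\bf s}=\chi_{\bf s}(\Sigma)$: on period-$N$ sequences this is diagonal in the Fourier basis with eigenvalue $\chi_{\bf s}(\eta)$, so the image is exactly the span of the modes $\eta$ with $\chi_{\bf s}(\eta)\neq 0$. This yields the same obstruction---the condition $S_{\alpha_j}(\mathcal B)=0$ is precisely the vanishing of the relevant Fourier coefficients---but without the determinant calculus, and the sufficiency reduces to ``periodic particular solution plus bounded homogeneous solution''. Your treatment of necessity of (i) is a genuine simplification: noting that orbits starting in $\ker U$ are \emph{exactly} the integer solutions of $\chi_{\bf s}(\Sigma)x=0$, you get boundedness of $R({\bf s})^n$ for free and then invoke Kronecker, avoiding the paper's explicit construction of a divergent orbit. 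The trade-off is that the paper's argument is fully written out with all indices in place, whereas your ``bookkeeping'' paragraph (alignment of minimal period $l$ with the eventual period $N$, passage from $\sum_k\xi_p^kx_k$ to a Fourier coefficient, Galois independence of the primitive root) is correctly flagged as the place where care is needed; once made precise it goes through as you indicate.
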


\begin{proof}[Sketch of proof (compare~{\cite{Kirschenhofer-Pethoe-Surer-Thuswaldner:10}})]
In order to prove the sufficiency of the two conditions let us
assume that (i) and (ii) are satisfied. We have to show that
for arbitrary ${\bf x} \in
\ZZ^{d+q}$ the sequence $(x_n)_{n \in \NN}:=\tau_{{\bf r} \odot {\bf
s}}^*({\bf x})$ is ultimately
periodic. Let $s_q:=1$. Setting
\[
{\bf y}:=\left(\sum_{i=0}^q s_ix_i,\sum_{i=0}^q
s_ix_{i+1},\ldots,\sum_{i=0}^q s_ix_{i+d-1}\right)
\]
it follows from \eqref{for1} that
\begin{equation}\label{xy}
(y_n)_{n \in \NN}:=\tau_{\bf
r}^*({\bf y})=V_{{\bf s}}((x_n)_{n \in \NN}).
\end{equation}

Since ${\bf r} \in
\D_d^{(\alpha_1)},$ we have  ${\bf r} \in \D_d$.
Therefore there must exist a cycle
$\spk{y_{n_0},\ldots,y_{n_0+l-1}} \in \OR({\bf r})$,
from which we deduce  the recurrence
relation
\begin{equation}\label{zug1}
\sum_{h=0}^q  s_hx_{n_0+k+h} =y_{n_0+k}= y_{n_0+k+l}= \sum_{h=0}^q
s_hx_{n_0+k+l+h}
\end{equation}
for $k \geq 0$ for the sequence $(x_n)_{n \geq n_0}$.
Its characteristic equation is
\[
(t^l-1)\chi_{\bf s}(t)=0.
\]
Let us now assume that $\lambda_1,\ldots,\lambda_w$ are
the roots  of $t^l-1$ only, $\lambda_{w+1},\ldots,\lambda_l$ are the common roots
of $t^l-1$ and of $\chi_{\bf s}(t)$, and
$\lambda_{l+1},\ldots,\lambda_g$ are the roots of $\chi_{\bf s}(t)$ only. By Assertion~(i) $t^l-1$ and  $\chi_{\bf s}(t)$ have
only simple roots, so that
$\lambda_{w+1},\ldots,\lambda_l$ have multiplicity two
while all the other roots are simple. Therefore the solution of recurrence
\eqref{zug1} has the form
\begin{equation}\label{heu1}
x_{n_0+k}= \sum_{j=1}^g A^{(0)}_j \lambda_j^k + \sum_{j=w+1}^l
A^{(1)}_j k \lambda_j^k
\end{equation}
for $l+q$ complex constants $A_j^{(\nu)}$, and the (ultimate) periodicity of $(x_n)_{n
\geq n_0}$ is equivalent to $A_j^{(1)}=0$ for all $j \in
\{w+1,\ldots,l\}$.  For $\alpha_1\nmid l, \ldots, \alpha_b
\nmid l$ the polynomials $x^l-1$ and $\chi_{\bf s}$ have no
common roots and the result is immediate. Let us now suppose  $x^l-1$ and $\chi_{\bf s}$ have common roots, {\it
i.e.}, that $w<l$.

From
\eqref{zug1}, observing $\chi_{\bf s}(\lambda_j)=0$
for $j > w,$ we get with $k\in\{0,\ldots,l-1\}$ the following system of $l$
linear equalities for the $l$ constants
$A_1^{(0)},\ldots,A_w^{(0)},A_{w+1}^{(1)},\ldots,A_l^{(1)}$.
\begin{equation}\label{heu2}
\begin{array}{rl}
\displaystyle y_{n_0+k}& \displaystyle= \displaystyle \sum_{j=1}^g A^{(0)}_j \lambda_j^k\chi_{\bf s}(\lambda_j)  + \sum_{j=w+1}^l A^{(1)}_j
(k\lambda_j^k\chi_{\bf s}(\lambda_j)+\lambda_j^{k+1}\chi'_{\bf s}(\lambda_j))\\
&=\displaystyle \sum_{j=1}^w A^{(0)}_j \lambda_j^k\chi_{\bf s}(\lambda_j)  + \sum_{j=w+1}^l A^{(1)}_j
\lambda_j^{k+1}\chi'_{\bf s}(\lambda_j).
\end{array}
\end{equation}
It remains to show that $A_j^{(1)}=0$ for
$w+1\le j \le l$.

Rewriting the system as
\begin{equation}\label{EQ}
(y_{n_0},\ldots,y_{n_0+l-1})^t= G
(A_1^{(0)},\ldots,A_w^{(0)},A_{w+1}^{(1)},\ldots,A_l^{(1)})^t
\end{equation}
with
\[
G = \left(\begin{array}{cccccc}
\chi_{\bf s}(\lambda_1) & \cdots & \chi_{\bf s}(\lambda_w) & \chi'_{\bf s}(\lambda_{w+1})\lambda_{w+1} & \cdots & \chi'_{\bf s}(\lambda_{l})\lambda_{l} \\
\chi_{\bf s}(\lambda_1)\lambda_1 & \cdots & \chi_{\bf s}(\lambda_w)\lambda_w & \chi'_{\bf s}(\lambda_{w+1})\lambda_{w+1}^2 & \cdots & \chi'_{\bf s}(\lambda_{l})\lambda_{l}^2 \\
\vdots & &\vdots & \vdots & &\vdots \\
\chi_{\bf s}(\lambda_1)\lambda_1^{l-1} & \cdots & \chi_{\bf s}(\lambda_w)\lambda_w^{l-1} & \chi'_{\bf s}(\lambda_{w+1})\lambda_{w+1}^l & \cdots & \chi'_{\bf s}(\lambda_{l})\lambda_{l}^l
\end{array}\right).
\]
we have by Cramer's rule that
\[
A_j^{(1)} = \frac{\det G_j}{\det G},
\]
where $G_j$ denotes the matrix that is obtained by exchanging the $j$th column of $G$ by
the vector $(y_{n_0},\ldots,y_{n_0+l-1})^t$.
 ($\det G \not=0$ is easily detected using the  Vandermonde determinant.)

Now
\begin{align}\label{xxxx}
\det G_j = &\prod_{k=1}^w \chi_{\bf s}(\lambda_k)^l
\prod_{\begin{subarray}{c} k=w+1 \\ k \not=j \end{subarray}}^l
(\lambda_k\chi'_{\bf s}(\lambda_k))^l D_j,
\end{align}
where
\begin{align*}D_j := & \det\left(\begin{array}{ccccccc}
1 & \cdots & 1 & y_{n_0} & 1 & \cdots & 1 \\
\lambda_1 & \cdots & \lambda_{j-1} & y_{n_0+1} & \lambda_{j+1} & \cdots & \lambda_{l} \\
\vdots & &\vdots & \vdots &&\vdots \\
\lambda_1^{l-1} & \cdots & \lambda_{j-1}^{l-1} & y_{n_0+l-1} & \lambda_{j+1}^{l-1} & \cdots & \lambda_{l}^{l-1} \\
\end{array}\right).
\end{align*}

Adding the $\overline{\lambda_j}^{k+1}$-fold multiple of the
$k$th row to the last row for each $k\in\{1,\ldots,l-1\}$ we gain
\[
D_j=\det\left(\begin{array}{ccccccc}
1 & \cdots & 1 & y_{n_0} & 1 & \cdots & 1 \\
\lambda_1 & \cdots & \lambda_{j-1} & y_{n_0+1} & \lambda_{j+1} & \cdots & \lambda_{l} \\
\vdots & &\vdots & \vdots &&\vdots \\
\lambda_1^{l-2} & \cdots & \lambda_{j-1}^{l-2} &  y_{n_0+k+l-2} & \lambda_{j+1}^{l-2} & \cdots & \lambda_{l}^{l-2} \\
0 & \cdots & 0 & \sum_{k=0}^{l-1} \overline{\lambda_j}^{k+1} y_{n_0+k} & 0 & \cdots & 0 \\
\end{array}\right).
\]
If we can establish  $\sum_{k=0}^{l-1} \overline{\lambda_j}^k
y_{n_0+k} = 0$, we are done.  Now, since $\lambda_j$ is a
root of $x^l-1$ and $\chi_{\bf s}$,  Condition (i) yields that
there exists a $p \in \{1,\ldots,b\}$ with $\alpha_p \mid l$.
Thus $\lambda_j$, and $\overline{\lambda_j},$  are
primitive $\alpha_p$th roots of unity. It follows from
Condition (ii) that
\begin{equation}\label{ES}
S_{\alpha_p}(\spk{y_{n_0},\ldots,y_{n_0+l-1}})=\sum_{k=0}^{l-1}
\xi_{\alpha_p}^k y_{n_0+k}=0
\end{equation}
for each $\alpha_p$th root of unity $\xi_{\alpha_p}$. In particular, $\sum_{k=0}^{l-1}
\overline{\lambda_j}^k y_{n_0+k}=0$.

Let us turn to the necessity of Conditions (i) and (ii).

Since ${\bf r} \odot {\bf s} \in \D_{d+q}$ we have $\varrho(R({\bf s})) \le \varrho(R({\bf r} \odot
{\bf s}))\le 1$. Since $s_0\not=0$, $\chi_{s}$
is a polynomial over $\ZZ$ each of whose roots are
non-zero and bounded by one in modulus. This implies that each root
of this polynomial is a root of unity.

Suppose now that
$\chi_{\bf s}$ has a root of multiplicity at least $2$, say
$\lambda_{j_0}$. Let ${\bf x} \in \ZZ^{d+q}$ and $(x_n)_{n \in
\NN}:=\tau_{{\bf r} \odot {\bf s}}^*({\bf x})$.  Since $(x_n)_{n \in
\NN}$ is a solution of recurrence \eqref{zug1} it must have the
shape

\[
x_{n_0+k} = \sum_{j=1}^gA_j(k)\lambda_j^k
\]

with some polynomials $A_j$ ($1\le j \le g$).
Inserting \eqref{zug1} yields
\begin{equation}\label{Y2}
y_{n_0+k}= \sum_{j=1}^g \sum_{h=0}^q s_h A_j(k+h) \lambda_j^{k+h}.
\end{equation}
Taking $k\in \{1,\ldots, l\}$ we get a system of $l$ equations for the $l+q$
coefficients $A_j^{(\nu)}$ of the polynomials $A_j$. In a similar way as in
the treatment of \eqref{zug1} in the first part of this proof it can be shown
that
  $q$
of the $l+q$ coefficients do not occur in \eqref{Y2} for $k\in \{1,\ldots, l\}$, and the system can be used  to calculate the remaining $l$  coefficients $A_j^{(\nu)}$. The coefficient of  $A_{j_0}^{(1)}$
 in \eqref{Y2} equals
\[
k\lambda_{j_0}^k\chi_{\bf s}(\lambda_{j_0})+\lambda_{j_0}^{k+1}\chi'_{\bf s}(\lambda_{j_0}).
\]
Since $\lambda_{j_0}$ is a double zero of $\chi_{\bf s}$, the latter
expression, and thus the coefficient of $A_{j_0}^{(1)}$
 in \eqref{Y2}  vanishes.

 Let now $z_1,\ldots, z_q$ be a $q$-tuple of integers
and consider the system of $q$ equations
\begin{equation}\label{zk}
z_k = \sum_{j=1}^gA_j(k)\lambda_j^k \qquad(1\le k\le q).
\end{equation}
This system can be used in order to calculate the
remaining $q$ coefficients $A_j^{(\nu)}$, among which we have
$A_{j_0}^{(1)}$. Choosing $z_1,\ldots, z_q$ in a way that
$A_{j_0}^{(1)}\not=0$ allows to determine all
coefficients $A_j^{(\nu)}$.
We use equation \eqref{zk} now to define the integers $z_k$ for  $k > q$.
Then by \eqref{zug1} the sequence $\tau_{{\bf r} \odot {\bf
s}}^*((z_0,\ldots,z_{d+l-1}))$ satisfies the recurrence
relation $\sum_{i=0}^q  s_iz_{n_0+k+i} = \sum_{i=0}^q
s_iz_{n_0+k+l+i}$. As $A_{j_0}^{(1)} \not=0$ this sequence does
not end up periodically by the following auxiliary Lemma~\ref{aux}, a contradiction
to ${\bf r} \odot {\bf s} \in \D_{d+q}$. Thus we have proved the
necessity of Condition (i) of the theorem.

Let us now turn to Condition (ii).
 Since ${\bf r} \odot {\bf s} \in \D_{d+q}$,
$(x_n)_{n\in\NN}$ in \eqref{heu1} is ultimately periodic. By
Lemma~\ref{aux} this implies that $A_j^{(1)}=0$ for each
$j\in\{w+1,\ldots,l\}$. Adopting the notation and reasoning
 of the sufficiency part of
 the proof this is equivalent to $D_j=0,$ so that \eqref{ES}
 holds for $\alpha_1,\ldots,\alpha_b,$ proving the necessity of (ii).
\end{proof}

In the last proof we make use of the following auxiliary result, which,
in other terminology, can be found in \cite{Kirschenhofer-Pethoe-Surer-Thuswaldner:10}, too.
\begin{lemma}\label{aux}
Let the sequence $(x_n)_{n\ge 0}$ be the solution of a homogeneous linear
 recurrence with constant coefficients in $\CC,$ whose
 eigenvalues are pairwise disjoint roots of unity. If at least one
 of the eigenvalues has multiplicity greater than 1, then $x_n$ is not
bounded.
\end{lemma}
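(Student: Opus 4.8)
The plan is to combine the explicit closed form for solutions of constant--coefficient linear recurrences with the fact that here every eigenvalue lies on the unit circle. Write
\[
x_n=\sum_{j=1}^g A_j(n)\,\lambda_j^n\qquad(n\ge 0),
\]
where $\lambda_1,\dots,\lambda_g$ are the pairwise distinct characteristic roots --- all roots of unity, by hypothesis --- and the $A_j$ are polynomials in $\mathbb{C}[X]$. The assumption that some eigenvalue has multiplicity greater than $1$ is used in the form that at least one of the $A_j$, say $A_{j_0}$, is non-constant, of some degree $k\ge 1$ (equivalently, the minimal recurrence satisfied by $(x_n)$ has a multiple root). I would then prove that $(x_n)$ is unbounded by contradiction, assuming $|x_n|\le M$ for all $n$.

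The main step is a reduction to periodicity followed by an incompatibility with $\deg A_{j_0}\ge 1$. Let $N$ be the least common multiple of the multiplicative orders of $\lambda_1,\dots,\lambda_g$, so $\lambda_j^N=1$ for all $j$. For each fixed residue $r\in\{0,\dots,N-1\}$ the map $k\mapsto x_{Nk+r}=\sum_{j=1}^g A_j(Nk+r)\,\lambda_j^r$ is a polynomial in $k$; being bounded it is constant, hence $x_{Nk+r}$ is independent of $k$, and as $r$ was arbitrary $(x_n)$ is periodic of period dividing $N$. Now a sequence of period $N$ has a discrete Fourier expansion over the $N$-th roots of unity, so there are constants $B_1,\dots,B_g\in\mathbb{C}$ with $x_n=\sum_{j=1}^g B_j\,\lambda_j^n$ for all $n$. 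Comparing this with $x_n=\sum_{j=1}^g A_j(n)\,\lambda_j^n$ and invoking the linear independence over $\mathbb{C}$ of the functions $n\mapsto n^a\lambda^n$ for pairwise distinct pairs $(\lambda,a)$ with $\lambda\ne 0$, we get $A_j(n)=B_j$ constant for every $j$, contradicting $\deg A_{j_0}=k\ge 1$. Hence $(x_n)$ is unbounded.

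An essentially equivalent route, closer in spirit to the manipulations of Section~\ref{sec:D2}, avoids the Fourier step: writing $E$ for the shift $(Ex)_n=x_{n+1}$, apply to $(x_n)$ the fixed operator $\prod_{j\ne j_0}(E-\lambda_j)^{\deg A_j+1}(E-\lambda_{j_0})^{k-1}$. A short computation shows that $(E-\lambda)$ applied to $P(n)\mu^n$ annihilates it if $\lambda=\mu$ and $P$ is constant, lowers $\deg P$ by one if $\lambda=\mu$ and $\deg P\ge 1$, and preserves $\deg P$ while multiplying its leading coefficient by $\mu-\lambda\ne 0$ if $\lambda\ne\mu$; thus the operator kills every term except the $\lambda_{j_0}$-term and turns that term into $(\alpha n+\beta)\lambda_{j_0}^n$ with $\alpha\ne 0$. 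Since $|\lambda_{j_0}|=1$, this output is unbounded, yet it is a fixed finite linear combination of shifts of $(x_n)$, forcing $(x_n)$ to be unbounded too. In either approach the only genuinely delicate point is ruling out a cancellation between the contributions of different eigenvalues --- handled by the linear independence of the $n^a\lambda^n$, equivalently by the bookkeeping of leading coefficients under $E-\lambda$ --- and this is precisely where the hypothesis that the eigenvalues are \emph{pairwise disjoint} roots of unity enters.
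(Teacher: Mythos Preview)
Your proof is correct. Both routes you outline work; the shift-operator version is the cleaner of the two, since it sidesteps the Fourier step entirely. One small point of sloppiness in your first argument: the discrete Fourier expansion of an $N$-periodic sequence is over \emph{all} $N$-th roots of unity, not just over $\lambda_1,\dots,\lambda_g$, so one should write $x_n=\sum_{\omega^N=1}B_\omega\,\omega^n$. This does not affect the contradiction, since the linear independence of the functions $n\mapsto n^a\omega^n$ (over all nonzero $\omega$ and all $a\ge 0$) still forces every $A_j$ to be constant when you compare the two representations. In fact, once you have established that $x_{n+N}=x_n$ for all $n$, you may bypass Fourier altogether: the identity $\sum_j\bigl(A_j(n+N)-A_j(n)\bigr)\lambda_j^n=0$ together with the same linear-independence fact gives $A_j(n+N)=A_j(n)$ for all $n$, and a periodic polynomial is constant.

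As for the comparison: the paper does not actually supply a proof of this lemma. It merely states the result and refers the reader to \cite{Kirschenhofer-Pethoe-Surer-Thuswaldner:10} with the remark that the proof is ``simple''. Your argument is a perfectly standard and self-contained justification, and it matches exactly how the lemma is applied in the proof of Theorem~\ref{iii}: there the authors explicitly arrange that $A_{j_0}^{(1)}\ne 0$, i.e., that the relevant polynomial coefficient is genuinely non-constant, which is precisely the hypothesis you identify as the operative one.
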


For the simple proof we also refer to~\cite{Kirschenhofer-Pethoe-Surer-Thuswaldner:10}.


Theorem~\ref{iii} allows in particular to give information on the
behavior of $\tau_{\bf r}$ on several parts of the boundary
of $\E_d$. Remember that $\partial \E_3 =
\partial \D_3$ consists of the two triangles $E_3^{(-1)}$ and
$E_3^{(1)}$ and of the surface $E_3^{(\mathbb{C})}$. Then we have the following result (see~\cite{Kirschenhofer-Pethoe-Surer-Thuswaldner:10}).

\begin{corollary}\label{cor:real}
The following assertions hold.
\begin{itemize}
\item
For $d \geq 2$ we have $\D_d \cap E_d^{(-1)} = (-1) \odot
\D_{d-1}^{(1)}$.
\item
For $d \geq 2$ we have $\D_d \cap E_d^{(1)} = (1) \odot
\D_{d-1}^{(2)}$.
\item
For $d \geq 3$ we have $\D_d \cap (1,0) \odot \overline{\E_{d-2}} =
(1,0) \odot \D_{d-2}^{(4)}$.
\item
For $d \geq 3$ we have $\D_d \cap (1,1) \odot \overline{\E_{d-2}} =
(1,1) \odot \D_{d-2}^{(3)}$.
\item
For $d \geq 3$ we have $\D_d \cap (1,-1) \odot \overline{\E_{d-2}} =
(1,-1) \odot \D_{d-2}^{(6)}$.

\end{itemize}
\end{corollary}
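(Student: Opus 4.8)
The plan is to read off all five identities uniformly from Theorem~\ref{iii}, feeding it in each case the unique integer vector $\mathbf{s}$ whose companion polynomial $\chi_{\mathbf{s}}$ is the cyclotomic polynomial $\Phi_\alpha$ for the index $\alpha$ appearing on the right-hand side: one takes $\mathbf{s}=(-1)$ (so $\chi_{\mathbf{s}}=\Phi_1=X-1$) for the first assertion, $\mathbf{s}=(1)$ ($\chi_{\mathbf{s}}=\Phi_2=X+1$) for the second, $\mathbf{s}=(1,0)$ ($\chi_{\mathbf{s}}=\Phi_4=X^2+1$) for the third, $\mathbf{s}=(1,1)$ ($\chi_{\mathbf{s}}=\Phi_3=X^2+X+1$) for the fourth, and $\mathbf{s}=(1,-1)$ ($\chi_{\mathbf{s}}=\Phi_6=X^2-X+1$) for the fifth. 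In every case $s_0\neq0$ and $\chi_{\mathbf{s}}$ is a \emph{single} cyclotomic polynomial, so condition~(i) of Theorem~\ref{iii} is automatically satisfied (with $b=1$); hence, writing $q\in\{1,2\}$ for the length of $\mathbf{s}$, Theorem~\ref{iii} reduces to the clean equivalence
\[
\mathbf{r}\odot\mathbf{s}\in\D_{d}\iff\mathbf{r}\in\D_{d-q}^{(\alpha)}\qquad(\mathbf{r}\in\RR^{d-q}).
\]
The dimension hypotheses of the corollary ($d\ge2$ when $q=1$, $d\ge3$ when $q=2$) are exactly what forces $d-q\ge1$, so that Theorem~\ref{iii} applies.

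Granting this equivalence, each identity becomes a short computation with sets. For the last three assertions the sets $\mathbf{s}\odot\overline{\E_{d-2}}$ appear verbatim on both sides, and I would argue: if $\mathbf{r}'\in\D_{d}\cap(\mathbf{s}\odot\overline{\E_{d-2}})$ then $\mathbf{r}'=\mathbf{r}\odot\mathbf{s}$ for a \emph{unique} $\mathbf{r}\in\overline{\E_{d-2}}$ (uniqueness since $\mathbf{t}\mapsto\chi_{\mathbf{t}}$ identifies $\RR^{d-2}$ with the monic real polynomials of degree $d-2$ and multiplication by $\chi_{\mathbf{s}}$ is injective), whence the equivalence gives $\mathbf{r}\in\D_{d-2}^{(\alpha)}$, i.e.\ $\mathbf{r}'\in\mathbf{s}\odot\D_{d-2}^{(\alpha)}$; conversely, for $\mathbf{r}\in\D_{d-2}^{(\alpha)}$ one has $\mathbf{r}\in\D_{d-2}\subseteq\overline{\E_{d-2}}$ by Proposition~\ref{EdDdEd}, so $\mathbf{r}\odot\mathbf{s}\in\mathbf{s}\odot\overline{\E_{d-2}}$, while the equivalence yields $\mathbf{r}\odot\mathbf{s}\in\D_d$. (Throughout $\odot$ is treated as commutative, which is immediate from commutativity of polynomial multiplication in \eqref{eq:odot}.) For the first two assertions I would run the same argument after rewriting the real boundary pieces via the decomposition of $\partial\E_d$ recalled in Sections~\ref{sec:Ed} and~\ref{sec:D2}, which expresses them as images of $\overline{\E_{d-1}}$ under $\odot(1)$ and $\odot(-1)$; the low-dimensional cases $d=2$ come from the explicit description of $\partial\D_2$ in Section~\ref{sec:D2} (where $\overline{\E_1}=[-1,1]$). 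In the reverse inclusions one additionally invokes Corollary~\ref{Ddboundary} together with $\D_d\subseteq\overline{\E_d}$ to confirm that the constructed parameter lies in $\partial\D_d=\partial\E_d$.

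I do not expect a genuine obstacle here -- the statement is really a direct corollary of Theorem~\ref{iii}. The step needing the most care is the bookkeeping of the cyclotomic factors: verifying that the companion polynomials of $(-1),(1),(1,0),(1,1),(1,-1)$ are exactly $\Phi_1,\Phi_2,\Phi_4,\Phi_3,\Phi_6$, that each of these is a single (hence irreducible) cyclotomic polynomial so that condition~(i) of Theorem~\ref{iii} holds vacuously, and that $(1,0)\odot\overline{\E_{d-2}}$, $(1,1)\odot\overline{\E_{d-2}}$, $(1,-1)\odot\overline{\E_{d-2}}$ are precisely the slices of $E_d^{(\mathbb{C})}$ on which $R(\mathbf{r})$ carries the primitive fourth, third, and sixth roots of unity -- which follows from $E_d^{(\mathbb{C})}=\{(1,y):|y|<2\}\odot\overline{\E_{d-2}}$ evaluated at $y=0,-1,1$. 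One must also track carefully the sign reversal that $\mathbf{t}\mapsto\chi_{\mathbf{t}}$ produces between ``$R(\mathbf{r})$ has eigenvalue $\pm1$'' and the parameter $(\mp1)$, so that the first two assertions get matched to the correct real boundary pieces. Everything else is a routine unwinding of Theorem~\ref{iii} and of the definitions of $\D_{d-q}^{(p)}$ (vanishing of the twisted cycle sums $S_p$) and of $\odot$.
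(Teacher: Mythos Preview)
Your approach is correct and is precisely the intended derivation: the paper presents this result as an immediate corollary of Theorem~\ref{iii} (with a citation to \cite{Kirschenhofer-Pethoe-Surer-Thuswaldner:10} and no separate proof), and your plan of specialising Theorem~\ref{iii} to the five integer vectors $\mathbf{s}\in\{(-1),(1),(1,0),(1,1),(1,-1)\}$ with $\chi_{\mathbf{s}}=\Phi_1,\Phi_2,\Phi_4,\Phi_3,\Phi_6$ respectively, together with the Fam--Meditch description of the boundary pieces and the inclusion $\D_{d-q}^{(\alpha)}\subseteq\D_{d-q}\subseteq\overline{\E_{d-q}}$, is exactly that derivation. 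Your remark about the sign reversal between the parameter $(\pm 1)$ and the eigenvalue $\mp 1$ is the only point requiring care, and you have flagged it correctly.
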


Combining the first two items of the last corollary and computing
an approximation of $\D_{2}^{(1)}$ and $\D_{2}^{(-1)}$ yields
an approximation of $\D_3 \cap E_3^{(-1)}$ resp. $\D_3 \cap E_3^{(-1)}$
as depicted in Figure~\ref{E3-1} (algorithms for $\D_d^{(0)}$ are presented in Section~\ref{sec:algorithms}; they can be adapted to $\D_d^{(p)}$ in an obvious way).
\begin{figure}
\hskip 0.7cm \includegraphics[width=0.3\textwidth, bb=0 0 300 400]{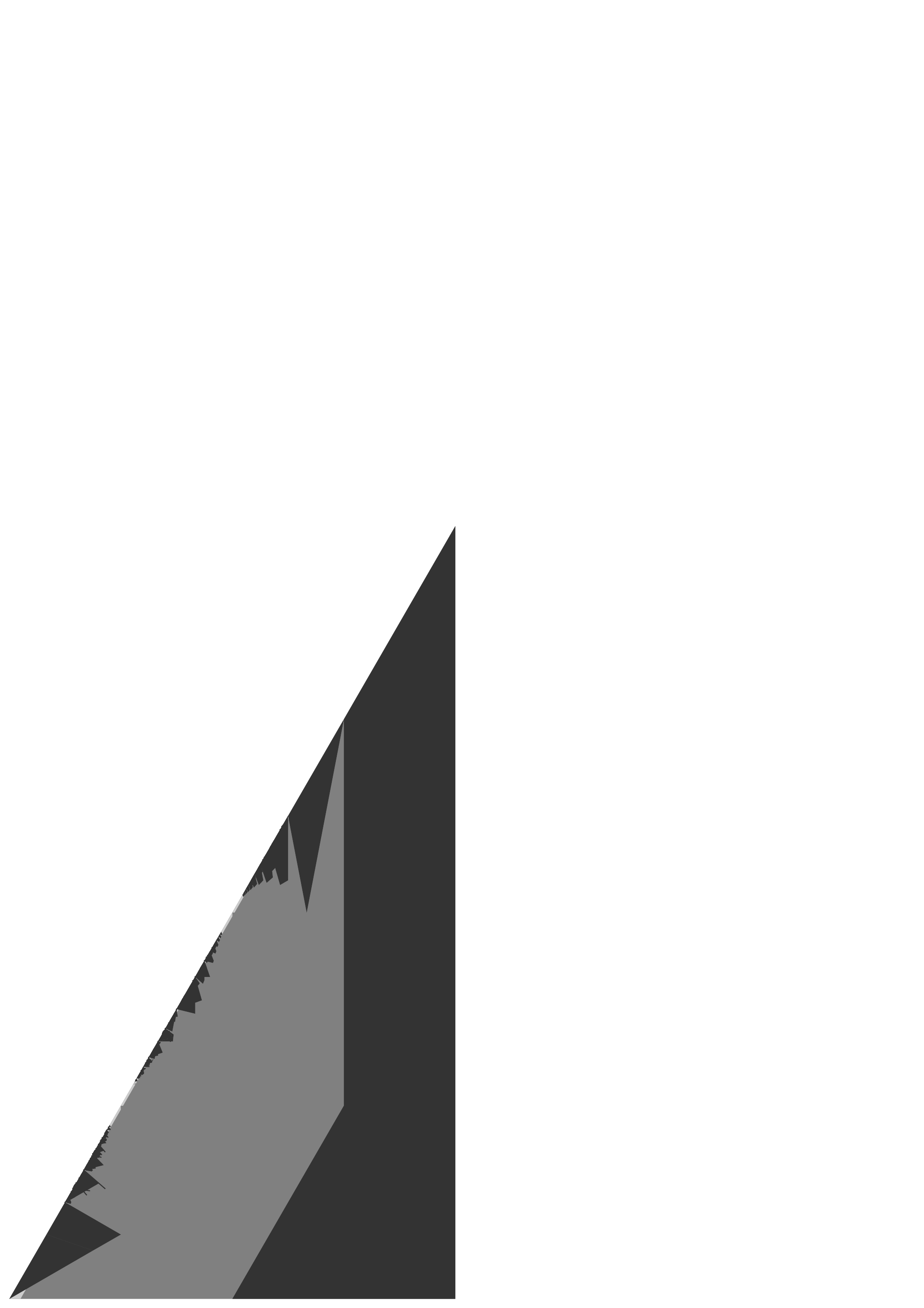}
\hskip 1cm
\includegraphics[width=0.37\textwidth, bb=0 0 300 400]{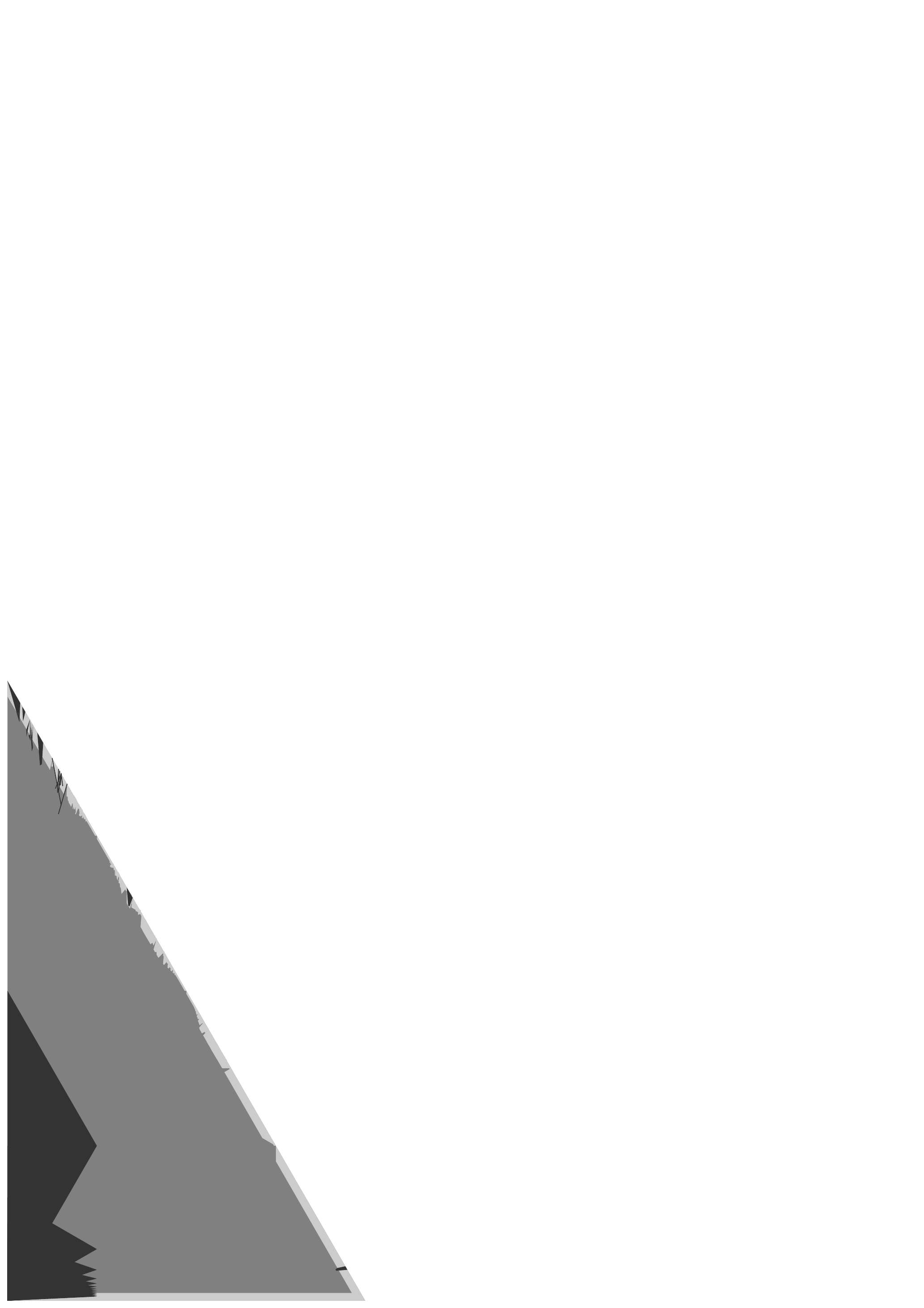}
  \caption{The triangles $E_3^{(-1)}$ (left hand side) and $E_3^{(1)}$
(right hand side). Dark grey:  parameters $\mathbf{r}$ for which
$\tau_\mathbf{r}$ is ultimately periodic for each starting value
$\mathbf{z}\in\mathbb{Z}^3$. Black: there exists a starting value
$\mathbf{z}\in\mathbb{Z}^3$ such that the orbit
$(\tau_\mathbf{r}^k(\mathbf{z}))_{k\in\mathbb{N}}$ becomes
unbounded. Light grey: not yet characterized. See \cite[Figure~3]{Kirschenhofer-Pethoe-Surer-Thuswaldner:10}.}
\label{E3-1}
\end{figure}
The last three items of the corollary allow to characterize {\it e.g.}
some lines of the surface $\D_3 \cap E_3^{(\mathbb{C})}$.

The study of concrete parameters $\mathbf{r}\in E_d^{(1)} \cup  E_d^{(-1)}$ shows interesting behavior as illustrated in the following three dimensional example.

\begin{example}[{\em cf.}~\cite{Kirschenhofer-Pethoe-Thuswaldner:08}]\label{ex:KPT}
Let $\varphi = \frac{1+\sqrt{5}}{2}$. From our considerations above one can derive that
 $$
\left(1,\varphi^2,\varphi^2\right)\in\partial \D_3 \setminus \D_3.
$$
We want to study the orbits of $\tau_{\left(1,\varphi^2,\varphi^2\right)}$ more closely for certain starting values $(z_0,z_1,z_2)$. In particular, let $z_0=z_1=0$. Interestingly, the behavior of the orbit depends on the starting digits of the {\it Zeckendorf representation} of $z_2$:
$$
z_2 = \sum_{j\ge 2} z_{2,j} F_j
$$
such that $z_{2,j}\in \{0,1\}, z_{2,j}z_{2,j+1}=0, j\ge 2$ (as in the proof of Theorem~\ref{th:steinerproof}, $(F_n)_{n\in\mathbb{N}}$ is the sequence of Fibonacci numbers). More precisely the following results hold (see~\cite[Theorems~4.1,5.1,5.2, and~5.3]{Kirschenhofer-Pethoe-Thuswaldner:08}).
\begin{itemize}
\item If $z_{2,2}=z_{2,3}=0$ the sequence $(z_n)$ is divergent;
\item if $z_{2,2}=0, z_{2,3}=1$ the sequence $(z_n)$ has period 30;
\item if $z_{2,2}=1, z_{2,3}=z_{2,4}=0$ the sequence $(z_n)$ has period 30;
\item if $z_{2,2}=1, z_{2,3}=0, z_{2,4}=1$ the sequence $(z_n)$ has period 70.
\end{itemize}
\end{example}

\subsection{The conjecture of Klaus Schmidt on Salem numbers}\label{sec:Salem}

Schmidt~\cite{Schmidt:80} (see also  Bertrand \cite{Bertrand:77}) proved the following result on beta-expansions of Salem numbers (recall that $T_\beta$ is the beta-transformation defined in \eqref{betatransform}).

\begin{theorem}[{\cite[Theorems~2.5 and 3.1]{Schmidt:80}}]\label{thm:Schmidt}
Let $\beta > 1$ be given.
\begin{itemize}
\item If $T_\beta$ has an ultimately periodic orbit for each element of $\mathbb{Q}\cap[0,1)$, then $\beta$ is either a Pisot or a Salem number.

\item If $\beta$ is a Pisot number, then $T_\beta$ has an ultimately periodic orbit for each element of $\mathbb{Q}(\beta) \cap [0,1)$.
\end{itemize}
\end{theorem}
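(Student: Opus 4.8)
The plan is to analyse the iterates of the beta-transform in the form
\[
T_\beta^n(\gamma)=\beta^n\gamma-Q_n(\beta),\qquad Q_n(X)=\sum_{j=1}^n a_jX^{n-j}\in\mathbb{Z}[X],\quad\deg Q_n<n,
\]
where $a_j=\lfloor\beta T_\beta^{j-1}(\gamma)\rfloor\in\{0,\dots,\lfloor\beta\rfloor\}$ are the greedy digits, so that the coefficients of the $Q_n$ are uniformly bounded. The common mechanism behind both assertions is that the orbit of $\gamma$ stays inside a single lattice, and that once one controls all archimedean absolute values of the iterates, this lattice contains only finitely many of them, forcing the orbit to be eventually periodic.

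For the second assertion, let $\beta$ be Pisot with conjugates $\beta=\beta_1,\dots,\beta_d$ (so $|\beta_i|<1$ for $i\ge2$) and let $\gamma\in\mathbb{Q}(\beta)\cap[0,1)$. Choose $m\in\mathbb{Z}_{>0}$ with $\gamma\in\tfrac1m\mathbb{Z}[\beta]$; since $\beta$ is an algebraic integer, $\beta\cdot\tfrac1m\mathbb{Z}[\beta]\subseteq\tfrac1m\mathbb{Z}[\beta]$ and $\mathbb{Z}\subseteq\tfrac1m\mathbb{Z}[\beta]$, hence $T_\beta^n(\gamma)\in\tfrac1m\mathbb{Z}[\beta]$ for every $n$. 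Here $T_\beta^n(\gamma)\in[0,1)$, while for $i\ge2$
\[
\bigl|\sigma_i\bigl(T_\beta^n(\gamma)\bigr)\bigr|=\bigl|\beta_i^n\sigma_i(\gamma)-Q_n(\beta_i)\bigr|\le|\sigma_i(\gamma)|+\lfloor\beta\rfloor\sum_{k\ge0}|\beta_i|^k=|\sigma_i(\gamma)|+\frac{\lfloor\beta\rfloor}{1-|\beta_i|},
\]
a bound independent of $n$. Thus, under the Minkowski embedding, $\{T_\beta^n(\gamma):n\ge0\}$ lies in a bounded region of the lattice $\tfrac1m\mathbb{Z}[\beta]$ and is therefore finite, so the orbit of $\gamma$ is eventually periodic. (If $\beta$ happens to be a rational integer then $\mathbb{Q}(\beta)=\mathbb{Q}$ and the statement is the classical eventual periodicity of base-$\beta$ expansions of rationals.)

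For the first assertion, the easy half is that $\beta$ must be an algebraic integer: applying the hypothesis to $\gamma=\tfrac12$ gives $0\le k<l$ with $T_\beta^k(\tfrac12)=T_\beta^l(\tfrac12)$, and clearing denominators shows that $\beta$ is a root of $f(X)=X^l-X^k-2Q_l(X)+2Q_k(X)\in\mathbb{Z}[X]$, which is monic of degree $l$ because $\deg Q_k,\deg Q_l<l$. Granting moreover that every conjugate $\beta_i$ of $\beta$ satisfies $|\beta_i|\le1$, one concludes that $\beta$ is Pisot when all its conjugates have modulus $<1$, and Salem when at least one of them lies on the unit circle, which is exactly the assertion.

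The hard part, and the main obstacle, is to show that no conjugate has modulus $>1$. I would argue by contradiction: assume $\beta$ has a conjugate $\alpha$ with $|\alpha|>1$, with corresponding embedding $\sigma$. For each rational $\gamma\in[0,1)$ the hypothesis makes the greedy digit sequence $(a_j)$ eventually periodic, so $\gamma=\sum_{j\ge1}a_j\beta^{-j}$ equals $R_\gamma(\beta)$ for a rational function $R_\gamma(X)=N_\gamma(X)/\bigl(X^{m}(X^{n}-1)\bigr)$ with $N_\gamma\in\mathbb{Z}[X]$; its denominator cannot vanish at $\alpha$, since a conjugate of $\beta>1$ is neither $0$ nor a root of unity, so applying $\sigma$ yields $\gamma=\sigma(\gamma)=R_\gamma(\alpha)=\sum_{j\ge1}a_j\alpha^{-j}$, the series converging because $|\alpha|>1$, and the same reasoning applied to the shifted digit sequences gives $\sigma\bigl(T_\beta^k(\gamma)\bigr)=\sum_{l\ge1}a_{k+l}\alpha^{-l}$, which is bounded uniformly in $\gamma$ and $k$ by $\lfloor\beta\rfloor/(|\alpha|-1)$. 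The contradiction then has to be extracted combinatorially: one realizes prescribed admissible digit blocks as initial segments of the greedy expansions of suitable rationals (the $\beta$-cylinders are nondegenerate intervals and $\mathbb{Q}$ is dense in $[0,1)$), uses the $\mathbb{Q}$-linear independence of $1,\alpha^{-1},\dots,\alpha^{-(d-1)}$, and counts the finitely many orbits of rationals with a fixed denominator $q$ against the bounded portion of $\tfrac1q\mathbb{Z}[\beta]$ that they are forced to occupy. Making this counting argument yield a genuine contradiction is the delicate number-theoretic core of Schmidt's proof, and is where essentially all of the remaining work lies.
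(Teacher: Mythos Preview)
The paper does not supply its own proof of this theorem: it cites Schmidt and only remarks that if one weakens the statement by replacing $\mathbb{Q}$ and $\mathbb{Q}(\beta)$ with $\mathbb{Z}[\beta]$ and assumes $\beta$ to be an algebraic integer, the result follows from the SRS conjugacy (Proposition~\ref{prop:betanumformula}) together with the inclusion $\E_d\subset\D_d\subset\overline{\E_d}$ (Proposition~\ref{EdDdEd}). So there is no detailed paper proof to compare against; your approach is the classical direct one, essentially Schmidt's, rather than the SRS reformulation.

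Your argument for the second assertion is correct and complete: the orbit stays in the lattice $\tfrac1m\mathbb{Z}[\beta]$, all archimedean embeddings of the iterates are uniformly bounded (trivially at the identity place, and via the geometric estimate at the contracting places), and discreteness under the Minkowski embedding finishes it.

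For the first assertion, your reduction is right and the algebraic-integer step via $\gamma=\tfrac12$ is clean. The framework you set up for the conjugate bound is also correct: for rational $\gamma$ with eventually periodic digits one does get $\sigma(T_\beta^k(\gamma))=\sum_{l\ge1}a_{k+l}\alpha^{-l}$, uniformly bounded. But as you yourself flag, you have not actually executed the contradiction. The sketch you give (realize admissible blocks as prefixes of expansions of rationals, invoke linear independence, count orbits of $\tfrac1q\mathbb{Z}$ against lattice points in $\tfrac1q\mathbb{Z}[\beta]$) is a plausible outline, yet none of it is carried out, and it is precisely this step that carries the content of Schmidt's argument. As it stands, your proposal for the first bullet establishes only that $\beta$ is an algebraic integer; the conclusion that no conjugate lies outside the closed unit disk remains unproved. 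That is a genuine gap, not merely a missing routine verification.
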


We do not reproduce the proof of this result here, however, if we replace the occurrences of $\mathbb{Q}$ as well as $\mathbb{Q}(\beta)$ in the theorem by $\mathbb{Z}[\beta]$ and assume that $\beta$ is an algebraic integer then the according slightly modified result follows immediately from Propositions~\ref{prop:betanumformula} and \ref{EdDdEd}. Just observe that the conjugacy between $T_\beta$ and $\tau_\mathbf{r}$ stated in Proposition~\ref{prop:betanumformula} relates Pisot numbers to SRS parameters $\mathbf{r}\in \E_d$ and Salem numers to SRS parameters $\mathbf{r}\in \partial\E_d$. Therefore, this modification of Theorem~\ref{thm:Schmidt} is a special case of Proposition~\ref{EdDdEd}.

Note that Theorem~\ref{thm:Schmidt} does not give information on whether beta-expansions w.r.t.\ Salem numbers are periodic or not. Already Schmidt~\cite{Schmidt:80} formulated the following conjecture.

\begin{conjecture}\label{con:Schmidt}
If $\beta$ is a {\em Salem} number, then $T_\beta$ has an ultimately periodic orbit for each element of $\mathbb{Q}(\beta) \cap [0,1)$.
\end{conjecture}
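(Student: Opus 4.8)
The plan is to recast the conjecture entirely in the language of shift radix systems and then attack the resulting boundary problem. Since a Salem number $\beta$ of degree $d+1$ is in particular an algebraic integer, Proposition~\ref{prop:betanumformula} applies: with $\mathbf{r}=(r_0,\dots,r_{d-1})$ defined by~\eqref{ers}, the restriction of $T_\beta$ to $\mathbb{Z}[\beta]\cap[0,1)$ is conjugate to $\tau_\mathbf{r}$. The roots of $\chi_\mathbf{r}$ are exactly the conjugates of $\beta$ other than $\beta$ itself, that is, $\beta^{-1}$ together with the $d-1$ conjugates lying on the unit circle (all of them non-real, since $\pm1$ cannot be conjugates of $\beta$); hence $\varrho(R(\mathbf{r}))=1$ and, by Corollary~\ref{Ddboundary}, $\mathbf{r}\in E_d^{(\mathbb{C})}\subseteq\partial\D_d$. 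Consequently the ``$\mathbb{Z}[\beta]$-version'' of Conjecture~\ref{con:Schmidt} is \emph{equivalent} to the assertion $\mathbf{r}\in\D_d$, and a fortiori Conjecture~\ref{con:Schmidt} would follow from the inclusion $E_d^{(\mathbb{C})}\subseteq\D_d$ for all $d$ (together with the extension described next). So the first step is this reduction, and the second is to prove $\mathbf{r}\in\D_d$, \emph{i.e.}\ that every orbit of $\tau_\mathbf{r}$ is ultimately periodic.

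To pass from $\mathbb{Z}[\beta]$ to all of $\mathbb{Q}(\beta)$ one uses that a Salem number is a unit (its minimal polynomial is reciprocal with constant term $1$), so $\mathbb{Z}[\beta]=\mathbb{Z}[\beta,\beta^{-1}]$ and $\mathbb{Q}(\beta)=\bigcup_{m\ge1}m^{-1}\mathbb{Z}[\beta]$. For each $m$ one sets up an analogue of the conjugacy of Proposition~\ref{prop:betanumformula} between $T_\beta$ on $m^{-1}\mathbb{Z}[\beta]\cap[0,1)$ and a variant of $\tau_\mathbf{r}$ on a finer lattice, the rational-base case and the $p$-adic picture of Bosio and Vivaldi (Theorem~\ref{BosioThm}) serving as the model; periodicity on all these lattices, uniform in $m$, yields the conjecture. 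Equivalently, one can package the archimedean place together with the finitely many non-archimedean places relevant to the denominators into a single adelic dynamical system: the $\beta^{-1}$-direction is archimedeanly contracting, the unit-circle directions are archimedeanly neutral, and integrality controls the $p$-adic coordinates; the claim then becomes that every orbit is trapped in a compact set, hence finite, hence ultimately periodic.

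The heart of the matter is the neutral behaviour. Splitting $\RR^d$ into the $\beta^{-1}$-eigenline and the sum of the $2$-dimensional rotation eigenspaces attached to the circle conjugates $e^{\pm i\pi\theta_1},\dots$, and projecting away the contracting line, one obtains --- exactly as in Section~\ref{sec:D2} for $d=2$ --- a piecewise isometry on a $(d-1)$-dimensional torus-like region, a higher-dimensional ``rotation with round-off'', and $\mathbf{r}\in\D_d$ is equivalent to periodicity of every orbit of this piecewise isometry through the relevant dense $\mathbb{Z}$-module. For a Salem number the angles $\theta_j$ are generically irrational, so the self-inducing/renormalisation technique of Akiyama, Brunotte, Peth\H{o} and Steiner (which exploited \emph{rational} rotation angles and exact self-similarity) and the quasi-periodicity argument of Lowenstein \emph{et al.}\ do not transfer. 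Instead one would have to exploit the arithmetic rigidity peculiar to Salem numbers --- that \emph{all} conjugates besides $\beta$ and $\beta^{-1}$ lie \emph{exactly} on the unit circle --- to forbid orbits escaping to infinity along the neutral subspace. A concrete line of attack is to combine a trapping-region and lattice-point-counting argument in the spirit of Akiyama and Peth\H{o} (Theorem~\ref{AP13}) with the reciprocity of the minimal polynomial, in the hope of upgrading ``infinitely many periodic orbits'' to ``all orbits eventually periodic''.

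The main obstacle is precisely this last point: along the unit-circle eigenspaces the linear part of $\tau_\mathbf{r}$ is an \emph{isometry} (cf.~\eqref{linear}, where $||\mathbf{v}(\mathbf{z})||_\infty<1$), so a priori the $O(1)$ round-off error can accumulate without bound, and ruling this out is open even for $d=2$ outside a finite list of algebraic parameters. A complete proof therefore has to supply a genuinely new, arithmetic mechanism explaining why the discretised isometries arising from Salem numbers are non-escaping; since generic piecewise isometries do have unbounded orbits, the argument cannot be purely dynamical. Absent such a mechanism the conjecture remains open, and what the SRS reformulation buys is a clean statement of the problem and the recognition that it is the ``complex boundary'' question $E_d^{(\mathbb{C})}\subseteq\D_d$ for the relevant parameters $\mathbf{r}$.
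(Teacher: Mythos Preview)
The statement you are addressing is a \emph{conjecture}, and the paper does not prove it; on the contrary, it presents it as open and even reproduces Boyd's heuristic model (Section~\ref{sec:Heuristic}) suggesting that it may well be \emph{false} for Salem numbers of degree~$\ge 8$. So there is no ``paper's proof'' to compare against, and your proposal is, as you yourself acknowledge in the final paragraph, not a proof but a reformulation together with a sketch of where the difficulty lies.

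That said, your reformulation is correct and agrees with the paper's own discussion: via Proposition~\ref{prop:betanumformula} the $\mathbb{Z}[\beta]$-version of the conjecture is exactly the assertion $\mathbf{r}\in\D_d$ for the Salem parameter, which lies in $E_d^{(\mathbb{C})}$; the paper packages this as Question~\ref{qu:Schmidt}. Your identification of the neutral (unit-circle) eigenspace as the obstruction is also the paper's diagnosis, and your reference to the trapping-region argument of Akiyama--Peth\H{o} is apt, though that result only gives infinitely many periodic orbits, not periodicity of all orbits. Two points deserve comment. First, the passage from $\mathbb{Z}[\beta]$ to $\mathbb{Q}(\beta)$ is more than a formality: the conjugacy of Proposition~\ref{prop:betanumformula} is set up on $\mathbb{Z}[\beta]$, and while the extension to $m^{-1}\mathbb{Z}[\beta]$ can indeed be handled (Schmidt does this in the Pisot case), your adelic sketch is not a substitute for the actual argument. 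Second, and more importantly, you should be aware that the paper's heuristic (with $m\ge 3$ pairs of unit-circle conjugates) predicts unbounded orbits, so an approach that tries to prove $E_d^{(\mathbb{C})}\subseteq\D_d$ in general, or even just for all Salem parameters, is attacking a statement that current evidence suggests is false.
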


So far, no example of a non-periodic beta-expansion w.r.t.\ a Salem number $\beta$ has been found although Boyd~\cite{Boyd:96} gives a heuristic argument that puts some doubt on this conjecture (see Section~\ref{sec:Heuristic}). In view of Proposition~\ref{prop:betanumformula} (apart from the difference between $\mathbb{Z}[\beta]$ and $\mathbb{Q}(\beta)$) this conjecture is a special case of the following generalization of Conjecture~\ref{Vivaldi-SRS-Conjecture} to arbitrary dimensions (which, because of Boyd's heuristics, we formulate as a question).

\begin{question}\label{qu:Schmidt}
Let $\mathbf{r}\in E_d^{(\mathbb{C})}$ be given. Is it true that each orbit of $\tau_\mathbf{r}$ is ultimately periodic?
\end{question}

As Proposition~\ref{D2easyboundary} and Corollary~\ref{cor:real} show, the corresponding question cannot be answered affirmatively for all parameters contained in $E_d^{(1)}$ as well as in $E_d^{(-1)}$.

\subsection{The expansion of $1$}

Since it seems to be very difficult to verify Conjecture~\ref{con:Schmidt} for a single Salem number $\beta$, Boyd~\cite{Boyd:89} considered the simpler problem of studying the orbits of $1$ under $T_\beta$ for Salem numbers of degree $4$.
In \cite[Theorem~1]{Boyd:89} he shows that these orbits are always ultimately periodic and -- although there is no uniform bound for the period -- he is able to give the orbits explicitly. We just state the result about the periods and omit the description of the concrete structure of the orbits.

\begin{theorem}[{see \cite[Lemma~1 and Theorem~1]{Boyd:89}}]\label{thm:boyd4}
Let $X^4 + b_1 X^3 + b_2 X^2 + b_1 X + 1$ be the minimal polynomial of a Salem number of degree $4$. Then $\lfloor \beta\rfloor \in \{-b_1-2,-b_1-1,-b_1,-b_1+1  \}$. According to these values we have the following periods $p$ for the orbits of $T_\beta(1)$:
\begin{itemize}
\item[(i)] If $\lfloor \beta \rfloor = -b_1+1$ then $2b_1-1 \le b_2 \le b_1-1$ and
\begin{itemize}
\item[(a)] if $b_2 = 2b_1 -1$ then $p=9$, and
\item[(b)] if $b_2 > 2b_1 -1$ then $p=5$.
\end{itemize}
\item[(ii)] If $\lfloor \beta \rfloor = -b_1$ then $p=3$.
\item[(iii)] If $\lfloor \beta \rfloor = -b_1-1$ then $p=4$.
\item[(iv)] If $\lfloor \beta \rfloor = -b_1-2$ then $-b_1+1 <  b_2 \le -2b_1-3$. Let $c_k=(-2b_1-2)-(-b_1-3)/k$ for $k\in \{1,2,\ldots, -b_1 - 3\}$. Then $-b_1+1 =c_1 < c_2 < \cdots < c_{-b_1-3} = -2 b_1-3$ and $c_{k-1} < b_2 \le c_k$ implies that $p=2k+2$.
\end{itemize}
\end{theorem}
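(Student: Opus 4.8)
The plan is to exploit the reciprocal structure of the minimal polynomial, essentially Boyd's approach in \cite{Boyd:89}. The conjugates of a Salem number $\beta$ of degree $4$ are $\beta^{-1}$ and a pair $e^{\pm i\theta}$ on the unit circle, so the root set is stable under $x\mapsto 1/x$ and the minimal polynomial is indeed reciprocal, $X^4+b_1X^3+b_2X^2+b_1X+1$. Dividing by $X^2$ and substituting $y=X+X^{-1}$ turns this into the quadratic $y^2+b_1y+(b_2-2)=0$ with roots $y_1=\beta+\beta^{-1}\in(2,\infty)$ and $y_2=2\cos\theta\in(-2,2)$; Vieta's formulas read $y_1+y_2=-b_1$ and $y_1y_2=b_2-2$. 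Since $y_2$ cannot be rational (otherwise the quartic would split), $y_1,y_2$ are conjugate quadratic irrationals, and from $y_1>2>-y_2$ one gets $-b_1>0$.

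\emph{Locating $\lfloor\beta\rfloor$.} Since $0<\beta^{-1}<1$ we have $y_1-1<\beta<y_1$, and because $y_1=-b_1-y_2$ with $-2<y_2<2$ this gives $-b_1-3<\beta<-b_1+2$, so that $\lfloor\beta\rfloor\in\{-b_1-3,-b_1-2,-b_1-1,-b_1,-b_1+1\}$. To exclude $\lfloor\beta\rfloor=-b_1-3$, write $m=-b_1$ and suppose $\beta<m-2$; then $m-2>1$ forces $m\ge4$, and $m-2$ exceeds every root of $P(X)=X^4+b_1X^3+b_2X^2+b_1X+1=(X^2-y_1X+1)(X^2-y_2X+1)$, whence $P(m-2)>0$. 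But $(y_1-2)(y_2-2)=y_1y_2-2(y_1+y_2)+4=b_2+2b_1+2$ is negative, so $b_2\le-2b_1-3=2m-3$, and a short computation gives $P(m-2)=-2(m-2)^3+b_2(m-2)^2-m^2+2m+1\le-2m+5<0$ for $m\ge3$, a contradiction. Hence $\lfloor\beta\rfloor\in\{-b_1-2,-b_1-1,-b_1,-b_1+1\}$, which is the first assertion.

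\emph{Computing the orbit of $1$.} In each of the four cases one computes the sequence $T_\beta^0(1)=1,\ T_\beta^1(1),\ T_\beta^2(1),\dots$ directly, using $\beta^4=-b_1\beta^3-b_2\beta^2-b_1\beta-1$ to keep every iterate an explicit element of $\mathbb{Z}[\beta]\cap[0,1)$ and using the now-known value $\lfloor\beta\rfloor$ together with the Vieta relations to evaluate each floor $\lfloor\beta\,T_\beta^{k}(1)\rfloor$ unambiguously (irrationality of $y_2$ rules out boundary coincidences). The refined parameter windows within each case --- that $\lfloor\beta\rfloor=-b_1+1$ forces $2b_1-1\le b_2\le b_1-1$, that $\lfloor\beta\rfloor=-b_1-2$ forces $-b_1+1<b_2\le-2b_1-3$, etc.\ --- are read off by feeding $\lfloor\beta\rfloor\le\beta<\lfloor\beta\rfloor+1$ back into $y_1=\beta+\beta^{-1}$, $y_1y_2=b_2-2$ and the Salem conditions (positive discriminant and irreducibility of the quartic). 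Within each window the orbit closes up after the stated number of steps, giving periods $3$ and $4$ when $\lfloor\beta\rfloor\in\{-b_1,-b_1-1\}$; the cases $\lfloor\beta\rfloor\in\{-b_1+1,-b_1-2\}$ are longer and produce the further sub-cases. In particular the thresholds $c_k=(-2b_1-2)-(-b_1-3)/k$ of case (iv) are precisely the values of $b_2$ at which the floor arising at the $k$-th relevant step of the orbit flips, so that $c_{k-1}<b_2\le c_k$ pins the period to $2k+2$; the split $b_2=2b_1-1$ versus $b_2>2b_1-1$ in case (i) arises the same way.

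The main obstacle is this last step: the floor bookkeeping in the ``extreme'' cases $\lfloor\beta\rfloor\in\{-b_1+1,-b_1-2\}$ requires carrying along several elements of $\mathbb{Z}[\beta]$ simultaneously and keeping all estimates sharp enough to resolve each digit, and one must identify exactly the $b_2$-values where the combinatorial pattern of the orbit changes. By contrast the middle cases are short and fall out immediately from the same computation. As a consistency check one can verify that each resulting digit string $d(1,\beta)=t_1t_2\cdots$ obeys Parry's admissibility criterion and reproduces $1=\sum_{i\ge1}t_i\beta^{-i}$, confirming it is genuinely the $\beta$-expansion of $1$.
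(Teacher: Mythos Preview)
The survey you are working from does not prove this theorem at all; it is quoted verbatim from Boyd~\cite{Boyd:89} with the explicit remark ``We just state the result about the periods and omit the description of the concrete structure of the orbits.'' So there is no ``paper's own proof'' to compare against---only Boyd's original argument.

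Your outline is faithful to Boyd's method: the substitution $y=X+X^{-1}$, the Vieta bookkeeping for $y_1=\beta+\beta^{-1}$ and $y_2=2\cos\theta$, and the case split on $\lfloor\beta\rfloor$ are exactly how he proceeds. Your exclusion of $\lfloor\beta\rfloor=-b_1-3$ via the sign of $P(m-2)$ is clean and correct.

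What you have written, however, is a strategy rather than a proof. The sentence ``one computes the sequence $T_\beta^0(1),T_\beta^1(1),\dots$ directly'' hides all of the content of cases~(i) and~(iv), and you acknowledge as much (``The main obstacle is this last step''). In Boyd's paper the substance is precisely that computation: at each step one must express $T_\beta^k(1)$ as an explicit integer polynomial in $\beta$, bound it using the inequalities on $b_1,b_2$ sharply enough to determine $\lfloor\beta\,T_\beta^k(1)\rfloor$, and in case~(iv) keep track of an orbit whose length grows with $k$ until one sees why the threshold $c_k=(-2b_1-2)-(-b_1-3)/k$ governs the period. Asserting that ``the thresholds $c_k$ are precisely the values of $b_2$ at which the floor \dots\ flips'' is the statement to be proved, not its proof. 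If you intend this as a genuine proof rather than a roadmap, you need to actually exhibit the orbits (or at least the recursive pattern in case~(iv)) and verify the floor evaluations; otherwise it is best presented as a proof sketch with an honest pointer to \cite{Boyd:89} for the detailed calculations.
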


According to Proposition~\ref{prop:betanumformula} the dynamical systems $(\tau_\mathbf{r}, \mathbb{Z}^d)$ and $(T_\beta, \mathbb{Z}[\beta] \cap [0,1))$ are conjugate by the conjugacy $\Phi_{\mathbf r}(\mathbf{z})=\{\mathbf{r}\mathbf{z}\}$ when $\mathbf{r}=(r_0,\ldots,r_{d-1})$ is chosen as in this proposition. Thus {\it a priori} $T_\beta(1)$ has no analogue in $(\tau_\mathbf{r}, \mathbb{Z}^d)$. However, note that
\begin{align*}
\tau_\mathbf{r}((1,0,\ldots, 0)^t) &= (0,\ldots,0, -\lfloor r_0 \rfloor)^t = (0,\ldots,0, -\lfloor -1/\beta \rfloor)^t  = (0,\ldots,0,1)^t \quad \hbox{and}\\
T_\beta(1)&= \{ \beta \},
\end{align*}
since, as a Salem number is a unit we have $b_0=1$ and, hence, $r_0=-1/\beta \in(-1,0)$. Because $\Phi_{\mathbf{r}}((0,\ldots,0,1)^t)=\{\beta\}$ we see that the orbit of $(1,0,\ldots,0)^t$ under $\tau_{\mathbf{r}}$ has the same behavior as the orbit of $1$ under $T_\beta$.

Let us turn back to Salem numbers of degree $4$. If $\beta$ is such a Salem number then, since $\beta$ has non-real conjugates on the unit circle, the minimal polynomial of $\beta$ can be written as $(X-\beta)(X^3 +r_2X^2 + r_1X +r_0)$
with $\mathbf{r}=(r_0,r_1,r_2) \in E_3^{(\mathbb{C})}$. Thus Theorem~\ref{thm:boyd4} answers the following question for a special class of parameters.

\begin{question}\label{qu:salem4}
Given $\mathbf{r} \in E_3^{(\mathbb{C})}$, is the orbit of $(1,0,0)^t$ under $\tau_{\mathbf r}$ ultimately periodic and, if so, how long is its period?
\end{question}

As mentioned in Section~\ref{sec:Ed}, the set $ E_3^{(\mathbb{C})}$ is a surface in $\mathbb{R}^3$. Using the definition of $E_3^{(\mathbb{C})}$ one easily derives that (see equation~\eqref{eq:EC} and \cite{Kirschenhofer-Pethoe-Surer-Thuswaldner:10})
\begin{equation}\label{ECn}
 E_3^{(\mathbb{C})}=\{(t,st+1,s+t) \;:\; -2< s< 2 ,\, -1\le t\le
1\}.
\end{equation}
Figure~\ref{fig:salemparameters} illustrates which values of the parameters $(s,t)$ correspond to Salem numbers.
\begin{figure}
\includegraphics[height=6cm]{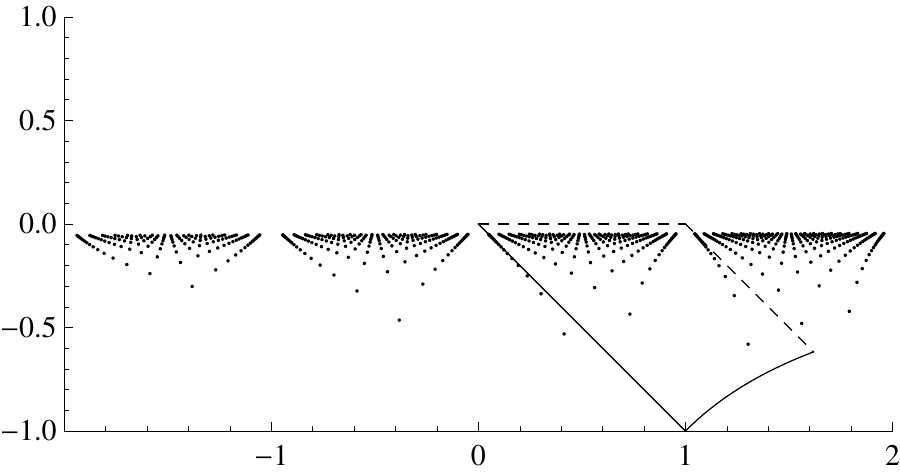}
\caption{The black dots mark the parameters corresponding to Salem numbers of degree $4$ in the parameterization of $E_3^{(\mathbb{C})}$ given in \eqref{ECn}. The marked region indicates a set of parameters that share the same orbit of $(1,0,0)^t$.
 \label{fig:salemparameters}}
\end{figure}
By Theorem~\ref{thm:boyd4} and the above mentioned remark on the conjugacy of the dynamical systems $(\tau_\mathbf{r}, \mathbb{Z}^d)$ and $(T_\beta, \mathbb{Z}[\beta] \cap [0,1))$, for each of the indicated points we know that the orbit of $\tau_{(t,st+1,s+t) }$ is periodic with the period given in this theorem. How about the general answer to Question~\ref{qu:salem4}? In Figure~\ref{fig:schmidtshaded} we illustrate the periods of the orbit of $(1,0,0)^t$ for the values $(s,t) \in (-2,2)\times [0,1]$.
\begin{figure}
\includegraphics[width=0.8\textwidth]{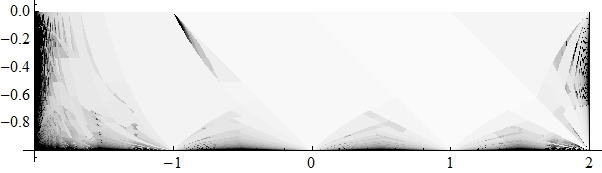}
\caption{Lengths of the orbit of $(1,0,0)^t$ in the parameter region $(s,t)\in (-2,2)\times[0,1]$. The lighter the point, the shorter the orbit.  Comparing this with Figure~\ref{fig:salemparameters} we see that for ``most'' Salem numbers of degree $4$ the orbit of $1$ under $T_\beta$ has short period. This agrees with Theorem~\ref{thm:boyd4}.
 \label{fig:schmidtshaded}}
\end{figure}
Although it is not hard to characterize the period of large subregions of this parameter range, we do not know whether $(1,0,0)^t$ has an ultimately periodic orbit for each parameter. In fact, it looks like a ``fortunate coincidence'' that Salem parameters lie in regions that mostly correspond to small periods. In particular, we have no explanation for the black ``stain'' southeast to the point $(-1,0)$ in Figure~\ref{fig:schmidtshaded} that corresponds to a spot with very long periods.

Boyd~\cite{Boyd:96} studies orbits of $1$ under $T_\beta$ for Salem numbers of degree $6$. There seems to be no simple ``formula'' for the period as in the case of degree $4$. Moreover, for some examples no periods have been found so far (see also~\cite{Hare-Tweedle:08} where orbits of $1$ under $T_\beta$ are given for classes of Salem numbers). We give two examples that illustrate the difficulty of the situation.

\begin{example}
Let $\beta >1$ be the Salem number defined by the polynomial
\[
x^6 - 3x^5 - x^4 - 7x^3 -x^2 - 3x + 1.
\]
Let $m$ be the pre-period of the orbit of $1$ under $T_\beta$, and $p$ its period (if these values exist). Boyd~\cite{Boyd:96} showed with computer assistance that $m+p > 10^9$.
Hare and Tweedle~\cite{Hare-Tweedle:08} consider the Salem number $\beta > 1$ defined by
\[
x^{12}-3 x^{11}+3 x^{10}-4 x^9+5 x^8-5 x^7+5 x^6-5 x^5+5 x^4-4 x^3+3 x^2-3 x+1.
\]
They compute that, if it exists, the period of the orbit of $1$ under $T_\beta$ is greater than $5\cdot 10^5$ in this case. We emphasize that for both of these examples it is {\em not} known whether the orbit of $1$ under $T_\beta$ is ultimately periodic or not.
\end{example}

\subsection{The heuristic model of Boyd for shift radix systems}\label{sec:Heuristic}

Let $\beta$ be a Salem number. In \cite[Section~6]{Boyd:96} a heuristic probabilistic model for the orbits of $1$ under $T_\beta$ is presented. This model suggests that for Salem numbers of degree $4$ and $6$ ``almost all'' orbits should be finite, and predicts the existence of ``many'' unbounded orbits for Salem numbers of degree $8$ and higher.
This suggests that there exist counter examples to Conjecture~\ref{con:Schmidt}. Here we present an SRS version of Boyd's model in order to give heuristics for the behavior of the orbit of $(1,0,\ldots,0)^t$ under $\tau_\mathbf{r}$ for $\mathbf{r} \in E_d^{(\mathbb{C})}$.

Let $\mathbf{r}\in E_d^{(\mathbb{C})}$ be given. To keep things simple we assume that the characteristic polynomial $\chi_\mathbf{r}$ of the matrix $R(\mathbf{r})$ defined in \eqref{mata} is irreducible. Let $\beta_1,\ldots, \beta_d$ be the roots of $\chi_\mathbf{r}$ grouped in a way that $\beta_1,\ldots, \beta_r$ are real and $\beta_{r+j}= \bar \beta_{r+s+j}$ ($1\le j \le s$) are the non-real roots ($d=r+2s$). Let $D={\rm diag}(\beta_1,\ldots,\beta_d)$. Since $R(\mathbf{r})$ is a {\it companion matrix} we have $R(\mathbf{r}) = VDV^{-1}$, where $V=(v_{ij})$ with $v_{ij}=\beta_j^{i-1}$ is the {\it Vandermonde matrix} formed with the roots of $\chi_\mathbf{r}$ ({\it cf. e.g.}~\cite{Brand:64}). Iterating \eqref{linear} for $k$ times we get
\begin{align}
\tau_\mathbf{r}^k((1,0,\ldots,0)^t) &=
\sum_{j=0}^{k-1} R(\mathbf{r})^j \mathbf{d}_j + R(\mathbf{r})^k(1,0,\ldots, 0)^t\nonumber \\
&=\sum_{j=0}^{k-1} VD^jV^{-1} \mathbf{d}_j + VD^kV^{-1}(1,0,\ldots, 0)^t, \label{eq:salemsum}
\end{align}
where $\mathbf{d}_j=(0,\ldots,0,\varepsilon_j)^t$ with $\varepsilon_j \in [0,1)$. Let $V^{-1}=(w_{ij})$. From \cite[Section~3]{Soto-Moya:11} we easily compute that $w_{id}=\prod_{\ell \not=i}(\beta_i - \beta_\ell)^{-1}$. Multiplying \eqref{eq:salemsum} by $V^{-1}$, using this fact we arrive at
\begin{equation}\label{eq:SalemBox}
V^{-1} \tau_\mathbf{r}^k\begin{pmatrix} 1\\0\\ \vdots\\0 \end{pmatrix} = \begin{pmatrix}
\prod_{\ell\not=1}(\beta_1-\beta_\ell)^{-1}\sum_{j=0}^{k-1} \varepsilon_j\beta_1^j \\
\vdots\\
\prod_{\ell\not=d}(\beta_d-\beta_\ell)^{-1}\sum_{j=0}^{k-1} \varepsilon_j\beta_d^j
\end{pmatrix}+ D^kV^{-1}\begin{pmatrix}1\\ 0\\ \vdots \\ 0\end{pmatrix} \in \mathbb{R}^r\times \mathbb{C}^{2s}.
\end{equation}
Note that the $(r+j)$-th coordinate of \eqref{eq:SalemBox} is just the complex conjugate of its $(r+s+j)$-th coordinate ($1\le j \le s$). Thus two points in the orbit of $(1,0,\ldots,0)^t$ under $\tau_\mathbf{r}$ are equal if and only if the first $r+s$ coordinates under the image of $V^{-1}$ are equal. So, using the fact that $|\varepsilon_j|< 1$ and picking $\mathbf{z}=(z_1,\ldots, z_d)^t \in \{V^{-1} \tau_\mathbf{r}^k((1,0,\ldots,0)^t)\;:\; 0\le k < n\}$ implies that
\begin{itemize}
\item[(i)] $\mathbf{z}$ is an element of the lattice $V^{-1} \mathbb{Z}^d$.
\item[(ii)] If $i\in\{1,\ldots, r\}$ then $z_i\in \mathbb{R}$ with
\[
|z_i| \le  \prod_{\ell\not=i} |\beta_i-\beta_\ell|^{-1}\sum_{j=0}^{k-1} \left|\beta_i \right|^j.
\]
\item[(iii)] If $i\in\{1,\ldots, s\}$ then $z_{r+i}=\bar z_{r+s+i}\in \mathbb{C}$ with
\begin{align*}
|z_{r+i}| &\le \prod_{\ell\not=r+i}|\beta_{r+i}-\beta_\ell|^{-1}\sum_{j=0}^{k-1}  \left|\beta_{r+i} \right|^j\\
&= \sqrt{\prod_{\ell\not=r+i}|\beta_{r+i}-\beta_\ell|^{-1}\prod_{\ell\not=r+s+i}|\beta_{r+s+i}-\beta_\ell|^{-1}\sum_{j=0}^{k-1} |\beta_{r+i}|^j\sum_{j=0}^{k-1} |\beta_{r+s+i}|^j}.
\end{align*}
\end{itemize}
Let ${\rm disc}(\chi_\mathbf{r})=\prod_{i\not=j}(\beta_i-\beta_j)$ be the discriminant of $\chi_\mathbf{r}$. Then the three items above imply that a point in $\{V^{-1} \tau_\mathbf{r}^k((1,0,\ldots,0)^t)\;:\; 0\le k < n\}$ is a point of the lattice $V^{-1}\mathbb{Z}^d$ that is contained in a product $K_n$ of disks and intervals with volume
\[
{\rm Vol}(K_n)=\frac{c}{|{\rm disc}(\chi_\mathbf{r})|} \prod_{i=1}^d \sum_{j=0}^{k-1} \left|\beta_{i}^j\right|,
\]
where $c$ is an absolute constant. As $\det(V)=\sqrt{{\rm disc}(\chi_\mathbf{r})}$ is the mesh volume of the lattice $V^{-1}\mathbb{Z}^d$ we get that this box cannot contain more than approximately
\[
N_n=\frac{c}{\sqrt{|{\rm disc}(\chi_\mathbf{r})|}} \prod_{i=1}^d\sum_{j=0}^{k-1}\left| \beta_{i}^j\right|
\]
elements. If $|\beta_i|<1$ then $ \sum_{j=0}^{k-1} \left|\beta_{i}^j\right| = O(1)$. Since $\chi_\mathbf{r}$ is irreducible $|\beta_i|=1$ implies that $\beta_i$ is non-real.  If $|\beta_i|=1$ then we have the estimate $ \sum_{j=0}^{k-1}\left|\beta_{i}^j\right| = O(n)$ for this sum as well for the conjugate sum. Let $m$ be the number of pairs of non-real roots of $\chi_\mathbf{r}$ that have modulus $1$. Then these considerations yield that
\begin{equation}\label{withoutbirthday}
N_n \le \frac{c}{\sqrt{|{\rm disc}(\chi_\mathbf{r})|}} n^{2m}.
\end{equation}
Unfortunately, this estimate doesn't allow us to get any conclusion on the periodicity of the orbit of $(1,0,\ldots,0)^t$. We thus make the following {\it assumption}: we assume that for each fixed $\beta_i$ with $|\beta_i|=1$ the quantities $|\eps_j\beta_i^j|$ ($0\le j\le k-1$) in \eqref{eq:SalemBox} behave like identically distributed independent random variables. Then, according to the central limit theorem, we have that the sums in \eqref{eq:SalemBox}
can be estimated by
\[
 \left|\sum_{j=0}^{k-1} \varepsilon_j\beta_{i}^j\right| = O(\sqrt{n}).
\]
Using this argument,  we can replace \eqref{withoutbirthday} by the better estimate
\[
N_n \le \frac{c}{\sqrt{|{\rm disc}(\chi_\mathbf{r})|}} n^{m}.
\]
Suppose that $m=1$. If $|{\rm disc}(\chi_\mathbf{r})|$ is large enough, the set $\{V^{-1} \tau_\mathbf{r}^k((1,0,\ldots,0)^t)\;:\; 0\le k < n\}$ would be contained in $K_n$ which contains less than $n$ points of the lattice $V^{-1}\mathbb{Z}^d$ in it. Thus there have to be some repetitions in the orbit of $(1,0,\ldots,0)^t$. This implies that it is periodic.

For $m=2$ and a sufficiently large discriminant the set $\{V^{-1} \tau_\mathbf{r}^k((1,0,\ldots,0)^t)\;:\; 0\le k < n\}$ would contain considerably more than $\sqrt{N_n}$ ``randomly chosen'' points taken from a box with $N_n$ elements. Thus, according to the ``birthday paradox'' (for $n\to\infty$) with probability $1$ the orbit ``picks'' twice the same point, which again implies periodicity.

For $m > 3$ this model suggests that there may well exist aperiodic orbits as there are ``too many'' choices to pick points. Summing up we come to the following conjecture.

\begin{conjecture}
Let $\mathbf{r} \in E_d^{(\mathbb{C})}$ be a  parameter with irreducible polynomial $\chi_\mathbf{r}$. Let $m$ be the number of pairs of complex conjugate roots $(\alpha, \bar\alpha)$ of $\chi_\mathbf{r}$ with $|\alpha|=1$. Then almost every orbit of $(1,0,\ldots,0)^t$ under $\tau_\mathbf{r}$ is periodic if $m=1$ or $m=2$ and aperiodic if $m \ge 3$.
\end{conjecture}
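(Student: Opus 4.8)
The plan is to turn the volume--versus--lattice-point count of Section~\ref{sec:Heuristic} into rigorous statements, the sole genuinely probabilistic ingredient being the assumption that the digits $\varepsilon_j=\{\mathbf{r}\tau_\mathbf{r}^j((1,0,\ldots,0)^t)\}$ behave like independent identically distributed random variables. First I would fix $\mathbf{r}\in E_d^{(\mathbb{C})}$ with $\chi_\mathbf{r}$ irreducible, diagonalise $R(\mathbf{r})=VDV^{-1}$ as in \eqref{eq:salemsum}, and pass to the conjugated orbit $\mathbf{z}_k:=V^{-1}\tau_\mathbf{r}^k((1,0,\ldots,0)^t)$ sitting in the lattice $V^{-1}\ZZ^d$. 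Since the coordinates attached to roots of modulus $<1$ stay in a fixed bounded set, and so do the pairs of coordinates attached to non-unimodular non-real roots, everything reduces to controlling the $m$ partial sums $\Sigma^{(i)}_k=\sum_{j=0}^{k-1}\varepsilon_j\beta_i^j$ attached to the unimodular roots $\beta_i=e^{2\pi i\theta_i}$ (with $\theta_i$ irrational because $\chi_\mathbf{r}$ is irreducible of degree $\ge4$); two points of the orbit coincide precisely when these $m$ sums agree.

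For the periodicity cases $m\in\{1,2\}$ the target is a square-root cancellation bound $|\Sigma^{(i)}_k|=O(k^{1/2}\log k)$ for every unimodular $\beta_i$, that is, a Weyl-type estimate for the exponential sum $\sum_{j<k}\varepsilon_je^{2\pi ij\theta_i}$. Granting such a bound and taking $|{\rm disc}(\chi_\mathbf{r})|$ large, the admissible region $K_n$ containing $\mathbf{z}_0,\ldots,\mathbf{z}_{n-1}$ carries roughly $n^m$ points of $V^{-1}\ZZ^d$, as in the quantity $N_n$ of Section~\ref{sec:Heuristic}. When $m=1$ this is fewer than $n$, so the pigeonhole principle yields $\mathbf{z}_a=\mathbf{z}_b$ with $a\ne b$ and the orbit is ultimately periodic. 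When $m=2$ one has $N_n$ of order $n^2$, but $\sqrt{N_n}$ is of smaller order than $n$ once the discriminant is large; a birthday-paradox argument then forces a collision, provided one also knows that the $\mathbf{z}_k$ are spread out essentially uniformly in $K_n$ — a quantitative equidistribution statement, and exactly the place where the independence heuristic is invoked.

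For the aperiodicity case $m\ge3$ the direction is reversed: now $\sqrt{N_n}$ is of order $n^{3/2}$ or larger, so no collision is forced, and one wants to prove that the orbit of $(1,0,\ldots,0)^t$ (and, in the stronger reading of the statement, almost every orbit) is in fact unbounded, hence not ultimately periodic. Here I would try to use the $2m\ge6$ expanding directions to build an escaping orbit by a symbolic-dynamics or transfer-operator argument describing which digit strings $(\varepsilon_j)$ actually arise from $\tau_\mathbf{r}$-orbits and showing that this set is rich enough to make at least one $\Sigma^{(i)}_k$ grow without bound — impossible along a bounded, in particular periodic, orbit. An alternative would be to adapt the trapping-region mechanism behind Theorem~\ref{AP13} of Akiyama and Peth\H{o}: an unbounded orbit must return repeatedly to a region which, for $m\ge3$, cannot hold enough lattice points to accommodate every unbounded orbit, forcing genuine escape.

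The main obstacle in every case is the same: the $\varepsilon_j$ are produced deterministically by $\tau_\mathbf{r}$, so ``independent identically distributed'' must be replaced by a theorem about the long-term statistics of $\tau_\mathbf{r}$ on $E_d^{(\mathbb{C})}$ — dynamics that are not understood even when $d=2$ (see Conjecture~\ref{Vivaldi-SRS-Conjecture}). There is at present no invariant measure for $\tau_\mathbf{r}$ in this regime with respect to which an ergodic theorem or a central limit theorem could be applied, and one cannot yet exclude correlations between the $\varepsilon_j$ and the phases $\beta_i^j$ that would destroy the expected cancellation when $m$ is small, or keep orbits trapped when $m$ is large. So, while the counting framework above is robust, the equidistribution input it requires appears to lie beyond current techniques, which is why the assertion is stated only as a conjecture.
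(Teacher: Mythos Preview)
This statement is a \emph{conjecture}; the paper does not prove it but rather motivates it via the heuristic probabilistic model of Section~\ref{sec:Heuristic}, and your proposal follows exactly that heuristic: diagonalise $R(\mathbf{r})$, bound the coordinates attached to roots of modulus $<1$, assume the digits $\varepsilon_j$ behave like i.i.d.\ random variables to get square-root cancellation in the unimodular sums, then invoke pigeonhole for $m=1$, the birthday paradox for $m=2$, and the excess of available lattice points for $m\ge 3$. You have also correctly identified the precise gap --- the lack of any ergodic or statistical control on the deterministic digit sequence $(\varepsilon_j)$ --- which is exactly why the paper states this only as a conjecture rather than a theorem.
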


Note that the cases $m=1$ and $m=2$ contain the Salem numbers of degree $4$ and $6$, respectively. Moreover, Salem numbers of degree $8$ and higher are contained in the cases $m\ge 3$. This is in accordance with \cite[Section~6]{Boyd:96}.

\section{Shift radix systems with finiteness property: the sets $\D_d^{(0)}$}\label{sec:Dd0}

As was already observed by Akiyama {\it et al.}~\cite{Akiyama-Borbeli-Brunotte-Pethoe-Thuswaldner:05} the set $\D_d^{(0)}$ can be constructed from the set $\D_d$ by ``cutting out'' families of convex polyhedra. Moreover, it is known that for $d\ge 2$ infinitely many such ``cut out polyhedra'' are needed to characterize $\D_d^{(0)}$ in this way (see Figure~\ref{d20} for an illustration of $\D_2^{(0)}$).

A list $\pi$ of pairwise distinct vectors
\begin{equation}\label{eq:cycle}
(a_{j},\ldots,a_{d-1+j})^t \qquad (0\le j\le L-1)
\end{equation}
with $a_{L}=a_0,\ldots, a_{L+d-1}=a_{d-1}$ is called a {\em cycle of vectors}.  To the cycle $\pi$ we associate the (possibly degenerate or empty) polyhedron
\[
P(\pi) = \{(r_0,\ldots,r_{d-1})\; : \; 0\le r_0 a_{j} + \cdots + r_{d-1} a_{d-1+j} + a_{d+j} <1 \hbox{ holds for } 0\le
j\le L-1\}.
\]
By definition the cycle in \eqref{eq:cycle} forms a periodic orbit of $\tau_{{\bf r}}$ if and only if  $\mathbf{r}\in{P}(\pi)$. Since ${\bf r}\in \D_d^{(0)}$ if and only if $\tau_{{\bf r}}$ has no non-trivial periodic orbit it follows that
$$
\label{cutout} \D_d^{(0)} = \D_d \setminus \bigcup_{\pi\not=\mathbf{0}}
{P}(\pi),
$$
where the union is taken over all non-zero cycles $\pi$ of vectors. The family of all (non-empty) polyhedra corresponding to this choice is called the family of {\it cut out polyhedra of $\D_d^{(0)}$}.

\begin{example}[{see \cite{Akiyama-Borbeli-Brunotte-Pethoe-Thuswaldner:05}}]
Let $\pi$ be a cycle of period $5$ in $\mathbb{Z}^2$ given by
$$
(-1,-1)^t \rightarrow (-1,1)^t \rightarrow (1,2)^t \rightarrow (2,1)^t
\rightarrow (1,-1)^t \rightarrow (-1,-1)^t.
$$
Then $P(\pi)$ gives the topmost cut out triangle in the approximation of $\D_2^{(0)}$ in Figure~\ref{d20}.
\end{example}

For $d=1$, the set $\D_1^{(0)}$ can easily be characterized.

\begin{proposition}[{{\em cf.}~\cite[Proposition~4.4]{Akiyama-Borbeli-Brunotte-Pethoe-Thuswaldner:05}}]\label{pro:D10}
$$
\D_1^{(0)}= [0,1).
$$
\end{proposition}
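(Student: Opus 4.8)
The plan is to analyze the one-dimensional map $\tau_r(z) = -\lfloor rz \rfloor$ directly for $r \in [0,1)$ and for $r \notin [0,1)$ separately. First I would dispose of the boundary of $\mathcal{D}_1$: by Corollary~\ref{Ddboundary} (or the easy fact $\mathcal{D}_1 = [-1,1]$ mentioned after the theorem on $\partial\E_d$) we have $\mathcal{D}_1^{(0)} \subset \mathcal{D}_1 = [-1,1]$, so it suffices to check $r \in [-1,1]$. For $r < 0$ I would exhibit an explicit nonzero periodic orbit: e.g.\ for $r \in (-1,0)$ one computes $\tau_r(1) = -\lfloor r \rfloor = 1$ (since $-1 < r < 0$ gives $\lfloor r\rfloor = -1$), so $1$ is a nonzero fixed point and $\tau_r$ does not have the finiteness property; the endpoint $r = -1$ also gives $\tau_{-1}(1) = 1$. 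Hence no $r < 0$ lies in $\mathcal{D}_1^{(0)}$, and this matches Example~\ref{ex32} where $-2/3 \notin \mathcal{D}_1^{(0)}$.

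Next I would handle $r = 1$, which is the trickiest of the "bad" parameters because $\tau_1$ does have a fixed point at $0$ but not the finiteness property: for $z \ge 1$ one has $\tau_1(z) = -z < 0$, and then $\tau_1(-z) = -\lfloor -z \rfloor = z$, so $\{1 \to -1 \to 1\}$ is a nonzero $2$-cycle; thus $1 \notin \mathcal{D}_1^{(0)}$. (Alternatively this follows since $1 \in \partial\E_1$, but the explicit orbit is cleaner.) This reduces the problem to showing $[0,1) \subseteq \mathcal{D}_1^{(0)}$.

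The core of the argument is the inclusion $[0,1) \subseteq \mathcal{D}_1^{(0)}$. Fix $r \in [0,1)$ and $z \in \ZZ$. The key claim is that $\tau_r$ is a contraction in absolute value off $0$, more precisely that $|\tau_r(z)| < |z|$ whenever $|z| \ge 1$, so that the orbit strictly decreases in absolute value until it reaches $0$. For $z \ge 1$: since $0 \le r < 1$ we have $0 \le rz < z$, hence $-z < -\lfloor rz \rfloor \le 0$, i.e.\ $-z < \tau_r(z) \le 0$, so $|\tau_r(z)| \le z - 1 < |z|$ (using integrality). For $z \le -1$: then $rz \le 0$, so $\lfloor rz \rfloor \le 0$ and $\tau_r(z) = -\lfloor rz \rfloor \ge 0$; moreover $rz > z$ (strict, since $r<1$ and $z<0$, unless $r=0$ in which case $\tau_0(z) = 0$), so $\lfloor rz \rfloor \ge \lceil rz \rceil - 1 \ge z$ ... more directly $rz > z$ gives $\lfloor rz \rfloor \ge z+1$ when $rz \ge z+1$ is not automatic, so I would instead argue: $z \le rz \le 0$ forces $\lfloor rz \rfloor \in \{z, z+1, \ldots, 0\}$, hence $\tau_r(z) = -\lfloor rz\rfloor \in \{0, 1, \ldots, -z\}$, giving $|\tau_r(z)| \le -z = |z|$, with equality only if $\lfloor rz \rfloor = z$, which needs $rz < z + 1$, i.e.\ $(1-r)|z| < 1$ — possible only for finitely many $z$, and in that borderline case $\tau_r(z) = -z > 0$ lands in the region $z \ge 1$ where strict decrease was already established, so the orbit still eventually strictly decreases. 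Thus every orbit is eventually trapped in a bounded set and, being integer-valued and strictly decreasing in $|\cdot|$ away from $0$, reaches $0$ in finitely many steps; since $\tau_r(0) = 0$, it stays there. Therefore $r \in \mathcal{D}_1^{(0)}$.

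The main obstacle I anticipate is the negative-$z$ borderline case just discussed: the naive bound only gives $|\tau_r(z)| \le |z|$ rather than strict decrease, so one must carry out the small case analysis showing that such a step immediately sends $z$ into the strictly-contracting positive regime (or handle it by noting the orbit cannot stay at constant absolute value forever). Everything else is a routine floor-function estimate. Combining the three cases ($r < 0$, $r = 1$, $r \in [0,1)$) with $\mathcal{D}_1^{(0)} \subset [-1,1]$ yields $\mathcal{D}_1^{(0)} = [0,1)$, as claimed.
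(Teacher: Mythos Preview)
Your argument is correct. The paper itself gives no proof beyond the remark ``The proof is an easy exercise,'' and your case analysis is exactly the natural way to carry it out: exhibit the fixed point $1$ for $r\in[-1,0)$, the $2$-cycle $1\to-1\to1$ for $r=1$, and prove the contraction $|\tau_r(z)|<|z|$ (possibly after one extra step) for $r\in[0,1)$.

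One stylistic remark on the borderline negative case: the cleanest way to present it is to observe that for $z\le -1$ you always have $0\le \tau_r(z)\le -z$, and if equality $\tau_r(z)=-z$ occurs then the next step lands at a positive integer where you have already shown strict decrease, so $|\tau_r^2(z)|\le |z|-1$. Thus $|\tau_r^{2k}(z)|$ is strictly decreasing as long as it is nonzero, and the orbit reaches $0$ in at most $2|z|$ steps. This avoids the slightly awkward detour through ``finitely many $z$'' and the incomplete chain of inequalities you flagged yourself.
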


The proof is an easy exercise.

\subsection{Algorithms to determine $\D_d^{(0)}$} \label{sec:algorithms}
To show that a given point $\mathbf{r}\in \D_d$ does not belong to $\D_d^{(0)}$ it is sufficient to show that $\tau_\mathbf{r}$ admits a non-trivial periodic orbit, {\it i.e.}, to show that there is a polyhedron $\pi$ with $\mathbf{r}\in P(\pi)$. To prove the other alternative is often more difficult. We provide an algorithm (going back to Brunotte~\cite{Brunotte:01}) that decides whether a given $\mathbf{r}\in\E_d$ is in $\D_d^{(0)}$ or not.

As usual, denote the standard basis vectors of $\mathbb{R}^d$ by $\{\mathbf{e}_1,\ldots,\mathbf{e}_d\}$.

\begin{definition}[Set of witnesses]\label{def:sow}
A {\em set of witnesses} associated with a parameter $\mathbf{r}\in \mathbb{R}^d$ is a set $\mathcal{V}_\mathbf{r}$ satisfying
\begin{itemize}
\item[(i)] $\{\pm \mathbf{e}_1,\ldots,\pm\mathbf{e}_d\}\subset \mathcal{V}_\mathbf{r}$ and
\item[(ii)] $\mathbf{z}\in \mathcal{V}_\mathbf{r}$ implies that $\{ \tau_\mathbf{r}(\mathbf{z}),-\tau_\mathbf{r}(-\mathbf{z})\} \subset \mathcal{V}_\mathbf{r}$,
\end{itemize}
\end{definition}

The following theorem justifies the terminology ``set of witnesses''.

\begin{theorem}[{see {\em e.g.}~\cite[Theorem~5.1]{Akiyama-Borbeli-Brunotte-Pethoe-Thuswaldner:05}}]\label{thm:Brunotte}
Choose $\mathbf{r}\in \mathbb{R}^d$  and let $\mathcal{V}_\mathbf{r}$ be a set of witnesses for $\mathbf{r}$. Then
\[
\mathbf{r}\in\D_d^{(0)} \quad\Longleftrightarrow\quad \hbox{for each } \mathbf{z}\in\mathcal{V}_\mathbf{r} \hbox{ there is } k\in\mathbb{N} \hbox{ such that } \tau_\mathbf{r}^k(\mathbf{z})=\mathbf{0}.
\]
\end{theorem}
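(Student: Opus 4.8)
The plan is to prove both directions of the equivalence, exploiting the almost-linearity relation \eqref{almostlinear} together with the defining properties (i) and (ii) of a set of witnesses.

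For the direction ``$\Rightarrow$'', suppose $\mathbf{r}\in\D_d^{(0)}$. Then by definition every $\mathbf{z}\in\mathbb{Z}^d$ is eventually mapped to $\mathbf{0}$ under iteration of $\tau_\mathbf{r}$. In particular this holds for each $\mathbf{z}\in\mathcal{V}_\mathbf{r}\subset\mathbb{Z}^d$, so there is nothing to prove in this direction.

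The substantial direction is ``$\Leftarrow$''. Assume that for each $\mathbf{z}\in\mathcal{V}_\mathbf{r}$ there is $k\in\mathbb{N}$ with $\tau_\mathbf{r}^k(\mathbf{z})=\mathbf{0}$; we must show $\mathbf{r}\in\D_d^{(0)}$, i.e., that the same holds for \emph{every} $\mathbf{z}\in\mathbb{Z}^d$. First I would note the following closure property: the set $F:=\{\mathbf{z}\in\mathbb{Z}^d \;:\; \tau_\mathbf{r}^k(\mathbf{z})=\mathbf{0}\text{ for some }k\in\mathbb{N}\}$ contains $\mathcal{V}_\mathbf{r}$ by hypothesis, and it is symmetric ($\mathbf{z}\in F\Leftrightarrow-\mathbf{z}\in F$ is not automatic, so care is needed — but one can enlarge $\mathcal{V}_\mathbf{r}$ or argue directly). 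The key algebraic input is \eqref{almostlinear}: for any $\mathbf{z}_1,\mathbf{z}_2\in\mathbb{Z}^d$ either $\tau_\mathbf{r}(\mathbf{z}_1+\mathbf{z}_2)=\tau_\mathbf{r}(\mathbf{z}_1)+\tau_\mathbf{r}(\mathbf{z}_2)$ or $\tau_\mathbf{r}(\mathbf{z}_1+\mathbf{z}_2)=\tau_\mathbf{r}(\mathbf{z}_1)-\tau_\mathbf{r}(-\mathbf{z}_2)$. I would prove by induction on $\|\mathbf{z}\|_1$ (or on the number of summands in an expression of $\mathbf{z}$ as a sum of $\pm\mathbf{e}_i$'s) that every $\mathbf{z}\in\mathbb{Z}^d$ lies in $F$. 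The base case $\mathbf{z}=\mathbf{0}$ is trivial and the cases $\mathbf{z}=\pm\mathbf{e}_i$ follow since $\{\pm\mathbf{e}_1,\dots,\pm\mathbf{e}_d\}\subset\mathcal{V}_\mathbf{r}\subset F$. For the inductive step, write $\mathbf{z}=\mathbf{z}'+\mathbf{e}$ where $\mathbf{e}\in\{\pm\mathbf{e}_1,\dots,\pm\mathbf{e}_d\}$ and $\|\mathbf{z}'\|_1<\|\mathbf{z}\|_1$. By \eqref{almostlinear}, $\tau_\mathbf{r}(\mathbf{z})$ equals either $\tau_\mathbf{r}(\mathbf{z}')+\tau_\mathbf{r}(\mathbf{e})$ or $\tau_\mathbf{r}(\mathbf{z}')-\tau_\mathbf{r}(-\mathbf{e})$; in either case $\tau_\mathbf{r}(\mathbf{z})$ is a sum/difference of a vector reachable from $\mathbf{z}'$ (hence, by induction, eventually hitting $\mathbf{0}$) and a vector of the form $\pm\tau_\mathbf{r}(\pm\mathbf{e})$, which by property (ii) lies in $\mathcal{V}_\mathbf{r}$. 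One must then iterate this decomposition: after applying $\tau_\mathbf{r}$ a number of steps equal to the ``time to reach $\mathbf{0}$'' from $\mathbf{z}'$, the first summand vanishes and we are left (up to sign) with an iterate of an element of $\mathcal{V}_\mathbf{r}$, which by hypothesis also eventually reaches $\mathbf{0}$. Hence $\mathbf{z}\in F$, completing the induction.

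The main obstacle — and the point requiring the most care — is the bookkeeping of the two alternatives in \eqref{almostlinear} across iterations: when one writes $\tau_\mathbf{r}^n(\mathbf{z}_1+\mathbf{z}_2)$, at each step the ``$+$'' or ``$-$'' branch may be taken, so one should set up the induction so that the statement being proved is robust under both branches. Concretely, I would phrase the inductive hypothesis as: for every $\mathbf{z}\in\mathcal{V}_\mathbf{r}$ and every $\mathbf{w}\in\mathbb{Z}^d$ that is eventually mapped to $\mathbf{0}$, the vector $\mathbf{z}+\mathbf{w}$ is eventually mapped to $\mathbf{0}$ — and then deduce the general statement by writing an arbitrary $\mathbf{z}\in\mathbb{Z}^d$ as $\pm\mathbf{e}_{i_1}\pm\cdots\pm\mathbf{e}_{i_k}$ and peeling off one standard basis vector at a time, using that $\mathcal{V}_\mathbf{r}$ is closed under $\mathbf{z}\mapsto\tau_\mathbf{r}(\mathbf{z})$ and $\mathbf{z}\mapsto-\tau_\mathbf{r}(-\mathbf{z})$ so that the ``witness part'' always stays inside $\mathcal{V}_\mathbf{r}$ regardless of which branch of \eqref{almostlinear} occurs. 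Once this closure-under-addition-of-witnesses lemma is established, the theorem follows immediately, and this is essentially the argument of Akiyama {\it et al.}~\cite{Akiyama-Borbeli-Brunotte-Pethoe-Thuswaldner:05}.
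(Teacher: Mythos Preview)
Your proposal is correct and, once you arrive at your final reformulation, essentially identical to the paper's proof. The paper states the key step more crisply: from \eqref{almostlinear} and property~(ii) one gets the single containment $\tau_\mathbf{r}(\mathbf{a}+\mathcal{V}_\mathbf{r})\subset\tau_\mathbf{r}(\mathbf{a})+\mathcal{V}_\mathbf{r}$, which iterated $\ell$ times (where $\tau_\mathbf{r}^\ell(\mathbf{a})=\mathbf{0}$) gives $\tau_\mathbf{r}^\ell(\mathbf{a}+\mathbf{b})\in\mathcal{V}_\mathbf{r}$ for any $\mathbf{b}\in\{\pm\mathbf{e}_1,\dots,\pm\mathbf{e}_d\}$ --- exactly your ``closure-under-addition-of-witnesses'' lemma, reached without the detour through the $\|\mathbf{z}\|_1$-induction you initially sketch and then abandon.
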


\begin{proof}
It is obvious that the left hand side of the equivalence implies the right hand side. Thus assume that the right hand side holds. Assume that $\mathbf{a}\in \mathbb{Z}^d$ has finite SRS expansion, {\it i.e.}, there exists $\ell\in\mathbb{N}$ such that $\tau_\mathbf{r}^\ell(\mathbf{a})=\mathbf{0}$ and choose $\mathbf{b}\in \{\pm \mathbf{e}_1,\ldots,\pm\mathbf{e}_d\}$. We show now that also $\mathbf{a}+\mathbf{b}$ has finite SRS expansion. As $\mathcal{V}_\mathbf{r}$ is a set of witnesses, using  Definition~\ref{def:sow}~(ii) we derive from the almost linearity condition stated in \eqref{almostlinear} that
\[
\tau_\mathbf{r}(\mathbf{a} + \mathcal{V}_\mathbf{r}) \subset \tau_\mathbf{r}(\mathbf{a}) + \mathcal{V}_\mathbf{r}.
\]
Iterating this for $\ell$ times and observing that $\mathbf{b} \in \mathcal{V}_\mathbf{r}$ holds in view of Definition~\ref{def:sow}~(i), we gain
\[
\tau_\mathbf{r}^\ell(\mathbf{a} + \mathbf{b}) \in \tau_\mathbf{r}^\ell(\mathbf{a}) + \mathcal{V}_\mathbf{r} = \mathcal{V}_\mathbf{r}.
\]
Thus our assumption implies that $\mathbf{a} + \mathbf{b}$ has finite SRS expansion. Since $\mathbf{0}$ clearly has finite SRS expansion, the above argument inductively proves that $\mathbf{r}\in\D_d^{(0)}$.
\end{proof}

For each $\mathbf{r} \in \E_d$ we can now check algorithmically whether $\mathbf{r}\in\D_d^{(0)}$ or not. Indeed, if $\mathbf{r}\in\E_d$ the matrix $R(\mathbf{r})$ is contractive. In view of \eqref{linear} this implies that Algorithm~\ref{alg:1} yields a finite set of witnesses $\mathcal{V}_\mathbf{r}$ for $\mathbf{r}$ after finitely many steps. Since Proposition~\ref{EdDdEd} ensures that each orbit of $\tau_\mathbf{r}$ is ultimately periodic for $\mathbf{r} \in \E_d$,  the criterion in Theorem~\ref{thm:Brunotte} can be checked algorithmically for each $\mathbf{z}\in\mathcal{V}_\mathbf{r}$.

\begin{algorithm}
\begin{algorithmic}
\Require{$\mathbf{r} \in \E_d$}
\Ensure{A set of witnesses $\mathcal{V}_\mathbf{r}$ for $\mathbf{r}$}
\State $W_0 \gets \{\pm \mathbf{e}_1,\ldots,\pm \mathbf{e}_d\}$
\State $i \gets 0$
\Repeat
\State $W_{i+1} \gets W_i \cup \tau_\mathbf{r}(W_i) \cup (-\tau_\mathbf{r}(-W_i))$
\State $i\gets i+1$
\Until{$W_i = W_{i-1}$}
\State $\mathcal{V}_\mathbf{r} \gets W_i$
\end{algorithmic}
\caption{An algorithm to calculate the set of witnesses of a parameter $\mathbf{r} \in \E_d$ (see~\cite[Section~5]{Akiyama-Borbeli-Brunotte-Pethoe-Thuswaldner:05})}
\label{alg:1}
\end{algorithm}

We can generalize these ideas and set up an algorithm that allows to determine small regions of the set $\D_d^{(0)}$. To this matter we define a set of witnesses for a compact set.

\begin{definition}[Set of witnesses for a compact set]\label{def:sowR}
Let $H \subset \mathbb{R}^d$ be a non-empty compact set and for $\mathbf{z}=(z_0,\ldots,z_{d-1})\in \mathbb{Z}^d$ define the functions
\begin{align}
M(\mathbf{z}) &= \max\{-\lfloor\mathbf{r}\mathbf{z}\rfloor\;:\; \mathbf{r}\in H\} ,\label{eq:M} \\
T(\mathbf{z}) &= \{(z_1,\ldots,z_{d-1},j)^t\;:\; -M(-\mathbf{z}) \le j \le M(\mathbf{z})\}. \nonumber
\end{align}
A set $\mathcal{V}_H$ is called a {\em set of witnesses for the region $H$}  if it satisfies
 \begin{itemize}
\item[(i)] $\{\pm \mathbf{e}_1,\ldots,\pm\mathbf{e}_d\}\subset \mathcal{V}_H$ and
\item[(ii)] $\mathbf{z}\in \mathcal{V}_H$ implies that $T(\mathbf{z}) \subset \mathcal{V}_H$.
\end{itemize}

A {\em graph $\mathcal{G}_H$ of witnesses for $H$} is a directed graph whose vertices are the elements of a set of witnesses $\mathcal{V}_H$ for $H$ and with a directed edge from $\mathbf{z}$ to $\mathbf{z}'$ if and only if $\mathbf{z}'\in T(\mathbf{z})$.
\end{definition}

Each cycle of a graph of witnesses $\mathcal{G}_H$ is formed by a cycle of vectors (note that cycles of graphs are therefore considered to be {\em simple} in this paper). If $\mathbf{0}$ is a vertex of $\mathcal{G}_H$ then  $\mathcal{G}_H$ contains the cycle $\mathbf{0} \to \mathbf{0}$. We call this cycle {\em trivial}. All the other cycles in $\mathcal{G}_H$ will be called {\em non-trivial}.

\begin{lemma}[{see~\cite[Section~5]{Akiyama-Borbeli-Brunotte-Pethoe-Thuswaldner:05}}]\label{lem:Brunotte}
The following assertions are true.

\begin{itemize}
 \item[(i)] A set of witnesses for $\mathbf{r}$ is a set of witnesses for the region $H=\{\mathbf{r}\}$ and vice versa.

\item[(ii)] Choose $\mathbf{r}\in \D_d$  and let $\mathcal{G}_{H}$ be a graph of witnesses for $H=\{\mathbf{r}\}$. If
$\mathbf{r}\not\in\D_d^{(0)}$ then $\mathcal{G}_{H}$ has a non-trivial cycle $\pi$ with $\mathbf{r}\in P(\pi)$.

\item[(iii)] A graph of witnesses for a compact set $H$ is a graph of witnesses for each non-empty compact subset of $H$.
\end{itemize}
\end{lemma}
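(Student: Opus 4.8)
The plan is to verify the three assertions of Lemma~\ref{lem:Brunotte} essentially directly from the definitions, since each is a short consequence of the machinery already set up.

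For (i), I would simply compare Definition~\ref{def:sow} with Definition~\ref{def:sowR} in the special case $H=\{\mathbf{r}\}$. When $H$ is a single point $\{\mathbf{r}\}$, the quantity $M(\mathbf{z})=\max\{-\lfloor\mathbf{r}\mathbf{z}\rfloor : \mathbf{r}\in H\}$ reduces to $-\lfloor\mathbf{r}\mathbf{z}\rfloor$, and likewise $M(-\mathbf{z})=-\lfloor-\mathbf{r}\mathbf{z}\rfloor=\lfloor\mathbf{r}\mathbf{z}\rfloor+\lceil\{\mathbf{r}\mathbf{z}\}\rceil$ — more usefully, $-M(-\mathbf{z})=\lfloor-\mathbf{r}\mathbf{z}\rfloor\cdot(-1)$, wait, let me just say: $-M(-\mathbf{z})$ is the last coordinate of $-\tau_\mathbf{r}(-\mathbf{z})$ and $M(\mathbf{z})$ is the last coordinate of $\tau_\mathbf{r}(\mathbf{z})$. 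Hence the interval $-M(-\mathbf{z})\le j\le M(\mathbf{z})$ is either a single point (when $\tau_\mathbf{r}(\mathbf{z})=-\tau_\mathbf{r}(-\mathbf{z})$, i.e.\ $\{\mathbf{r}\mathbf{z}\}=0$) or two points, namely exactly $\{\tau_\mathbf{r}(\mathbf{z}),-\tau_\mathbf{r}(-\mathbf{z})\}$. So $T(\mathbf{z})=\{\tau_\mathbf{r}(\mathbf{z}),-\tau_\mathbf{r}(-\mathbf{z})\}$, which makes conditions (ii) of the two definitions literally the same; condition (i) is identical in both. This proves (i).

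For (ii), let $\mathbf{r}\in\D_d$ with $\mathbf{r}\notin\D_d^{(0)}$. By Theorem~\ref{thm:Brunotte} applied to the set of witnesses $\mathcal{V}_H=\mathcal{V}_\mathbf{r}$ (which is a set of witnesses for $\mathbf{r}$ by part (i)), there must be some $\mathbf{z}\in\mathcal{V}_\mathbf{r}$ whose orbit never reaches $\mathbf{0}$. Since $\mathbf{r}\in\D_d$, that orbit is ultimately periodic, so it enters a non-trivial cycle $\pi$ of $\tau_\mathbf{r}$; all vectors appearing in this cycle lie in $\mathcal{V}_\mathbf{r}$ by the closure property (ii) of a set of witnesses. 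Each consecutive pair $\mathbf{w}\to\tau_\mathbf{r}(\mathbf{w})$ in this cycle satisfies $\tau_\mathbf{r}(\mathbf{w})\in T(\mathbf{w})$ (indeed $\tau_\mathbf{r}(\mathbf{w})\in\{\tau_\mathbf{r}(\mathbf{w}),-\tau_\mathbf{r}(-\mathbf{w})\}=T(\mathbf{w})$ by the computation above), so $\pi$ is a cycle of the graph $\mathcal{G}_H$. Finally, as noted in the paragraph preceding the statement, a cycle of vectors $\pi$ is a periodic orbit of $\tau_\mathbf{r}$ precisely when $\mathbf{r}\in P(\pi)$; since $\pi$ is such an orbit, $\mathbf{r}\in P(\pi)$, and $\pi$ is non-trivial because the orbit avoids $\mathbf{0}$.

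For (iii), let $H'\subseteq H$ be a non-empty compact subset and let $\mathcal{V}_H$ (with associated $\mathcal{G}_H$) be a set (graph) of witnesses for $H$. The key monotonicity observation is that for any $\mathbf{z}$, $M_{H'}(\mathbf{z})=\max\{-\lfloor\mathbf{r}\mathbf{z}\rfloor:\mathbf{r}\in H'\}\le\max\{-\lfloor\mathbf{r}\mathbf{z}\rfloor:\mathbf{r}\in H\}=M_H(\mathbf{z})$, and likewise $M_{H'}(-\mathbf{z})\le M_H(-\mathbf{z})$; therefore $T_{H'}(\mathbf{z})\subseteq T_H(\mathbf{z})$. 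Consequently a set satisfying (i) and closed under $T_H$ is also closed under $T_{H'}$, so $\mathcal{V}_H$ is a set of witnesses for $H'$ as well, and the corresponding graph $\mathcal{G}_{H'}$ is a subgraph of $\mathcal{G}_H$ on the same vertex set. This completes the proof.

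I do not anticipate a genuine obstacle here; the only point requiring a little care is the explicit identification $T_{\{\mathbf{r}\}}(\mathbf{z})=\{\tau_\mathbf{r}(\mathbf{z}),-\tau_\mathbf{r}(-\mathbf{z})\}$ used throughout, which rests on the elementary identity $-\lfloor-y\rfloor=\lceil y\rceil$ and the observation that $\lfloor y\rfloor=\lceil y\rceil$ iff $y\in\mathbb{Z}$, i.e.\ iff $\{\mathbf{r}\mathbf{z}\}=0$.
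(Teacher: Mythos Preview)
Your proposal is correct and follows the same route as the paper, which simply states that all three assertions are immediate consequences of Definitions~\ref{def:sow} and~\ref{def:sowR}; you have just spelled out those consequences in detail. The one small addition is your use of Theorem~\ref{thm:Brunotte} in part~(ii) to locate the non-trivial cycle inside $\mathcal{V}_\mathbf{r}$ --- this is actually necessary (a non-trivial $\tau_\mathbf{r}$-cycle in $\mathbb{Z}^d$ need not lie in $\mathcal{V}_\mathbf{r}$ by definition alone), so your argument is more complete than the paper's one-line proof.
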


\begin{proof}
All three assertions are immediate consequences of Definitions~\ref{def:sow} and~\ref{def:sowR}.
\end{proof}

We will use this lemma in the proof of the following result.

\begin{theorem}[{see {\em e.g.}~\cite[Theorem~5.2]{Akiyama-Borbeli-Brunotte-Pethoe-Thuswaldner:05}}]\label{thm:BrunotteR}
Let $H$ be the convex hull of the finite set $\{\mathbf{r}_1,\ldots,\mathbf{r}_k\}\subset\D_d$. If $\mathcal{G}_H$ is a graph of witnesses for $H$ then
\[
\D_d^{(0)} \cap H = H \setminus \bigcup_{\begin{subarray}{c}\pi \in \mathcal{G}_H\\ \pi \not = \mathbf{0} \end{subarray}} P(\pi)
\]
where the union is extended over all non-zero cycles of $\mathcal{G}_H$. Thus the set $\D_d^{(0)} \cap H$ is described by the graph $\mathcal{G}_H$.
\end{theorem}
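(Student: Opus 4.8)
The plan is to prove the two inclusions separately, using Lemma~\ref{lem:Brunotte} to reduce everything to the single-parameter situation and then invoking Theorem~\ref{thm:Brunotte}. First I would establish the inclusion $\D_d^{(0)}\cap H \subseteq H\setminus\bigcup_{\pi\neq\mathbf{0}}P(\pi)$. This is really the ``easy'' direction: if $\mathbf{r}\in H$ lies in $P(\pi)$ for some non-zero cycle $\pi$ of $\mathcal{G}_H$, then (by the very definition of $P(\pi)$ recalled just before Proposition~\ref{pro:D10}) the cycle of vectors underlying $\pi$ forms a non-trivial periodic orbit of $\tau_\mathbf{r}$. Hence $\tau_\mathbf{r}$ has a non-trivial cycle, so $\mathbf{r}\notin\D_d^{(0)}$. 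Taking contrapositives gives the claimed inclusion; note that this direction does not even use that $\mathcal{G}_H$ is a graph of witnesses, only that the cycles of $\mathcal{G}_H$ are cycles of vectors.

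For the reverse inclusion $H\setminus\bigcup_{\pi\neq\mathbf{0}}P(\pi)\subseteq \D_d^{(0)}\cap H$, fix $\mathbf{r}\in H$ with $\mathbf{r}\notin P(\pi)$ for every non-zero cycle $\pi$ of $\mathcal{G}_H$; I must show $\mathbf{r}\in\D_d^{(0)}$. Since $H$ is the convex hull of points of $\D_d$ and $\D_d$ is convex? — actually $\D_d$ need not be convex, but each $\mathbf{r}_i\in\D_d$ and the statement only asserts $H\subseteq\D_d$ implicitly through ``$\{\mathbf{r}_1,\dots,\mathbf{r}_k\}\subset\D_d$'' together with the hypothesis that a graph of witnesses for $H$ exists; I would simply assume, as the theorem does, that $\mathbf{r}\in H\subseteq\D_d$ (the existence of a finite graph of witnesses forces this). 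By Lemma~\ref{lem:Brunotte}(iii) the graph $\mathcal{G}_H$ is a graph of witnesses for the compact subset $\{\mathbf{r}\}$, and by Lemma~\ref{lem:Brunotte}(i) its vertex set $\mathcal{V}_H$ is then a set of witnesses for $\mathbf{r}$ in the sense of Definition~\ref{def:sow}. Now apply the contrapositive of Lemma~\ref{lem:Brunotte}(ii): if $\mathbf{r}\notin\D_d^{(0)}$, then $\mathcal{G}_H$ would contain a non-trivial cycle $\pi$ with $\mathbf{r}\in P(\pi)$, contradicting our choice of $\mathbf{r}$. Therefore $\mathbf{r}\in\D_d^{(0)}$, and since $\mathbf{r}\in H$ we get $\mathbf{r}\in\D_d^{(0)}\cap H$.

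Combining the two inclusions yields the displayed equality. The final sentence of the theorem (``the set $\D_d^{(0)}\cap H$ is described by the graph $\mathcal{G}_H$'') is then just a remark: enumerating the finitely many simple cycles of the finite graph $\mathcal{G}_H$, forming the polyhedra $P(\pi)$, and removing their union from $H$ produces $\D_d^{(0)}\cap H$ effectively.

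The only genuine subtlety — and the place I would be most careful — is verifying that $\mathcal{V}_H$ really is a \emph{set of witnesses for $\mathbf{r}$} in the sense of Definition~\ref{def:sow}, i.e.\ that condition (ii) there ($\mathbf{z}\in\mathcal{V}_H\Rightarrow\{\tau_\mathbf{r}(\mathbf{z}),-\tau_\mathbf{r}(-\mathbf{z})\}\subset\mathcal{V}_H$) follows from condition (ii) of Definition~\ref{def:sowR} ($\mathbf{z}\in\mathcal{V}_H\Rightarrow T(\mathbf{z})\subset\mathcal{V}_H$). This is exactly the content of Lemma~\ref{lem:Brunotte}(i) applied to the region $H=\{\mathbf{r}\}$: for a single point the quantity $M(\mathbf{z})=\max\{-\lfloor\mathbf{r}\mathbf{z}\rfloor:\mathbf{r}\in H\}$ collapses to $-\lfloor\mathbf{r}\mathbf{z}\rfloor$, so $T(\mathbf{z})=\{\tau_\mathbf{r}(\mathbf{z}),-\tau_\mathbf{r}(-\mathbf{z})\}$ (using that $-M(-\mathbf{z})=\lfloor\mathbf{r}(-\mathbf{z})\rfloor\cdot(-1)$ gives precisely the last coordinate of $-\tau_\mathbf{r}(-\mathbf{z})$), matching Definition~\ref{def:sow}(ii) on the nose. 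Since Lemma~\ref{lem:Brunotte} is available to us, I would cite it rather than re-derive this, keeping the proof short.
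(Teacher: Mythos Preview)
Your proof is correct and follows essentially the same approach as the paper's: both directions are handled via Lemma~\ref{lem:Brunotte}, with the harder inclusion obtained by applying parts~(iii) and~(ii) to pass from $H$ to the singleton $\{\mathbf{r}\}$ and then exhibit a non-trivial cycle $\pi$ of $\mathcal{G}_H$ containing $\mathbf{r}$ whenever $\mathbf{r}\notin\D_d^{(0)}$. The only cosmetic difference is that the paper dispatches the easy inclusion in one line by invoking the global cut-out description $\D_d^{(0)}\cap H = H\setminus\bigcup_{\pi\neq\mathbf{0}}P(\pi)$ (union over \emph{all} non-zero cycles) and observing that restricting to cycles of $\mathcal{G}_H$ only shrinks the union, whereas you argue directly from the definition of $P(\pi)$; both are immediate.
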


\begin{proof}
As obviously
\[
\D_d^{(0)} \cap H = H \setminus \bigcup_{\pi \not = \mathbf{0}} P(\pi) \subset  H \setminus \bigcup_{\begin{subarray}{c}\pi \in \mathcal{G}_H\\ \pi \not = \mathbf{0} \end{subarray}} P(\pi)
\]
it suffices to prove the reverse inclusion. To this matter assume that $\mathbf{r} \not\in \D_d^{(0)} \cap H$. W.l.o.g.\ we may also assume that $\mathbf{r}\in H$. Then by Lemma~\ref{lem:Brunotte}~(iii) the graph $\mathcal{G}_H$ is a graph of witnesses for $\{\mathbf{r}\}$. Thus Lemma~\ref{lem:Brunotte}~(ii) implies that $\mathcal{G}_H$ has a non-trivial cycle $\pi$ with $\mathbf{r}\in P(\pi)$ and, hence, $\mathbf{r}\not\in H \setminus \bigcup_{\begin{subarray}{c}\pi \in \mathcal{G}_H\\ \pi \not = \mathbf{0} \end{subarray}} P(\pi)$.
\end{proof}

Theorem~\ref{thm:BrunotteR} is of special interest if there is an algorithmic way to construct the graph $\mathcal{G}_H$. In this case it leads to an algorithm for the description of $\D_d^{(0)}$ in the region $H$.

To be more precise, assume that $H \subset \E_d$ is the convex hull of a finite set $\mathbf{r}_1,\ldots, \mathbf{r}_k$. Then the maximum $M(\mathbf{z})$ in \eqref{eq:M} is easily computable and analogously to Algorithm~\ref{alg:1} we can set up Algorithm~\ref{alg:2} to calculate the set of vertices of a graph of witnesses $\mathcal{G}_H$ for $H$. As soon as we have this set of vertices the edges can be constructed from the definition of a graph of witnesses. The cycles can then be determined by classical algorithms ({\it cf.\ e.g.}~\cite{Johnson:75}).

\begin{algorithm}
\begin{algorithmic}
\Require{$H \subset \E_d$ which is the convex hull of $\mathbf{r}_1,\ldots, \mathbf{r}_k$}
\Ensure{The states $\mathcal{V}_H$ of a graph of witnesses $\mathcal{G}_H$ for $H$}
\State $W_0 \gets \{\pm \mathbf{e}_1,\ldots,\pm \mathbf{e}_d\}$
\State $i \gets 0$
\Repeat
\State $W_{i+1} \gets W_i \cup T(W_i)$
\State $i\gets i+1$
\Until{$W_i = W_{i-1}$}
\State $\mathcal{V}_H\gets W_i$
\end{algorithmic}
\caption{An algorithm to calculate the set of witnesses of $H \subset \E_d$ (see~\cite[Section~5]{Akiyama-Borbeli-Brunotte-Pethoe-Thuswaldner:05})}
\label{alg:2}
\end{algorithm}

We need to make sure that Algorithm~\ref{alg:2} terminates. To this matter set $I(\mathbf{z})=\{\mathbf{sz}\,:\, \mathbf{s}\in H\}$. As $H$ is convex, this set is an interval. Thus, given $\mathbf{z}$, for each $\mathbf{z}'\in T(\mathbf{z})$ we can find $\mathbf{r}\in H$ such that
\[
\mathbf{z}' = \tau_{(\mathbf{r}}(\mathbf{z})=R(\mathbf{r}) \mathbf{z} + \mathbf{v} \qquad (\hbox{for some } \mathbf{v} \hbox{ with } ||\mathbf{v}||_\infty<1).
\]
As $\mathbf{r}\in \E_d$ we can choose a norm that makes $R(\mathbf{r})$ contractive for a particular $\mathbf{r}$. However, in general it is not possible to find a norm that makes $R(\mathbf{r})$ contractive for each $\mathbf{r}\in H$ unless $H$ is small enough in diameter. Thus, in order to ensure that  Algorithm~\ref{alg:2} terminates we have to choose the set $H$ sufficiently small.

There seems to exist no algorithmic way to determine sets $H$ that are ``small enough'' to make Algorithm~\ref{alg:2} finite. In practice one starts with some set $H$. If the algorithm does not terminate after a reasonable amount of time one has to subdivide $H$ into smaller subsets until the algorithm terminates for each piece. This strategy has been used so far to describe large parts of $\D_2^{(0)}$ (see {\it e.g.}~\cite{Akiyama-Brunotte-Pethoe-Thuswaldner:06,Surer:07}). Very recently, Weitzer~\cite{Weitzer:13} was able to design a new algorithm which describes $\D_d^{(0)} \cap H$ for arbitrary compact sets $H \subset \E_d$.  He does not need any further assumptions on $H$. Moreover, he is able to show that the set $\D_2^{(0)}$ is not connected and has non-trivial fundamental group.

\begin{remark}
These algorithms can easily be adapted to characterize the sets $\D_d^{(p)}$ used in Section~\ref{sec:realDd} (see \cite[Section~6.1]{Kirschenhofer-Pethoe-Surer-Thuswaldner:10}).
\end{remark}

We conclude this section with rhe following fundamental problem.

\begin{problem}
Give a complete description for $\D_d$ if $d \ge 2$. 
\end{problem}

\subsection{The finiteness property on the boundary of $\E_d$} Let us now focus on the relation between the sets $\D_d^{(0)}$ and  $\E_d$. We already observed that the application of $\tau_\mathbf{r}$ performs a multiplication by the matrix $R(\mathbf{r})$ followed by a round-off. If $\mathbf{r} \in \partial \mathcal{D}_d$, then $R(\mathbf{r})$ has at least one eigenvalue of modulus $1$. Thus multiplication by $R(\mathbf{r})$ will not contract along the direction of the corresponding eigenvector $\mathbf{v}$. If we consider a typical (large) orbit of $\tau_\mathbf{r}$, it is reasonable to assume that the successive ``round-off errors'' will --- even though they may not cancel out by the heuristics given in Section~\ref{sec:Heuristic} --- not always draw the orbit towards $\mathbf{0}$. This would imply that such an orbit will sometimes not end up at $\mathbf{0}$ if it starts far enough away from the origin in the direction of $\mathbf{v}$. More precisely, the following conjecture was stated by Akiyama {\it et al.}~\cite{Akiyama-Borbely-Brunotte-Pethoe-Thuswaldner:06}.

\begin{conjecture}\label{c69}
For $d\in\mathbb{N}$ we have
$$
\D_d^{(0)}\subset \E_d.
$$
\end{conjecture}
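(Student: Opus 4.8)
The statement to be proved is Conjecture~\ref{c69}, namely $\D_d^{(0)}\subset\E_d$ for all $d\in\NN$. Since this is explicitly labelled a conjecture in the source, I should not pretend it is a settled theorem; instead I will sketch what a proof strategy would look like, what partial progress is available, and where the genuine obstacle lies.

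The plan is to argue by contraposition: take $\mathbf{r}\in\partial\D_d$ (recall that by Corollary~\ref{Ddboundary} this is exactly the set where $\varrho(R(\mathbf{r}))=1$, and $\D_d^{(0)}\subset\D_d$ always, so it suffices to handle the boundary) and exhibit a starting vector $\mathbf{z}\in\ZZ^d$ whose $\tau_\mathbf{r}$-orbit never reaches $\mathbf{0}$. First I would split according to the type of the unimodular eigenvalue, using the decomposition $\partial\E_d = E_d^{(1)}\cup E_d^{(-1)}\cup E_d^{(\CC)}$ from \eqref{Edboundary}. For $\mathbf{r}\in E_d^{(1)}$, the matrix $R(\mathbf{r})$ has $1$ as an eigenvalue; letting $\mathbf{u}^t$ be the corresponding left eigenvector, the quantity $\mathbf{u}^t\tau_\mathbf{r}^n(\mathbf{z}) = \mathbf{u}^t\mathbf{z} + \sum_{k=1}^n \mathbf{u}^t\mathbf{v}_k$ (from \eqref{tauiterate}) is a bounded perturbation of $\mathbf{u}^t\mathbf{z}$, and one would like to choose $\mathbf{z}$ so that $\mathbf{u}^t\mathbf{z}$ is bounded away from every value $\mathbf{u}^t\tau_\mathbf{r}^n(\mathbf{0})$ can take on the trivial orbit --- but since $\mathbf{u}^t\tau_\mathbf{r}^n(\mathbf{0})=0$ for all $n$, it is enough to make $|\mathbf{u}^t\mathbf{z}|$ larger than the bound $c_1/(1-1)$... which diverges. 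So the naive argument fails exactly here, and this already signals the difficulty. For the $E_d^{(-1)}$ case one would similarly track the alternating sum $(-1)^n\mathbf{u}^t\tau_\mathbf{r}^n(\mathbf{z})$.

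A more promising route, and the one I would actually pursue, uses the conjugacy machinery of Section~2 together with the heuristic picture of Section~\ref{sec:Heuristic}. Concretely: in the directions of the modulus-one eigenvectors, multiplication by $R(\mathbf{r})$ is an isometry (for a non-real pair, a rotation; for $\pm 1$, $\pm\mathrm{id}$), while the round-off error $\mathbf{v}(\mathbf{z})$ has $\|\mathbf{v}(\mathbf{z})\|_\infty<1$ by \eqref{linear}. Passing to the coordinates $V^{-1}\mathbf{z}$ from \eqref{eq:SalemBox}, the component of the orbit along a unimodular direction is a sum $\sum_{j=0}^{k-1}\varepsilon_j\beta_i^j$ of terms of bounded modulus; along a contracting direction the orbit stays bounded. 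The idea is then to show that a suitable starting vector $\mathbf{z}$ placed far out along the unimodular directions generates an orbit that, while staying near the ``unstable subspace'', is too spread out to ever collapse to the single point $\mathbf{0}$: one wants a Lyapunov-type quantity (e.g.\ the squared norm of the projection onto the unstable subspace, or a modular/arithmetic invariant of the lattice point) that is non-decreasing under $\tau_\mathbf{r}$ once it exceeds a threshold and that is $0$ only at $\mathbf{0}$. Establishing $\D_d^{(0)}\cap\partial\D_d=\emptyset$ for $d=2$ (Corollary~\ref{D2boundarycorollary}) was done precisely this way: for $E_2^{(-1)}\cup E_{2-}^{(1)}$ one has explicit two-periodic or fixed orbits $\tau_\mathbf{r}^2((-n,n)^t)=(-n,n)^t$ etc.\ (Proposition~\ref{D2easyboundary}), and for $E_2^{(\CC)}$ one uses $\tau_\mathbf{r}^{-1}(\mathbf{0})=\{\mathbf{0}\}$. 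So for general $d$ I would first try to produce, for each $\mathbf{r}\in E_d^{(1)}\cup E_d^{(-1)}$, an explicit non-trivial periodic orbit of $\tau_\mathbf{r}$ (generalising the $(-n,n)$ family via the eigenvector structure), which immediately forces $\mathbf{r}\notin\D_d^{(0)}$; then reduce $E_d^{(\CC)}$ to lower dimension through the $\odot$-operation and Corollary~\ref{cor:real}.

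The main obstacle is the set $E_d^{(\CC)}$, i.e.\ exactly the situation of Question~\ref{qu:Schmidt} and the Salem-number case: when $R(\mathbf{r})$ has a non-real eigenvalue pair on the unit circle (and possibly several such pairs together with contracting ones), there is no reason for a short non-trivial cycle to exist, the round-off errors may conspire, and --- as Boyd's heuristic in Section~\ref{sec:Heuristic} and the conjecture at the end of Section~\ref{sec:Heuristic} indicate --- for $m\ge 3$ pairs of unimodular conjugates one even expects genuinely \emph{aperiodic} orbits, so $\mathbf{r}$ would lie in $\partial\D_d\setminus\D_d$ rather than merely outside $\D_d^{(0)}$. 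Thus any complete proof of Conjecture~\ref{c69} must confront the same arithmetic-dynamics phenomena that make Conjecture~\ref{Vivaldi-SRS-Conjecture} and Conjecture~\ref{con:Schmidt} open; in particular one cannot hope to dispatch $E_d^{(\CC)}$ by a soft norm argument, and new input (Diophantine approximation of the rotation angle, as in Theorem~\ref{th:steinerproof}, or $p$-adic methods as in Theorem~\ref{BosioThm}, or a lattice-point count as in Theorem~\ref{AP13}) would be required. A realistic intermediate goal, which I would state as what the above strategy actually delivers, is: $\D_d^{(0)}\cap(E_d^{(1)}\cup E_d^{(-1)})=\emptyset$ for all $d$, so that Conjecture~\ref{c69} reduces to showing $\D_d^{(0)}\cap E_d^{(\CC)}=\emptyset$.
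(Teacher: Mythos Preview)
You correctly recognise that this is an open conjecture and that the natural strategy is contraposition on $\partial\E_d$. Two points deserve correction.

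First, your ``realistic intermediate goal'' $\D_d^{(0)}\cap\bigl(E_d^{(1)}\cup E_d^{(-1)}\bigr)=\emptyset$ is not intermediate: it is already a theorem, proved in the paper as Proposition~\ref{DdboundaryPartialResults}(i)--(ii) via exactly the explicit non-trivial cycles you anticipate. For $\mathbf{r}\in E_d^{(1)}$ one has $r_0+\cdots+r_{d-1}=-1$, and then $\tau_\mathbf{r}((n,\ldots,n)^t)=(n,\ldots,n)^t$ for every $n$; for $\mathbf{r}\in E_d^{(-1)}$ the vector $(n,-n,\ldots,(-1)^{d-1}n)^t$ has period two. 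So the conjecture is already reduced to $E_d^{(\CC)}$, and indeed Proposition~\ref{DdboundaryPartialResults}(iv) further disposes of all $\mathbf{r}$ with $|r_0|=1$ (where $\tau_\mathbf{r}^{-1}(\mathbf{0})=\{\mathbf{0}\}$).

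Second, you conflate the difficulty of Conjecture~\ref{c69} with that of Conjectures~\ref{Vivaldi-SRS-Conjecture} and~\ref{con:Schmidt}. These are logically quite different. To exclude $\mathbf{r}$ from $\D_d^{(0)}$ one only needs \emph{one} orbit that never reaches $\mathbf{0}$; whether that orbit is ultimately periodic or not is irrelevant. In particular, the existence of aperiodic orbits (as Boyd's heuristic suggests for $m\ge 3$) would make the conjecture \emph{easier} at those parameters, not harder, and the paper's proof of the case $d=3$ (Theorem~\ref{thm:D3}) illustrates how one can avoid the periodicity question altogether: for the delicate subregions of $E_3^{(\CC)}$ one shows that $\tau_\mathbf{r}^{-n}(\mathbf{0})$ meets a suitable slab between two hyperplanes in only finitely many points, forcing some lattice point in that slab to lie on an orbit missing $\mathbf{0}$ --- without ever deciding whether that orbit is periodic. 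The genuine remaining obstacle is therefore narrower than you suggest: for $d\ge 4$ and $\mathbf{r}\in E_d^{(\CC)}$ with $|r_0|<1$ (so $\tau_\mathbf{r}^{-1}(\mathbf{0})$ is non-trivial), one lacks a uniform construction of a witness orbit, and the $d=3$ preimage-counting argument is the template to try to extend.
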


In other words: {\it Let $\mathbf{r}\in\mathbb{R}^d$. If
$\tau_{\mathbf r}$ has the finiteness property then, according to
the conjecture, each of the
eigenvalues of $R(\mathbf{r})$ has modulus strictly less than one.} Since by Proposition~\ref{EdDdEd} we have $\D_d^{(0)}\subset\D_d \subset \overline{\E_{d}}$ it remains to check all parameters $\mathbf{r}$ giving rise to a matrix
$R(\mathbf{r})$ whose eigenvalues have modulus at most one with
equality in at least one case.
Therefore Conjecture~\ref{c69} is equivalent to
$$
\mathcal{D}_d^{(0)} \cap \partial \mathcal{D}_d = \emptyset.
$$
This is of course trivially true for
$d=1$ (see Proposition~\ref{pro:D10}). It has been proved for $d=2$ by Akiyama {\it et al.}~\cite{Akiyama-Brunotte-Pethoe-Thuswaldner:06} (see Corollary~\ref{D2boundarycorollary}).
In the proofs for the cases $d=1$ and $d=2$ for all
$\mathbf{r}\in \E_d$, explicit orbits that do not end up at $\mathbf{0}$ are constructed.
For $d=3$ this seems no
longer possible for all parameters $\mathbf{r}\in \partial\E_d$.
Nevertheless  Brunotte and the authors could
settle the instance $d=3$.

\begin{theorem}[{{\em cf.}~\cite{Brunotte-Kirschenhofer-Thuswaldner:12}}]\label{thm:D3}
$$
\D_3^{(0)}\subset \E_3.
$$
\end{theorem}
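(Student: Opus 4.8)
The plan is to show that $\mathcal{D}_3^{(0)} \cap \partial\mathcal{D}_3 = \emptyset$, which by Proposition~\ref{EdDdEd} is equivalent to the claimed inclusion. By Corollary~\ref{Ddboundary} the boundary $\partial\mathcal{D}_3 = \partial\mathcal{E}_3$ decomposes, via \eqref{Edboundary}, into the three pieces $E_3^{(1)}$, $E_3^{(-1)}$ and $E_3^{(\mathbb{C})}$. The cases $E_3^{(1)}$ and $E_3^{(-1)}$ are handled by the results already recorded: by Corollary~\ref{cor:real} we have $\mathcal{D}_3 \cap E_3^{(-1)} = (-1)\odot\mathcal{D}_2^{(1)}$ and $\mathcal{D}_3 \cap E_3^{(1)} = (1)\odot\mathcal{D}_2^{(2)}$. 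For a parameter $\mathbf{r}\odot(\mp1)$ lying in one of these sets, the cycle of vectors that is forced by the real eigenvalue $\mp 1$ (the preimages of $\mathbf{0}$ computed as in the sketch of Proposition~\ref{D2easyboundary}, lifted through the operator $V_{\bf s}$ of the previous section) never collapses to $\mathbf{0}$; hence such $\mathbf{r}$ cannot lie in $\mathcal{D}_3^{(0)}$. Concretely, if $R(\mathbf{r})$ has $1$ as an eigenvalue with eigenvector $\mathbf v$, one exhibits for $n$ large enough an orbit through a point close to $n\mathbf v$ that is periodic but nonzero; if $R(\mathbf r)$ has $-1$ as an eigenvalue one does the same with a $2$-cycle. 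This reduces everything to the genuinely new case $\mathbf{r}\in E_3^{(\mathbb{C})}$.

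For the non-real part, use the parameterization \eqref{ECn}: $E_3^{(\mathbb{C})} = \{(t, st+1, s+t) : -2 < s < 2,\ -1 \le t \le 1\}$, so that $\chi_\mathbf{r}(X) = (X^2 + sX + 1)(X - t)$ has a conjugate pair of roots $e^{\pm i\pi\theta}$ on the unit circle (with $s = -2\cos\pi\theta$) together with the real root $-t\in[-1,1]$. I would split according to whether $|t| < 1$ or $|t| = 1$. When $|t| < 1$, the idea is to produce a nonzero periodic (indeed bounded, non-terminating) orbit by working in the plane spanned by the eigenvectors of the unit-modulus eigenvalues: project the almost-linear recursion \eqref{linear} onto that invariant plane, note that on it $\tau_\mathbf{r}$ is a discretized rotation of the type studied in Section~\ref{sec:D2}, and use the fact (established there, e.g.\ via $\tau_\mathbf{r}^{-1}(\mathbf 0)=\{\mathbf 0\}$ on the rotational part) that no nonzero point can reach $\mathbf 0$ — the contraction along the $-t$ direction only pushes orbits onto a bounded cylinder around that plane, never to the origin. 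Thus $\mathbf{r}\notin\mathcal D_3^{(0)}$. When $t = \pm 1$, the polynomial $\chi_\mathbf{r}$ additionally has the root $\mp 1$ on the unit circle, so the parameter lies on the edge of $E_3^{(\mathbb{C})}$ shared with $E_3^{(\pm 1)}$; here I would invoke the last three items of Corollary~\ref{cor:real} (the sets $(1,\pm 1)\odot\overline{\mathcal E_1}$ and $(1,0)\odot\overline{\mathcal E_1}$), combined with the explicit non-terminating cycles associated with the root of modulus $1$, exactly as in the $E_3^{(\pm1)}$ case.

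The main obstacle is the case $\mathbf r\in E_3^{(\mathbb C)}$ with $|t|<1$ and $\theta$ \emph{irrational} (so the rotation angle is incommensurable with $\pi$): here one cannot simply exhibit a finite periodic orbit, because the rotational dynamics on the invariant plane need not be periodic at all, and one must instead argue that \emph{some} orbit — in fact every orbit starting far enough out — fails to terminate at $\mathbf 0$. The key point to make rigorous is that, since $R(\mathbf r)$ is an isometry on the invariant plane and a strict contraction transversally, any orbit of $\tau_\mathbf r$ that started at distance $\rho$ from the plane, with $\rho$ large, stays at distance bounded below by a positive constant times $\rho$ from the plane forever (the round-off perturbation $\mathbf v(\mathbf z)$ has $\|\mathbf v\|_\infty<1$, so its cumulative transversal effect is $O(1)$), and hence never equals $\mathbf 0$. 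Equivalently: pick a left eigenvector $\mathbf u^t$ for the real eigenvalue $-t$, iterate \eqref{tauiterate}, and estimate $|\mathbf u^t \tau_\mathbf r^n(\mathbf z)|$ from below — it stays away from $0$ because $|-t|<1$ forces $\mathbf u^t\tau_\mathbf r^n(\mathbf z)$ to converge to a nonzero limit whenever $|\mathbf u^t\mathbf z|$ is large. Making this quantitative, uniformly over the two-dimensional family $E_3^{(\mathbb C)}$, and dovetailing it cleanly with the edge cases $|t|=1$, is the technical heart of the argument (and is presumably where the length of \cite{Brunotte-Kirschenhofer-Thuswaldner:12} comes from).
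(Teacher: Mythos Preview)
Your overall decomposition into $E_3^{(1)}$, $E_3^{(-1)}$, $E_3^{(\mathbb{C})}$ matches the paper, and the easy pieces $E_3^{(\pm 1)}$ are indeed handled by the explicit cycles recorded in Proposition~\ref{DdboundaryPartialResults}~(i),(ii). The problem is your treatment of $E_3^{(\mathbb{C})}$ with $|t|<1$: both arguments you propose break down.

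For the eigenvector argument: with $\mathbf{u}^t$ a left eigenvector for the real eigenvalue $-t$, \eqref{tauiterate} gives
\[
\mathbf{u}^t\tau_\mathbf{r}^n(\mathbf{z}) = (-t)^n\,\mathbf{u}^t\mathbf{z} + \sum_{k=1}^n (-t)^{n-k}\,\mathbf{u}^t\mathbf{v}_k.
\]
Since $|t|<1$, the first term tends to $0$ \emph{regardless} of how large $|\mathbf{u}^t\mathbf{z}|$ is, and the second term is bounded by $c/(1-|t|)$. So you only get an eventual \emph{upper} bound on the transverse component, not a lower one; there is no mechanism forcing the limit (which need not even exist) to be nonzero. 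Your verbal version has the same defect: if the transverse direction is contracting, the distance to the invariant plane \emph{shrinks} to $O(1)$, it does not stay bounded below by a constant times $\rho$. The projection-to-the-plane argument is also not usable as stated: the projection of $\mathbb{Z}^3$ onto the rotational invariant plane is in general dense, and the projected map is not a $2$-dimensional SRS, so the clean fact $\tau_{(1,\lambda)}^{-1}(\mathbf{0})=\{\mathbf{0}\}$ from Section~\ref{sec:D2} does not transfer. Indeed in $\mathbb{Z}^3$ one has $\tau_\mathbf{r}^{-1}(\mathbf{0})=\{(z_0,0,0)^t:0\le r_0 z_0<1\}$, which is typically larger than $\{\mathbf{0}\}$ when $r_0=t\in(0,1)$.

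What the paper actually does on $E_3^{(\mathbb{C})}$ is quite different. The region $(-2,2)\times[-1,1]$ is cut into many explicit subregions (Figure~\ref{Aufteilung}); on most of them one can still write down a concrete nontrivial cycle, but on several pieces (labelled $1,2,3,5$ there) this is genuinely impossible. For those pieces the argument goes through the \emph{preimages} of $\mathbf{0}$: one shows that $\tau_\mathbf{r}^{-n}(\mathbf{0})$, intersected with a fixed slab between two hyperplanes, is finite for all $n$. Since the slab contains infinitely many lattice points, some of them cannot reach $\mathbf{0}$, hence $\mathbf{r}\notin\mathcal{D}_3^{(0)}$ --- without ever exhibiting the obstructing cycle. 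The technical work is precisely in establishing this finiteness uniformly over each subregion, and that is where the length of \cite{Brunotte-Kirschenhofer-Thuswaldner:12} comes from.
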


In the following we give a very rough outline of the idea of the proof. In Figure \ref{E3} we see the set $\E_3$. The boundary of this set can be decomposed according to \eqref{Edboundary}. Moreover, the following parameterizations hold (see \cite{Kirschenhofer-Pethoe-Surer-Thuswaldner:10})
\begin{eqnarray}
E_3^{(1)}&=&\{(s,s+t+st,st+t+1) \;:\; -1\le s,t\le 1\}, \label{eq:E1}
\\
E_3^{(-1)}&=& \{(-s,s-t-st,st+t-1) \;:\; -1\le s,t\le
1\}\label{eq:E-1},\quad\hbox{and}\\
E_3^{(\mathbb{C})}&=&\{(t,st+1,s+t) \;:\; -2 < s < 2 ,\, -1\le t\le
1\}.\label{eq:EC}
\end{eqnarray}
The sets $E_3^{(1)}$ and $E_3^{(-1)}$ can be treated easily, see Proposition~\ref{DdboundaryPartialResults}~(i) and~(ii).
The more delicate instance is constituted by the elements of $E_3^{(\mathbb{C})}$. Here the decomposition of the parameter region depicted in Figure \ref{Aufteilung} is helpful.

\begin{figure}
\centering \leavevmode
\includegraphics[width=0.6\textwidth]{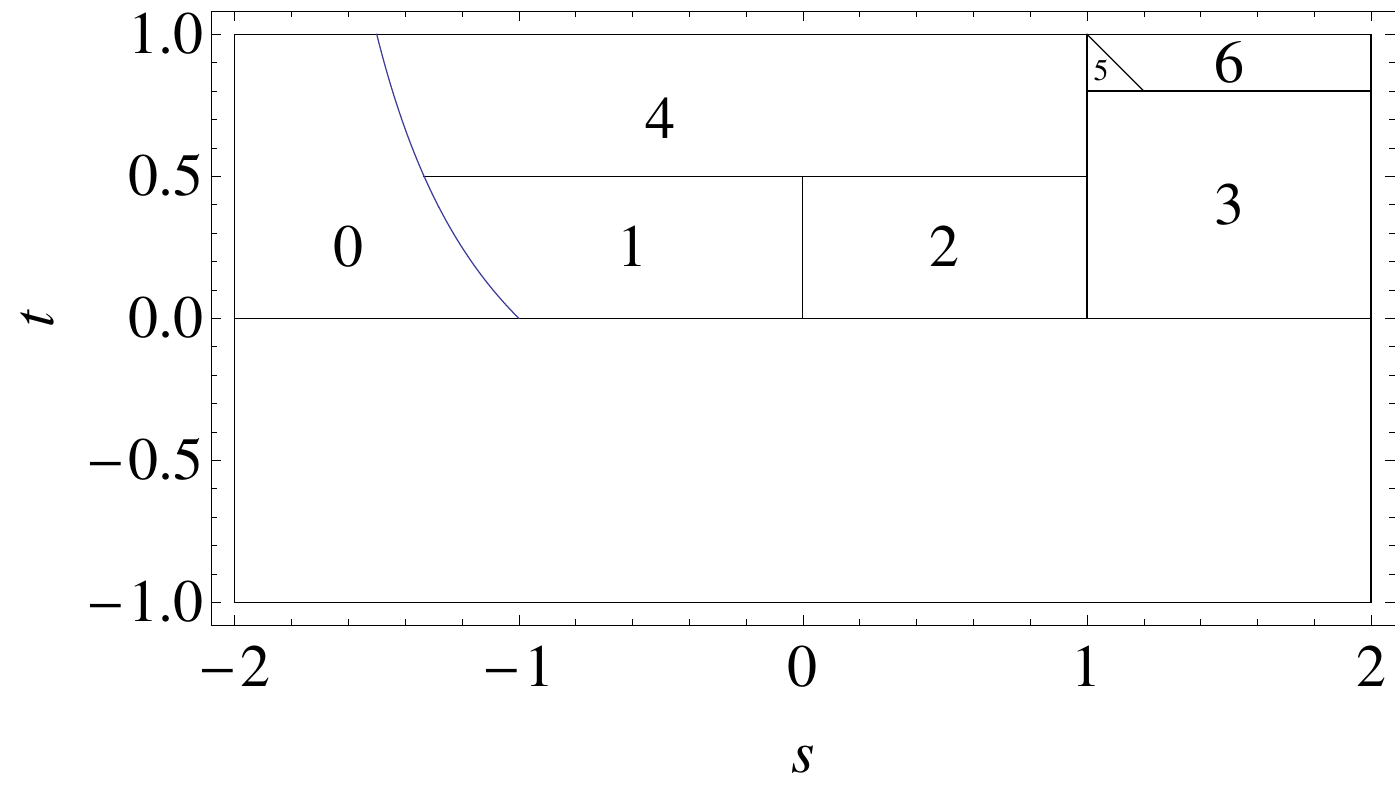}
\caption{The subdivision of the parameter region used in the proof of Theorem~\ref{thm:D3}}
\label{Aufteilung}
\end{figure}

Whereas for several subregions it is again possible to explicitly construct non-trivial cycles as in the instances mentioned above, this seems no longer the case {\it e.g.} for the regions
labelled $1$, $2$, $3$ or $5$ in Figure~\ref{Aufteilung}. Here the following idea can be applied.
For the parameters in the regions in question it can be proved that for large $n$ the set $\tau_{\mathbf{r}}^{-n}(\mathbf{0})$, where $\tau_{\mathbf{r}}^{-1}$ denotes the preimage of $\tau_{\mathbf{r}}$, has finite intersection with a subspace that is bounded by two hyperplanes. Thereby it can be concluded that some elements of this subspace belong to  periodic orbits of $\tau_{\mathbf{r}}$ that do not end up at $\mathbf{0}$ without constructing these orbits explicitly.

\medskip

Peth\H{o}~\cite{Pethoe:09} has studied the instance of the latter problem
where some eigenvalues of $R(\mathbf{r})$ are roots of unity. In the following proposition we give a summary of the partial results known for arbitrary dimensions.

\begin{proposition}\label{DdboundaryPartialResults}
Assume that $\mathbf{r}=(r_0,\ldots, r_{d-1})\in \partial\mathcal{D}_d$. Then $\mathbf{r}\not\in\mathcal{D}_d^{(0)}$ holds if one of the following conditions is true.
\begin{itemize}
\item[(i)] $\mathbf{r} \in E_d^{(1)}$.
\item[(ii)] $\mathbf{r} \in E_d^{(-1)}$.
\item[(iii)] $r_0<0$.
\item[(iv)] Each root of $\chi_\mathbf{r}$ has modulus $1$.
\item[(v)] There is a Salem number $\beta$ such that $(X-\beta)\chi_\mathbf{r}(X)$, with $\chi_\mathbf{r}$ as in \eqref{chi},  is the minimal polynomial of $\beta$ over $\mathbb{Z}$.
\item[(vi)]  $\mathbf{r}=(\frac{\pm 1}{p_0},\frac{p_{d-1}}{p_0},\ldots,\frac{p_1}{p_0})$ with $p_0,\ldots, p_{d-1}\in \mathbb{Z}$.
\end{itemize}
\end{proposition}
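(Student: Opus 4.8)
The plan is to prove each of the six sufficient conditions essentially separately, drawing on the conjugacies and the structural results already established in the excerpt. For (i) and (ii), namely $\mathbf{r}\in E_d^{(1)}$ or $\mathbf{r}\in E_d^{(-1)}$, I would invoke Corollary~\ref{cor:real}: we have $\D_d\cap E_d^{(-1)} = (-1)\odot\D_{d-1}^{(1)}$ and $\D_d\cap E_d^{(1)} = (1)\odot\D_{d-1}^{(2)}$. If $\mathbf{r}\in E_d^{(1)}\cap\D_d$, then $\mathbf{r}=(1)\odot\mathbf{r}'$ with $\mathbf{r}'\in\D_{d-1}^{(2)}$, and by the first Corollary after the statement of the $V_\mathbf{s}$-proposition, $\mathbf{r}\in\D_d^{(0)}$ would force $\mathbf{r}'\in\D_{d-1}^{(0)}$; but a nonzero constant vector $\mathbf{s}=(1)$ has $\chi_\mathbf{s}=X-1=\Phi_1$, so any cycle of $\tau_{\mathbf{r}'}$ that is lifted by $V_{(1)}$ to a cycle of $\tau_\mathbf{r}$ contributes a cycle whose coordinate-sum must vanish — in particular the trivial cycle of $\tau_{\mathbf{r}'}$ lifts to something nontrivial when $\mathbf{r}'\ne\mathbf{0}$, contradicting the finiteness property. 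The cleanest route is: if $\mathbf{r}=(1)\odot\mathbf{r}'\in\D_d^{(0)}$ then $1$ is an eigenvalue of $R(\mathbf{r})$ on the spectral boundary, so it suffices to exhibit one non-terminating orbit; take $\mathbf{z}$ with $V_{(1)}(\tau_{\mathbf{r}}^*\mathbf{z}')=\tau_{\mathbf{r}'}^*(\ldots)$ constant and nonzero, which is always available. The case $E_d^{(-1)}$ is identical using $\Phi_2=X+1$ and alternating sums. Alternatively, for $d=2,3$ one could just cite Proposition~\ref{D2easyboundary} and Theorem~\ref{thm:D3}, but the $\odot$-argument works uniformly in $d$.

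For (iv), suppose every root of $\chi_\mathbf{r}$ lies on the unit circle. Then $R(\mathbf{r})$ is a matrix all of whose eigenvalues have modulus $1$; I would argue that there is a non-terminating orbit by the following counting idea, which is exactly the rigorous skeleton underlying the heuristic of Section~\ref{sec:Heuristic} in the borderline case where it becomes a genuine argument. Using \eqref{tauiterate}, $\tau_\mathbf{r}^n(\mathbf{z}) = R(\mathbf{r})^n\mathbf{z} + \sum_{k=1}^n R(\mathbf{r})^{n-k}\mathbf{v}_k$ with $\|\mathbf{v}_k\|_\infty<1$. If $\chi_\mathbf{r}$ has a repeated root on the unit circle (equivalently $R(\mathbf{r})$ is not diagonalisable), then for a suitable starting vector $\mathbf{z}$ the polynomial growth of $R(\mathbf{r})^n\mathbf{z}$ dominates the at-most-comparable growth of the error sum along a generalised eigenvector direction, so the orbit is unbounded and $\mathbf{r}\notin\D_d$, hence a fortiori $\mathbf{r}\notin\D_d^{(0)}$; this is precisely the mechanism of Lemma~\ref{aux}. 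If all roots are simple (so $R(\mathbf{r})$ is diagonalisable with all eigenvalues on the circle), then $\|\tau_\mathbf{r}^n(\mathbf{z})\|$ stays bounded but the preimage tree is forced to be infinite near $\mathbf{0}$: indeed $\det R(\mathbf{r}) = \pm\prod\beta_i$ has modulus $1$, and one shows $\tau_\mathbf{r}^{-1}(\mathbf{0})$ and its iterated preimages accumulate, so $\mathbf{0}$ has infinitely many $\tau_\mathbf{r}$-preimages in a bounded region, and a pigeonhole / lattice-point count in the diagonalising coordinates (the box $K_n$ of Section~\ref{sec:Heuristic}, here of bounded volume when $m=0$ pairs grow) shows these preimages cannot all reach $\mathbf{0}$ in the forward direction without a collision producing a nontrivial cycle — giving $\mathbf{r}\notin\D_d^{(0)}$.

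For (iii), $r_0<0$: here $\det R(\mathbf{r}) = (-1)^d r_0$ and, more to the point, I would mimic the $d=2$ computation in the excerpt after Proposition~\ref{D2easyboundary}. One checks directly that $\tau_\mathbf{r}^{-1}(\mathbf{0})=\{\mathbf{0}\}$ when $r_0\in(-1,0)$: if $\tau_\mathbf{r}(\mathbf{z})=\mathbf{0}$ then $\mathbf{z}=(0,\dots,0,z_{d-1})^t$ with $0\le r_0 z_{d-1}<1$, forcing $z_{d-1}=0$ since $r_0<0$ would otherwise require $z_{d-1}<0$ giving $r_0 z_{d-1}>0$ but then the next coordinate condition traps it — more carefully, the unique preimage is $\mathbf{0}$. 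Consequently no nonzero $\mathbf{z}$ can ever map to $\mathbf{0}$, so $\tau_\mathbf{r}$ does not have the finiteness property and $\mathbf{r}\notin\D_d^{(0)}$. (For $r_0\le -1$ the point is not even in $\overline{\E_d}\supset\D_d$, by Proposition~\ref{stregion} with $k=0$, so there is nothing to prove; but in fact $r_0<0$ on $\partial\D_d$ already forces the preimage argument.) For (v), if $(X-\beta)\chi_\mathbf{r}(X)$ is the minimal polynomial of a Salem number $\beta$, then by the Remark after Theorem~\ref{srsbeta} the vector $\mathbf{r}$ is exactly the parameter produced in Theorem~\ref{srsbeta}, and $\mathbf{r}\in\partial\E_d$ because a Salem number has conjugates on the unit circle; then Theorem~\ref{srsbeta} says $\mathbf{r}\in\D_d^{(0)}$ iff $\beta$ has property (F), but property (F) holds only for Pisot numbers (quoted from \cite{Frougny-Solomyak:92} in the excerpt), and a Salem number is not Pisot, whence $\mathbf{r}\notin\D_d^{(0)}$. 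Finally for (vi), $\mathbf{r}=(\pm 1/p_0, p_{d-1}/p_0,\dots,p_1/p_0)$ is, by Theorem~\ref{srscns} and its surrounding discussion, the SRS parameter attached to the polynomial $P(X)=\pm X^d + p_{d-1}X^{d-1}+\dots+p_0$ with $|p_d|=1$, i.e. a monic CNS candidate; since $\mathbf{r}\in\partial\D_d$ forces $P$ to fail the expansivity requirement for a CNS polynomial (a CNS polynomial must be expanding, as noted after Definition~\ref{def:CNS}, and $\mathbf{r}\in\partial\D_d$ means $\varrho(R(\mathbf{r}))=1$ by Corollary~\ref{Ddboundary}, so some root of $\chi_\mathbf{r}$ — hence of $P$ — lies on the unit circle), $P$ is not a CNS polynomial, and Theorem~\ref{srscns} gives $\mathbf{r}\notin\D_d^{(0)}$. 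The main obstacle, and the place where the argument above is least turn-key, is case (iv) in the diagonalisable sub-case: turning the accumulation-of-preimages picture into an actual nontrivial cycle requires the same kind of careful bookkeeping as in Theorem~\ref{thm:D3}, and one should check that Peth\H{o}'s argument in \cite{Pethoe:09} indeed covers all roots-of-unity (and more generally all unimodular) spectra rather than only roots of unity; if it does not, one falls back on the lattice-point count, whose implementation is the genuinely technical step.
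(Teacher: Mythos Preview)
Your proposal has genuine gaps in parts (iii) and (iv), and is needlessly convoluted in (i)--(ii). The paper's proof is far more direct in every case.

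For (i) and (ii) the paper simply exhibits explicit periodic points: since $\mathbf{r}\in E_d^{(1)}$ means $\chi_\mathbf{r}(1)=0$, i.e.\ $r_0+\cdots+r_{d-1}=-1$, one checks in one line that $\tau_\mathbf{r}((n,\ldots,n)^t)=(n,\ldots,n)^t$ for every $n\in\mathbb{Z}$; similarly $\mathbf{r}\in E_d^{(-1)}$ gives $\tau_\mathbf{r}^2((n,-n,\ldots,(-1)^{d-1}n)^t)=(n,-n,\ldots,(-1)^{d-1}n)^t$. Your $\odot$-machinery is correct in spirit but you never actually close the argument (``the cleanest route is \ldots'' is left vague), and it is vastly heavier than needed.

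For (iii) your claim that $\tau_\mathbf{r}^{-1}(\mathbf{0})=\{\mathbf{0}\}$ when $r_0\in(-1,0)$ is \emph{false}. A preimage of $\mathbf{0}$ has the form $(z_0,0,\ldots,0)^t$ with $0\le r_0 z_0<1$; for $r_0<0$ this reads $1/r_0<z_0\le 0$, which in general admits several integers (e.g.\ $r_0=-\tfrac12$ gives $z_0\in\{-1,0\}$). The paper instead invokes a half-space argument: one shows there is a half-space none of whose lattice points can ever reach $\mathbf{0}$ under $\tau_\mathbf{r}$.

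For (iv) you have in fact discovered the paper's one-line argument but applied it to the wrong item. If every root of $\chi_\mathbf{r}$ has modulus $1$ then $|r_0|=\prod|\beta_i|=1$, so $r_0\in\{-1,1\}$, and \emph{now} the preimage computation gives $\tau_\mathbf{r}^{-1}(\mathbf{0})=\{\mathbf{0}\}$ exactly as you wrote for (iii). Your elaborate diagonalisable/non-diagonalisable dichotomy and the lattice-point counting are entirely unnecessary --- and as you yourself note, incomplete.

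Your arguments for (v) via property~(F) and for (vi) via expanding CNS polynomials are valid; the paper proves (v) slightly differently, via a direct half-space argument using the positive real conjugate $\beta^{-1}$ of a Salem number, but both routes work.
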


\begin{remark}
Item (iii) is a special case of \cite[Theorem~2.1]{Akiyama-Brunotte-Pethoe-Thuswaldner:07}. 
Item (v) is a restatement of the fact that beta expansions w.r.t.\ Salem numbers never satisfy property (F), see {\it e.g.}~\cite[Section~2]{Boyd:89} or \cite[Lemma~1(b)]{Frougny-Solomyak:92}. Item (vi) is equivalent to the fact that CNS polynomials satisfying the finiteness property need to be expanding ({\it cf.}~\cite[Theorem~6.1]{Pethoe:91};  see also~\cite{Kovacs-Pethoe:91}).
\end{remark}

\begin{proof}
In Item (i) we have that $r_0+\cdots+r_{d-1}=-1$. Thus, choosing $\mathbf{z}=(n,\ldots,n)^t\in \mathbb{Z}^d$ we get that
\[
\tau_{\mathbf{r}}(\mathbf{z}) = (n,\ldots, n, -\lfloor \mathbf{rz} \rfloor)^t = (n,\ldots,n, -\lfloor n(r_0+\cdots+r_{d-1}) \rfloor)^t = (n,\ldots,n)^t
\]
which exhibits a non-trivial cycle for each $n\not=0$. Similarly, in Item (ii), we use the fact that $r_0-r_1+r_2-+\cdots+(-1)^{d-1}r_{d-1}=(-1)^{d-1}$ in order to derive $\tau_\mathbf{r}^2(\mathbf{z})=\mathbf{z}$ for each $\mathbf{z}=(n,-n,\ldots,(-1)^{d-1}n)^t\in \mathbb{Z}^d$.

To prove Item (iii) one shows that for $r_0<0$ there is a half-space whose elements cannot have orbits ending up in~$\mathbf{0}$. 

To show that the result holds when (iv) is in force, observe that this condition implies that $r_0 \in\{-1,1\}$. This immediately yields $\tau_{\mathbf r}^{-1}(\mathbf{0})=\{\mathbf{0}\}$ and, hence, no orbit apart from the trivial one can end up in~$\mathbf{0}$.

In the proof of (v) one uses that a Salem number $\beta$ has the positive conjugate $\beta^{-1}$. As in (iii) this fact allows to conclude that there is a half-space whose elements cannot have orbits ending up in~$\mathbf{0}$. 

Item (vi) is proved using the fact that under this condition the polynomial $\chi_\mathbf{r}$ has a root of unity among its roots.
\end{proof}

\section{The geometry of shift radix systems}

\subsection{SRS tiles}

Let us assume that the matrix $R(\mathbf{r})$ is contractive and $\mathbf{r}$ is {\it reduced} in the sense that $\mathbf{r}$ does not include leading zeros. Then, as it was observed by Berth\'e {\it et al.}\ \cite{BSSST2011} the mapping
$\tau_\mathbf{r}$ can be used to define so-called SRS tiles in
analogy with the definition of tiles for other dynamical systems
related to numeration (\emph{cf.\ e.g.}\
\cite{Akiyama:02,Barat-Berthe-Liardet-Thuswaldner:06,Berthe-Siegel:05,Ito-Rao:06,Katai-Koernyei:92,Rauzy:82,Scheicher-Thuswaldner:01,Thurston:89}).
As it will turn out some of these tiles
are related to CNS tiles and beta-tiles in a way corresponding to
the conjugacies established already in Section 2.
Formally we have the following objects.

\begin{definition}[SRS tile]
Let $\mathbf{r}=(r_0,\ldots,r_{d-1}) \in \mathcal{E}_d$ with $r_0\not=0$ and $\mathbf{x} \in \mathbb{Z}^d$ be given.
The set
\[ 
\mathcal{T}_\mathbf{r}(\mathbf{x})= \mathop{\rm Lim}_{n\to\infty} R(\mathbf{r})^n \tau_\mathbf{r}^{-n}(\mathbf{x}) 
\]
(where the limit is taken with respect to the Hausdorff metric) is called the {\em SRS tile associated with $\mathbf{r}$}.
$\mathcal{T}_\mathbf{r}(\mathbf{0})$ is called the {\em central SRS tile associated with $\mathbf{r}$ located at $\mathbf{x}$}.
\end{definition}

In other words in order to build $\mathcal{T}_\mathbf{r}(\mathbf{x})$, the vectors are considered whose SRS expansion
coincides with the expansion of $\mathbf{x}$ up to an added finite
prefix and afterwards the expansion is renormalized. We mention that the existence of this limit is not trivially true but can be assured by using the contractivity of the operator $R(\mathbf{r})$.
\begin{example}
Let $\mathbf{r}=(\frac{4}{5},-\frac{49}{50})$. Approximations of the tile $\mathcal{T}_\mathbf{r}(\mathbf{0})$ are illustrated in Figure~\ref{fig:SRSspirale}.
\begin{figure}
\includegraphics[width=0.3\textwidth]{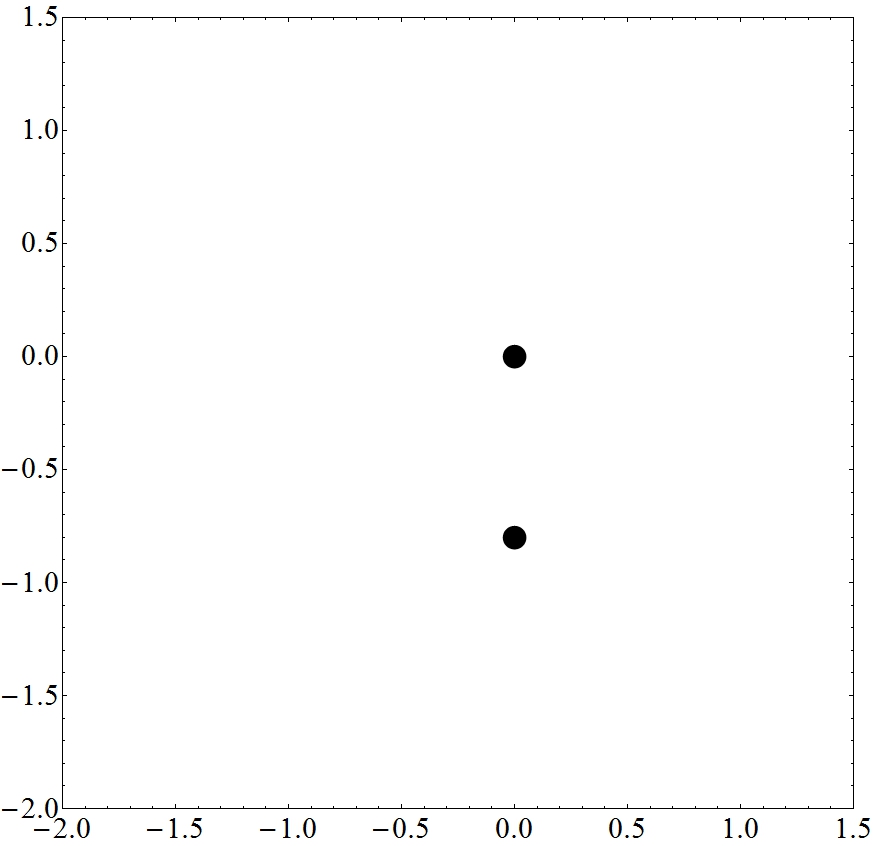}
\includegraphics[width=0.3\textwidth]{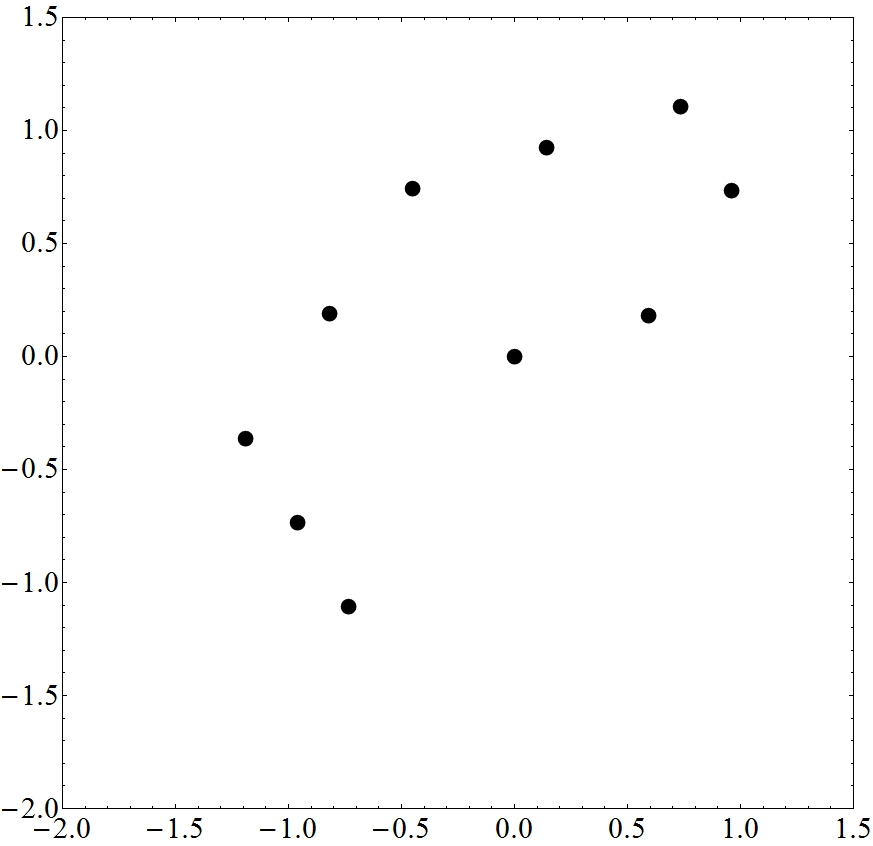}
\includegraphics[width=0.3\textwidth]{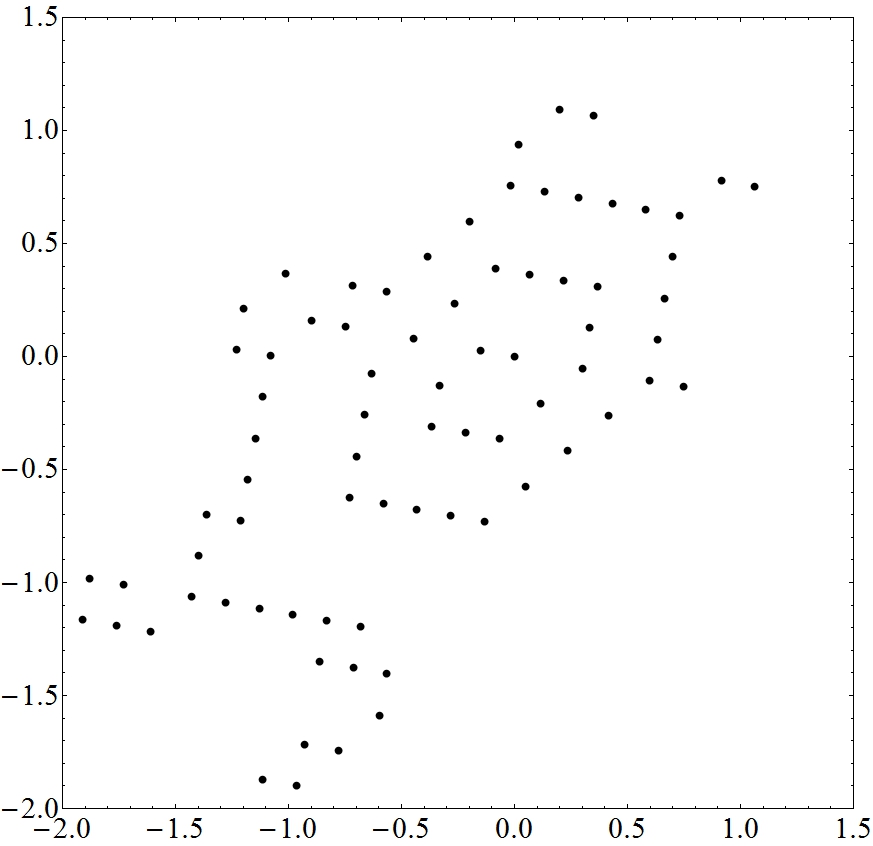} \\[.3cm]
\includegraphics[width=0.3\textwidth]{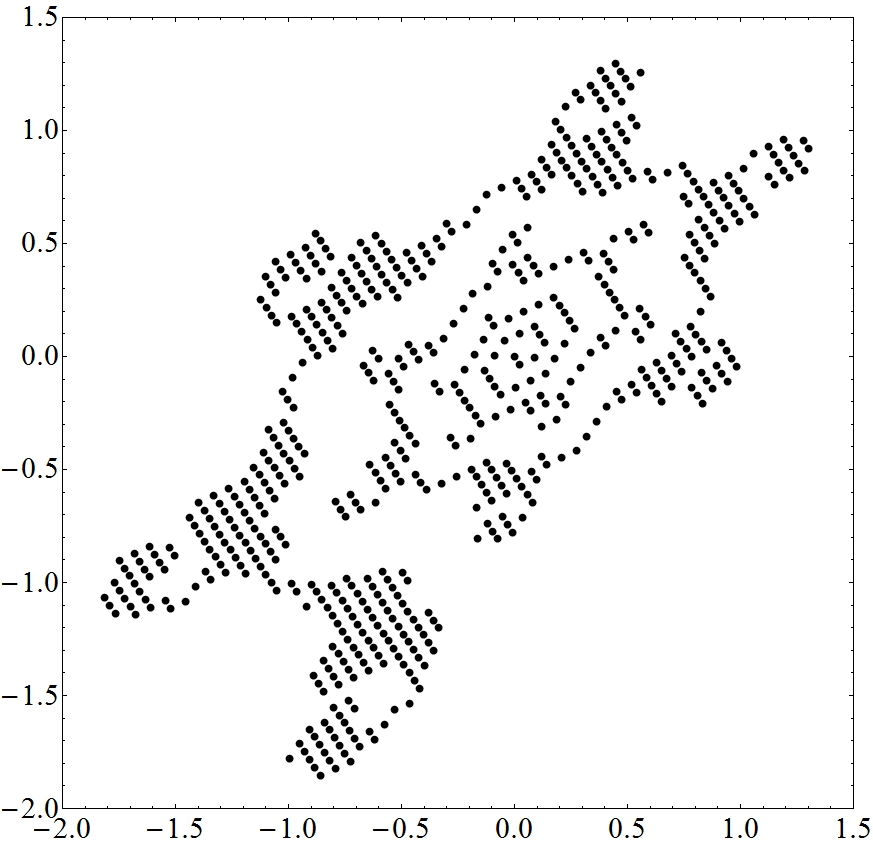}
\includegraphics[width=0.3\textwidth]{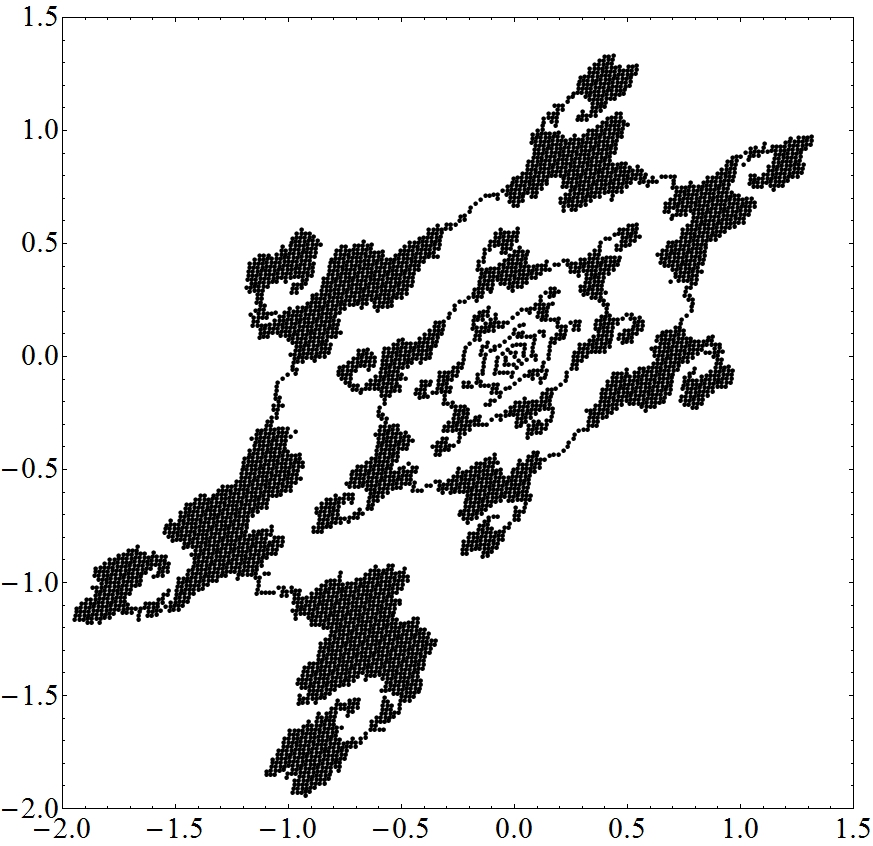}
\includegraphics[width=0.3\textwidth]{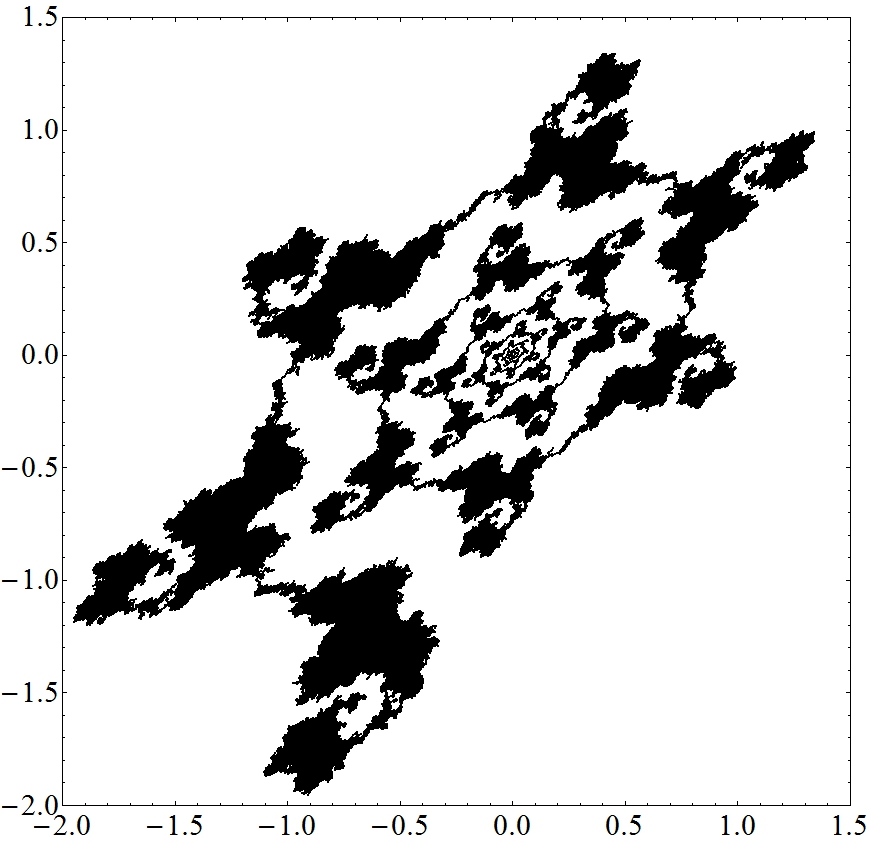}
\caption{Approximations of $\mathcal{T}_{\mathbf{r}}(\mathbf{0})$ for the parameter $\mathbf{r}=(\frac{4}{5},-\frac{49}{50})$: the images show $R(\mathbf{r})^k\tau_{\mathbf{r}}^{-k}(\mathbf{0})$ for $k=1,6,16,26,36,46$.
 \label{fig:SRSspirale}}
\end{figure}
\end{example}

The following Proposition summarizes some of the basic properties of SRS tiles.
\begin{proposition}[{\em cf.} {\cite[Section~3]{BSSST2011}}]\label{prop:basicSRS}
For each $\mathbf{r}=(r_0,\ldots,r_{d-1}) \in \mathcal{E}_d$ with $r_0\not=0$ we have the following results.
\begin{itemize}
\item $\mathcal{T}_\mathbf{r}(\mathbf{x})$ is compact for all $\mathbf{x} \in \mathbb{Z}^d$.
\item The family $\{\mathcal{T}_\mathbf{r}(\mathbf{x}) \;:\; \mathbf{x} \in \mathbb{Z}^d\}$ is locally finite.
\item $\mathcal{T}_\mathbf{r}(\mathbf{x})$ satisfies the set equation
\[
\mathcal{T}_\mathbf{r}(\mathbf{x}) = \bigcup_{\mathbf{y}\in\tau_\mathbf{r}^{-1}(\mathbf{x})} R(\mathbf{r}) \mathcal{T}_\mathbf{r}(\mathbf{y}).
\]
\item The collection $\{\mathcal{T}_\mathbf{r}(\mathbf{x})  \;:\; \mathbf{x} \in \mathbb{Z}^d\}$ covers $\mathbb{R}^d$, {\em i.e.},
\begin{equation}\label{eq:cov}
\bigcup_{\mathbf{x} \in \mathbb{Z}^d} \mathcal{T}_\mathbf{r}(\mathbf{x}) = \mathbb{R}^d.
\end{equation}
\end{itemize}
\end{proposition}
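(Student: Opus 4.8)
The plan is to derive all four assertions from two ingredients: the almost linearity \eqref{linear}, $\tau_\mathbf{r}(\mathbf{z})=R(\mathbf{r})\mathbf{z}+\mathbf{v}(\mathbf{z})$ with $\mathbf{v}(\mathbf{z})=(0,\ldots,0,\{\mathbf{r}\mathbf{z}\})^t$, and the contractivity of $R(\mathbf{r})$, valid since $\mathbf{r}\in\mathcal{E}_d$, i.e.\ a norm $\|\cdot\|$ on $\mathbb{R}^d$ and a constant $\varrho\in(0,1)$ with $\|R(\mathbf{r})\mathbf{x}\|\le\varrho\|\mathbf{x}\|$ (cf.\ \eqref{norminequ}). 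First I would record that for every $\mathbf{x}\in\mathbb{Z}^d$ the set $\tau_\mathbf{r}^{-1}(\mathbf{x})$ is non-empty and finite: the first $d-1$ coordinates of a preimage are prescribed by $\mathbf{x}$, while \eqref{linearinequalities} confines $r_0y_0$ to an interval of length $1$, which, as $0<|r_0|<1$ (the constant term of $\chi_\mathbf{r}$ is, up to sign, the product of the roots of $R(\mathbf{r})$, all of modulus less than $1$), contains at least one and at most finitely many admissible integers $y_0$. Hence the tree of iterated preimages of $\mathbf{x}$ is infinite but finitely branching. Along any infinite branch $(\mathbf{x}_j)_{j\ge0}$ (meaning $\mathbf{x}_0=\mathbf{x}$ and $\tau_\mathbf{r}(\mathbf{x}_{j+1})=\mathbf{x}_j$), the iteration formula \eqref{tauiterate}, equivalently a telescoping of \eqref{linear}, gives
\[
R(\mathbf{r})^n\mathbf{x}_n=\mathbf{x}-\sum_{j=0}^{n-1}R(\mathbf{r})^j\mathbf{v}(\mathbf{x}_{j+1}),
\]
so $R(\mathbf{r})^n\mathbf{x}_n$ tends to $\mathbf{x}-\sum_{j\ge0}R(\mathbf{r})^j\mathbf{v}(\mathbf{x}_{j+1})$, the tail being at most $\varepsilon_n:=\tfrac{c}{1-\varrho}\varrho^{\,n}$ with $c=\sup_\mathbf{z}\|\mathbf{v}(\mathbf{z})\|<\infty$; crucially $\varepsilon_n$ is uniform over all branches and over all base points $\mathbf{x}$.

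Next I would invoke a König-type compactness argument on this finitely branching tree to conclude that the Hausdorff limit $\mathcal{T}_\mathbf{r}(\mathbf{x})$ exists, equals the set of all limits $\mathbf{x}-\sum_{j\ge0}R(\mathbf{r})^j\mathbf{v}(\mathbf{x}_{j+1})$ over infinite branches from $\mathbf{x}$, and, by the uniform tail bound, satisfies $d_H(R(\mathbf{r})^n\tau_\mathbf{r}^{-n}(\mathbf{x}),\mathcal{T}_\mathbf{r}(\mathbf{x}))\le\varepsilon_n$ with $\varepsilon_n$ independent of $\mathbf{x}$. The first two bullets follow at once: each point of $\mathcal{T}_\mathbf{r}(\mathbf{x})$ lies in the fixed translate $\mathbf{x}+\overline{B}(\mathbf{0},\tfrac{c}{1-\varrho})$, so $\mathcal{T}_\mathbf{r}(\mathbf{x})$ is bounded and, as a Hausdorff limit of compact sets (equivalently the continuous image of a compact branch space), closed, hence compact; and a bounded set can meet $\mathcal{T}_\mathbf{r}(\mathbf{x})$ only for the finitely many lattice points $\mathbf{x}$ within distance $\tfrac{c}{1-\varrho}$ of it, which is local finiteness. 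For the set equation I would split $\tau_\mathbf{r}^{-n}(\mathbf{x})=\bigcup_{\mathbf{y}\in\tau_\mathbf{r}^{-1}(\mathbf{x})}\tau_\mathbf{r}^{-(n-1)}(\mathbf{y})$, apply $R(\mathbf{r})^n=R(\mathbf{r})\circ R(\mathbf{r})^{n-1}$, and pass to the limit; the union being finite and $R(\mathbf{r})$ continuous, the Hausdorff limit commutes with both operations, yielding $\mathcal{T}_\mathbf{r}(\mathbf{x})=\bigcup_{\mathbf{y}\in\tau_\mathbf{r}^{-1}(\mathbf{x})}R(\mathbf{r})\mathcal{T}_\mathbf{r}(\mathbf{y})$.

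The covering property \eqref{eq:cov} is the real content, and only the inclusion ``$\supseteq$'' requires work. Fix $\mathbf{p}\in\mathbb{R}^d$. Since every $\mathbf{z}\in\mathbb{Z}^d$ lies in $\tau_\mathbf{r}^{-n}(\tau_\mathbf{r}^n(\mathbf{z}))$ we have $\bigcup_{\mathbf{x}\in\mathbb{Z}^d}R(\mathbf{r})^n\tau_\mathbf{r}^{-n}(\mathbf{x})=R(\mathbf{r})^n\mathbb{Z}^d$. Taking the integer vector $\mathbf{z}_n$ closest (coordinatewise) to $R(\mathbf{r})^{-n}\mathbf{p}$, which is legitimate because $R(\mathbf{r})$ is invertible for $r_0\neq0$, one gets $\|R(\mathbf{r})^n\mathbf{z}_n-\mathbf{p}\|=\|R(\mathbf{r})^n(\mathbf{z}_n-R(\mathbf{r})^{-n}\mathbf{p})\|\le\varrho^{\,n}C_0=:\mu_n$, where $C_0$ bounds the norm of a vector with entries in $[-\tfrac12,\tfrac12]$; thus $R(\mathbf{r})^n\mathbb{Z}^d$ is $\mu_n$-dense in $\mathbb{R}^d$ with $\mu_n\to0$. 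Consequently, for every $n$ there is $\mathbf{x}(n)\in\mathbb{Z}^d$ with $\mathrm{dist}(\mathbf{p},R(\mathbf{r})^n\tau_\mathbf{r}^{-n}(\mathbf{x}(n)))\le\mu_n$, and hence $\mathrm{dist}(\mathbf{p},\mathcal{T}_\mathbf{r}(\mathbf{x}(n)))\le\mu_n+\varepsilon_n\to0$ by the uniform approximation of the previous step. Since $\bigcup_\mathbf{x}\mathcal{T}_\mathbf{r}(\mathbf{x})$ is a locally finite union of compact sets it is closed, so $\mathbf{p}$ lies in it; ``$\subseteq$'' is trivial.

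I expect the genuinely delicate point to be the second paragraph: marrying the telescoping identity with the compactness argument on the preimage tree so as to obtain \emph{simultaneously} the existence of the Hausdorff limit and the $\mathbf{x}$-uniform estimate $d_H(R(\mathbf{r})^n\tau_\mathbf{r}^{-n}(\mathbf{x}),\mathcal{T}_\mathbf{r}(\mathbf{x}))\le\varepsilon_n$. The branches may be distributed irregularly, so one must carefully extract from a convergent subsequence of finite preimage paths an infinite branch whose limit is the prescribed point. Once this uniform estimate is in hand, compactness, local finiteness, the set equation, and the covering statement all follow quickly, as sketched.
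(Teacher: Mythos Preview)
Your proof is correct and follows essentially the same strategy as the paper's sketch: contractivity of $R(\mathbf{r})$ in an adapted norm yields the uniform containment $\mathcal{T}_\mathbf{r}(\mathbf{x})\subset\mathbf{x}+\overline{B}(\mathbf{0},c/(1-\varrho))$, from which compactness and local finiteness follow; the set equation comes from decomposing $\tau_\mathbf{r}^{-n}(\mathbf{x})$ over the finitely many $\mathbf{y}\in\tau_\mathbf{r}^{-1}(\mathbf{x})$ and passing the Hausdorff limit through a finite union and a linear map; and the covering uses density of $R(\mathbf{r})^n\mathbb{Z}^d$ together with closedness of the locally finite union of tiles.

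The one noteworthy tactical difference is in the covering argument. The paper first asserts $\mathbb{Z}^d\subset\bigcup_\mathbf{x}\mathcal{T}_\mathbf{r}(\mathbf{x})$ as ``obvious'' and then applies the set equation to propagate this to $R(\mathbf{r})^k\mathbb{Z}^d$ for every $k$. You instead exploit the \emph{uniform} Hausdorff estimate $d_H\bigl(R(\mathbf{r})^n\tau_\mathbf{r}^{-n}(\mathbf{x}),\mathcal{T}_\mathbf{r}(\mathbf{x})\bigr)\le\varepsilon_n$, which you set up carefully via the telescoping identity, to transfer the $\mu_n$-density of $R(\mathbf{r})^n\mathbb{Z}^d$ directly to proximity to some tile. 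Your route is more self-contained and actually sidesteps having to justify the paper's ``obvious'' inclusion; the uniform estimate is in any case the engine behind the whole argument. Your K\"onig/branch-space packaging for the existence of the Hausdorff limit is likewise more explicit than the paper's one-line remark, and your closing caveat correctly locates where the genuine care lies.
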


\begin{proof}[Sketch of the proof]
With respect to compactness observe first that Hausdorff limits are closed by definition. Using inequality \eqref{norminequ} it follows that every  $\mathcal{T}_\mathbf{r}(\mathbf{x})$, $\mathbf{x}\in\mathbb{Z}^d$, is contained in the closed ball of radius $R$ with center~$\mathbf{x}$ where
\begin{equation} \label{eq:R}
R := \sum_{n=0}^\infty \big\|R(\mathbf{r})^n(0,\ldots,0,1)^t\big\| \le \frac{\|(0,\ldots,0,1)^t\|}{1-\tilde\varrho}\,
\end{equation}
which establishes boundedness.
\medskip

Since $\mathcal{T}_\mathbf{r}(\mathbf{x})$ is uniformly bounded in $\mathbf{x}$ and the set of ``base points'' $\mathbb{Z}^d$ is a lattice, we also get that the family of SRS tiles $\{\mathcal{T}_\mathbf{r}(\mathbf{x}) \,:\, \mathbf{x}\in\mathbb{Z}^d\}$ is \emph{locally finite}, that is, any open ball intersects only a finite number of tiles of the family.

\medskip

In order to establish the set equation observe that
\begin{align*}
\mathcal{T}_\mathbf{r}(\mathbf{x}) &= \mathop{\rm
Lim}_{n\rightarrow\infty} R(\mathbf{r})^n
\tau_\mathbf{r}^{-n}(\mathbf{x}) = R(\mathbf{r})\mathop{\rm
Lim}_{n\rightarrow\infty}
\bigcup_{\mathbf{y}\in\tau_\mathbf{r}^{-1}(\mathbf{x})}
R(\mathbf{r})^{n-1} \tau_\mathbf{r}^{-n+1}(\mathbf{y}) \\
&= R(\mathbf{r})
\bigcup_{\mathbf{y}\in\tau_\mathbf{r}^{-1}(\mathbf{x})} \mathop{\rm
Lim}_{n\rightarrow\infty}R(\mathbf{r})^{n-1}
\tau_\mathbf{r}^{-n+1}(\mathbf{y})=R({\mathbf{r}})\bigcup_{\mathbf{y}\in\tau_\mathbf{r}^{-1}(\mathbf{x})}
\mathcal{T}_\mathbf{r}(\mathbf{y}). \hfill
\end{align*}

\medskip

It remains to prove the covering property. The lattice $\mathbb{Z}^d$ is obviously contained in the union in~\eqref{eq:cov}. Thus, by the set equation, the same is true for $R(\mathbf{r})^k\mathbb{Z}^d$ for each $k\in\mathbb{N}$. By the contractivity of $R(\mathbf{r})$, compactness of $\mathcal{T}_\mathbf{r}(\mathbf{x})$, and local finiteness of  $\{\mathcal{T}_\mathbf{r}(\mathbf{x}) \,:\, \mathbf{x}\in\mathbb{Z}^d\}$ the result follows.
\end{proof}

In the following we analyze the specific role of the central tile ({\em cf.}~\cite {BSSST2011}).
Since $\mathbf{0}\in\tau_{\mathbf{r}}^{-1}(\mathbf{0})$ holds for each $\mathbf{r}\in\E_d$, the origin is always an element of the central tile. However, the question  whether or not $\mathbf{0}$ is contained exclusively in the central tile plays an important role in numeration. In the case of beta-numeration, $\mathbf{0}$ is contained exclusively in the central beta-tile (see
Definition~\ref{def:betatile} below) if and only if property (F) (compare \eqref{PropertyF}) is satisfied (\cite{Akiyama:02,Frougny-Solomyak:92}), and a similar criterion holds for CNS ({\it cf.}~\cite{Akiyama-Thuswaldner:00}). It turns out that there is a corresponding characterization for SRS with finiteness property (see \cite{BSSST2011}).

\begin{definition}[Purely periodic point]
Let $\mathbf{r} \in \mathbb{R}^d$.
 An element $\mathbf{z} \in \mathbb{Z}^d$ is called \emph{purely periodic} point if $\tau_\mathbf{r}^p(\mathbf{z})=\mathbf{z}$ for some $p\ge 1$.
\end{definition}
Then we have the announced characterization.

\begin{theorem}[see~{\cite[Theorem~3.10]{BSSST2011}}]\label{t63}
Let $\mathbf{r}=(r_0,\ldots,r_{d-1}) \in \mathcal{E}_d$ with $r_0\not=0$ and $\mathbf{x} \in \mathbb{Z}^d$.
Then $\mathbf{0} \in \mathcal{T}_\mathbf{r}(\mathbf{x})$ if and only if $\mathbf{x}$ is purely periodic.
There are only finitely many purely periodic points.
\end{theorem}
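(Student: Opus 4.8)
The plan is to exploit the set equation $\mathcal{T}_\mathbf{r}(\mathbf{x}) = \bigcup_{\mathbf{y}\in\tau_\mathbf{r}^{-1}(\mathbf{x})} R(\mathbf{r}) \mathcal{T}_\mathbf{r}(\mathbf{y})$ from Proposition~\ref{prop:basicSRS} together with the explicit description of $\mathcal{T}_\mathbf{r}(\mathbf{x})$ as a limit of renormalized preimages. First I would unfold the set equation $n$ times to obtain
\[
\mathcal{T}_\mathbf{r}(\mathbf{x}) = \bigcup_{\mathbf{y}\in\tau_\mathbf{r}^{-n}(\mathbf{x})} R(\mathbf{r})^n \mathcal{T}_\mathbf{r}(\mathbf{y}),
\]
and observe that $\mathbf{0}\in\mathcal{T}_\mathbf{r}(\mathbf{x})$ iff for every $n$ there is $\mathbf{y}_n\in\tau_\mathbf{r}^{-n}(\mathbf{x})$ with $\mathbf{0}\in R(\mathbf{r})^n\mathcal{T}_\mathbf{r}(\mathbf{y}_n)$, i.e.\ $R(\mathbf{r})^{-n}\mathbf{0}=\mathbf{0}\in\mathcal{T}_\mathbf{r}(\mathbf{y}_n)$. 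Since all tiles $\mathcal{T}_\mathbf{r}(\mathbf{y})$ lie in the closed ball of radius $R$ (from \eqref{eq:R}) about $\mathbf{y}$, the condition $\mathbf{0}\in\mathcal{T}_\mathbf{r}(\mathbf{y}_n)$ forces $\|\mathbf{y}_n\|\le R$. Hence the vectors $\mathbf{y}_n$ all lie in the finite set $\mathbb{Z}^d\cap \overline{B(\mathbf{0},R)}$.

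The key combinatorial step is then the following: the sequence $(\mathbf{y}_n)_{n\ge0}$ (with $\mathbf{y}_0=\mathbf{x}$) satisfies $\tau_\mathbf{r}(\mathbf{y}_{n+1})=\mathbf{y}_n$ and stays in a finite set, so by the pigeonhole principle there are indices $i<j$ with $\mathbf{y}_i=\mathbf{y}_j$; applying $\tau_\mathbf{r}^i$ gives $\mathbf{x}=\mathbf{y}_0=\tau_\mathbf{r}^i(\mathbf{y}_i)=\tau_\mathbf{r}^i(\mathbf{y}_j)=\tau_\mathbf{r}^{i}(\tau_\mathbf{r}^{-(j-i)}\text{-chain})$, and chasing this back along the $\tau_\mathbf{r}$-preimage chain shows $\mathbf{x}$ lies on a cycle, i.e.\ $\tau_\mathbf{r}^{p}(\mathbf{x})=\mathbf{x}$ for $p=j-i$. (One must be slightly careful here: from $\mathbf{y}_i=\mathbf{y}_j$ one gets $\tau_\mathbf{r}^{j-i}(\mathbf{y}_j)=\mathbf{y}_i=\mathbf{y}_j$, so $\mathbf{y}_j$ is purely periodic; then $\mathbf{x}=\tau_\mathbf{r}^{j}(\mathbf{y}_j)$ is the image of a purely periodic point and therefore itself purely periodic, being in the forward orbit of a point on a cycle.) Conversely, if $\mathbf{x}$ is purely periodic with $\tau_\mathbf{r}^p(\mathbf{x})=\mathbf{x}$, I would take the periodic preimage chain $\mathbf{y}_n$ obtained by going backwards around the cycle; each $\mathcal{T}_\mathbf{r}(\mathbf{y}_n)$ contains its base point $\mathbf{y}_n$, and since there are only finitely many distinct tiles along the cycle and $R(\mathbf{r})$ is contractive, $R(\mathbf{r})^n\mathcal{T}_\mathbf{r}(\mathbf{y}_n)$ shrinks to $\{\mathbf{0}\}$, forcing $\mathbf{0}\in\mathcal{T}_\mathbf{r}(\mathbf{x})$ by the unfolded set equation.

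Finally, finiteness of the set of purely periodic points follows from Proposition~\ref{EdDdEd} (or directly from inequality \eqref{norminequ}): for $\mathbf{r}\in\mathcal{E}_d$ the map $R(\mathbf{r})$ is contractive, so by the argument in the proof of Proposition~\ref{EdDdEd} every orbit of $\tau_\mathbf{r}$ eventually enters and stays in the ball $\|\mathbf{z}\|_{\tilde\varrho}\le \frac{c}{1-\tilde\varrho}+1$; in particular every purely periodic point lies in this bounded region, which contains only finitely many lattice points. The main obstacle I anticipate is the bookkeeping in the converse direction — making rigorous that the renormalized tiles $R(\mathbf{r})^n\mathcal{T}_\mathbf{r}(\mathbf{y}_n)$ along a periodic preimage chain converge to $\{\mathbf{0}\}$ in the Hausdorff metric, which needs the uniform bound on tile diameters together with contractivity (this is essentially \eqref{eq:R}), and making sure the ``purely periodic $\Rightarrow$ forward image purely periodic'' step is stated cleanly (a point whose forward orbit is eventually periodic and which itself lies in that eventual cycle). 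The forward direction (existence of the chain $\Rightarrow$ periodicity) is then the comparatively easy pigeonhole argument once the ball bound $\|\mathbf{y}_n\|\le R$ is in place.
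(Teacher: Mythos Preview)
Your proposal is correct and follows essentially the same route as the paper: the set equation, a preimage chain staying in a finite set by the uniform radius bound \eqref{eq:R}, and pigeonhole. Two small points are worth mentioning.

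First, the claim ``each $\mathcal{T}_\mathbf{r}(\mathbf{y}_n)$ contains its base point $\mathbf{y}_n$'' is not established anywhere in the paper (Proposition~\ref{prop:basicSRS} only gives $\mathcal{T}_\mathbf{r}(\mathbf{y})\subset \overline{B(\mathbf{y},R)}$, not $\mathbf{y}\in\mathcal{T}_\mathbf{r}(\mathbf{y})$) and you do not actually need it: for your shrinking argument it is enough that the tiles are non-empty and uniformly bounded, both of which you already have. Second, the paper handles the implication ``$\mathbf{x}$ purely periodic $\Rightarrow \mathbf{0}\in\mathcal{T}_\mathbf{r}(\mathbf{x})$'' in one line, bypassing the set equation entirely: from $\tau_\mathbf{r}^{p}(\mathbf{x})=\mathbf{x}$ one has $\mathbf{x}\in\tau_\mathbf{r}^{-kp}(\mathbf{x})$ for every $k$, hence $R(\mathbf{r})^{kp}\mathbf{x}\in R(\mathbf{r})^{kp}\tau_\mathbf{r}^{-kp}(\mathbf{x})$, and contractivity gives $R(\mathbf{r})^{kp}\mathbf{x}\to\mathbf{0}$, so $\mathbf{0}$ lies in the Hausdorff limit $\mathcal{T}_\mathbf{r}(\mathbf{x})$. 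This is cleaner than your route via shrinking tiles along the cycle.
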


\begin{proof}[Sketch of the proof.]
In order to establish that pure periodicity of $\mathbf{x}$ implies that $\mathbf{0}\in
\mathcal{T}_\mathbf{r}(\mathbf{x})$ observe first that by assumption we have
 $\mathbf{x} = \tau_\mathbf{r}^{kp}(\mathbf{x})$.
By the contractivity of the operator $R(\mathbf{r})$ it follows that $\mathbf{0}=\lim_{p\to  \infty} R(\mathbf{r})^{kp}\mathbf{x} \in \mathcal{T}_\mathbf{r}(\mathbf{x})$.

\medskip

With respect to the other direction observe that
by the set equation there is a sequence $(\mathbf{z}_n)_{n\ge 1}$ with $\mathbf{z}_n= \tau_\mathbf{r}^{-n}(\mathbf{x})$ and $\mathbf{0} \in R(\mathbf{r})^{n} \mathcal{T}_\mathbf{r}(\mathbf{z}_n)$. Thus $\mathbf{0} \in \mathcal{T}_\mathbf{r}(\mathbf{z}_n)$.
Therefore by the local finiteness of $\{\mathcal{T}_\mathbf{r}(\mathbf{x})  \;:\; \mathbf{x} \in \mathbb{Z}^d\}$ there are $n,k\in\mathbb{N}$ such that $\mathbf{z}_n = \mathbf{z}_{n+k},$
hence $\mathbf{x} = \tau_\mathbf{r}^{k}(\mathbf{x})$.

\medskip

Observing the proof of the last proposition it follows that only points $\mathbf{x}\in\mathbb{Z}^d$ with $\|\mathbf{x}\|\le R$ with $R$ as it \eqref{eq:R}, can be purely periodic.
Note that the latter property was already proved in~\cite{Akiyama-Borbeli-Brunotte-Pethoe-Thuswaldner:05}.
\end{proof}

There is an immediate consequence of the last theorem for SRS with finiteness property (see~\cite{BSSST2011}).

\begin{corollary}\label{cor:exc}
Let $\mathbf{r}=(r_0,\ldots,r_{d-1}) \in \mathcal{E}_d$ with $r_0\not=0$ be given.
Then $\mathbf{r} \in \mathcal{D}_d^{(0)}$ if and only if $\mathbf{0} \in \mathcal{T}_\mathbf{r}(\mathbf{0}) \setminus
\bigcup_{\mathbf{y}\neq \mathbf{0}} \mathcal{T}_\mathbf{r}(\mathbf{y})$.
\end{corollary}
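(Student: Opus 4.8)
The plan is to derive Corollary~\ref{cor:exc} directly from Theorem~\ref{t63} by rephrasing the condition ``$\mathbf{r}\in\D_d^{(0)}$'' in terms of purely periodic points. First I would recall that, by Definition~\ref{def:SRS}, $\mathbf{r}\in\D_d^{(0)}$ means that $\tau_\mathbf{r}$ has the finiteness property, i.e., every $\mathbf{z}\in\ZZ^d$ is eventually mapped to $\mathbf{0}$. Since $\mathbf{r}\in\E_d$, Proposition~\ref{EdDdEd} guarantees $\mathbf{r}\in\D_d$, so every orbit is ultimately periodic; hence the finiteness property is equivalent to the statement that $\mathbf{0}$ is the \emph{only} purely periodic point of $\tau_\mathbf{r}$. (Indeed, if $\mathbf{z}$ is purely periodic and nonzero, its orbit never reaches $\mathbf{0}$; conversely, if $\mathbf{0}$ is the only purely periodic point, then every ultimately periodic orbit must terminate in the cycle $\langle\mathbf{0}\rangle$, so the finiteness property holds.)

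Next I would invoke Theorem~\ref{t63}: for every $\mathbf{x}\in\ZZ^d$ we have $\mathbf{0}\in\mathcal{T}_\mathbf{r}(\mathbf{x})$ if and only if $\mathbf{x}$ is purely periodic. Combining this with the reformulation above, $\mathbf{r}\in\D_d^{(0)}$ holds if and only if $\mathbf{0}\in\mathcal{T}_\mathbf{r}(\mathbf{x})$ for $\mathbf{x}=\mathbf{0}$ and for no other $\mathbf{x}$. Since $\mathbf{0}\in\tau_\mathbf{r}^{-1}(\mathbf{0})$ always holds (because $\tau_\mathbf{r}(\mathbf{0})=\mathbf{0}$), the point $\mathbf{0}$ is always purely periodic, so $\mathbf{0}\in\mathcal{T}_\mathbf{r}(\mathbf{0})$ unconditionally. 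Thus the remaining content of the equivalence is precisely that $\mathbf{0}\notin\mathcal{T}_\mathbf{r}(\mathbf{y})$ for all $\mathbf{y}\neq\mathbf{0}$, which is the statement $\mathbf{0}\in\mathcal{T}_\mathbf{r}(\mathbf{0})\setminus\bigcup_{\mathbf{y}\neq\mathbf{0}}\mathcal{T}_\mathbf{r}(\mathbf{y})$.

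Putting the two directions together cleanly: if $\mathbf{r}\in\D_d^{(0)}$, then no nonzero $\mathbf{y}$ is purely periodic, so by Theorem~\ref{t63} no nonzero $\mathbf{y}$ satisfies $\mathbf{0}\in\mathcal{T}_\mathbf{r}(\mathbf{y})$, which together with $\mathbf{0}\in\mathcal{T}_\mathbf{r}(\mathbf{0})$ gives the exclusive-containment condition. Conversely, if $\mathbf{0}\in\mathcal{T}_\mathbf{r}(\mathbf{0})\setminus\bigcup_{\mathbf{y}\neq\mathbf{0}}\mathcal{T}_\mathbf{r}(\mathbf{y})$, then by Theorem~\ref{t63} the only purely periodic point is $\mathbf{0}$; since $\mathbf{r}\in\E_d\subset\D_d$, every orbit is ultimately periodic and therefore ends in the trivial cycle, so $\tau_\mathbf{r}$ has the finiteness property, i.e.\ $\mathbf{r}\in\D_d^{(0)}$.

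I do not expect any real obstacle here, since this is essentially a bookkeeping consequence of Theorem~\ref{t63} and Proposition~\ref{EdDdEd}; the only point requiring a moment's care is the observation that, for $\mathbf{r}\in\E_d$, ultimate periodicity of all orbits makes ``finiteness property'' and ``$\mathbf{0}$ is the unique purely periodic point'' equivalent — this is what lets us translate the dynamical condition into the tiling condition. Everything else is immediate from the cited results and from $\tau_\mathbf{r}(\mathbf{0})=\mathbf{0}$.
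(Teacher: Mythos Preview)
Your proof is correct and follows exactly the intended route: the paper presents this corollary as an ``immediate consequence'' of Theorem~\ref{t63}, and your argument spells out precisely the details --- using Proposition~\ref{EdDdEd} to ensure ultimate periodicity, then translating ``$\mathbf{0}$ is the unique purely periodic point'' into the tile-membership statement via Theorem~\ref{t63}. There is nothing to add.
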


In particular, for $\mathbf{r} \in \mathcal{D}_d^{(0)}$ the central tile $\mathcal{T}_\mathbf{r}(\mathbf{0})$ has non-empty interior. Nevertheless the following example demonstrates that  the interior of $\mathcal{T}_\mathbf{r}(\mathbf{x})$  may be empty for certain choices of $\mathbf{r}$ and $\mathbf{x}$.

\begin{example}[see \cite{BSSST2011}] \label{ex:pp}
Let $\mathbf{r}=(\frac{9}{10},-\frac{11}{20})$.
Then, with the points
\[
\mathbf{z}_1=(-1,-1)^t,\ \mathbf{z}_2=(-1,1)^t,\ \mathbf{z}_3=(1,2)^t,\ \mathbf{z}_4=(2,1)^t,\ \mathbf{z}_5=(1,-1)^t,
\]
we have the cycle
\[
\tau_\mathbf{r}:\mathbf{z}_1 \mapsto \mathbf{z}_2 \mapsto \mathbf{z}_3 \mapsto \mathbf{z}_4 \mapsto \mathbf{z}_5 \mapsto \mathbf{z}_1.
\]
Thus, each of these points is purely periodic.
By direct calculation we see that
\[
\tau_\mathbf{r}^{-1}(\mathbf{z}_1) =  \big\{(1,-1)^t\big\}=\{\mathbf{z}_5\},
\]
and similarly $\tau_\mathbf{r}^{-1}(\mathbf{z}_i)=\{\mathbf{z}_{i-1}\}$ for $i \in \{2,3,4,5\}$.
Hence every tile $\mathcal{T}_\mathbf{r}(\mathbf{z}_i)$, $i\in \{1,2,3,4,5\}$, consists of the single point $\mathbf{0}$.
\end{example}

\subsection{Tiling properties of SRS tiles}\label{sec:srstiling}
We saw in Proposition~\ref{prop:basicSRS} that the collection $\{\mathcal{T}_\mathbf{r}(\mathbf{x})  \;:\; \mathbf{x} \in \mathbb{Z}^d\}$ is a covering of $\mathbb{R}^d$. In \cite{BSSST2011} and \cite{ST:11} tiling properties of SRS tiles are proved. In the present section we review these results. As their proofs are involved we refrain from reproducing them here and confine ourselves to mention some main ideas.

We start with a basic definition ({\em cf.}~\cite[Definition~4.1]{BSSST2011}).

\begin{definition}[Weak $m$-tiling]\label{def:tilings}
Let $\mathcal{K}$ be a locally finite collection of subsets of $\mathbb{R}^d$ that cover $\mathbb{R}^d$. The {\em covering degree} of $\mathcal{K}$ is given by the number
\[
\min \{ \#\{ K \in \mathcal{K} \;:\; \mathbf{t}\in K\} \;:\; \mathbf{t} \in \mathbb{R}^d \}.
\]
The collection $\mathcal{K}$ is called a {\it weak $m$-tiling} if its covering degree is $m$, and
$\bigcap_{j=1}^{m+1} {\rm int}(K_j)= \emptyset$ for each choice of pairwise disjoint elements $K_1,\ldots, K_{m+1}$ of $\mathcal{K}$. A weak $1$-tiling is called a {\em weak tiling}.
\end{definition}

There are several reasons why we emphasize on {\it weak} tilings. For a collection $\mathcal{K}$ of subsets of $\mathbb{R}^d$ to be a tiling one commonly assumes that
\begin{itemize}
\item each $K \in \mathcal{K}$ is the closure of its interior,
\item $\mathcal{K}$ contains only finitely many different elements up to translation, and
\item the $d$-dimensional Lebesgue measure of $\partial K$ is zero for each $K\in \mathcal{K}$.
\end{itemize}
In our setting, namely for the collection $\{\mathcal{T}_\mathbf{r}(\mathbf{x})  \;:\; \mathbf{x} \in \mathbb{Z}^d\}$, we already saw in Example~\ref{ex:pp} that there exist elements having no inner points. Moreover, for some parameters we get infinitely many different shapes of the tiles $\mathcal{T}_\mathbf{r}(\mathbf{x})$, so that we do not have finiteness up to translation (this is the case for instance for the SRS parameter $r=-\frac23$ associated with the $\frac32$-number system, see Example~\ref{ex32_2}). Finally, in general, there seems to exist no known proof for the fact that the boundary of $\mathcal{T}_\mathbf{r}(\mathbf{x})$ has measure zero (although we conjecture this to be true).

We start with a tiling result that is contained in \cite[Theorem~4.6]{BSSST2011}.

\begin{theorem}\label{thm:multitiling}
Let $\mathbf{r}=(r_0,\ldots,r_{d-1}) \in \E_d$ with $r_0\not=0$ be given and assume that $\mathbf{r}$ satisfies one of the following conditions.
\begin{itemize}
\item $\mathbf{r} \in \mathbb{Q}^d$, or
\item $(X-\beta)(X^d + r_{d-1}X^{d-1} + \cdots + r_0) \in\mathbb{Z}[X]$ for some $\beta > 1$, or
\item $r_0,\ldots, r_{d-1}$ are algebraically independent over $\mathbb{Q}$.
\end{itemize}
Then the collection $\{\mathcal{T}_\mathbf{r}(\mathbf{x})  \;:\; \mathbf{x} \in \mathbb{Z}^d\}$ is a weak $m$-tiling for some $m\in \mathbb{N}$.
\end{theorem}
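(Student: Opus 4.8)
The plan is to reduce the theorem to a known general principle about self-affine tiles: a locally finite collection of compact sets that covers $\mathbb{R}^d$, each of which is the image of the collection under a common expanding (or, dually, contracting) map via a set equation, is a weak $m$-tiling provided one can control the measure-theoretic ``overlaps.'' The key quantitative input is the notion of the \emph{covering degree function}. Define, for the collection $\{\mathcal{T}_\mathbf{r}(\mathbf{x})\;:\;\mathbf{x}\in\mathbb{Z}^d\}$, the counting function $m(\mathbf{t})=\#\{\mathbf{x}\in\mathbb{Z}^d\;:\;\mathbf{t}\in\mathcal{T}_\mathbf{r}(\mathbf{x})\}$. By Proposition~\ref{prop:basicSRS} the collection is locally finite and covers $\mathbb{R}^d$, so $m(\mathbf{t})\ge 1$ everywhere and $m$ is finite everywhere. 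First I would show that $m$ is essentially constant: using the set equation $\mathcal{T}_\mathbf{r}(\mathbf{x})=\bigcup_{\mathbf{y}\in\tau_\mathbf{r}^{-1}(\mathbf{x})}R(\mathbf{r})\mathcal{T}_\mathbf{r}(\mathbf{y})$ together with the fact that $\tau_\mathbf{r}$ is ``almost linear'' (formula~\eqref{almostlinear}) and $R(\mathbf{r})\mathbb{Z}^d$ is a sublattice of $\mathbb{Z}^d$, one derives a functional relation of the form $m(R(\mathbf{r})\mathbf{t})=m(\mathbf{t})$ up to a null set, and then, because $R(\mathbf{r})$ is contractive and $\mathbb{Z}^d$-periodicity holds in a suitable sense, the values of $m$ on a set of full measure form a single integer $m$. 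This is the standard ``the covering degree is constant almost everywhere'' argument; the delicate point is to make the null-set bookkeeping rigorous, which is exactly why the three hypotheses on $\mathbf{r}$ are needed.

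The three cases correspond to three settings in which one has enough arithmetic structure to prove that the boundaries $\partial\mathcal{T}_\mathbf{r}(\mathbf{x})$ have $d$-dimensional Lebesgue measure zero, equivalently that the interiors genuinely tile with multiplicity $m$. For $\mathbf{r}\in\mathbb{Q}^d$ one clears denominators and reinterprets the SRS tile inside a larger integer self-affine system, so that the tile is a genuine self-affine (graph-directed) set with an algebraic contraction and the classical measure-zero-boundary results for such sets apply. For the case $(X-\beta)\chi_\mathbf{r}(X)\in\mathbb{Z}[X]$ with $\beta>1$, I would invoke the conjugacy of Proposition~\ref{prop:betanumformula}: the SRS tile is then (an affine image of) a beta-tile / Rauzy fractal, for which the tiling-up-to-measure-zero statements are known from the theory of beta-numeration (Akiyama, and the references cited in the excerpt). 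For the algebraically independent case, the relevant lattice $R(\mathbf{r})\mathbb{Z}^d$ sits in ``general position'' and a transversality / genericity argument shows that distinct tiles can only meet in measure zero. In each case the conclusion is packaged into the same statement: the covering degree $m$ is well-defined, and any $m+1$ pairwise distinct tiles have interiors with empty common intersection.

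The second half of the argument — verifying the intersection condition $\bigcap_{j=1}^{m+1}\mathrm{int}(K_j)=\emptyset$ — follows once one knows the tiles have boundary of measure zero and the covering degree is exactly $m$ almost everywhere: if $m+1$ interiors had a common point they would have a common open ball, on which the covering degree would be at least $m+1$ on a set of positive measure, contradicting that $m$ is the (almost everywhere constant) minimal value. So the logical skeleton is: (1) local finiteness and covering, from Proposition~\ref{prop:basicSRS}; (2) constancy of the covering degree a.e., via the set equation and contractivity of $R(\mathbf{r})$; (3) measure zero of tile boundaries, proved case-by-case using the three hypotheses; (4) deduce the weak $m$-tiling property.

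I expect step (3) to be the main obstacle, and it is genuinely different in flavour in the three cases. The rational case is the most robust because one truly has a self-affine structure with an algebraic expanding conjugate, so standard results (no nonzero measure on the boundary for self-affine tiles satisfying an open set condition, or a direct Fourier/Perron--Frobenius estimate on the boundary graph) apply after unwinding the identification; the hard work is setting up the right ambient lattice and checking the graph-directed iterated function system is irreducible. The beta-number case reduces to a result already in the literature but requires carefully matching the SRS normalization with the standard beta-tile normalization through the conjugacy. The algebraically independent case is the subtlest conceptually: here there is no algebraic relation to exploit, and one must instead argue that the contraction $R(\mathbf{r})$ together with the generic position of the return lattice forces any overlap of two distinct tiles to be a Lebesgue-null self-affine set of strictly smaller ``dimension''; this is where I would expect to spend the most effort, likely by a covering/volume-counting estimate showing that the measure of the $k$-th approximation of an overlap decays faster than the measure of a single tile's approximation. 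Since the excerpt instructs us to cite \cite{BSSST2011} rather than reproduce these arguments, the write-up would state the reduction to these three measure-zero facts and refer to that paper for the detailed estimates.
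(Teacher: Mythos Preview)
Your overall architecture has a genuine gap at step~(3): you assume that under each of the three hypotheses one can prove that $\partial\mathcal{T}_\mathbf{r}(\mathbf{x})$ has $d$-dimensional Lebesgue measure zero, and then deduce the weak $m$-tiling property from that. But the paper states explicitly (just before Definition~\ref{def:tilings}, and again as a separate conjecture after Theorem~\ref{p:tiling}) that the measure-zero boundary property is \emph{not known} for SRS tiles in general, and in particular is not known under the three hypotheses of this theorem. The whole point of introducing the notion of a \emph{weak} $m$-tiling is precisely to formulate a tiling statement that can be proved without knowing that the boundaries are null. So your step~(3) is not an available ingredient, and without it your step~(4) does not go through. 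Concretely, even the rational case is not a standard self-affine or graph-directed IFS: Example~\ref{ex32_2} shows that for the rational parameter $r=-2/3$ there are infinitely many distinct tile shapes, so the ``classical measure-zero-boundary results'' you want to invoke do not apply. A smaller technical slip in step~(2): $R(\mathbf{r})\mathbb{Z}^d$ is \emph{not} a sublattice of $\mathbb{Z}^d$ unless $\mathbf{r}\in\mathbb{Z}^d$, so the periodicity argument you sketch needs a different mechanism.

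The paper's route is different and avoids boundaries altogether. One works with the notion of an \emph{$m$-exclusive point}: a point with a neighborhood in which every point lies in exactly $m$ tiles. The proof (carried out in~\cite{BSSST2011}) constructs, under each of the three arithmetic hypotheses, a dense set of $m$-exclusive points, where $m$ is the covering degree. From a single $m$-exclusive point one already gets that no $m+1$ interiors can meet there; density then gives the full weak $m$-tiling condition. The three hypotheses enter not to control boundary measure but to guarantee enough arithmetic rigidity (rationality, the beta-conjugacy of Proposition~\ref{prop:betanumformula}, or generic independence) to locate these exclusive points explicitly. This is why the theorem can be proved while the boundary-measure question remains open.
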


If $\mathcal{K}$ is a covering of degree $m$, then an {\it $m$-exclusive point} is a point that has a neighborhood $U$ such that each $\mathbf{x}\in U$ is covered by exactly $m$ elements of $\mathcal{K}$. The proof of Theorem~\ref{thm:multitiling} is technical and deals with the construction of a dense set of $m$-exclusive points. To prove that a given parameter satisfying the conditions of Theorem~\ref{thm:multitiling} actually induces a weak tiling it is obviously sufficient to exhibit a single $1$-exclusive point. For a given example this can often be done algorithmically. If $\mathbf{r}\in\D_d^{(0)}$,  Corollary~\ref{cor:exc} and Theorem~\ref{thm:multitiling} can be combined to the following tiling result.

\begin{corollary}[{see \cite[Corollary~4.7]{BSSST2011}}]\label{cor:tiling}
Let $\mathbf{r}=(r_0,\ldots,r_{d-1}) \in \D_d^{(0)} \cap \E_d$ with $r_0\not=0$. If $\mathbf{r}$ satisfies one of the three items listed in the statement of Theorem~\ref{thm:multitiling}, then  the collection $\{\mathcal{T}_\mathbf{r}(\mathbf{x})  \;:\; \mathbf{x} \in \mathbb{Z}^d\}$ is a weak tiling.
\end{corollary}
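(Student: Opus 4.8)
The plan is to obtain the statement by combining the weak $m$-tiling provided by Theorem~\ref{thm:multitiling} with the exclusivity of the origin established in Corollary~\ref{cor:exc}: the point is that Corollary~\ref{cor:exc} pins the covering degree of the family down to~$1$, which forces $m=1$.

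First I would invoke Theorem~\ref{thm:multitiling}. Since $\mathbf{r} \in \E_d$ with $r_0\neq 0$ satisfies one of the three listed conditions, the collection $\mathcal{K} := \{\mathcal{T}_\mathbf{r}(\mathbf{x}) \;:\; \mathbf{x}\in\mathbb{Z}^d\}$ is a weak $m$-tiling for some $m\in\mathbb{N}$. By Definition~\ref{def:tilings} this means two things: the covering degree of $\mathcal{K}$ equals $m$, i.e.\ $\min\{\#\{K\in\mathcal{K}\;:\;\mathbf{t}\in K\}\;:\;\mathbf{t}\in\mathbb{R}^d\}=m$; and $\bigcap_{j=1}^{m+1}\operatorname{int}(K_j)=\emptyset$ for every choice of pairwise distinct $K_1,\ldots,K_{m+1}\in\mathcal{K}$. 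Hence it suffices to show $m=1$, because then the first property says the covering degree is~$1$ and the second is precisely the empty-interior-intersection condition required for a weak tiling.

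Next I would compute the covering degree directly. On the one hand, $\mathcal{K}$ covers $\mathbb{R}^d$ by Proposition~\ref{prop:basicSRS}, so every point of $\mathbb{R}^d$ lies in at least one tile and the covering degree is $\ge 1$. On the other hand, $\mathbf{r}\in\D_d^{(0)}$ together with Corollary~\ref{cor:exc} gives $\mathbf{0}\in\mathcal{T}_\mathbf{r}(\mathbf{0})\setminus\bigcup_{\mathbf{y}\neq\mathbf{0}}\mathcal{T}_\mathbf{r}(\mathbf{y})$, so the origin lies in the central tile and in no other tile of the family; thus $\mathbf{0}$ is covered by exactly one element of $\mathcal{K}$, and the covering degree is $\le 1$. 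Therefore it equals~$1$, whence $m=1$, and $\mathcal{K}$ is a weak tiling.

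There is no genuinely hard step: all the substance sits inside Theorem~\ref{thm:multitiling} (the construction of a dense set of $m$-exclusive points) and Corollary~\ref{cor:exc}, both of which may be quoted. The only points one must be careful about are that the covering degree is a well-defined number precisely because $\mathcal{K}$ is locally finite (Proposition~\ref{prop:basicSRS}), so that comparing it with the single witnessing point $\mathbf{0}$ is legitimate, and that the hypothesis $\mathbf{r}\in\D_d^{(0)}\cap\E_d$ with $r_0\neq 0$ is exactly what is needed to make both Theorem~\ref{thm:multitiling} and Corollary~\ref{cor:exc} applicable.
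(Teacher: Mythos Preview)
Your argument is correct and is precisely the combination of Corollary~\ref{cor:exc} and Theorem~\ref{thm:multitiling} that the paper indicates just before stating the corollary. The paper does not spell out the details either, so your write-up in fact makes explicit exactly the intended reasoning: Theorem~\ref{thm:multitiling} fixes the covering degree at some $m$, and the single point $\mathbf{0}$, covered exactly once by Corollary~\ref{cor:exc}, forces $m=1$.
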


In \cite{ST:11}, for rational vectors $\mathbf{r}$ a tiling result without restrictions was established.

\begin{theorem}\label{p:tiling}
Let $\mathbf{r}=(r_0,\ldots, r_{d-1})\in \E_d$ have rational coordinates and assume that $r_0\not=0$. Then   $\{\mathcal{T}_\mathbf{r}(\mathbf{x})  \;:\; \mathbf{x} \in \mathbb{Z}^d\}$ is a weak tiling of $\mathbb{R}^d$.
\end{theorem}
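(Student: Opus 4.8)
The plan is to deduce Theorem~\ref{p:tiling} from the weak $m$-tiling statement in Theorem~\ref{thm:multitiling} by showing that for rational $\mathbf{r}$ the covering degree $m$ must equal $1$. First I would invoke Theorem~\ref{thm:multitiling}: since $\mathbf{r}\in\mathbb{Q}^d$, the collection $\{\mathcal{T}_\mathbf{r}(\mathbf{x})\;:\;\mathbf{x}\in\mathbb{Z}^d\}$ is already known to be a weak $m$-tiling for some $m\in\mathbb{N}$, so by Definition~\ref{def:tilings} it suffices to exhibit a single point of $\mathbb{R}^d$ that lies in the interior of exactly one tile (an \emph{$m$-exclusive} point with $m=1$), equivalently to rule out $m\ge 2$.

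The key idea for the rational case is to pass to a suitable $p$-adic (or rather $S$-adic) completion in which the lattice $\mathbb{Z}^d$ becomes dense and the map $\tau_\mathbf{r}$ extends continuously, in the spirit of Theorem~\ref{BosioThm}. Write $\mathbf{r}=(p_1/p_0,\ldots)$ in lowest common-denominator form; the denominators appearing in $R(\mathbf{r})$ are supported on a finite set of primes, and one forms the product of the corresponding completions. In this enlarged space, the inverse branches of $\tau_\mathbf{r}$ partition a compact neighbourhood, and $R(\mathbf{r})^n\tau_\mathbf{r}^{-n}(\mathbf{0})$ converges to a set whose projection to $\mathbb{R}^d$ is $\mathcal{T}_\mathbf{r}(\mathbf{0})$. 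The crucial structural fact to establish is that the "big" tile living in $\mathbb{R}^d\times(\text{finite part})$ tiles the whole product group by $\mathbb{Z}[\mathbf{r}]$-translations \emph{exactly once} — this is a clean algebraic tiling, since there the dynamics becomes (eventually) a group translation/odometer with no round-off ambiguity. Projecting this exact tiling down to the Archimedean factor and using a measure/Haar-normalization argument then forces the Archimedean covering degree to be the index of the relevant sublattice, and a direct volume comparison (the tile has Lebesgue measure equal to the covolume of $\mathbb{Z}^d$ times $1/m$, combined with the product tiling having total measure one) pins down $m=1$.

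Concretely the steps, in order, are: (1) identify the finite set of primes dividing the denominators of $\mathbf{r}$ and build the associated local completion $\mathbb{K}_\infty=\mathbb{R}^d\times\prod_v(\text{local factor})$; (2) show $\mathbb{Z}^d$ embeds densely and $\tau_\mathbf{r}$, $R(\mathbf{r})$ extend, with $R(\mathbf{r})$ a contraction on the whole product (Archimedean factor contracts by $\varrho(R(\mathbf{r}))<1$ since $\mathbf{r}\in\E_d$, non-Archimedean factors contract because the relevant eigenvalues have $p$-adic absolute value $<1$, coming from $\det R(\mathbf{r})=\pm r_0\neq 0$ being a $p$-unit times the denominator); (3) define the "global" tile $\widetilde{\mathcal{T}}$ in $\mathbb{K}_\infty$ as the analogous Hausdorff limit and prove it satisfies a set equation, is compact, and that $\{\widetilde{\mathcal{T}}+\gamma:\gamma\in\Gamma\}$ is an exact measure-tiling of $\mathbb{K}_\infty$ for the appropriate lattice $\Gamma$; (4) project to $\mathbb{R}^d$, observe $\pi(\widetilde{\mathcal{T}})=\mathcal{T}_\mathbf{r}(\mathbf{0})$ and translates of $\widetilde{\mathcal{T}}$ project onto the $\mathbb{Z}^d$-translates of $\mathcal{T}_\mathbf{r}(\mathbf{0})$, so the covering degree equals a fibre-counting quantity; (5) normalize Haar measures and compare volumes to conclude $m=1$, i.e.\ produce a $1$-exclusive point, and invoke Definition~\ref{def:tilings} together with Theorem~\ref{thm:multitiling}.

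The main obstacle will be step~(3): proving that the product tile $\widetilde{\mathcal{T}}$ genuinely tiles $\mathbb{K}_\infty$ exactly once, rather than merely covering it. This requires controlling the measure of the boundary of $\widetilde{\mathcal{T}}$ (so that "tiling in measure" upgrades to the needed statement about interiors of finitely many tiles), and handling the fact that $\tau_\mathbf{r}$ is not everywhere injective — the inverse-branch structure must be shown to be measure-preserving and "almost partitioning" after renormalization. A secondary technical point is bookkeeping the correct lattice $\Gamma$ and Haar normalization so that the volume identity comes out to exactly $m=1$ and not some proper divisor; here one leans on the strong approximation-type density of $\mathbb{Z}^d$ in $\mathbb{K}_\infty$ and on the fact, already used in Proposition~\ref{prop:basicSRS}, that $\{\mathcal{T}_\mathbf{r}(\mathbf{x})\}$ is locally finite, which keeps all the counting finite and legitimate.
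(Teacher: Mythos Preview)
Your high-level architecture --- pass to a product space $\mathbb{R}^d\times\prod_{\mathfrak p}K_{\mathfrak p}$, prove an exact tiling there, and pull the result back --- is exactly the route the paper (following \cite{ST:11}) takes. However, several of the concrete steps you outline would not go through as written, and in two places your picture diverges from the actual mechanism in a way that would break the argument.

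First, the SRS tiles $\mathcal{T}_\mathbf{r}(\mathbf{x})$ are \emph{not} translates of a single prototype $\mathcal{T}_\mathbf{r}(\mathbf{0})$. The set equation in Proposition~\ref{prop:basicSRS} is not translation-equivariant, and the paper gives explicit examples (Example~\ref{ex:pp}, where some tiles are single points, and Example~\ref{ex32_2}, where infinitely many different interval lengths occur) showing that the shapes genuinely vary with $\mathbf{x}$. So your step~(4), which asserts that ``translates of $\widetilde{\mathcal T}$ project onto the $\mathbb{Z}^d$-translates of $\mathcal{T}_\mathbf{r}(\mathbf{0})$'', fails already at the level of the conclusion you need: there is no single $\widetilde{\mathcal T}$ whose Archimedean projections account for all the SRS tiles. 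Relatedly, the correct passage from the product space back to $\mathbb{R}^d$ is not \emph{projection} but \emph{intersection with the Euclidean slice} $\mathbb{R}^d\times\prod_{\mathfrak p}\{0\}$: the rational self-affine tiles of \cite{ST:11} tile the full product space, and their intersections with this slice are precisely the various SRS tiles $\mathcal{T}_\mathbf{r}(\mathbf{x})$. Slicing a tiling gives a tiling of the slice (away from boundaries), which is what forces $m=1$; projecting a tiling generally produces a multiple covering and the volume bookkeeping you sketch in step~(5) does not recover $m=1$ from it.

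Second, your contraction claim in step~(2) is inverted. For a prime $p$ dividing the common denominator of $\mathbf{r}$ one has $|\det R(\mathbf{r})|_p=|r_0|_p>1$, so $R(\mathbf{r})$ is \emph{expanding} on the non-Archimedean factors, not contracting. This is precisely what makes the product-space tile a genuine self-affine tile in the Lagarias--Wang sense (the global map $R(\mathbf{r})^{-1}$, which contracts on $\mathbb{R}^d$ and expands $p$-adically, has to be replaced by an everywhere-expanding map on the product). Finally, the non-Archimedean factors in \cite{ST:11} are completions $K_{\mathfrak p}$ of the number field generated by the roots of $\chi_\mathbf{r}$, not simply $\mathbb{Q}_p$ for primes dividing denominators; this is what makes the extension of Lagarias--Wang go through. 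With these three corrections (slice rather than project; drop the translate assumption; use the number-field completions with the expanding direction), your outline becomes the paper's argument.
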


The proof of this theorem is quite elaborate. Extending a theorem of Lagarias and Wang~\cite{Lagarias-Wang:97}, in \cite{ST:11} a tiling theorem for so-called {\it rational self-affine} tiles is proved. These tiles are defined as subsets of $\mathbb{R}^d \times \prod_{\mathfrak{p}}K_\mathfrak{p}$, where $K_\mathfrak{p}$ are completions of a number field $K$ that is defined in terms of the roots of the characteristic polynomial of $R(\mathbf{r})$. As the intersection of these tiles with the ``Euclidean part'' $\mathbb{R}^d \times \prod_{\mathfrak{p}}\{0\}$ of the representation space turn out to be SRS tiles corresponding to rational parameters, this tiling theorem can be used to prove Theorem~\ref{p:tiling}.

In the one-dimensional case the situation becomes much easier and we get the following result (here we identify the vector $(r)$ with the scalar $r$; see \cite[Theorem 4.9 and its proof]{BSSST2011}).

\begin{theorem}\label{1dimtiling}
Let $r \in \E_1\setminus \{0\}$. Then  $\{\mathcal{T}_{r}({x})  \;:\; {x} \in \mathbb{Z}\}$   is a tiling whose elements are intervals. In particular,
\[
\bigcup_{x \in \mathbb{Z}} \mathcal{T}_r(x) = \mathbb{R} \quad\hbox{with}\quad \#  (\mathcal{T}_r(x) \cap  \mathcal{T}_r(x')) \in \{0,1\} \hbox{ for distinct }x,x'\in\mathbb{Z}.
\]
\end{theorem}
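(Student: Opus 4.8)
The plan is to exploit a one-dimensional peculiarity: every backward orbit $\tau_r^{-n}(x)$ is a block of \emph{consecutive} integers, so after rescaling by $R(r)^n=(-r)^n$ it becomes a block of consecutive points of the fine lattice $|r|^n\mathbb Z$; the Hausdorff limit of such blocks is then forced to be an interval, and disjointness of the tiles is inherited from disjointness of the integer fibres.

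First I would record the combinatorial input. Since $\E_1=(-1,1)$, we have $r\in(-1,1)\setminus\{0\}$ and $R(r)$ is the $1\times1$ matrix $(-r)$, a contraction by $|r|$. For $x\in\mathbb Z$, $\tau_r^{-1}(x)=\{z\in\mathbb Z:x\le rz<x+1\}$ consists of the integers in a half-open real interval of length $1/|r|>1$, hence is a non-empty block of consecutive integers. Using $\tau_r^{-n}(x)=\bigcup_{y\in\tau_r^{-(n-1)}(x)}\tau_r^{-1}(y)$ together with the identity $\bigcup_{y=c}^{d}\tau_r^{-1}(y)=\{z\in\mathbb Z:c\le rz<d+1\}$ (valid because the intervals $[y,y+1)$, $c\le y\le d$, tile $[c,d+1)$), which is again a non-empty block of consecutive integers, an induction on $n$ shows that $\tau_r^{-n}(x)$ is a non-empty block of consecutive integers for all $n\ge0$. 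Moreover, as $\tau_r^n\colon\mathbb Z\to\mathbb Z$ is a map, the fibres $\{\tau_r^{-n}(x):x\in\mathbb Z\}$ partition $\mathbb Z$ for each fixed $n$.

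Next I would fix $x$ and put $B_n(x)=(-r)^n\tau_r^{-n}(x)$, so that $\mathcal T_r(x)=\mathop{\rm Lim}_{n\to\infty}B_n(x)$ exists and is compact by Proposition~\ref{prop:basicSRS}. Because $z\mapsto(-r)^nz$ maps $\mathbb Z$ bijectively onto $|r|^n\mathbb Z$ (preserving or reversing order) and $\tau_r^{-n}(x)$ is a block of consecutive integers, $B_n(x)$ is exactly $|r|^n\mathbb Z\cap[\min B_n(x),\max B_n(x)]$, i.e.\ a block of consecutive lattice points of mesh $|r|^n\to0$. A Hausdorff limit of such sets is connected: if $\mathcal T_r(x)=K_1\sqcup K_2$ with $\mathrm{dist}(K_1,K_2)=3\eta>0$, then for $n$ large enough that $|r|^n<\eta$ and the Hausdorff distance from $B_n(x)$ to $\mathcal T_r(x)$ is $<\eta$, the consecutive points of $B_n(x)$ would have to jump from the $\eta$-neighbourhood of $K_1$ to that of $K_2$ in a single step of length $|r|^n<\eta$, which is impossible since these neighbourhoods lie more than $\eta$ apart. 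Hence $\mathcal T_r(x)$ is a compact connected subset of $\mathbb R$, that is, a (possibly degenerate) closed interval $[\alpha(x),\beta(x)]$.

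Finally I would prove the tiling property. The covering $\bigcup_{x\in\mathbb Z}\mathcal T_r(x)=\mathbb R$ is part of Proposition~\ref{prop:basicSRS}. Suppose $x\ne x'$ and $t\in\mathrm{int}\,\mathcal T_r(x)\cap\mathrm{int}\,\mathcal T_r(x')$. Hausdorff convergence of compact subsets of $\mathbb R$ forces $\min B_n(x)\to\alpha(x)<t$ and $\max B_n(x)\to\beta(x)>t$ (and similarly for $x'$), so there are $\epsilon>0$ and $N$ with $(t-\epsilon,t+\epsilon)\subset(\min B_n(x),\max B_n(x))\cap(\min B_n(x'),\max B_n(x'))$ for all $n\ge N$. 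Picking $n\ge N$ with $|r|^n<2\epsilon$, the lattice $|r|^n\mathbb Z$ meets $(t-\epsilon,t+\epsilon)$ in some point $p$; since $B_n(x)$ and $B_n(x')$ contain \emph{all} lattice points of their respective enclosing intervals, $p\in B_n(x)\cap B_n(x')$. But $B_n(x)\cap B_n(x')=(-r)^n\bigl(\tau_r^{-n}(x)\cap\tau_r^{-n}(x')\bigr)=\emptyset$ because the fibres partition $\mathbb Z$, a contradiction. Hence distinct tiles have disjoint interiors; being closed intervals that cover $\mathbb R$, they meet pairwise in at most one point, i.e.\ $\#(\mathcal T_r(x)\cap\mathcal T_r(x'))\in\{0,1\}$ for $x\ne x'$, which is the assertion. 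The main obstacle is exactly this last step: it succeeds only because each rescaled fibre is a \emph{contiguous} block of lattice points (the phenomenon established in the first step) rather than an arbitrary finite set, so that disjointness of the limit interiors can be read off from disjointness of the integer fibres; in higher dimensions this contiguity is lost, which is why no such short argument is available there.
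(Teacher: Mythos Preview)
Your argument is correct in outline, but note a sign slip: since $\tau_r(z)=-\lfloor rz\rfloor$, one has $\tau_r^{-1}(x)=\{z\in\mathbb Z:-x\le rz<-x+1\}$, not $\{z:x\le rz<x+1\}$. The induction formula needs the same correction. This does not affect the structural claims (non-empty block of consecutive integers, union of adjacent blocks is again a block), so your proof survives.

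Your route differs from the paper's. You first establish that $\tau_r^{-n}(x)$ is a block of consecutive integers, rescale to a contiguous block of the lattice $|r|^n\mathbb Z$, and then argue separately that (a)~the Hausdorff limit of such blocks is connected and (b)~disjointness of the integer fibres plus contiguity forces disjoint interiors. The paper instead proves a single order-preservation statement: for $r>0$ and $x_0<y_0$, every element of $R(r)^k\tau_r^{-k}(x_0)$ is strictly less than every element of $R(r)^k\tau_r^{-k}(y_0)$ (the sign in $R(r)=-r$ exactly compensates the order reversal of $\tau_r^{-1}$). Passing to the limit gives $\sup\mathcal T_r(x_0)\le\inf\mathcal T_r(y_0)$; combined with the covering property this simultaneously forces each tile to be an interval and distinct tiles to meet in at most one point. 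Your approach is more explicit about why the limit is an interval and makes the role of contiguity visible; the paper's is shorter because the order argument handles connectivity and disjointness in one stroke. Both hinge on the same one-dimensional phenomenon you correctly identify at the end.
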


\begin{proof}
We confine ourselves to $r>0$ (the case $r<0$ can be treated similar). Choose $x,y\in \mathbb{Z}$ with $x_0 < y_0$. By the definition of $\tau_r$ we get that $-x_1<-y_1$ for all $x_1 \in \tau_r^{-1}(x_0)$ and all $y_1 \in \tau_r^{-1}(y_0)$. Iterating this for $k$ times and multiplying by $R(r)^k=(-r)^k$ we obtain that
\[
x \in R(r)^k\tau_r^{-k}(x_0), \quad  y \in R(r)^k\tau_r^{-k}(y_0) \quad\hbox{implies that}\quad x < y.
\]
Taking the Hausdorff limit for $k \to \infty$, the result follows by the definition of SRS tiles and taking into account the fact that $\{\mathcal{T}_{r}({x})  \;:\; {x} \in \mathbb{Z}\}$ is a covering of $\mathbb{R}$ by Proposition~\ref{prop:basicSRS}.
\end{proof}

There are natural questions related to the results of this subsection. Although it seems to be unknown whether the collection $\{\mathcal{T}_\mathbf{r}(\mathbf{x})  \;:\; \mathbf{x} \in \mathbb{Z}^d\}$ forms a weak $m$-tiling for some $m$ for each $\mathbf{r}\in\E_d$ we conjecture the following stronger result (which also contains the {\it Pisot conjecture} for beta-tiles, see {\it e.g.}~\cite[Section~7]{BBK:06}).

\begin{conjecture}
Let $\mathbf{r}=(r_0,\ldots,r_{d-1}) \in \mathcal{E}_d$ with $r_0\not=0$. Then $\{\mathcal{T}_\mathbf{r}(\mathbf{x})  \;:\; \mathbf{x} \in \mathbb{Z}^d\}$ is a weak tiling of $\mathbb{R}^d$.
\end{conjecture}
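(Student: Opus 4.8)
The plan is to proceed in two stages: first to upgrade Theorem~\ref{thm:multitiling} by showing that the collection $\{\mathcal{T}_\mathbf{r}(\mathbf{x}):\mathbf{x}\in\mathbb{Z}^d\}$ is a weak $m$-tiling for \emph{every} $\mathbf{r}\in\E_d$ with $r_0\neq 0$ and some integer $m=m(\mathbf{r})\ge 1$, and then to prove that this covering degree $m$ always equals~$1$. The second stage is the essential one, and it is where I expect to get stuck, since it contains the Pisot conjecture as a special case.

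For the first stage I would embed the problem into an adelic representation space. Let $\chi_\mathbf{r}$ be the characteristic polynomial of $R(\mathbf{r})$ and let $K$ be the number field generated by its roots (or by the $r_i$); following the construction of rational self-affine tiles of Steiner and Thuswaldner that underlies Theorem~\ref{p:tiling}, one builds a space of the form $\mathbb{R}^d\times\prod_{\mathfrak p}K_{\mathfrak p}$ carrying a contracting action conjugate to multiplication by $R(\mathbf{r})$, into which $\mathbb{Z}^d$ embeds as a lattice and whose ``Euclidean slice'' recovers $\mathcal{T}_\mathbf{r}(\mathbf{x})$. The set equation of Proposition~\ref{prop:basicSRS} then becomes a graph-directed self-affine identity on this space. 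The covering-degree function $\mathbf{t}\mapsto\#\{\mathbf{x}\in\mathbb{Z}^d:\mathbf{t}\in\mathcal{T}_\mathbf{r}(\mathbf{x})\}$ is lower semicontinuous, everywhere finite by local finiteness, and --- because the natural extension of $\tau_\mathbf{r}$ acts ergodically on the compact group attached to the slice --- essentially constant; the emptiness of the interior of any $(m+1)$-fold intersection then follows by a Lebesgue-measure comparison in the spirit of Lagarias--Wang, exactly as in the cases already covered by Theorem~\ref{thm:multitiling}. The technical obstacle here is uniformity: the coordinates $r_0,\ldots,r_{d-1}$ may satisfy an arbitrary mixture of algebraic relations, so the construction of the representation space and the ergodicity argument must be carried out without assuming $\mathbf{r}$ rational, Salem-type, or algebraically generic. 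I expect this stage to be laborious but essentially within reach of existing methods.

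The second stage, namely proving $m=1$, is the crux. By Corollary~\ref{cor:exc}, if $\mathbf{r}\in\D_d^{(0)}$ then $\mathbf{0}$ is covered only by the central tile, which already forces the weak $m$-tiling of stage one to be a weak tiling; hence it suffices to exhibit, for every $\mathbf{r}\in\E_d$, a single point of $\mathbb{R}^d$ lying in the interior of exactly one tile. For $\mathbf{r}\notin\D_d^{(0)}$ I would try to renormalise around a purely periodic point $\mathbf{x}$ (such points exist and are finite in number by Theorem~\ref{t63}): along the backward orbit $\tau_\mathbf{r}^{-n}(\mathbf{x})$ the rescaled tiles $R(\mathbf{r})^n\mathcal{T}_\mathbf{r}(\cdot)$ contract, and one hopes to show that sufficiently deep in the subdivision the tile around the rescaled origin meets no other member of the family, combining local finiteness with an explicit separation estimate coming from contractivity of $R(\mathbf{r})$ (the norm inequality~\eqref{norminequ}). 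Equivalently, one must rule out overlaps of positive measure between distinct SRS tiles, i.e.\ establish a geometric finiteness or coincidence condition; this is precisely the content of the Pisot conjecture in the beta-numeration specialisation furnished by Theorem~\ref{srsbeta}, so a complete proof is presently out of reach. Realistically one can only hope to settle the statement in low dimensions ($d=1$ already by Theorem~\ref{1dimtiling}, and $d=2$ by a finer, case-based analysis), for rational parameters where it is already known (Theorem~\ref{p:tiling}), or under an additional hypothesis such as a weak finiteness property or a strong coincidence assumption, from which an exclusive point can be produced by a finite search on a graph of witnesses as in Theorem~\ref{thm:BrunotteR}.
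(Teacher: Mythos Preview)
The statement you are attempting to prove is presented in the paper as a \emph{conjecture}, not a theorem; the paper offers no proof whatsoever. Indeed, immediately before stating it the authors remark that ``it seems to be unknown whether the collection $\{\mathcal{T}_\mathbf{r}(\mathbf{x}):\mathbf{x}\in\mathbb{Z}^d\}$ forms a weak $m$-tiling for some $m$ for each $\mathbf{r}\in\E_d$'' and that the conjecture ``also contains the \emph{Pisot conjecture} for beta-tiles''. So there is nothing on the paper's side to compare your proposal against.

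Your proposal is honest about this: you explicitly identify the second stage (forcing $m=1$) as ``precisely the content of the Pisot conjecture'' and say ``a complete proof is presently out of reach''. That assessment is correct and matches the paper's own position. Your first stage is also overly optimistic relative to what the paper claims: the paper does \emph{not} assert a weak $m$-tiling for arbitrary $\mathbf{r}\in\E_d$, only under the three hypotheses of Theorem~\ref{thm:multitiling}, and the adelic construction you invoke (from Steiner--Thuswaldner) is worked out in the paper only for rational $\mathbf{r}$ (Theorem~\ref{p:tiling}); extending it to arbitrary algebraic dependence among the $r_i$ is, as you suspect, not routine. In short, what you have written is a reasonable research programme, not a proof, and the paper agrees with you that the statement is open.
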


Moreover, we state the following conjecture on the boundary of SRS tiles.

\begin{conjecture}
Let $\mathbf{r}=(r_0,\ldots,r_{d-1}) \in \mathcal{E}_d$ with $r_0\not=0$. Then the $d$-dimensional Lebesgue measure of $\partial \mathcal{T}_\mathbf{r}(\mathbf{x})$ is zero for each $\mathbf{x} \in \mathbb{Z}^d$.
\end{conjecture}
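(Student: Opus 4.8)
The plan is to control $\partial\mathcal{T}_\mathbf{r}(\mathbf{x})$ through the self-affine structure already present in Proposition~\ref{prop:basicSRS}. First I would reduce the conjecture to a statement about the intersections of neighbouring tiles. Let $\mathbf{p}\in\partial\mathcal{T}_\mathbf{r}(\mathbf{x})$; then $\mathbf{p}$ is a limit of points outside $\mathcal{T}_\mathbf{r}(\mathbf{x})$, which by the covering property \eqref{eq:cov} lie in other tiles, and by the local finiteness of the family infinitely many of them lie in one fixed tile $\mathcal{T}_\mathbf{r}(\mathbf{y})$, $\mathbf{y}\ne\mathbf{x}$, which being closed contains $\mathbf{p}$. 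Since every $\mathcal{T}_\mathbf{r}(\mathbf{x})$ lies in a ball of radius $R$ about $\mathbf{x}$ (see \eqref{eq:R}), only difference vectors $\mathbf{y}-\mathbf{x}$ in a fixed finite set $\mathcal{S}\subset\mathbb{Z}^d\setminus\{\mathbf{0}\}$ can occur, so
\[
\partial\mathcal{T}_\mathbf{r}(\mathbf{x}) \subset \bigcup_{\mathbf{s}\in\mathcal{S}} \bigl(\mathcal{T}_\mathbf{r}(\mathbf{x}) \cap \mathcal{T}_\mathbf{r}(\mathbf{x}+\mathbf{s})\bigr) \qquad(\mathbf{x}\in\mathbb{Z}^d),
\]
and it suffices to show that each ``interface'' $J(\mathbf{x},\mathbf{y}):=\mathcal{T}_\mathbf{r}(\mathbf{x})\cap\mathcal{T}_\mathbf{r}(\mathbf{y})$ with $\mathbf{x}\ne\mathbf{y}$ is $\lambda_d$-null; note this automatically rules out positive-measure tiles with empty interior (cf.\ Example~\ref{ex:pp}).

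Second I would build a graph-directed iterated function system for the interfaces. Intersecting the set equation of Proposition~\ref{prop:basicSRS} with itself, and noting that $\tau_\mathbf{r}^{-1}(\mathbf{x})$ and $\tau_\mathbf{r}^{-1}(\mathbf{y})$ are disjoint whenever $\mathbf{x}\ne\mathbf{y}$, one obtains
\[
J(\mathbf{x},\mathbf{y}) = \bigcup_{\mathbf{x}'\in\tau_\mathbf{r}^{-1}(\mathbf{x}),\, \mathbf{y}'\in\tau_\mathbf{r}^{-1}(\mathbf{y})} R(\mathbf{r})\, J(\mathbf{x}',\mathbf{y}'),
\]
with every pair $(\mathbf{x}',\mathbf{y}')$ on the right again off-diagonal. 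A direct computation with the definition of $\tau_\mathbf{r}$ shows that, because $r_0\ne0$, the difference $\mathbf{y}'-\mathbf{x}'$ stays in a finite set once $\mathbf{y}-\mathbf{x}$ does; so the interfaces, indexed by their difference vectors, form a graph-directed system all of whose edge maps equal the single contraction $R(\mathbf{r})$ with $|\det R(\mathbf{r})|=|r_0|$. Iterating $n$ times, using $\lambda_d(R(\mathbf{r})A)=|r_0|\,\lambda_d(A)$ together with the fact that each $J(\mathbf{x}',\mathbf{y}')$ lies in a ball of radius $R$, one gets $\lambda_d(J(\mathbf{x},\mathbf{y}))\le |r_0|^n\,N_n\,V$, where $V$ is the volume of a ball of radius $R$ and $N_n$ is the number of paths of length $n$ in the resulting contact graph. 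Everything then comes down to the estimate $|r_0|\,\varrho(A)<1$, where $A$ is the (finite, non-negative integer) adjacency matrix of the contact graph: this forces $N_n=o(|r_0|^{-n})$ and hence $\lambda_d(J(\mathbf{x},\mathbf{y}))=0$.

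The hard part will be exactly this spectral estimate, and I do not expect it to follow from soft arguments: the trivial bound on the out-degree of a vertex of the contact graph is of order $|r_0|^{-2}$, so naive counting only gives $|r_0|^n N_n$ of order $|r_0|^{-n}\to\infty$. One must therefore show that most potential transitions are vacuous --- most of the sets $J(\mathbf{x}',\mathbf{y}')$ are empty --- and that the surviving interfaces are genuinely lower-dimensional. The natural lever is the covering degree: on a weak $m$-tiling, a neighbourhood of a generic point of an interface is covered at least $(m+1)$ times by the family, which ought to bound $\varrho(A)$ from above. Accordingly I would run a two-step programme: (1) first settle the preceding weak-tiling conjecture for all $\mathbf{r}\in\mathcal{E}_d$ with $r_0\ne0$, extending the techniques of Berth\'e {\it et al.}~\cite{BSSST2011} and Steiner--Thuswaldner~\cite{ST:11} past the rational and algebraically-independent regimes; (2) then combine the resulting $m$-tiling with a mass-distribution (renewal) argument on the contact graph to pin down $|r_0|\,\varrho(A)<1$. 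In the cases where only finitely many tile shapes occur --- rational $\mathbf{r}$, or $\mathbf{r}$ arising from a $\beta$- or CNS-situation --- step~(2) should already be within reach by adapting the contact-graph analysis used for CNS tiles and $\beta$-tiles (Akiyama--Thuswaldner~\cite{Akiyama-Thuswaldner:00}, Scheicher--Thuswaldner~\cite{Scheicher-Thuswaldner:01}), and the case $d=1$ is immediate from Theorem~\ref{1dimtiling} since there the tiles are intervals with two-point boundary. An alternative to step~(2) would be to bound the box-counting dimension of $J(\mathbf{x},\mathbf{y})$ directly from the subdivision using the singular values of $R(\mathbf{r})^n$, but this again hinges on controlling $N_n$, so the bottleneck is the same. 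I expect step~(1), together with the lack of any algorithmic handle on which compact $H\subset\mathcal{E}_d$ are ``small enough'' (the obstruction already noted after Algorithm~\ref{alg:2}), to be the genuine source of difficulty.
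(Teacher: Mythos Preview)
The statement you are attempting is labelled as a \emph{conjecture} in the paper and is presented as open; the paper gives no proof or proof sketch of it. So there is nothing to compare your argument against --- what you have written is, appropriately, a research programme rather than a proof, and you correctly flag the spectral-radius estimate as the genuine obstruction.

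One technical slip in the set-up is worth noting. You write that ``the interfaces, indexed by their difference vectors, form a graph-directed system,'' and then count paths in a finite contact graph whose vertices are the differences $\mathbf{s}\in\mathcal{S}$. This is fine in the CNS/beta-unit situation where all tiles are translates of a single prototile, but for a general $\mathbf{r}\in\mathcal{E}_d$ the SRS tiles need not be translates of each other (the paper explicitly notes infinitely many distinct shapes for $r=-2/3$, see Example~\ref{ex32_2}). Concretely, the number of preimage pairs $(\mathbf{x}',\mathbf{y}')\in\tau_\mathbf{r}^{-1}(\mathbf{x})\times\tau_\mathbf{r}^{-1}(\mathbf{y})$ with a prescribed difference $\mathbf{y}'-\mathbf{x}'=\mathbf{s}'$ depends on $\mathbf{x}$ and $\mathbf{y}$ individually, not just on $\mathbf{s}=\mathbf{y}-\mathbf{x}$: with $\mathbf{x}'=(x_0',x_0,\ldots,x_{d-2})$ one needs both $r_0x_0'+\{\cdots\}\in[0,1)$ and $r_0(x_0'+\delta)+\{\cdots\}\in[0,1)$, and the count of admissible $x_0'$ fluctuates with the fractional parts of the bracketed quantities. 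So your adjacency matrix $A$ is not canonically defined. You can salvage the bounding scheme by using the entrywise supremum of these counts, but then $\varrho(A)$ is strictly larger than what the actual dynamics produces, which pushes your already-delicate inequality $|r_0|\,\varrho(A)<1$ further out of reach. Alternatively one can work with an infinite graph whose vertices are the actual pairs $(\mathbf{x},\mathbf{y})$ with $J(\mathbf{x},\mathbf{y})\ne\emptyset$; uniform boundedness of $\|\mathbf{y}-\mathbf{x}\|$ keeps the out-degrees bounded, but the Perron--Frobenius machinery you invoke no longer applies directly. Either way, the finiteness of the contact graph is an extra hypothesis you would need to justify (or replace) before the spectral argument can begin.
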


Finally, we state a problem related to the connectivity of  central SRS tiles (see also~\cite[Section~7]{BSSST2011}).
For $d\in\mathbb{N}$ define the Mandelbrot set  
\[
\mathcal{M}_d = \{ \mathbf{r} \in \E_d \;:\; \mathcal{T}_\mathbf{r}(\mathbf{0}) \hbox{ is connected}\}.
\]
It is an easy consequence of Theorem~\ref{1dimtiling} that $\mathcal{M}_1=(-1,1)$. However, we do not know anything about $\mathcal{M}_d$ in higher dimensions.

\begin{problem}
Describe the Mandelbrot sets $\mathcal{M}_d$ for $d\ge 2$.
\end{problem}

\subsection{SRS tiles and their relations to beta-tiles and self-affine tiles}

Let $\beta$ be a Pisot number and write the minimal polynomial of $\beta$ as
\[ 
(X-\beta) (X^d + r_{d-1} X^{d-1} + \cdots + r_0) \in \mathbb{Z}[X]. 
\]
Let $\mathbf{r} = (r_0,\ldots,r_{d-1})$. Then, for every $\mathbf{x} \in \mathbb{Z}^d$, the {\it SRS tile associated with $\beta$} is the set $$\mathcal{T}_\mathbf{r}(\mathbf{x}) = \lim_{n\to\infty} R(\mathbf{r})^n \tau_\mathbf{r}^{-n}(\mathbf{x}),$$
with $R(\mathbf{r})$ as in \eqref{mata},
%
%
%

The conjugacy between $T_\beta$ and $\tau_\mathbf{r}$ proved in Proposition~\ref{prop:betanumformula} suggests that there is some relation between the SRS tiles $\mathcal{T}_\mathbf{r}(\mathbf{x})$, $\mathbf{x}\in \mathbb{Z} ^d$, and the tiles associated with beta-numeration (which have been studied extensively in the literature, see {\it e.g.} \cite{Akiyama:02,Rauzy:82}). We recall the definition of these ``beta-tiles''.

Let $\beta_1, \ldots, \beta_d$ be the Galois conjugates of~$\beta$, numbered in such a way that  $\beta_1, \ldots, \beta_r \in \mathbb{R}$, $\beta_{r+1} = \overline{\beta_{r+s+1}}, \ldots, \beta_{r+s} = \overline{\beta_{r+2s}} \in \mathbb{C}$, $d = r + 2s$. Let further
$x^{(j)}$ be the corresponding conjugate of $x \in \mathbb{Q}(\beta)$, $1 \le j \le d$, and
$ \Xi_\beta:\ \mathbb{Q}(\beta) \to \mathbb{R}^d$,
 be the map
  $$\ x \mapsto  \big(x^{(1)}, \ldots, x^{(r)}, \Re\big(x^{(r+1)}\big), \Im\big(x^{(r+1)}\big), \ldots, \Re\big(x^{(r+s)}\big), \Im\big(x^{(r+s)}\big)\big).$$

Then we have the following definition.

\begin{definition}[Beta-tile, see~\cite{Akiyama:02, BSSST2011,Thurston:89}]\label{def:betatile}
For $x \in \mathbb{Z}[\beta] \cap [0,1)$, the {\it beta-tile} is the (compact) set
\[ 
\mathcal{{R} }_\beta(x) = \lim_{n\to\infty} \Xi_\beta\big(\beta^n T_\beta^{-n}(x) \big). 
\]
The {\it {integral }}{beta-tile} is the (compact) set
\[ 
\mathcal{{S}}_\beta(x) = \lim_{n\to\infty} \Xi_\beta\big(\beta^n {\big(} T_\beta^{-n}(x) {{\cap \mathbb{Z}[\beta]}\big)}\big). 
\]
\end{definition}

With these definitions it holds that
$\mathbf{t} \in \mathcal{{R} }_\beta(x)$ if and only if there exist $c_k \in \mathbb{Z}$ with
\[ \mathbf{t} = \Xi_\beta(x) + \sum_{k=1}^\infty \Xi_\beta(\beta^{k-1} c_k),\ \frac{c_n}{\beta} + \cdots + \frac{c_1}{\beta^n} + \frac{x}{\beta^n} \in [0,1) \ \forall n {\ge 1},\]
and
$\mathbf{t} \in \mathcal{{S}}_\beta(x)$ if and only if there exist $c_k \in \mathbb{Z}$ with
\[ \mathbf{t} = \Xi_\beta(x) + \sum_{k=1}^\infty \Xi_\beta(\beta^{k-1} c_k),\ \frac{c_n}{\beta} + \cdots + \frac{c_1}{\beta^n} + \frac{x}{\beta^n} \in [0,1) \cap \mathbb{Z}[\beta]\ \forall n {\ge 1}.\]

Observe that the ``digits'' $c_k$ fulfill the greedy condition, compare \eqref{greedycondition}. The following result shows how SRS-tiles are related to integral beta-tiles by a linear transformation. 

\begin{theorem}[{compare \cite[Theorem~6.7]{BSSST2011}}]\label{thm:betatilecorrespondence}
Let $\beta$ be a Pisot number with minimal polynomial
$(X-\beta)(X^d+r_{d-1}X^{d-1}+\cdots+r_0)$ and $d=r+2s$ Galois
conjugates $\beta_1,\ldots,\beta_r\in\mathbb{R}$,
$\beta_{r+1},\ldots,\beta_{r+2s}\in\mathbb{C}\setminus\mathbb{R}$, ordered such that
$\beta_{r+1}=\overline{\beta_{r+s+1}},\,\ldots,\,\beta_{r+s}=\overline{\beta_{r+2s}}$.
Let
\[
X^d+r_{d-1}X^{d-1}+\cdots+r_0 =
(X-\beta_j)(X^{d-1}+q_{d-2}^{(j)}X^{d-2}+\cdots+q_0^{(j)})
\]
for $1\le j\le d$ and
\[
U=\left(\begin{array}{ccccc}
q^{(1)}_{0} & q^{(1)}_{1} & \cdots & q^{(1)}_{d-2} & 1 \\
\vdots & \vdots & &\vdots & \vdots \\
q^{(r)}_{0} & q^{(r)}_{1} & \cdots & q^{(r)}_{d-2} & 1 \\
\Re(q^{(r+1)}_{0}) & \Re(q^{(r+1)}_{1}) & \cdots & \Re(q^{(r+1)}_{d-2}) & 1 \\
\Im(q^{(r+1)}_{0}) & \Im(q^{(r+1)}_{1}) & \cdots & \Im(q^{(r+1)}_{d-2}) & 0 \\
\vdots & \vdots & &\vdots & \vdots \\
\Re(q^{(r+s)}_{0}) & \Re(q^{(r+s)}_{1}) & \cdots & \Re(q^{(r+s)}_{d-2}) & 1 \\
\Im(q^{(r+s)}_{0}) & \Im(q^{(r+s)}_{1}) & \cdots & \Im(q^{(r+s)}_{d-2}) & 0 \\
\end{array}\right) \in \mathbb{R}^{d \times d}.
\]
Then we have
\[
\mathcal{S}_\beta(\{\mathbf{r}\mathbf{x}\}) = U (R(\mathbf{r})-\beta I_d) \mathcal{T}_\mathbf{r}(\mathbf{x})
\]
for every $\mathbf{x} \in \mathbb{Z}^d$, where $\mathbf{r}=(r_0,\ldots,r_{d-1})$ and $I_d$ is the $d$-dimensional identity matrix.
\end{theorem}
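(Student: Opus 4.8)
Here is how I would proceed.

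I would transport both tile families to a common combinatorial model through the conjugacy of Proposition~\ref{prop:betanumformula}, and then recognise $U(R(\mathbf{r})-\beta I_d)$ as the linear map realising this transport. Write $\chi_\mathbf{r}(X)=X^d+r_{d-1}X^{d-1}+\cdots+r_0=\prod_{j=1}^d(X-\beta_j)$, let $b_d,\ldots,b_0$ be the coefficients of the minimal polynomial $X^{d+1}+b_dX^d+\cdots+b_0$ of $\beta$, and set $L:=U(R(\mathbf{r})-\beta I_d)$; since $\beta$ is Pisot we have $|\beta_j|<1$ for all $j$, so $R(\mathbf{r})$ is contractive (hence $\mathbf{r}\in\E_d$) and $\Xi_\beta(\beta^n)\to\mathbf{0}$. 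By Proposition~\ref{prop:betanumformula}, $\Phi_\mathbf{r}\colon\mathbf{z}\mapsto\{\mathbf{r}\mathbf{z}\}$ is a bijection $\mathbb{Z}^d\to\mathbb{Z}[\beta]\cap[0,1)$ conjugating $\tau_\mathbf{r}$ with $T_\beta$; as $T_\beta$ maps $[0,1)$ into itself, iterating \eqref{eq:conj} shows that $\Phi_\mathbf{r}$ restricts, for every $n$, to a bijection $\tau_\mathbf{r}^{-n}(\mathbf{x})\to T_\beta^{-n}(\{\mathbf{r}\mathbf{x}\})\cap\mathbb{Z}[\beta]$. Consequently $\mathcal{T}_\mathbf{r}(\mathbf{x})=\lim_{n\to\infty}\{R(\mathbf{r})^n\mathbf{z}:\mathbf{z}\in\tau_\mathbf{r}^{-n}(\mathbf{x})\}$ and $\mathcal{S}_\beta(\{\mathbf{r}\mathbf{x}\})=\lim_{n\to\infty}\{\Xi_\beta(\beta^n\{\mathbf{r}\mathbf{z}\}):\mathbf{z}\in\tau_\mathbf{r}^{-n}(\mathbf{x})\}$ are Hausdorff limits of finite sets over the \emph{same} index sets.

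Next I would record a spectral description of $U$. Writing $q^{(j)}(X)=\chi_\mathbf{r}(X)/(X-\beta_j)=X^{d-1}+q_{d-2}^{(j)}X^{d-2}+\cdots+q_0^{(j)}$, the companion shape of $R(\mathbf{r})$, the recursion $q_i^{(j)}=r_{i+1}+\beta_j q_{i+1}^{(j)}$ (with $q_{d-1}^{(j)}:=1$), and $\chi_\mathbf{r}(\beta_j)=0$ together show that $(q_0^{(j)},\ldots,q_{d-2}^{(j)},1)$ is a left eigenvector of $R(\mathbf{r})$ for the eigenvalue $\beta_j$. Hence $UR(\mathbf{r})=\widetilde D\,U$, where $\widetilde D$ is the real block-diagonal matrix with blocks $(\beta_j)$ for the real conjugates and $\left(\begin{smallmatrix}\Re\beta_j&-\Im\beta_j\\\Im\beta_j&\Re\beta_j\end{smallmatrix}\right)$ for the complex pairs; the same $\widetilde D$ satisfies $\Xi_\beta(\beta w)=\widetilde D\,\Xi_\beta(w)$ for $w\in\mathbb{Q}(\beta)$. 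Since $R(\mathbf{r})-\beta I_d$ commutes with $R(\mathbf{r})$, this gives $L\,R(\mathbf{r})^n=\widetilde D^n L$ for all $n$.

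The crux is the pointwise identity $\Xi_\beta(\{\mathbf{r}\mathbf{z}\})=L\mathbf{z}+\{\mathbf{r}\mathbf{z}\}\,\Xi_\beta(1)$ for $\mathbf{z}\in\mathbb{Z}^d$. Because $\{\mathbf{r}\mathbf{z}\}=\mathbf{r}\mathbf{z}-\lfloor\mathbf{r}\mathbf{z}\rfloor$ with $\lfloor\mathbf{r}\mathbf{z}\rfloor\in\mathbb{Z}$ and $\Xi_\beta$ is $\mathbb{Z}$-linear, this is equivalent to $\Xi_\beta(\mathbf{r}\mathbf{z})-(\mathbf{r}\mathbf{z})\Xi_\beta(1)=L\mathbf{z}$, and \emph{both} sides now depend linearly on $\mathbf{z}$; so it suffices to check it on the standard basis vector $\mathbf{e}_i$ ($0\le i\le d-1$, so $\mathbf{r}\mathbf{e}_i=r_i$), i.e.\ to prove $\Xi_\beta(r_i)-r_i\Xi_\beta(1)=L\mathbf{e}_i$, which coordinate by coordinate (taking real and imaginary parts in the complex blocks) reduces to the scalar identity $r_i^{(j)}-r_i=(\beta_j-\beta)q_i^{(j)}$. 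This I would prove by downward induction on $i$, starting from $r_d=r_d^{(j)}=1$, $q_d^{(j)}:=0$, using $r_i=b_{i+1}+\beta r_{i+1}$ and $r_i^{(j)}=b_{i+1}+\beta_j r_{i+1}^{(j)}$ (obtained by expanding $(X-\beta)\chi_\mathbf{r}(X)=X^{d+1}+b_dX^d+\cdots+b_0$) together with $q_i^{(j)}=r_{i+1}+\beta_j q_{i+1}^{(j)}$. Applying $\widetilde D^n$ to the pointwise identity and using $\Xi_\beta(\beta^n w)=\widetilde D^n\Xi_\beta(w)$ and $\widetilde D^nL=LR(\mathbf{r})^n$ then yields
\[
\Xi_\beta(\beta^n\{\mathbf{r}\mathbf{z}\})=L\,R(\mathbf{r})^n\mathbf{z}+\{\mathbf{r}\mathbf{z}\}\,\Xi_\beta(\beta^n).
\]
Since $\{\mathbf{r}\mathbf{z}\}\in[0,1)$, the error is bounded in norm by $\|\Xi_\beta(\beta^n)\|\to0$ \emph{uniformly} in $\mathbf{z}$, so the two families of finite sets from the first paragraph lie within Hausdorff distance $\|\Xi_\beta(\beta^n)\|$ of $\{L R(\mathbf{r})^n\mathbf{z}:\mathbf{z}\in\tau_\mathbf{r}^{-n}(\mathbf{x})\}$ and therefore share its Hausdorff limit. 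As $L$ is invertible ($R(\mathbf{r})-\beta I_d$ is invertible because $\beta\ne\beta_j$, and $U$ is invertible, its rows being left eigenvectors for pairwise distinct eigenvalues), $L$ commutes with Hausdorff limits, and we obtain $\mathcal{S}_\beta(\{\mathbf{r}\mathbf{x}\})=L\,\mathcal{T}_\mathbf{r}(\mathbf{x})=U(R(\mathbf{r})-\beta I_d)\,\mathcal{T}_\mathbf{r}(\mathbf{x})$.

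The one genuinely delicate step is the pointwise identity: the key observation is that subtracting the ``$\beta$-coordinate contribution'' $\{\mathbf{r}\mathbf{z}\}\,\Xi_\beta(1)$ cancels the nonlinear floor term and leaves an expression linear in $\mathbf{z}$ (hence checkable on the standard basis via the $q^{(j)}$-recursions), while the subtracted term, being bounded, is killed by the renormalisation $\widetilde D^n$. Keeping the bookkeeping of the conjugates, of the quotient-polynomial coefficients $q_i^{(j)}$, and of the real/imaginary splitting consistent with the precise shapes of $U$ and $\Xi_\beta$ is where essentially all the work lies; the conjugacy, the contractivity of $R(\mathbf{r})$, and the Hausdorff-continuity of invertible linear maps are routine.
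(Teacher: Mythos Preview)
Your argument is correct and follows the route the paper indicates (the paper omits the proof and refers to \cite{BSSST2011}, but explicitly singles out the conjugacy of Proposition~\ref{prop:betanumformula} and the ``center'' identity $\Xi_\beta(\{\mathbf{r}\mathbf{x}\})=U(R(\mathbf{r})-\beta I_d)\mathbf{x}+U(0,\ldots,0,\{\mathbf{r}\mathbf{x}\})^t$ as the technical crux). Your pointwise identity $\Xi_\beta(\{\mathbf{r}\mathbf{z}\})=L\mathbf{z}+\{\mathbf{r}\mathbf{z}\}\,\Xi_\beta(1)$ is exactly this formula (the last column of $U$ is $\Xi_\beta(1)$), and your observation that the bounded correction $\{\mathbf{r}\mathbf{z}\}\,\Xi_\beta(\beta^n)$ vanishes uniformly under renormalisation is precisely how the mismatch between ``center'' and linear image is absorbed in the Hausdorff limit; the diagonalisation $UR(\mathbf{r})=\widetilde D\,U$ via the left eigenvectors $(q_0^{(j)},\ldots,q_{d-2}^{(j)},1)$ and the inductive verification of $r_i^{(j)}-r_i=(\beta_j-\beta)q_i^{(j)}$ are the expected bookkeeping.
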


We omit the technical proof that, obviously, makes use of the conjugacy in Proposition~\ref{prop:betanumformula} and refer the reader to \cite{BSSST2011}. One reason for the technical difficulties come from the fact that although the integral beta-tile associated with $\mathcal{T}_\mathbf{r}(\mathbf{x})$ is given by $\mathcal{S}_\beta(\{\mathbf{r} \mathbf{x}\}) = U (R(\mathbf{r}) - \beta I_d) \mathcal{T}_\mathbf{r}(\mathbf{x})$, its ``center'' is $\Xi_\beta(\{\mathbf{r}\mathbf{x}\}) = U (\tau_\mathbf{r}(\mathbf{x}) - \beta\mathbf{x}) = U (R(\mathbf{r}) - \beta I_d) \mathbf{x} + U (0,\ldots,0,\{\mathbf{r}\mathbf{x}\})^t$.

\begin{example}
Let $\beta_1$ be the Pisot unit given by the dominant root of $X^3-X^2-X-1$. According to Proposition~\ref{prop:betanumformula} the associated SRS parameter is $\mathbf{r} = (1/\beta_1,\beta_1-1)$. Using the algorithm based on Theorem~\ref{thm:Brunotte} one can easily show that $\mathbf{r} \in \D_2^{(0)}$. Thus Corollary~\ref{cor:tiling} implies that the collection $\{\mathcal{T}_\mathbf{r}(\mathbf{x})\;:\; \mathbf{x}\in \mathbb{Z}^d\}$ induces a tiling of $\mathbb{R}^2$.  On the right hand side of Figure~\ref{fig:betatiling} a patch of this tiling is depicted. In view of Theorem~\ref{thm:betatilecorrespondence} the beta-tiles associated with $\beta_1$ also form a tiling which can be obtained from the SRS tiling just by an affine transformation.

\begin{figure}
\includegraphics[width=.45\textwidth]{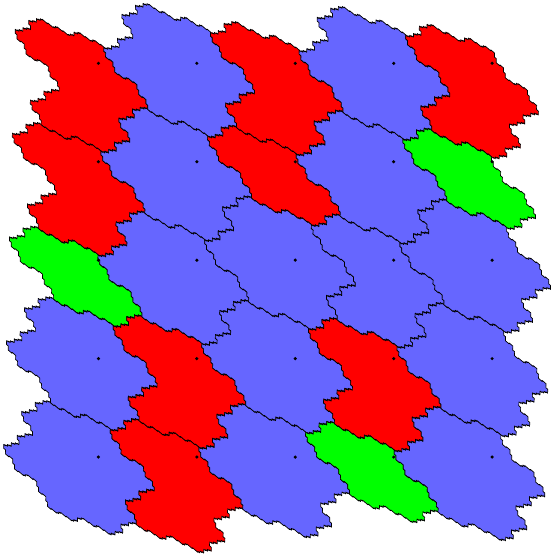} 
\includegraphics[width=.45\textwidth]{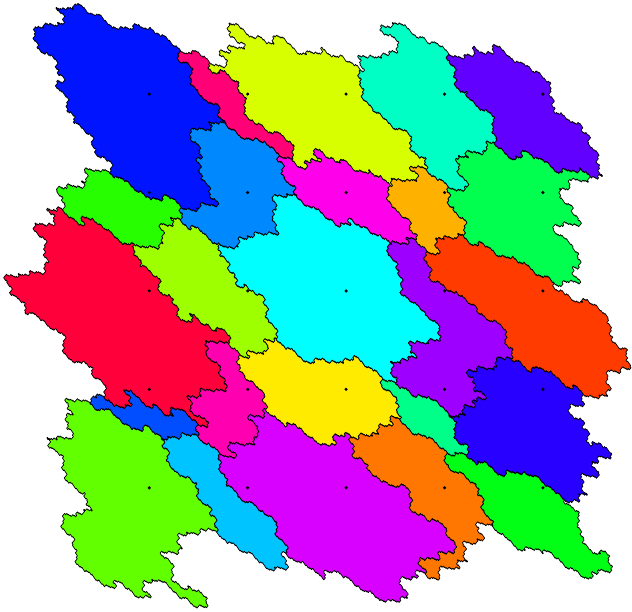}
\caption{Two patches of tilings induced by SRS tiles: the left figure shows the tiling associated with the parameter $\mathbf{r} = (1/\beta_1,\beta_1-1)$ where $\beta_1$ is the Pisot unit given by $\beta_1^3 = \beta_1^2 + \beta_1 + 1$. It is an affine image of the tiling induced by the (integral) beta-tiles associated with the Pisot unit $\beta_1$. The central tile is the classical {\it Rauzy fractal}. The right patch corresponds to the parameter $\mathbf{r} =  (2/\beta_2,\beta_2-2)$ where $\beta_2$ is the (non-unit) Pisot number given by $\beta_2^3 = 2\beta_2^2 + 2\beta_2 + 2$. It can also be regarded as (an affine transformation of) the tiling induced by the integral beta-tiles associated with $\beta_2$ . \label{fig:betatiling}}
\end{figure}

Similarly, let $\beta_2$ be the (non-unit) Pisot root of $X^3-2X^2-2X-2$.  Proposition~\ref{prop:betanumformula} yields the associated SRS parameter $\mathbf{r} =  (2/\beta_2,\beta_2-2)$. Again, one can check that $\mathbf{r} \in \D_2^{(0)}$, and the resulting tiling is depicted on the right hand side of Figure~\ref{fig:betatiling}. As $\beta_2$ is not a unit, the structure of this tiling turns out to be more involved.
\end{example}

\begin{proposition}[compare \cite {Akiyama:02, Sirvent-Wang:00a}]
If $\beta$ is a Pisot unit ($\beta^{-1} \in \mathbb{Z}[\beta]$), then
\begin{itemize}
\item[(i)] $\mathcal{R}_\beta(x) = \mathcal{S}_\beta(x)$ for every $x \in \mathbb{Z}[\beta] \cap [0,1)$,
\item[(ii)] we have only finitely many tiles up to translation,
\item[(iii)] the boundary of each tile has zero Lebesgue measure,
\item[(iv)] each tile is the closure of its interior,
\item[(v)] $\{\mathcal{S}_\beta(x) \,:\, x \in \mathbb{Z}[\beta] \cap [0,1)\}$ forms a multiple tiling of $\mathbb{R}^d$,
\item[(vi)] $\{\mathcal{S}_\beta(x)  \,:\, x \in \mathbb{Z}[\beta] \cap [0,1)\}$ forms a tiling if (F) holds,
\item[(vii)] $\{\mathcal{S}_\beta(x)  \,:\, x \in \mathbb{Z}[\beta] \cap [0,1)\}$ forms a tiling iff (W) holds: \\
for every $x \in \mathbb{Z}[\beta] \cap [0,1)$ and every $\varepsilon > 0$, there exists some $y \in [0,\varepsilon)$ with finite beta-expansion such that $x+y$ has finite beta-expansion.
\end{itemize}
\end{proposition}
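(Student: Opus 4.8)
The plan is to prove (i) by a direct argument and then to organize the rest around two pillars: the self-replicating graph-directed structure of the family $\{\mathcal{R}_\beta(x)\}$, and the dictionary with SRS tiles provided by Propositions~\ref{prop:betanumformula}, Theorem~\ref{thm:betatilecorrespondence} and Corollaries~\ref{cor:exc},~\ref{cor:tiling}. For (i): if $\beta$ is a unit then $\mathbb{Z}[\beta]=\mathbb{Z}[\beta^{-1}]$, and each inverse branch of $T_\beta$ is $y\mapsto(y+m)\beta^{-1}$ with $m$ a non-negative integer; hence $T_\beta^{-1}(\mathbb{Z}[\beta]\cap[0,1))\subset\mathbb{Z}[\beta]\cap[0,1)$, and by induction $T_\beta^{-n}(x)\subset\mathbb{Z}[\beta]$ for every $x\in\mathbb{Z}[\beta]\cap[0,1)$. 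Thus $T_\beta^{-n}(x)\cap\mathbb{Z}[\beta]=T_\beta^{-n}(x)$ for all $n$, the two Hausdorff limits defining $\mathcal{S}_\beta(x)$ and $\mathcal{R}_\beta(x)$ are literally equal, and from here on I write $\mathcal{R}_\beta(x)$ for both.

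For (ii) I would exploit that a Pisot $\beta$ has an eventually periodic (quasi-)greedy expansion of $1$, so that the $\beta$-shift is sofic and admissible digit strings are recognized by a finite automaton with state set $Q$. Writing down the set equation for $\mathcal{R}_\beta(x)$ — the beta-tile analogue of the set equation in Proposition~\ref{prop:basicSRS}, with contraction realized by the restriction of multiplication by $\beta$ to the contracting subspace $\mathbb{R}^d$ — one checks that $\mathcal{R}_\beta(x)-\Xi_\beta(x)$ depends on $x$ only through the state $q(x)\in Q$ reached along the greedy expansion of $x$; hence there are at most $\#Q$ tiles up to translation. Equivalently, $\{\mathcal{R}_\beta(x)\}$ is a graph-directed iterated function system on finitely many prototiles governed by the (finite) Parry automaton, and here the Pisot hypothesis gives contractivity while the unit hypothesis guarantees that no nonarchimedean component of the representation space is needed.

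Items (iii)--(v) are the technical core. I would first record that $\{\mathcal{R}_\beta(x)\}$ covers $\mathbb{R}^d$, transported from Proposition~\ref{prop:basicSRS} via Theorem~\ref{thm:betatilecorrespondence} and the bijection $\Phi_\mathbf{r}$ of Proposition~\ref{prop:betanumformula} (or proved directly from the set equation and contractivity; equivalently, via the geometric natural extension of $T_\beta$ whose fibres are the tiles). For (iii) one sets up the boundary graph: by local finiteness $\partial\mathcal{R}_\beta(x)$ is a finite union of the nonempty intersections $\mathcal{R}_\beta(x)\cap\mathcal{R}_\beta(x')$, and these satisfy their own graph-directed set equation; the task is to show that the associated weighted incidence matrix has spectral radius small enough that the attractor has Hausdorff dimension strictly below $d$, so that $\partial\mathcal{R}_\beta(x)$ is Lebesgue-null. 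This estimate — which uses the Pisot property essentially, since the conjugates lying inside the unit circle bounds the combinatorial growth of the boundary graph — is the main obstacle of the whole proposition. Granting (iii), $\mathcal{R}_\beta(x)$ has positive Lebesgue measure (the GIFS attractor is nondegenerate because $\beta$ is a unit, so nothing collapses in extra places) and hence nonempty interior; that each tile is the closure of its interior, i.e.\ (iv), then follows from the set equation together with the measure-zero boundary by a standard measure/Baire argument. With (iii) and (iv) available, the weak $m$-tiling of Theorem~\ref{thm:multitiling} (second item, applicable since $\mathbf{r}\in\E_d$, $r_0\neq0$, and $(X-\beta)(X^d+r_{d-1}X^{d-1}+\cdots+r_0)\in\mathbb{Z}[X]$) transported through the linear isomorphism $U(R(\mathbf{r})-\beta I_d)$ of Theorem~\ref{thm:betatilecorrespondence} upgrades to an honest multiple tiling — (iii) forcing the overlap set to be Lebesgue-null — which is (v).

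Finally (vi) and (vii) I would deduce from the SRS machinery. By Theorem~\ref{srsbeta}, $\beta$ has property~(F) (see~\eqref{PropertyF}) if and only if $\mathbf{r}\in\D_d^{(0)}$; as then $\mathbf{r}\in\D_d^{(0)}\cap\E_d$ with $r_0\neq0$ satisfies the second item of Theorem~\ref{thm:multitiling}, Corollary~\ref{cor:tiling} gives that $\{\mathcal{T}_\mathbf{r}(\mathbf{x})\}$ is a weak tiling; transporting by Theorem~\ref{thm:betatilecorrespondence} and combining with (iii)--(iv) shows $\{\mathcal{R}_\beta(x)\}$ is a genuine tiling, which is (vi). For (vii) the content is that covering degree $1$ is equivalent to (W): for the ``if'' direction one produces, from (W), a dense set of points lying in a single tile, by analyzing the local structure of $\mathcal{R}_\beta(x)$ near $\Xi_\beta(x)$, where exclusivity amounts precisely to the abundance of finite expansions of the numbers $x+y$; for the ``only if'' direction one shows that if (W) fails at some $x_0$ with gap $\varepsilon_0$ then, unwinding the self-replicating structure, a neighbourhood of $\Xi_\beta(x_0)$ in $\mathcal{R}_\beta(x_0)$ is also covered by another tile, contradicting the tiling property — this translation of a geometric tiling into the combinatorial condition (W) is the delicate step of (vii). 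Throughout, the unit hypothesis is essential: for non-unit Pisot $\beta$ one must use $\mathcal{S}_\beta$ rather than $\mathcal{R}_\beta$, extra nonarchimedean directions enter, and — as the analogous SRS phenomenon in Example~\ref{ex:pp} shows — individual tiles may even collapse to points.
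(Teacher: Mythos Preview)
Your proof of (i) matches the paper's own argument exactly. For (ii)--(vii), however, the paper does not give proofs at all: it simply records citations (\cite{Akiyama:02} for (ii), (v)--(vii); \cite{BST:10} for (iii); \cite{Sirvent-Wang:00a} for (iv)), treating the proposition as a summary of known literature rather than something to be established here. Your proposal, by contrast, supplies genuine sketches --- the Parry-automaton/GIFS argument for (ii), the boundary-graph spectral estimate for (iii), and notably for (v)--(vi) you invert the historical logic by pulling the results back from the SRS side via Theorem~\ref{thm:multitiling}, Theorem~\ref{thm:betatilecorrespondence}, and Corollary~\ref{cor:tiling}, rather than quoting the beta-tile literature that predates and motivates those SRS results.

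This reversal is legitimate provided the proofs in \cite{BSSST2011} of the SRS multitiling theorem do not themselves rest on the beta-tile proposition you are deriving (you should check this if you want to present the argument that way); if they are independent, your route is a nice illustration that the SRS framework genuinely subsumes the classical Pisot-unit theory. What the paper's approach buys is brevity and clear attribution; what yours buys is a self-contained narrative and a demonstration that the machinery developed in this survey actually recovers the motivating results. Your honest flagging of the spectral-radius estimate in (iii) as the real technical obstacle is appropriate --- neither you nor the paper proves it, and it is indeed where the work lies.
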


\begin{proof}
Assertion~(i) is an immediate consequence of the definition, since $T_\beta^{-1}(\mathbb{Z}[\beta]) \subset \mathbb{Z}[1/\beta]=\mathbb{Z}[\beta]$ holds for a Pisot unit $\beta$. Assertion~(ii) is contained in \cite[Lemma~5]{Akiyama:02}. Assertion~(iii) is proved in \cite[Theorem~5.3.12]{BST:10} and Assertion~(iv) is contained in \cite[Theorem~4.1]{Sirvent-Wang:00a} (both of these results are stated in terms of substitutions rather than beta-expansions). The tiling properties are proved in \cite{Akiyama:02}; property (W) has been further studied {\it e.g.} in \cite{Akiyama-Rao-Steiner:04}.
\end{proof}

In the following we turn our attention to tiles associated with expanding polynomials. Here Proposition~\ref{p:CNSconjugacy} suggests a relation between certain SRS tiles and the self-affine tiles associated with expanding monic polynomials defined as follows.

\begin{definition}[Self-affine tile, {\it cf.}~\cite{Katai-Koernyei:92}]
Let $A(X) = X^d + a_{d-1} X^{d-1} + \cdots + a_0 \in \mathbb{Z}[X]$ be an expanding polynomial and $B$ the transposed companion matrix with characteristic polynomial~$A$.
\[ \mathcal{F} := \left\{\mathbf{t} \in \mathbb{R}^d\; :\; \mathbf{t} = \sum_{i=0}^\infty B^{-i} (c_i,0,\ldots,0)^t, c_i \in \mathcal{N} \right\} \]
($\mathcal{N}=\{0,\ldots,|a_0|-1\}$) is called the {\it self-affine tile associated with~$A$}.
\end{definition}

For this class of tiles the following properties hold.

\begin{proposition}[\cite{Katai-Koernyei:92,Lagarias-Wang:97,Wang:00a}] 
\mbox{}
\begin{itemize}
\item[(i)] $\mathcal{F}$ is compact and self-affine.
\item[(ii)] $\mathcal{F}$ is the closure of its interior.
\item[(iii)] $\{\mathbf{x} + \mathcal{F}  \,:\,\mathbf{x} \in \mathbb{Z}^d\}$ induces a (multiple) tiling of $\mathbb{R}^d$. If $A$ is irreducible $\{\mathbf{x} + \mathcal{F}  \,:\, \mathbf{x} \in \mathbb{Z}^d\}$ forms a tiling of $\mathbb{R}^d$.
\end{itemize}
\end{proposition}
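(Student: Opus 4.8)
The plan is to recognise $\mathcal{F}$ as a self-affine tile of the type treated in \cite{Katai-Koernyei:92,Lagarias-Wang:97,Wang:00a} and to read off (i)--(iii) from the theory developed there, the only point needing a genuine verification being a statement about the digit set. First I would note that, since $A$ is expanding, every eigenvalue of $B$ has modulus $>1$, so there is a norm on $\mathbb{R}^d$ in which $B^{-1}$ is a strict contraction (as in \eqref{norminequ}); hence the maps $\psi_c\colon\mathbf{x}\mapsto(c,0,\ldots,0)^t+B^{-1}\mathbf{x}$, $c\in\mathcal{N}$, form a contractive iterated function system, and Hutchinson's theorem provides a unique nonempty compact set $\mathcal{K}$ with $\mathcal{K}=\bigcup_{c\in\mathcal{N}}\psi_c(\mathcal{K})$. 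Telescoping the series in the definition identifies $\mathcal{K}$ with $\mathcal{F}$, so $\mathcal{F}$ is compact and satisfies the set equation $\mathcal{F}=\bigcup_{c\in\mathcal{N}}\bigl((c,0,\ldots,0)^t+B^{-1}\mathcal{F}\bigr)$, which exhibits it as self-affine; this is~(i). (Compactness may also be obtained exactly as in the proof of Proposition~\ref{prop:basicSRS}.)

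The combinatorial fact underlying (ii) and (iii) is that $\mathcal{D}:=\mathcal{N}\times\{0\}^{d-1}$ is a complete residue system modulo $B\mathbb{Z}^d$. Since $\#\mathcal{D}=|a_0|=|\det B|$, it suffices to check that no two of its elements are congruent, and this follows from the explicit isomorphism $\mathbb{Z}^d/B\mathbb{Z}^d\cong\mathbb{Z}/a_0\mathbb{Z}$, $(c,0,\ldots,0)^t\mapsto c$, which one reads off the columns of the (transposed) companion matrix. Thus $\mathcal{F}$ is, up to an affine normalisation, a standard integral self-affine tile with a complete residue digit set, and the structure theory of \cite{Lagarias-Wang:97,Wang:00a} yields that $\mathcal{F}$ has positive $d$-dimensional Lebesgue measure and coincides with the closure of its interior. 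Positivity of the measure is obtained from the covering below together with compactness of $\mathcal{F}$, and the closure-of-interior property then follows from self-affinity and a Lebesgue density argument. I expect this to be the main obstacle: it is precisely the step where one must invoke self-affine tile machinery rather than a formal manipulation, and the statement fails for digit sets that are not complete residue systems.

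Finally, for~(iii): the covering $\bigcup_{\mathbf{x}\in\mathbb{Z}^d}(\mathbf{x}+\mathcal{F})=\mathbb{R}^d$ follows because every point of $\mathbb{R}^d$ admits a $B$-adic expansion with digits in $\mathcal{D}$ up to an integer translate, so the union is dense, and it is closed since it is a locally finite union of compact sets (cf.\ the proof of Proposition~\ref{prop:basicSRS}). The multiplicity theorem of Lagarias and Wang~\cite{Lagarias-Wang:97} for integral self-affine tiles with a complete residue digit set then shows that $\{\mathbf{x}+\mathcal{F}\;:\;\mathbf{x}\in\mathbb{Z}^d\}$ is a lattice tiling of some constant integer multiplicity $k\ge 1$ (working with $\mathcal{F}$ or with the closely related tile $T(B,\mathcal{D})=\{\sum_{i\ge 1}B^{-i}\mathbf{d}_i:\mathbf{d}_i\in\mathcal{D}\}$ as convenient). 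When $A$ is irreducible, $\mathbb{Z}[X]/(A(X))$ is an order in the number field $\mathbb{Q}[X]/(A(X))$, hence a torsion-free $\mathbb{Z}$-module of rank $d$; a volume/index count then identifies the lattice of eventually periodic $B$-adic addresses with $\mathbb{Z}^d$ and forces $k=1$, giving a genuine tiling. This last point is the content of K\'atai and K\H{o}rnyei~\cite{Katai-Koernyei:92}, and, together with the digit-set verification, it is the second place where the argument relies essentially on an external result.
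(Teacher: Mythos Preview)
Your proposal is correct and follows essentially the same route as the paper: recognise $\mathcal{F}$ as the attractor of a contractive IFS (giving compactness and the self-affine set equation), use the covering $\mathcal{F}+\mathbb{Z}^d=\mathbb{R}^d$ to obtain nonempty interior, and defer the (multiple) tiling assertions to the cited literature. Two minor discrepancies worth noting: the paper invokes a Baire-type argument (rather than Lebesgue density) to pass from the covering to ${\rm int}(\mathcal{F})\neq\emptyset$, and it attributes the tiling statement in the irreducible case to Lagarias and Wang~\cite{Lagarias-Wang:97} rather than to K\'atai and K\H{o}rnyei; your sketch of why irreducibility forces multiplicity $k=1$ is also a bit loose compared to the actual Lagarias--Wang criterion (the smallest $B$-invariant sublattice containing $\mathcal{D}-\mathcal{D}$ must be all of $\mathbb{Z}^d$), though since both your sketch and the paper ultimately just cite the result this is not a genuine gap.
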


\begin{proof}
Assertion (i) is an immediate consequence of the definition of $\mathcal{F}$, see {\it e.g.}~\cite{Katai-Koernyei:92}. In particular, note that $\mathcal{F}=\bigcup_{c \in \mathcal{N}} B^{-1}(\mathcal{F}+(c,0,\ldots,0)^t)$
where $B=VR(\mathbf{r})^{-1}V^{-1}$ with
\begin{equation}\label{rV}
\mathbf{r} = \left(\frac{1}{a_0}, \frac{a_{d-1}}{a_0}, \ldots, \frac{a_1}{a_0}\right) \quad\hbox{and}\quad V =  \left(\begin{array}{cccc}
1 & a_{d-1} & \cdots &  a_1 \\
0 & \ddots  & \ddots & \vdots \\
\vdots &  \ddots & \ddots & a_{d-1} \\
0 & \cdots & 0 & 1
\end{array}\right).
\end{equation}
To prove Assertion~(ii) one first shows that $\mathcal{F}+\mathbb{Z}^d$ forms a covering of $\mathbb{R}^d$. From this fact a Baire type argument yields that ${\rm int}(\mathcal{F})\not=\emptyset$. Using this fact the self-affinity of $ \mathcal{F}$ yields~(ii), see \cite[Theorem~2.1]{Wang:00a}. In assertion~(iii) the multiple tiling property is fairly easy to prove, the tiling property is hard to establish and was shown in~\cite{Lagarias-Wang:97} in a more general context.
\end{proof}

There is a close relation between the tile $\mathcal{F}$ and the central SRS-tile studied above.

\begin{theorem}\label{thm:cnssrstile}
Let $A(X) = X^d + a_{d-1} X^{d-1} + \cdots + a_0 \in \mathbb{Z}[X]$ be an expanding polynomial.
For all $\mathbf{x} \in \mathbb{Z}^{d}$ we have
\begin{align*}
\mathcal{F} & = V \mathcal{T}_\mathbf{r}(\mathbf{0}), \\
\mathbf{x} + \mathcal{F} & = V \mathcal{T}_\mathbf{r}(V^{-1}(\mathbf{x}))
\end{align*}
where $V$ is given in \eqref{rV}.
\end{theorem}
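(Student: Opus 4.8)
The plan is to identify the tile $\mathcal{F}$ with a coordinate image of the central SRS tile $\mathcal{T}_\mathbf{r}(\mathbf 0)$ by comparing their respective set equations (iterated function system descriptions) and then invoking uniqueness of the attractor. First I would recall from Proposition~\ref{p:CNSconjugacy} and \eqref{rV} that with $\mathbf{r} = (1/a_0, a_{d-1}/a_0,\ldots,a_1/a_0)$ and $V$ as in \eqref{rV}, the backward division mapping $D_A$ on $\Lambda_A$ (which, since $A$ is monic, is all of $\mathcal{R}\cong\mathbb{Z}^d$) is conjugate to $\tau_\mathbf{r}$ via $\Psi_A$. The crucial algebraic fact I would establish is the matrix identity $B = V R(\mathbf{r})^{-1} V^{-1}$ (equivalently $B^{-1} = V R(\mathbf{r}) V^{-1}$), which is already asserted in the proof of the previous proposition; this says that the expanding companion matrix $B$ and the contracting matrix $R(\mathbf{r})$ are inverse to each other up to the conjugating matrix $V$. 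I would verify this by a direct computation: both $B^{-1}$ and $R(\mathbf{r})$ have characteristic polynomial $X^d + \frac{a_{d-1}}{a_0}X^{d-1}+\cdots+\frac{1}{a_0}$ up to normalization, and $V$ is precisely the change-of-basis matrix relating the standard basis to the ``Brunotte-type'' basis implicit in \eqref{rV}, so the conjugation is forced.

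Next I would write down the set equations for both objects. On the SRS side, Proposition~\ref{prop:basicSRS} gives
\[
\mathcal{T}_\mathbf{r}(\mathbf{x}) = \bigcup_{\mathbf{y}\in\tau_\mathbf{r}^{-1}(\mathbf{x})} R(\mathbf{r})\,\mathcal{T}_\mathbf{r}(\mathbf{y}),
\]
and the preimages $\tau_\mathbf{r}^{-1}(\mathbf{x})$ are exactly the vectors $\mathbf{y}$ with $\tau_\mathbf{r}(\mathbf{y})=\mathbf{x}$, which by the conjugacy correspond under $\Psi_A^{-1}$ to the preimages of $D_A$; concretely, running $D_A$ backwards inserts a digit $c\in\mathcal{N}=\{0,\ldots,|a_0|-1\}$ in the lowest position, cf.\ \eqref{DP}. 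On the self-affine side, $\mathcal{F} = \bigcup_{c\in\mathcal{N}} B^{-1}(\mathcal{F} + (c,0,\ldots,0)^t)$. Applying $V$ to the SRS set equation and using $V R(\mathbf{r}) = B^{-1} V$, I would obtain that the collection $\{V\mathcal{T}_\mathbf{r}(V^{-1}(\mathbf{x}))\}_{\mathbf{x}\in\mathbb{Z}^d}$ satisfies exactly the same graph-directed set equation (with the same digit set and the same contraction $B^{-1}$) as $\{\mathbf{x}+\mathcal{F}\}_{\mathbf{x}\in\mathbb{Z}^d}$ — here one must check that $V$ maps the lattice $\mathbb{Z}^d$ onto itself, which holds since $V$ is an integer matrix with determinant $\pm1$, and that the digit translation vectors match, i.e.\ $V\cdot(\text{insertion of digit }c)$ corresponds to adding $(c,0,\ldots,0)^t$; this is where the specific shape of $V$ in \eqref{rV} is used. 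Since both families of compact sets are the unique attractor of the same contractive graph-directed IFS (uniqueness following from the Banach fixed point theorem in the Hausdorff metric, with the contractivity supplied by $\|B^{-1}\|<1$ after choosing an adapted norm, exactly as in \eqref{norminequ}), they coincide, giving both displayed equalities at once — the case $\mathbf{x}=\mathbf{0}$ being the first line and the general case the second.

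The main obstacle I expect is the bookkeeping in matching the \emph{digits}: one must check that under the conjugacy $\Psi_A$ and the linear map $V$, the finitely many preimage branches of $\tau_\mathbf{r}$ at a point $\mathbf{x}$ are carried precisely to the branches $\mathbf{t}\mapsto B^{-1}(\mathbf{t}+(c,0,\ldots,0)^t)$, $c\in\mathcal{N}$, with the translate $V^{-1}(\mathbf{x})$ on the SRS side corresponding to the translate $\mathbf{x}$ on the self-affine side. Concretely, if $\tau_\mathbf{r}(\mathbf{y})=\mathbf{x}$ then $\mathbf{y} = R(\mathbf{r})^{-1}(\mathbf{x} - \mathbf{v})$ with $\mathbf{v}=(0,\ldots,0,\{\mathbf{r}\mathbf{y}\})^t$, and one needs to track how the ``digit'' $\{\mathbf{r}\mathbf{y}\}$ (a real number in $[0,1)$) is transformed by $V$ into an integer digit $c\in\mathcal{N}$; this is exactly the content of the conjugacy in Proposition~\ref{p:CNSconjugacy} combined with the observation that $V$ conjugates the round-off term of $\tau_\mathbf{r}$ to the integer digit of $D_A$. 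Once this correspondence is pinned down, the rest is the standard uniqueness-of-attractor argument. Rather than reproving contractivity and uniqueness from scratch, I would cite the reasoning already given in the proof of Proposition~\ref{EdDdEd} (for the norm $\|\cdot\|_{\tilde\varrho}$) and in the sketch of Proposition~\ref{prop:basicSRS}, and present only the digit-matching computation in full.
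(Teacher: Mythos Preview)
Your strategy is sound and is essentially how the cited result in \cite{BSSST2011} is obtained: the paper itself gives no argument beyond that citation, and what you outline --- transporting the set equation of Proposition~\ref{prop:basicSRS} through the conjugacy $B^{-1}=V R(\mathbf r)V^{-1}$ and the digit correspondence coming from Proposition~\ref{p:CNSconjugacy}, then invoking uniqueness of the attractor --- is the natural proof.

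Two small points worth tightening. First, be careful with the phrase ``unique attractor of the same contractive graph-directed IFS'': the index set here is the infinite lattice $\mathbb{Z}^d$, so the Banach fixed point argument does not apply out of the box. It does go through once you restrict to the complete metric space of families $(K_\mathbf{x})_{\mathbf{x}\in\mathbb{Z}^d}$ with $K_\mathbf{x}$ a non-empty compact subset of a fixed ball of radius $R$ (from \eqref{eq:R}) around $\mathbf{x}$, equipped with the supremum of Hausdorff distances; uniform boundedness of both candidate families makes this legitimate. Alternatively, and more simply, first prove $\mathcal{F}=V\mathcal{T}_\mathbf{r}(\mathbf 0)$ via the \emph{single-set} IFS $\mathcal{F}=\bigcup_{c\in\mathcal{N}} B^{-1}(\mathcal{F}+(c,0,\ldots,0)^t)$, and then deduce the translated statement directly from the definition $\mathcal{T}_\mathbf{r}(\mathbf x)=\lim R(\mathbf r)^n\tau_\mathbf{r}^{-n}(\mathbf x)$ by writing out $\tau_\mathbf{r}^{-n}(V^{-1}\mathbf{x})$ explicitly as $R(\mathbf r)^{-n}V^{-1}\mathbf{x}$ plus the digit contributions.

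Second, the digit-matching you flag as ``the main obstacle'' is indeed the only real computation, and it is exactly the content of \eqref{eq:TA}: for each $c\in\mathcal{N}$ the $\tau_\mathbf{r}$-preimage branch at $V^{-1}\mathbf{x}$ corresponds under $V$ to $\mathbf{t}\mapsto B^{-1}(\mathbf{t}+(c,0,\ldots,0)^t)$. So rather than rederiving it, you can simply cite \eqref{eq:TA} together with the identity $B^{-1}=VR(\mathbf r)V^{-1}$ and the fact that $V$ (being unipotent upper-triangular over $\mathbb{Z}$) is a lattice automorphism.
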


The result follows immediately from \cite[Corollary~5.14]{BSSST2011}.

\begin{example}
Continuing Example~\ref{exKnuth}, let $X^2 + 2X + 2$ be given. The self-affine tile associated with this polynomial is Knuth's famous twin dragon. In view of Proposition~\ref{p:CNSconjugacy}, the associated SRS parameter is $\mathbf{r}=(\frac12,1)$. Using Theorem~\ref{thm:Brunotte} we see that $\mathbf{r}\in\D_2^{(0)}$, and Corollary~\ref{cor:tiling} can be invoked to show that the associated SRS tiles induce a tiling (see the left side of Figure~\ref{fig:CNStiling}). According to Theorem~\ref{thm:cnssrstile} this tiling is an affine transformation of the tiling $\mathcal{F}+\mathbb{Z}^d$.

\begin{figure}
\includegraphics[width=.45\textwidth]{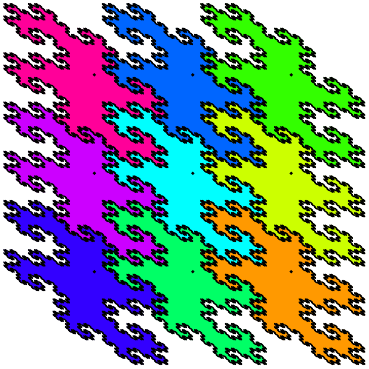} 
\includegraphics[width=.45\textwidth]{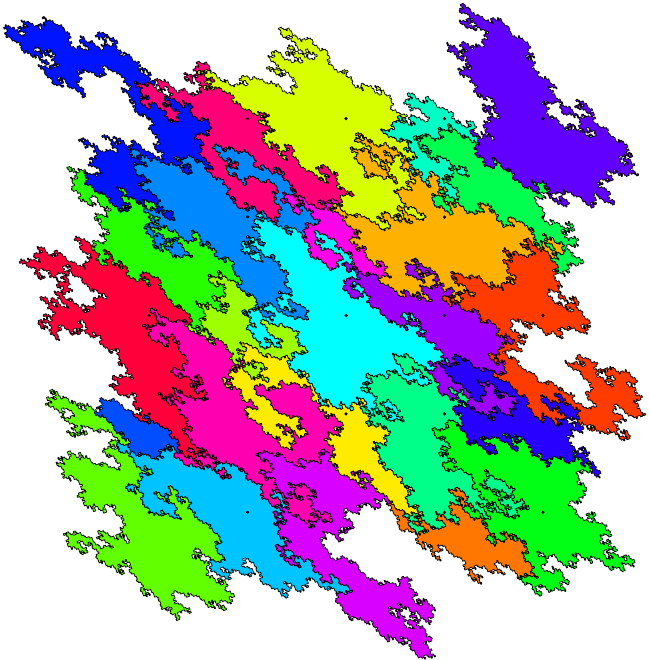} 
\caption{Two patches of tiles induced by SRS tiles: the left figure shows the tiling associated with the parameter $\mathbf{r} = (1/2,1)$. It is an affine image of the tiling induced by the CNS defined by $X^2 + 2X + 2$. The central tile is Knuth's twin dragon. The right patch corresponds to the parameter $\mathbf{r} = (2/3,1)$. It can also be regarded as the {\it Brunotte tiling} associated with the non-monic expanding polynomial $2X^2 + 3X + 3$. \label{fig:CNStiling}}
\end{figure}

Starting with the non-monic polynomial $2X^2 + 3X + 3$ we get $\mathbf{r}=(\frac23,1)$ and the tiling depicted on the right side of Figure~\ref{fig:CNStiling}. We mention that also in the case of non-monic polynomials we have a tiling theory. These so-called {\it Brunotte tiles} are defined and discussed in \cite[Section~5]{BSSST2011}.
\end{example}

We conclude this section with a continuation of Example~\ref{ex32}.

\begin{example}[The $\frac32$-number system, continued]\label{ex32_2}
As shown in Example~\ref{ex32} the $\frac32$-number system defined in~\cite{Akiyama-Frougny-Sakarovitch:07} is conjugate to the SRS $\tau_{-2/3}$. It has been shown in \cite[Section~5.4]{BSSST2011} that the tiling (see~Theorem~\ref{1dimtiling}) induced by the associated SRS tiles
\begin{equation}\label{32-tiling}
\{\mathcal{T}_{-2/3}(x)\;:\; x\in \mathbb{Z} \}
\end{equation}
consists of (possibly degenerate) intervals with infinitely many different lengths. Essentially, this is due to the fact that for each $k\in\mathbb{N}$ we can find $N_k\in \mathbb{Z}$ such that $\# \tau_{-2/3}^{-k}(N_k) =2$ (see \cite[Lemma~5.18]{BSSST2011}). This can be used to show that the length $\ell_k$ of the interval $\mathcal{T}_{-2/3}(N_k)$ satisfies $\ell_k\in[(\frac23)^k, 3(\frac23)^k]$, which immediately yields the existence of intervals of infinitely many different lengths in \eqref{32-tiling}.

It was observed in \cite[Example~2.1]{ST:11} that the length $\ell$ of the interval $\mathcal{T}_{-2/3}(0)$ which is equal to $\ell=1.6227\cdots$ is related to a solution of some case of the Josephus problem presented in~\cite{OW:91}. 
\end{example}

\section{Variants of shift radix systems}\label{sec:variants}

In the recent years some variants of SRS have been studied. Akiyama and Scheicher~\cite{Akiyama-Scheicher:07} investigated ``symmetric'' SRS. They differ from the ordinary ones just by replacing $-\lfloor \mathbf{rz} \rfloor$ by $-\lfloor \mathbf{rz} + \frac12 \rfloor$, {\it i.e.},
the {\it symmetric SRS} $\hat\tau_\mathbf{r}: \mathbb{Z}^d \to \mathbb{Z}^d$ is defined by
$$
\hat\tau_{{\bf r}}({\bf z})=\left(z_1,\dots,z_{d-1},-\left\lfloor {\bf r} {\bf z}+\frac12\right\rfloor\right)^t \qquad ({\bf z}=(z_0,\dots,z_{d-1})^t).
$$
It turns out that the characterization of the (accordingly defined) finiteness property is easier in this case and complete results have been achieved for dimension two (see~\cite{Akiyama-Scheicher:07}) and three (see~\cite{Huszti-Scheicher-Surer-Thuswaldner:06}). As mentioned above, for SRS it is conjectured that SRS tiles always induce weak tilings. Interestingly, this is not true for tiles associated with symmetric SRS. Kalle and Steiner~\cite{Kalle-Steiner} found a parameter $\mathbf{r}$ (related to a Pisot unit) where the associated symmetric SRS tiles form a double tiling (in this paper this is studied in the world of symmetric beta-expansions; these are known to be a special case of symmetric SRS, see \cite{Akiyama-Scheicher:07}). Further generalizations of SRS are studied by Surer~\cite{Surer:09}. Analogs for finite fields have been introduced by Scheicher~\cite{Scheicher:07}. Brunotte {\it et al.}~\cite{Brunotte-Kirschenhofer-Thuswaldner:12a} define SRS for Gaussian integers. In this case the characterization problem for the finiteness property is non-trivial already in dimension one.

\bigskip

{\bf Acknowledgements.} The authors wish to express their thanks to the scientific committees of the international conferences ``Num\'eration 2011''  in Li\`ege, Belgium and ``Numeration and substitution: 2012'' in Kyoto, Japan for inviting them to give expository lectures on shift radix systems at these conferences. Moreover they are grateful to Shigeki Akiyama for inviting them to write this survey for the present RIMS conference series volume. They also thank Wolfgang Steiner for his help; he generated Figures~\ref{fig:steiner4},~\ref{fig:betatiling}, and~\ref{fig:CNStiling}.
\bibliographystyle{siam}
\bibliography{SRSKyoto}

\begin{thebibliography}{10}

\bibitem{AdlerKitchensTresser:01}
{\sc R.~Adler, B.~Kitchens, and C.~Tresser}, {\em Dynamics of non-ergodic
  piecewise affine maps of the torus}, Ergodic Theory Dynam. Systems, 21
  (2001), pp.~959--999.

\bibitem{Akiyama:00}
{\sc S.~Akiyama}, {\em Cubic {P}isot units with finite beta expansions}, in
  Algebraic number theory and Diophantine analysis (Graz, 1998), de Gruyter,
  Berlin, 2000, pp.~11--26.

\bibitem{Akiyama:02}
\leavevmode\vrule height 2pt depth -1.6pt width 23pt, {\em On the boundary of
  self affine tilings generated by {P}isot numbers}, J. Math. Soc. Japan, 54
  (2002), pp.~283--308.

\bibitem{Akiyama-Borbeli-Brunotte-Pethoe-Thuswaldner:05}
{\sc S.~Akiyama, T.~Borb{\'e}ly, H.~Brunotte, A.~Peth{\H{o}}, and J.~M.
  Thuswaldner}, {\em Generalized radix representations and dynamical systems.
  {I}}, Acta Math. Hungar., 108 (2005), pp.~207--238.

\bibitem{Akiyama-Borbely-Brunotte-Pethoe-Thuswaldner:06}
\leavevmode\vrule height 2pt depth -1.6pt width 23pt, {\em Basic properties of
  shift radix systems}, Acta Math. Acad. Paedagog. Nyh\'azi. (N.S.), 22 (2006),
  pp.~19--25 (electronic).

\bibitem{Akiyama-Brunotte-Pethoe-Steiner:06}
{\sc S.~Akiyama, H.~Brunotte, A.~Peth{\H{o}}, and W.~Steiner}, {\em Remarks on
  a conjecture on certain integer sequences}, Period. Math. Hungar., 52 (2006),
  pp.~1--17.

\bibitem{Akiyama-Brunotte-Pethoe-Steiner:07}
\leavevmode\vrule height 2pt depth -1.6pt width 23pt, {\em Periodicity of
  certain piecewise affine planar maps}, Tsukuba J. Math., 32 (2008),
  pp.~197--251.

\bibitem{Akiyama-Brunotte-Pethoe-Thuswaldner:06}
{\sc S.~Akiyama, H.~Brunotte, A.~Peth{\H{o}}, and J.~M. Thuswaldner}, {\em
  Generalized radix representations and dynamical systems. {II}}, Acta Arith.,
  121 (2006), pp.~21--61.

\bibitem{Akiyama-Brunotte-Pethoe-Thuswaldner:07}
\leavevmode\vrule height 2pt depth -1.6pt width 23pt, {\em Generalized radix
  representations and dynamical systems. {III}}, Osaka J. Math., 45 (2008),
  pp.~347--374.

\bibitem{Akiyama-Frougny-Sakarovitch:07}
{\sc S.~Akiyama, C.~Frougny, and J.~Sakarovitch}, {\em Powers of rationals
  modulo 1 and rational base number systems}, Israel J. Math., 168 (2008),
  pp.~53--91.

\bibitem{AH:13}
{\sc S.~Akiyama and E.~Harriss}, {\em Pentagonal domain exchange}, submitted.

\bibitem{Akiyama-Pethoe:02}
{\sc S.~Akiyama and A.~Peth\H{o}}, {\em On canonical number systems}, Theor.
  Comput. Sci., 270 (2002), pp.~921--933.

\bibitem{Akiyama-Pethoe:13}
{\sc S.~Akiyama and A.~Peth{\H{o}}}, {\em Discretized rotation has infinitely
  many periodic orbits}, Nonlinearity, 26 (2013), pp.~871--880.

\bibitem{Akiyama-Rao-Steiner:04}
{\sc S.~Akiyama, H.~Rao, and W.~Steiner}, {\em A certain finiteness property of
  {P}isot number systems}, J. Number Theory, 107 (2004), pp.~135--160.

\bibitem{Akiyama-Scheicher:07}
{\sc S.~Akiyama and K.~Scheicher}, {\em Symmetric shift radix systems and
  finite expansions}, Math. Pannon., 18 (2007), pp.~101--124.

\bibitem{Akiyama-Thuswaldner:00}
{\sc S.~Akiyama and J.~M. Thuswaldner}, {\em Topological properties of
  two-dimensional number systems}, J. Theor. Nomb. Bordx., 12 (2000),
  pp.~69--79.

\bibitem{Arnoux-Ito:01}
{\sc P.~Arnoux and S.~Ito}, {\em Pisot substitutions and {R}auzy fractals},
  Bull. Belg. Math. Soc. Simon Stevin, 8 (2001), pp.~181--207.
\newblock Journ\'ees Montoises d'Informatique Th\'eorique (Marne-la-Vall\'ee,
  2000).

\bibitem{BBK:06}
{\sc V.~Baker, M.~Barge, and J.~Kwapisz}, {\em Geometric realization and
  coincidence for reducible non-unimodular {P}isot tiling spaces with an
  application to {$\beta$}-shifts}, Ann. Inst. Fourier (Grenoble), 56 (2006),
  pp.~2213--2248.
\newblock Num{\'e}ration, pavages, substitutions.

\bibitem{Barat-Berthe-Liardet-Thuswaldner:06}
{\sc G.~Barat, V.~Berth{\'e}, P.~Liardet, and J.~Thuswaldner}, {\em Dynamical
  directions in numeration}, Ann. Inst. Fourier (Grenoble), 56 (2006),
  pp.~1987--2092.
\newblock Num\'eration, pavages, substitutions.

\bibitem{Berthe-Siegel:05}
{\sc V.~Berth{\'e} and A.~Siegel}, {\em Tilings associated with beta-numeration
  and substitutions}, Integers, 5 (2005), pp.~A2, 46 pp. (electronic).

\bibitem{BSSST2011}
{\sc V.~Berth{\'e}, A.~Siegel, W.~Steiner, P.~Surer, and J.~M. Thuswaldner},
  {\em Fractal tiles associated with shift radix systems}, Adv. Math., 226
  (2011), pp.~139--175.

\bibitem{BST:10}
{\sc V.~Berth{\'e}, A.~Siegel, and J.~Thuswaldner}, {\em Substitutions, {R}auzy
  fractals and tilings}, in Combinatorics, automata and number theory, vol.~135
  of Encyclopedia Math. Appl., Cambridge Univ. Press, Cambridge, 2010,
  pp.~248--323.

\bibitem{Bertrand:77}
{\sc A.~Bertrand}, {\em D\'eveloppements en base de {P}isot et r\'epartition
  modulo {$1$}}, C. R. Acad. Sci. Paris S\'er. A-B, 285 (1977), pp.~A419--A421.

\bibitem{Bosio-Vivaldi:00}
{\sc D.~Bosio and F.~Vivaldi}, {\em Round-off errors and {$p$}-adic numbers},
  Nonlinearity, 13 (2000), pp.~309--322.

\bibitem{Boyd:89}
{\sc D.~W. Boyd}, {\em Salem numbers of degree four have periodic expansions},
  in Th\'eorie des nombres ({Q}uebec, {PQ}, 1987), de Gruyter, Berlin, 1989,
  pp.~57--64.

\bibitem{Boyd:96}
\leavevmode\vrule height 2pt depth -1.6pt width 23pt, {\em On the beta
  expansion for {S}alem numbers of degree {$6$}}, Math. Comp., 65 (1996),
  pp.~861--875, $S$29--$S$31.

\bibitem{Boyd:97}
\leavevmode\vrule height 2pt depth -1.6pt width 23pt, {\em The beta expansion
  for {S}alem numbers}, in Organic mathematics ({B}urnaby, {BC}, 1995), vol.~20
  of CMS Conf. Proc., Amer. Math. Soc., Providence, RI, 1997, pp.~117--131.

\bibitem{Brand:64}
{\sc L.~Brand}, {\em The companion matrix and its properties}, Amer. Math.
  Monthly, 71 (1964), pp.~629--634.

\bibitem{Bruin-Lambert-Poggiaspalla-Vaienti:03}
{\sc H.~Bruin, A.~Lambert, G.~Poggiaspalla, and S.~Vaienti}, {\em Numerical
  analysis for a discontinuous rotation of the torus}, Chaos, 13 (2003),
  pp.~558--571.

\bibitem{Brunotte:01}
{\sc H.~Brunotte}, {\em On trinomial bases of radix representations of
  algebraic integers}, Acta Sci. Math. (Szeged), 67 (2001), pp.~521--527.

\bibitem{Brunotte-Kirschenhofer-Thuswaldner:12a}
{\sc H.~Brunotte, P.~Kirschenhofer, and J.~M. Thuswaldner}, {\em Shift radix
  systems for {G}aussian integers and {P}eth{\H o}'s loudspeaker}, Publ. Math.
  Debrecen, 79 (2011), pp.~341--356.

\bibitem{Brunotte-Kirschenhofer-Thuswaldner:12}
\leavevmode\vrule height 2pt depth -1.6pt width 23pt, {\em Contractivity of
  three dimensional shift radix systems with finiteness property}, J. Differ.
  Equations Appl., 18 (2012), pp.~1077--1099.

\bibitem{Burcsi-Kovacs:08}
{\sc P.~Burcsi and A.~Kov{\'a}cs}, {\em Exhaustive search methods for {CNS}
  polynomials}, Monatsh. Math., 155 (2008), pp.~421--430.

\bibitem{Fam-Meditch:78}
{\sc A.~T. Fam and J.~S. Meditch}, {\em A canonical parameter space for linear
  systems design}, IEEE Trans. Autom. Control, 23 (1978), pp.~454--458.

\bibitem{Frougny-Solomyak:92}
{\sc C.~Frougny and B.~Solomyak}, {\em Finite beta-expansions}, Ergodic Theory
  Dynam. Systems, 12 (1992), pp.~713--723.

\bibitem{Gilbert:81}
{\sc W.~J. Gilbert}, {\em Radix representations of quadratic fields}, J. Math.
  Anal. Appl., 83 (1981), pp.~264--274.

\bibitem{Hare-Tweedle:08}
{\sc K.~G. Hare and D.~Tweedle}, {\em Beta-expansions for infinite families of
  {P}isot and {S}alem numbers}, J. Number Theory, 128 (2008), pp.~2756--2765.

\bibitem{Hollander:96}
{\sc M.~Hollander}, {\em Linear Numeration Systems, Finite Beta Expansions, and
  Discrete Spectrum of Substitution Dynamical Systems}, {P}h{D}. thesis,
  Washington University, Seattle, 1996.

\bibitem{Huszti-Scheicher-Surer-Thuswaldner:06}
{\sc A.~Huszti, K.~Scheicher, P.~Surer, and J.~M. Thuswaldner}, {\em
  Three-dimensional symmetric shift radix systems}, Acta Arith., 129 (2007),
  pp.~147--166.

\bibitem{Ito-Rao:06}
{\sc S.~Ito and H.~Rao}, {\em Atomic surfaces, tilings and coincidence. {I}.
  {I}rreducible case}, Israel J. Math., 153 (2006), pp.~129--155.

\bibitem{Johnson:75}
{\sc D.~B. Johnson}, {\em Finding all the elementary circuits of a directed
  graph}, SIAM J. Comput., 4 (1975), pp.~77--84.

\bibitem{Kalle-Steiner}
{\sc C.~Kalle and W.~Steiner}, {\em Beta-expansions, natural extensions and
  multiple tilings associated with {P}isot units}, Trans. Amer. Math. Soc., 364
  (2012), pp.~2281--2318.

\bibitem{Katai-Koernyei:92}
{\sc I.~K{\'a}tai and I.~K\H{o}rnyei}, {\em On number systems in algebraic
  number fields}, Publ. Math. Debrecen, 41 (1992), pp.~289--294.

\bibitem{Katai-Kovacs:80}
{\sc I.~K{\'a}tai and B.~Kov{\'a}cs}, {\em Kanonische {Z}ahlensysteme in der
  {T}heorie der {Q}uadratischen {Z}ahlen}, Acta Sci. Math. (Szeged), 42 (1980),
  pp.~99--107.

\bibitem{Katai-Kovacs:81}
\leavevmode\vrule height 2pt depth -1.6pt width 23pt, {\em Canonical number
  systems in imaginary quadratic fields}, Acta Math. Hungar., 37 (1981),
  pp.~159--164.

\bibitem{Katai-Szabo:76}
{\sc I.~K{\'a}tai and J.~Szab{\'o}}, {\em Canonical number systems for complex
  integers}, Acta Sci. Math. (Szeged), 37 (1975), pp.~255--260.

\bibitem{Kirschenhofer-Pethoe-Surer-Thuswaldner:10}
{\sc P.~Kirschenhofer, A.~Peth{\H{o}}, P.~Surer, and J.~M. Thuswaldner}, {\em
  Finite and periodic orbits of shift radix systems}, J. Th\'eor. Nombres
  Bordeaux, 22 (2010), pp.~421--448.

\bibitem{Kirschenhofer-Pethoe-Thuswaldner:08}
{\sc P.~Kirschenhofer, A.~Peth{\H{o}}, and J.~M. Thuswaldner}, {\em On a family
  of three term nonlinear integer recurrences}, Int. J. Number Theory, 4
  (2008), pp.~135--146.

\bibitem{Knuth:60}
{\sc D.~E. Knuth}, {\em An imaginary number system}, Comm. ACM, 3 (1960),
  pp.~245--247.

\bibitem{Kouptsov-Lowenstein-Vivaldi:02}
{\sc K.~L. Kouptsov, J.~H. Lowenstein, and F.~Vivaldi}, {\em Quadratic rational
  rotations of the torus and dual lattice maps}, Nonlinearity, 15 (2002),
  pp.~1795--1842.

\bibitem{Kovacs:81a}
{\sc B.~Kov{\'a}cs}, {\em Canonical number systems in algebraic number fields},
  Acta Math. Hungar., 37 (1981), pp.~405--407.

\bibitem{Kovacs-Pethoe:91}
{\sc B.~Kov{\'a}cs and A.~Peth\H{o}}, {\em Number systems in integral domains,
  especially in orders of algebraic number fields}, Acta Sci. Math. (Szeged),
  55 (1991), pp.~286--299.

\bibitem{Lagarias-Wang:96a}
{\sc J.~Lagarias and Y.~Wang}, {\em Self-affine tiles in $\mathbb{R}^n$}, Adv.
  Math., 121 (1996), pp.~21--49.

\bibitem{Lagarias-Wang:97}
\leavevmode\vrule height 2pt depth -1.6pt width 23pt, {\em Integral self-affine
  tiles in $\mathbb{R}^n$ {II}. lattice tilings}, J. Fourier Anal. Appl., 3
  (1997), pp.~83--102.

\bibitem{Lowensteinetal:97}
{\sc J.~Lowenstein, S.~Hatjispyros, and F.~Vivaldi}, {\em Quasi-periodicity,
  global stability and scaling in a model of {H}amiltonian round-off}, Chaos, 7
  (1997), pp.~49--66.

\bibitem{Lowenstein-Vivaldi:98}
{\sc J.~H. Lowenstein and F.~Vivaldi}, {\em Anomalous transport in a model of
  {H}amiltonian round-off}, Nonlinearity, 11 (1998), pp.~1321--1350.

\bibitem{Mahler:68}
{\sc K.~Mahler}, {\em An unsolved problem on the powers of {$3/2$}}, J.
  Austral. Math. Soc., 8 (1968), pp.~313--321.

\bibitem{OW:91}
{\sc A.~M. Odlyzko and H.~S. Wilf}, {\em Functional iteration and the
  {J}osephus problem}, Glasgow Math. J., 33 (1991), pp.~235--240.

\bibitem{Parry:60}
{\sc W.~Parry}, {\em On the {$\beta $}-expansions of real numbers}, Acta Math.
  Acad. Sci. Hungar., 11 (1960), pp.~401--416.

\bibitem{Penney:65}
{\sc W.~Penney}, {\em A ``binary'' system for complex numbers}, J. ACM, 12
  (1965), pp.~247--248.

\bibitem{Pethoe:91}
{\sc A.~Peth{\H{o}}}, {\em On a polynomial transformation and its application
  to the construction of a public key cryptosystem}, in Computational number
  theory ({D}ebrecen, 1989), de Gruyter, Berlin, 1991, pp.~31--43.

\bibitem{Pethoe:09}
\leavevmode\vrule height 2pt depth -1.6pt width 23pt, {\em On the boundary of
  the closure of the set of contractive polynomials}, Integers, 9 (2009),
  pp.~311 -- 325.

\bibitem{Rauzy:82}
{\sc G.~Rauzy}, {\em Nombres alg\'ebriques et substitutions}, Bull. Soc. Math.
  France, 110 (1982), pp.~147--178.

\bibitem{Reeve-Black-Vivaldi:13}
{\sc H.~Reeve-Black and F.~Vivaldi}, {\em Near-integrable behaviour in a family
  of discretized rotations}, Nonlinearity, 26 (2013), pp.~1227--1270.

\bibitem{Renyi:57}
{\sc A.~R{\'e}nyi}, {\em Representations for real numbers and their ergodic
  properties}, Acta Math. Acad. Sci. Hungar, 8 (1957), pp.~477--493.

\bibitem{Rockett-Szusz:92}
{\sc A.~M. Rockett and P.~Sz{\"u}sz}, {\em Continued fractions}, World
  Scientific Publishing Co. Inc., River Edge, NJ, 1992.

\bibitem{Scheicher:07}
{\sc K.~Scheicher}, {\em {$\beta$}-expansions in algebraic function fields over
  finite fields}, Finite Fields Appl., 13 (2007), pp.~394--410.

\bibitem{Scheicher-Surer-Thuswaldner-vdWoestijne:08}
{\sc K.~Scheicher, P.~Surer, J.~M. Thuswaldner, and C.~van~de Woestijne}, {\em
  Generalised canonical number systems and digit systems}, preprint.

\bibitem{Scheicher-Thuswaldner:01}
{\sc K.~Scheicher and J.~M. Thuswaldner}, {\em Canonical number systems,
  counting automata and fractals}, Math. Proc. Cambridge Philos. Soc., 133
  (2002), pp.~163--182.

\bibitem{Scheicher-Thuswaldner:03}
{\sc K.~Scheicher and J.~M. Thuswaldner}, {\em On the characterization of
  canonical number systems}, Osaka J. Math., 41 (2004), pp.~327--351.

\bibitem{Schmidt:80}
{\sc K.~Schmidt}, {\em On periodic expansions of {P}isot numbers and {S}alem
  numbers}, Bull. London Math. Soc., 12 (1980), pp.~269--278.

\bibitem{Schur:18}
{\sc I.~Schur}, {\em {\"U}ber {P}otenzreihen, die im {I}nnern des
  {E}inheitskreises beschr\"ankt sind. {II}}, J. reine und angew. Math, 148
  (1918), pp.~122--145.

\bibitem{Sirvent-Wang:00a}
{\sc V.~F. Sirvent and Y.~Wang}, {\em Self-affine tiling via substitution
  dynamical systems and {R}auzy fractals}, Pacific J. Math., 206 (2002),
  pp.~465--485.

\bibitem{Soto-Moya:11}
{\sc F.~Soto-Eguibar and H.~Moya-Cessa}, {\em Inverse of the {V}andermonde and
  {V}andermonde confluent matrices}, Appl. Math. Inf. Sci., 5 (2011),
  pp.~361--366.

\bibitem{ST:11}
{\sc W.~Steiner and J.~M. Thuswaldner}, {\em Rational self-affine tiles}.
\newblock Preprint, arXiv:1203.0758v1.

\bibitem{Surer:07}
{\sc P.~Surer}, {\em Characterisation results for shift radix systems}, Math.
  Pannon., 18 (2007), pp.~265--297.

\bibitem{Surer:09}
\leavevmode\vrule height 2pt depth -1.6pt width 23pt, {\em $\varepsilon$-shift
  radix systems and radix representations with shifted digit sets}, Publ. Math.
  (Debrecen), 74 (2009), pp.~19--43.

\bibitem{Thurston:89}
{\sc W.~Thurston}, {\em Groups, tilings and finite state automata}.
\newblock AMS Colloquium Lecture Notes, 1989.

\bibitem{Vivaldi:94}
{\sc F.~Vivaldi}, {\em Periodicity and transport from round-off errors},
  Experiment. Math., 3 (1994), pp.~303--315.

\bibitem{Vivaldi-Vladimirov:03}
{\sc F.~Vivaldi and I.~Vladimirov}, {\em Pseudo-randomness of round-off errors
  in discretized linear maps on the plane}, Internat. J. Bifur. Chaos Appl.
  Sci. Engrg., 13 (2003), pp.~3373--3393.

\bibitem{Wang:00a}
{\sc Y.~Wang}, {\em Self-affine tiles}, in Advances in Wavelet, K.~S. Lau, ed.,
  Springer, 1998, pp.~261--285.

\bibitem{Weitzer:13}
{\sc M.~Weitzer}, {\em Characterization algorithms for shift radix systems with
  finiteness property}, submitted.

\end{thebibliography}

\end{document}